\documentclass[a4paper,11pt]{article}

\usepackage{verbatim}
\usepackage[T1]{fontenc}
\usepackage{setspace}
\setlength\parskip{\medskipamount}
\usepackage{indentfirst}

\usepackage{geometry}
\geometry{margin=2.5cm}
\usepackage[hidelinks]{hyperref}

\usepackage{amsthm}
\usepackage{amsmath}
\usepackage{amsfonts}
\usepackage{amssymb}
\usepackage{mathdots}
\usepackage{mathtools}

\usepackage{times}
\usepackage{bm}
\usepackage{bbm}
\usepackage{dutchcal}

\usepackage{pgfplots}
\pgfplotsset{compat=1.14}
\usepackage{tikz-cd}
\usepackage{tikz}
\usetikzlibrary{matrix,arrows,decorations.pathmorphing,mindmap,fit,math}
\usetikzlibrary{shapes.geometric}
\usetikzlibrary{shapes.multipart}
\usepackage[usestackEOL]{stackengine}
\usepackage{fancybox}
\usepackage{caption}
\captionsetup{justification=Centering}
\usepackage{subcaption}
\usepackage{float}
\usepackage[allcommands]{overarrows}

\newcommand{\sbt}{\,\begin{picture}(-1,1)(-1,-3)\circle*{3}\end{picture}\ }

\usepackage[toc,page]{appendix}


\DeclareMathOperator{\id}{Id}
\DeclareMathOperator{\hw}{hw}

\DeclareMathOperator{\SL}{SL}
\DeclareMathOperator{\SU}{SU}

\DeclareMathOperator{\card}{Card}

\DeclareMathOperator{\GL}{GL}
\DeclareMathOperator{\diag}{diag}

\newcommand{\pr}{\mathrm{pr}}
\newcommand{\Mat}{\mathrm{Mat}}
\newcommand{\op}{\mathrm{op}}
\newcommand{\edge}{\mathrm{edge}}

\newcommand{\R}{{\mathbb R}}
\newcommand{\C}{{\mathbb C}}

\newcommand{\N}{{\mathbb N}}

\newcommand{\T}{{\mathbb T}}

\newcommand{\B}{\mathcal{B}}
\newcommand{\la}{\lambda}

\usepackage{theoremref}
\newtheorem{theorem}{Theorem}[section]

\newtheorem{proposition}[theorem]{Proposition}
\newtheorem{lemma}[theorem]{Lemma}
\newtheorem{corollary}[theorem]{Corollary}

\newtheorem{conj}{Conjecture}

\theoremstyle{definition}
\newtheorem{definition}[theorem]{Definition}
\newtheorem{example}[theorem]{Example}

\theoremstyle{remark}
\newtheorem{remark}[theorem]{Remark}
\newtheorem*{notation}{Notation}
\makeatletter
\renewcommand*\env@matrix[1][\arraystretch]{%
  \edef\arraystretch{#1}%
  \hskip -\arraycolsep
  \let\@ifnextchar\new@ifnextchar
  \array{*\c@MaxMatrixCols c}}
\makeatother

\newcommand\nnfootnote[1]{%
  \begin{NoHyper}
  \renewcommand\thefootnote{}\footnote{#1}%
  \addtocounter{footnote}{-1}%
  \end{NoHyper}
}

\providecommand\given{} 
\newcommand\givensymbol[1]{%
  \nonscript\;\delimsize#1\allowbreak\nonscript\;\mathopen{}%
}
\DeclarePairedDelimiterX\Set[1]\{\}{%
  \renewcommand\given{\givensymbol{\vert}}%
  #1%
}

\usepackage{tikzsymbols}

\begin{document}
\title{Multiple Horn problems for planar networks\\ and invertible matrices}
\author{ Anton Alekseev \and Arkady Berenstein \and Anfisa Gurenkova \and Yanpeng Li}

\newcommand{\Addresses}{{
  \bigskip
  \footnotesize

  \textsc{Section of Mathematics, University of Geneva, rue du Conseil-G\'en\'eral 7-9, 1205 Geneva, Switzerland}\par\nopagebreak
  \textit{E-mail address}: \texttt{Anton.Alekseev@unige.ch}

  \medskip
  \textsc{Department of Mathematics, University of Oregon, Eugene, OR 97403, USA}\par\nopagebreak
  \textit{E-mail address}: \texttt{arkadiy@uoregon.edu}

   \medskip
  \textsc{Faculty of Mathematics, HSE University, Usacheva 6, 119048 Moscow, Russia; \, 
  Igor Krichever Center for Advanced Studies, Skoltech, Bolshoy Boulevard 30, bld.1, 121205 Moscow, Russia}\par\nopagebreak
  \textit{E-mail address}: \texttt{Anfisa.Gurenkova@skoltech.ru}

  \textsc{Department of Mathematics, Sichuan University, No.24 South Section 1, Yihuan Road, Chengdu, China}\par\nopagebreak
  \textit{E-mail address}: \texttt{yanpeng.li@scu.edu.cn}

}}

\date{}
\maketitle

\nnfootnote{\today}
\nnfootnote{\emph{Keywords:} Horn problem, planar networks, octahedron recurrence, positive varieties}

\begin{abstract}
    The multiplicative multiple Horn problem is asking to determine possible singular values of the combinations $AB, BC$ and $ABC$ for a triple of invertible matrices $A,B,C$  with given singular values. There are similar  problems for eigenvalues of sums of Hermitian matrices (the additive problem), and for maximal weights of multi-paths in concatenations of planar networks (the tropical problem). 

    For the planar network multiple Horn problem, we establish necessary conditions, and we conjecture that for large enough networks they are also sufficient. These conditions are given by the trace equalities and rhombus inequalities (familiar from the hive description of the classical Horn problem), and by the new set of tetrahedron equalities. Furthermore, if one imposes Gelfand-Zeitlin conditions on weights of planar networks, tetrahedron equalities turn into the octahedron recurrence from the theory of crystals. We give a geometric interpretation of our results in terms of positive varieties with potential. In this approach, rhombus inequalities follow from the inequality $\Phi^t \leqslant 0$ for the tropicalized potential, and tetrahedron equalities are obtained as tropicalization of certain Plücker relations.

    For the multiplicative problem, we introduce a scaling parameter $s$, and
    we show that for $s$ large enough (corresponding to exponentially large/small singular values) the Duistermaat-Heckman measure associated to the multiplicative problem concentrates in a small neighborhood of the octahedron recurrence locus.

\end{abstract}

\tableofcontents

\section{Introduction}

The Horn problem is a classical problem of Linear Algebra asking to determine all possible triples $(\lambda, \mu, \rho)$ of ordered sets of real numbers which can be realized as eigenvalues of $n \times n$ Hermitian matrices $a, b$ and $c=a+b$. The answer to this question was conjectured by Horn \cite{H62}, and the Horn Conjecture was settled in the positive by Klyachko \cite{K} and by Knutson-Tao \cite{KT}. There are several ways to present the answer. One of them uses the Knutson-Tao-Woodward combinatorics of hives \cite{KTW}, and it includes the trace equality based on 
$$
{\rm Tr}(a) + {\rm Tr}(b)={\rm Tr}(a+b) = {\rm Tr}(c),
$$
and rhombus inequalities (for more details, see the body of the paper). Note that solutions of the classical Horn problem form a convex polyhedral cone. 

Surprisingly, there are two other problems which resemble the Horn problem, and which have exactly the same answer. The first one is the problem of determining possible singular values of invertible matrices $A, B$ and $C=AB$ (the multiplicative problem). Its equivalence to the classical Horn problem is the Thompson Conjecture, and it was settled in the positive by Klyachko \cite{K}. The second one is the question about maximal weights of multi-paths in the planar networks $\Pi_1$ and $\Pi_2$, and in their concatenation $\Pi_1 \circ \Pi_2$. Its equivalence to the Horn problem was established in \cite{APS-planar}.

One can introduce a scaling parameter $s \in \mathbb{R}_{\neq 0}$ in the multiplicative Horn problem such that the $s \to 0$ limit corresponds to the classical (also called additive) Horn problem, and the $s \to \infty$ limit gives the planar network problem. The equivalences described above can be summarized by saying that the solution of the multiplicative problem is independent of $s$, including the limits  $s \to 0$ and $s \to \infty$.

In this paper, we consider several versions of the multiple Horn problem. The additive multiple Horn problem is a question of finding all possible 6-tuples $(\lambda, \mu, \nu, \rho, \sigma, \tau)$ of ordered sets of real numbers which can be realized as eigenvalues of $n \times n$ Hermitian matrices $a, b, c, a+b, b+c$ and $a+b+c$. The multiple Horn problem is related (but cannot be reduced to) to the classical Horn problem. 

The multiple Horn problem also admits the multiplicative and the planar network versions. In the multiplicative version, one can again introduce a scaling parameter $s \in \mathbb{R}_{\neq 0}$, and it is still true that in the limit $s \to 0$ one gets the additive problem, and one can speculate that in the limit $s \to \infty$ one gets the planar network problem (see the discussion below). However, now solutions of the multiplicative problem depend on $s$, and three versions of the multiple Horn problem are no longer equivalent to each other. In this paper, we focus on the planar network and multiplicative problems. 

For the planar network multiple Horn problem, we prove a necessary condition for 
$(\lambda, \mu, \nu, \rho, \sigma, \tau)$ to be realized as maximal multi-paths in the concatenation of three planar networks (see Theorems \ref{theorem PN rhombi inequalities} and \ref{theorem PN tetrahedra equalities}).
We conjecture that for large enough networks these conditions are also sufficient. Our conditions include the trace equalities, the rhombus inequalities, and the new tetrahedron equalities. These tetrahedron equalities take the form
$$
\alpha \leqslant {\rm max}\{\beta, \gamma\}, \hskip 0.3cm
\beta \leqslant {\rm max}\{\gamma, \alpha\}, \hskip 0.3cm
\gamma \leqslant {\rm max}\{\alpha, \beta\}
$$
for certain combinations of weights $\alpha, \beta, \gamma$. These inequalities, trace ``tropical lines'' (see Fig.\ref{fig:tropline}) in the space of weights.
\begin{figure}[H]
    \centering
    \begin{tikzpicture}[scale=0.5]
        \draw[thick,->] (0,0)--(5,0);
        \draw[thick,->] (0,0)--(0,5);
        \draw[thick] (0,2)--(2,2);
        \draw[thick] (2,0)--(2,2);
        \draw[thick] (2,2)--(4,4);
        \node at (5.5,0) {$\alpha$};
        \node at (-0.5,4.2) {$\beta$};
        \node at (2,-0.5) {$\gamma$};
        \node at (-0.5,2) {$\gamma$}; 
    \end{tikzpicture}\caption{A tropical line: $\max\{\alpha, \beta, \gamma\}$.}\label{fig:tropline}
\end{figure}
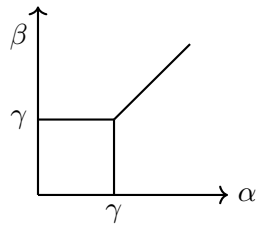
We show (see Theorems \ref{theorem PN octahedron recurrence} and \ref{pro:geoocttotropoct}) that by choosing so-called standard planar networks and by imposing the Gelfand-Zeitlin condition (see Definition \ref{definition multi-GZ condition for PN}) on their weights, the tetrahedron equalities reduce to the octahedron recurrence  from the theory of crystals:
$$
\gamma= {\rm max}\{\alpha, \beta\}.
$$
In that case, the solution of the planar network multiple Horn problem is completely determined by the trace equalities and by the rhombus inequalities, similar to the classical Horn problem.

We give a geometric interpretation of these results in terms of a certain positive variety $\mathcal{M}_3$ with potential $\Phi_3$ and with the distinguished set of multi-corner minors $M_{i,j,k}$. In terms of these data, the rhombus inequalities are equivalent to the inequality
$\Phi_3^t \leqslant 0$ for the tropicalized potential (see Theorem \ref{thm:allrhombi}), and the octahedron recurrence follows by tropicalization of Pl\"ucker relations for $M_{i,j,k}$'s (see Theorem \ref{Thm:octrec}).

For the multiplicative multiple Horn problem, we conjecture that for $s$ large enough (that is, singular values of all matrices are exponentially large/small) its solutions are contained in a small neighborhood of the set determined by trace equalities and rhombus and tetrahedron equalities, and we confirm this conjecture for $2 \times 2$ matrices (see Appendix A). The solution sets of the additive and multiplicative Horn problems are equipped with natural Duistermaat-Heckman measures induced by the identification of Hermitian matrices with the dual $\mathfrak{u}(n)^*$ of the Lie algebra $\mathfrak{u}(n)$, and of lower-triangular matrices with the Poisson-Lie dual ${\rm U}(n)^*$ of the Poisson-Lie group ${\rm U}(n)$. We show that for $s$ large enough the Duistermaat-Heckman measure of the multiplicative problem concentrates in a small neighborhood of the set of solutions of the octahedron recurrence (see Theorem \ref{theorem: symplectic convergence}).

The structure of the paper is as follows. In Section \ref{sec:background}, we give a brief review of the three versions of the Horn problem, introduce multiple Horn problems, and give a statement of our main results. In Section \ref{sec:planar}, we state and prove the results on the planar network multiple Horn problem. In that section, our main tool is combinatorics of weighted planar networks. In Section \ref{sec:geometry}, we define the variety $\mathcal{M}_3$, its potential $\Phi_3$, and multi-corner minors $M_{i,j,k}$, and we prove the results on the relation between rhombus and tetrahedron equalities and tropicalizations $\Phi_3^t$ and $M_{i,j,k}^t$. Our tools in this section are the theory of positive varieties with potential and elementary matrix calculus. In Section \ref{section: multiplicative problem}, we prove the concentration of the Duistermaat-Heckman measure of the multiplicative problem in a neighborhood of solutions of the octahedron recurrence. Our tools here include tropical calculus and geometry of Poisson-Lie groups, and they are inspired by the analysis of the Horn problem in \cite{APS-symplectic}.

\vskip 0.2cm

{\bf Acknowledgments.} We are grateful to M. Christandl for introducing us to the multiple Horn problem. We would like to thank I. Davydenkova, S. Fomin, T. C. Fraser, A. Knutson, J-H. Lu, A. Szenes, and A. Volkov for interesting discussions. 

The final version of this paper was prepared while AB and YL were visiting the University of Geneva whose hospitality and support is gratefully acknowledged.
Research of AA was supported in part by the grants 08235 and 20040, and the National Center for Competence in
Research SwissMAP of the Swiss National Science Foundation, and by the award of the Simons Foundation to the Hamilton Mathematics
Institute of the Trinity College Dublin under the program ``Targeted Grants to Institutes''. 
Research of AB was partially supported by the Simons Foundation Collaboration Grant No. 636972.
Research of AG was partially supported by the Russian Science Foundation under project 23-11-00150.
YL's research was partially supported by the National Natural Science Foundation of China (No. 12201438 and No. 12261131498). 

\section{Background and main results}      \label{sec:background}

The purpose of this section is to give some background on the Horn problem, introduce three versions of the multiple Horn problem, and to state our main results.


\subsection{Horn problems}

In this section, we recall three versions of the Horn problem, and the hive presentation of Horn inequalities.

\subsubsection{Three Horn problems}
We begin by stating three equivalent problems concerning eigenvalues of sums of Hermitian matrices, singular values of products of invertible matrices, and maximal multi-paths in weighted planar networks.

{\em Additive problem.} The classical (also called additive) Horn problem is the following question: determine the set ${\rm Horn}^{\rm add}$ of possible triples of ordered sets of eigenvalues of Hermitian matrices $a, b$ and $c=a+b$:
$$
{\rm Horn}^{\rm add} = \{ (\lambda, \mu, \nu)\mid \exists \, a, b \in \mathcal{H},
\lambda = {\rm eig}(a), \mu={\rm eig}(b), \nu ={\rm eig}(a+b)\}.
$$
Here $\lambda, \mu, \nu \in \mathbb{R}^n$ are nonincreasing sequences of eigenvalues ({\em e.g.} 
$\lambda_1 \geqslant \lambda_2 \geqslant \dots \geqslant \lambda_n$), $\mathcal{H}$ is the space of Hermitian $n\times n$ matrices, and ${\rm eig}\colon \mathcal{H} \to \mathbb{R}^n$ is the function assigning to a Hermitian matrix its ordered set of eigenvalues.

Sometimes, it is more convenient to fix eigenvalues of matrices $a$ and $b$, and to ask a question of possible eigenvalues of $c=a+b$. We will denote the corresponding set by ${\rm Horn}^{\rm add}_{\lambda, \mu} \subset \mathbb{R}^n$. One can identify the space of Hermitian matrices with the dual of the unitary Lie algebra $\mathcal{H} \cong \mathfrak{u}(n)^*$. This identification turns conjugacy classes $\mathcal{H}_\lambda\subset \mathcal{H}$ into coadjoint orbits of ${\rm U}(n)$. These conjugacy classes carry canonical symplectic forms $\omega_\lambda$ and Liouville measures $\mu^\mathcal{H}_\lambda$. 
Pushforward under the map $\pi_{\lambda, \mu}\colon (a,b) \mapsto \nu$ gives rise to Duistermaat-Heckman measures on ${\rm Horn}^{\rm add}_{\lambda, \mu}$:
$$
{\rm DH}^{\rm add}_{\lambda, \mu} = (\pi_{\lambda, \mu})_*(\mu^\mathcal{H}_\lambda \times \mu^\mathcal{H}_\mu).
$$

{\em Multiplicative problem.} Denote by $G={\GL}_n$ the general linear group over $\mathbb{C}$. For $s \in \mathbb{R}_{\neq 0}$,  one defines the multiplicative Horn problem as follows:
$$
{\rm Horn}^{\rm mult}(s) = \{ (\lambda, \mu, \nu)\mid \exists \, A, B \in G, 
e^{s\lambda}= {\rm sing}(A), e^{s \mu}={\rm sing}(B), e^{s \nu} ={\rm sing}(AB)\}.
$$
Here $e^{s \lambda} =(e^{s \lambda_1}, \dots, e^{s \lambda_n})$ and 
${\rm sing}(A) = {\rm eig}(AA^*)^{1/2}$ are singular values of $A$. We use the shorthand notation ${\rm Horn}^{\rm mult}$ for the case of $s=1$, and the notation ${\rm Horn}^{\rm mult}_{\lambda, \mu}(s)$ for fixed $\lambda$ and $\mu$.

Recall that the group $\GL_n$ admits Iwasawa decompositions
$G=K(AU_-)=(AU_-)K$, where $K={\rm U}(n)$, $A$ consists of diagonal matrices with positive entries, and $U_-$ consists of lower-triangular matrices with identity on the diagonal. Denote by $\mathcal{B}(n)=AU_- \subset B_-$. Recall that there is a group isomorphism $\mathcal{B}(n)\cong {\rm U}(n)^*$ between $\mathcal{B}(n)$ and the dual Poisson-Lie group ${\rm U}(n)^*$ of ${\rm U}(n)$.

Singular values of a matrix are invariant under left and right multiplication by elements of $K={\rm U}(n)$. Hence, the action of $K^3$ on $G^2$ given by formula
\begin{equation}       \label{intro:action_3on2}
    (u_1, u_2, u_3): (A, B) \mapsto (u_1 A u_2^{-1},
    u_2 B u_3^{-1})
\end{equation}
preserves singular values of $A, B$ and $AB$. By using this action, one can bring every pair $(A,B)$ to $\mathcal{B}(n)^2$, and to obtain an isomorphism
$$
{\rm Horn}^{\rm mult}(s) \cong \{ (\lambda, \mu, \nu)\mid \exists \, A, B \in \mathcal{B}(n), 
e^{s\lambda}= {\rm sing}(A), e^{s \mu}={\rm sing}(B), e^{s \nu} ={\rm sing}(AB)\}.
$$

The group $\mathcal{B}(n)$ carries a canonical Poisson bracket \cite{LW}, and its symplectic leaves are formed by elements with fixed singular values. Therefore, the sets ${\rm Horn}^{\rm mult}_{\lambda, \mu}(s)$ carry Duistermaat-Heckman measures
$$
{\rm DH}^{\rm mult}_{\lambda, \mu}(s) = (\pi_{\lambda, \mu}(s))_*(\mu_\lambda(s) \times \mu_\mu(s)),
$$
where $\pi_{\lambda, \mu}(s): (A, B) \mapsto \nu$.
Furthermore, one can introduce a Poisson space
$$
\mathcal{P}_2 = G^2/K^3,
$$
where the action is given by equation \eqref{intro:action_3on2}.
Functions 
$$
\lambda(A,B) =s^{-1} \log({\rm sing}(A)), \hskip 0.3cm
\mu(A,B)=s^{-1} \log({\rm sing}(B)), \hskip 0.3cm
\nu(A,B)=s^{-1} \log({\rm sing}(AB))
$$
descend to $\mathcal{P}_2$. A point $(\lambda, \mu, \nu)$ belongs to ${\rm Horn}^{\rm mult}(s)$ if and only if its pre-image in $\mathcal{P}_2$ is nonempty.

{\em Tropical problem.} Finally, for a pair of planar networks $\Pi_{1,2}$ of rank $n$ (with $n$ sources and $n$ sinks) we define the set
\[
    {\rm Horn}^{\Pi_1, \Pi_2} = \Set*{(\lambda, \mu, \nu) \given 
    \begin{aligned}
        &\exists \, w_i \in \mathbb{T}^{\Pi_i}, L(\lambda) = m(\Pi_1, w_1), L(\mu) = m(\Pi_2, w_2), \\
        &L(\nu) = m(\Pi_1 \circ \Pi_2, w_1 \circ w_2)
    \end{aligned}}.
\]
Here $\mathbb{T}=\mathbb{R} \cup \{ - \infty\}$ is the tropical semiring,
the functions $w_i$ are assigning weights to edges of the two networks,
$\Pi_1 \circ \Pi_2$ is the concatenation of $\Pi_1$ and $\Pi_2$, $L$ is the linear map defined by formula
$$
L\colon (\lambda_1, \lambda_2, \dots, \lambda_n) \mapsto (\lambda_1, \lambda_1 + \lambda_2, \dots, \lambda_1 + \dots + \lambda_n),
$$
and $m(\Pi, w)$ is an element of $\mathbb{T}^n$ whose $i$-th component is
the maximal weight of an $i$-path (a union of $i$ paths not touching each other) in $\Pi$. 

An example of large enough planar networks is given by standard networks (see Fig.\ref{fig:stplanar}). In that case, one can also impose the Gelfand-Zeitlin condition on the weights, and this doesn't affect the set of solutions of the planar network Horn problem (see \cite{APS-planar}).

\subsubsection{Comparison of Horn problems}

Surprisingly, the three Horn problems described above are intimately related to each other. This relation can be summarized in the following theorem:

\begin{theorem} \label{intro:thm_Horn}
For all $s \in \mathbb{R}_{\neq 0}$, we have
\begin{equation}     \label{eq:horn=horn}
{\rm Horn}^{\rm mult}(s) = {\rm Horn}^{\rm add},
\end{equation}
and for all $\lambda, \mu$ the corresponding Duistermaat-Heckman measures coincide:
\begin{equation}      \label{eq:DH=DH}
    {\rm DH}^{\rm mult}_{\lambda, \mu}(s) = {\rm DH}^{\rm add}_{\lambda, \mu}.
\end{equation}
For all $\Pi_1, \Pi_2$, we have
\begin{equation}    \label{eq:horn=trop}
{\rm Horn}^{\Pi_1, \Pi_2} \subset {\rm Horn}^{\rm add},
\end{equation}
and for $\Pi_1, \Pi_2$ sufficiently large networks ({\em e.g} standard networks) this inclusion is an equality.
\end{theorem}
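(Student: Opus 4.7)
The proof splits into three largely independent pieces, and the plan is to reduce the first two claims to known theorems on the classical Horn problem via the Ginzburg-Weinstein correspondence, and to obtain the tropical inclusion as a limit $s \to \infty$ of the multiplicative one.

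For the set equality \eqref{eq:horn=horn}, I would first observe that for $s > 0$ the rescaling $(\lambda,\mu,\nu) \mapsto (s\lambda, s\mu, s\nu)$ identifies ${\rm Horn}^{\rm mult}(s)$ with ${\rm Horn}^{\rm mult}(1)$; since ${\rm Horn}^{\rm add}$ is a closed cone, every $s > 0$ reduces to $s = 1$, and the case $s < 0$ follows from the involution $A \mapsto (A^{-1})^*$ (which sends singular values to their inverses in reverse order) combined with the corresponding reflection $a \mapsto -a$ on the Hermitian side. What remains is exactly Thompson's conjecture ${\rm Horn}^{\rm mult}(1) = {\rm Horn}^{\rm add}$, proven by Klyachko in \cite{K}; no further work is needed beyond citing this result.

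For the measure equality \eqref{eq:DH=DH}, the key ingredient is a version of the Ginzburg-Weinstein diffeomorphism $\gamma_s \colon \mathfrak{u}(n)^* \to \mathcal{B}(n)$ that intertwines the linear and multiplicative Poisson structures, restricts to a symplectomorphism from the coadjoint orbit $\mathcal{H}_\lambda$ onto the symplectic leaf $\{A \in \mathcal{B}(n) \mid {\rm sing}(A) = e^{s\lambda}\}$, and is equivariant with respect to the natural $K$-actions on both sides. Granting such a map, both factors in $\mathcal{H}_\lambda \times \mathcal{H}_\mu$ can be transported to $\mathcal{B}(n)^2$ while preserving the Liouville measures $\mu^{\mathcal{H}}_\lambda \times \mu^{\mathcal{H}}_\mu$, and the remaining task is to verify that the projections $\pi_{\lambda,\mu}$ and $\pi_{\lambda,\mu}(s)$ are intertwined by $\gamma_s$ on the nose, which then implies the equality of pushforward measures. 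This step is carried out in detail in \cite{APS-symplectic}, and I would follow that argument.

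For the tropical statement \eqref{eq:horn=trop}, the plan is to realize each weighted planar network $(\Pi,w)$ as an $s \to \infty$ tropical limit of a one-parameter family of totally positive matrices $A(\Pi, w; s) \in \mathcal{B}(n)$, built edge by edge, in such a way that (i) concatenation $\Pi_1 \circ \Pi_2$ corresponds to matrix multiplication, and (ii) $s^{-1} \log$ of the $i$-th singular value of $A(\Pi, w; s)$ tends to the maximal weight $m(\Pi,w)_i$ of an $i$-path in $\Pi$. Passing to the limit in the already-established identity ${\rm Horn}^{\rm mult}(s) = {\rm Horn}^{\rm add}$ then yields the inclusion ${\rm Horn}^{\Pi_1, \Pi_2} \subset {\rm Horn}^{\rm add}$, since ${\rm Horn}^{\rm add}$ is closed. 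Equality for sufficiently large standard networks is the main result of \cite{APS-planar}, obtained by exhibiting a surjection from standard-network weights onto ${\rm Horn}^{\rm add}$ via the Berenstein-Zelevinsky parametrization of totally positive double Bruhat cells. The main obstacle I anticipate is the measure statement: the set-theoretic claims are combinatorial and rest on a single deep input (Klyachko's theorem), but the coincidence of Duistermaat-Heckman measures requires carrying the full symplectic data through the Ginzburg-Weinstein correspondence and carefully matching normalizations of Liouville measures on the two sides.
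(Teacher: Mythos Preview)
Your proposal is correct and follows essentially the same route as the paper: both treat the theorem as a compilation of cited results (Klyachko \cite{K} for \eqref{eq:horn=horn} at $s=1$, \cite{APS-symplectic} for \eqref{eq:DH=DH}, and \cite{APS-planar} for \eqref{eq:horn=trop}), with the only actual argument being the scaling reduction from general $s$ to $s=1$. Your separate treatment of $s<0$ via the involution $A\mapsto (A^{-1})^*$ is a sensible way to handle the ordering issue that the paper's one-line scaling argument glosses over, and your sketch of the tropical inclusion as an $s\to\infty$ limit of the multiplicative problem is exactly the mechanism behind \cite{APS-planar}; neither constitutes a genuine departure from the paper's approach.
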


Theorem \ref{intro:thm_Horn} is a combination of several results. 
The equality \eqref{eq:horn=horn} is the Thompson conjecture, now  Klyachko's theorem in \cite{K}. In fact, it was established for $s=1$, and it follows for all the other values of $s\neq 0$. Indeed, observe that ${\rm eig}(sa) = s\, {\rm eig}(a)$. This implies that 
the set ${\rm Horn}^{\rm add}$ is invariant under dilations
$(a, b, c=a+b) \mapsto (sa, sb, sc)$. Since ${\rm Horn}^{\rm mult}(1)={\rm Horn}^{\rm add}$, we conclude that
$$
{\rm Horn}^{\rm mult}(s)=s \, {\rm Horn}^{\rm add} = {\rm Horn}^{\rm add}.
$$
The equality of Duistermaat-Heckman measures was established in \cite{APS-symplectic}, and the
inclusion \eqref{eq:horn=trop} and its saturation for sufficiently large networks was established in \cite{APS-planar}.

\subsubsection{Solution of the Horn problem}

The complete description of the set ${\rm Horn}^{\rm add}$ was conjectured by Horn \cite{H62}, and it was established by Klyachko \cite{K}, and by Knutson-Tao \cite{KT} and Knutson-Tao-Woodward \cite{KTW}. There are several descriptions of this set, and we will focus on the description in terms of hives. It can be summarized in the following theorem:

\begin{theorem}      \label{intro:KKT}
    A triple $(\lambda, \mu, \nu)$ belongs to the set ${\rm Horn}^{\rm add}$ if and only if it satisfies the trace equality
    \begin{equation}      \label{eq:trace}
    \sum_{i=1}^n \lambda_i + \sum_{j=1}^n \mu_j = \sum_{k=1}^n \nu_k,
    \end{equation}
    and the rhombus inequalities described below.
\end{theorem}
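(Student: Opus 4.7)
The plan is to follow the Knutson-Tao-Woodward approach \cite{KTW}, combining Klyachko's theorem \cite{K} with the Knutson-Tao saturation theorem \cite{KT}. The strategy is to identify, first for integer $(\lambda, \mu, \nu)$ and then by density for real data, the set of triples admitting a hive with boundary $(\lambda, \mu, \nu)$ with the set ${\rm Horn}^{\rm add}$.

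For necessity, the trace equality is immediate from ${\rm Tr}(a) + {\rm Tr}(b) = {\rm Tr}(a+b)$. The necessity of the rhombus inequalities is less transparent, and I would establish it by constructing an explicit hive from a pair $(a, b)$ with ${\rm eig}(a) = \lambda$, ${\rm eig}(b) = \mu$, ${\rm eig}(a+b) = \nu$: the hive entries can be built from maxima of sums of eigenvalues over flags compatible with $a$ and $b$ (equivalently, from certain Schubert-type minors of $a$, $b$, and $a+b$). The rhombus concavities then follow from Cauchy interlacing combined with a Cauchy--Binet style argument applied to these minors.

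For sufficiency, I would first treat the case of integer (equivalently, rational) triples $(\lambda, \mu, \nu)$. By the Knutson-Tao-Woodward correspondence, the number of integer hives with boundary $(\lambda, \mu, \nu)$ subject to the rhombus inequalities equals the Littlewood-Richardson coefficient $c^\nu_{\lambda \mu}$; hence existence of such a hive is equivalent to $c^\nu_{\lambda \mu} \neq 0$. By the Knutson-Tao saturation theorem, this in turn is equivalent to $c^{N\nu}_{N\lambda, N\mu} \neq 0$ for some $N \geq 1$, which by Klyachko's theorem is equivalent to $(\lambda, \mu, \nu) \in {\rm Horn}^{\rm add}$. Since both the set of rhombus-compatible triples and ${\rm Horn}^{\rm add}$ are closed convex rational polyhedral cones, agreement on the rational points forces agreement on all real triples.

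The main obstacle is the saturation theorem: it is the deepest input, and the original Knutson-Tao proof requires the honeycomb calculus and its ``gentle'' deformations to rule out fractional solutions to the Littlewood-Richardson equations. Once saturation is granted, the remainder of the argument is essentially a dictionary between the combinatorial (hive), representation-theoretic (Littlewood-Richardson), and spectral (Horn) descriptions of the same convex cone.
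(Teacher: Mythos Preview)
The paper does not prove this theorem: it is stated in the background section as a known result, with the proof attributed to Klyachko \cite{K}, Knutson--Tao \cite{KT}, and Knutson--Tao--Woodward \cite{KTW}. Your outline is a faithful summary of precisely that classical route (Klyachko's theorem combined with saturation and the hive description of Littlewood--Richardson coefficients), so there is nothing to compare against in the paper itself.

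One small comment on your necessity argument: constructing an explicit hive directly from a pair of Hermitian matrices via ``maxima of sums of eigenvalues over flags'' is correct in spirit but is itself a nontrivial result (this is essentially the content of \cite{Speyer} or of the planar-network argument in \cite{APS-planar}, both of which the paper cites). If you want a self-contained proof you should either invoke one of these explicitly, or, more economically, deduce necessity from the same Klyachko/saturation machinery you use for sufficiency: if $(\lambda,\mu,\nu)\in{\rm Horn}^{\rm add}$ then by Klyachko some dilation has $c^{N\nu}_{N\lambda,N\mu}\neq 0$, hence by KTW there is an integer hive with boundary $(N\lambda,N\mu,N\nu)$, and rescaling by $1/N$ gives a real hive for $(\lambda,\mu,\nu)$.
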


The trace equality is equivalent to additivity of the trace:
$$
{\rm Tr}(a) + {\rm Tr}(b) = {\rm Tr}(a+b) = {\rm Tr}(c).
$$
In order to describe the rhombus inequalities, one draws a graph represented by an equilateral triangle split into smaller equilateral triangles. The components of 
$L(\lambda), \sum \lambda_i + L(\mu), L(\nu)$ are placed in the vertices on the outer boundary of the big triangle (see Fig.\ref{intro:tri}). The triple $(\lambda, \mu, \nu)$ satisfies the rhombus inequalities if there exist real numbers $m_{i,j}$ that can be placed in the interior vertices of the graph such that the inequality
$$
a_1 + a_2 \geqslant b_1 +b_2
$$
is satisfied for each small rhombus (of any orientation), see Fig.\ref{fig:intro-rhom}. Here $a_{1,2}$ and $b_{1,2}$ are the values $m_{i,j}$ corresponding to the vertices of the rhombus.
Note that the set ${\rm Horn}^{\rm add}$ is a convex polyhedral cone, and the sets
${\rm Horn}^{\rm add}_{\lambda, \mu}$ are convex polytopes. One can view these facts as a manifestation of the Kirwan's Covexity Theorem in Symplectic Geometry \cite{Ki}.
\begin{figure}[H]
    \centering
    \begin{subfigure}[t]{0.6\textwidth}
    \centering
    \begin{tikzpicture}[every text node part/.style={align=center}]
    \draw (0:0)--(0:1.5)--(300:1.5)--(0:0);
    \draw[shift=(0:1.5)] (0:0)--(0:1.5)--(300:1.5)--(0:0);
    \draw[shift=(0:3)] (0:0)--(0:1.5)--(300:1.5)--(0:0);
    \draw[shift=(300:1.5)] (0:0)--(0:1.5)--(300:1.5)--(0:0);
    \draw[shift=(330:1.5*sqrt(3))] (0:0)--(0:1.5)--(300:1.5)--(0:0);
    \draw[shift=(300:3)] (0:0)--(0:1.5)--(300:1.5)--(0:0);

    \node at (-1.2,0) {$\lambda_1+\lambda_2+\lambda_3$};
    \node at (-0.1,-1.3) {$\lambda_1+\lambda_2$};
    \node at (1.2,-2.6) {$\lambda_1$};

    \node at (5.7,0) {$\nu_1+\nu_2+\nu_3$};
    \node at (4.5,-1.3) {$\nu_1+\nu_2$};
    \node at (3.3,-2.6) {$\nu_1$};

    \node at (1.5, 0.5) {$\sum\lambda_i+$\\ $+\mu_1$};
    \node at (3, 0.5) {$\sum\lambda_i+$\\  $+\mu_1+\mu_2$};

    \node at (2.25, -4.2) {$0$};
    \end{tikzpicture}
\caption{Triangulation $T_3$.}\label{intro:tri}
    \end{subfigure}
    \begin{subfigure}[t]{0.3\textwidth}
    \centering
    \begin{tikzpicture}
        \draw (0,0) -- (0.5,0.866) --(1,0) -- (0.5, -0.866) -- cycle;
        \node[anchor=east] at (0,0) {$a_{1}$};
        \node[anchor=west] at (1,0) {$a_{2}$};
        \node[anchor=south] at (.5,.866) {$b_{1}$};
        \node[anchor=north] at (.5,-.866) {$b_{2}$};
    \end{tikzpicture}\caption{Order of vertices of a rhombus.}\label{fig:intro-rhom}
    \end{subfigure}\caption{}
\end{figure}

There is a more geometric way to view rhombus inequalities in terms of tropicalization of positive varieties with potential.  We will need the following notation: $[1, i] = \{ 1,2, \dots, i\} \subset \{ 1, \dots, n\},
[1, i]^{\rm op} = \{ n-i+1, \dots n\}$. For $I, J \subset [1,n]$ of the same cardinality $|I| = |J| =k$, we denote by $\Delta_{I,J}(g)$ the minor of the matrix $g$ with rows in the set $I$ and columns in the set $J$.

The group $G=\GL_n$ carries an action of $U\times U$ by left and right translations, where $U$ is the unipotent group of upper triangular matrices with the identity on the diagonal. The corner minors
$$
M_i(g) = \Delta_{[1, i]^{\rm op}, [1, i]}(g)
$$
are invariant under this action. Let $H$ be the subgroup of $G$ consisting of invertible diagonal matrices. The rational map ${\rm hw}\colon G \to H$ 
given by formula
$$
{\rm hw}\colon g\mapsto \diag\left(\frac{M_n(g)}{M_{n-1}(g)}, \frac{M_{n-1}(g)}{M_{n-2}(g)},\ldots, M_1(g)\right)
$$
is called the highest weight map, see Example \ref{eq:k=1andhw}.


We will consider the GIT quotient
$$
\mathcal{M}_2 =G^2/\!\!/U^3={\rm Spec}\left(\mathbb{C}[G^2]^{U^3}\right),
$$
where the action is given by \eqref{intro:action_3on2} (note that while the formula is the same, it is a different group which is acting).
The variety $\mathcal{M}_2$ is equipped with the potential \cite{bk2}
$$
\Phi_2(A,B) = \Phi_{\rm BK}(A) + \Phi_{\rm BK}(B) - \Phi_{\rm BK}(AB),
$$
where $\Phi_{\rm BK}$ is the Berenstein-Kazhdan potential.
We define the $(i,j)$-th {\em multi-corner minor} as
$$
M_{i,j}(A, B) := \sum_{L, |L| =j} \Delta_{[1, i+j]^{\rm op}, [1, i] \cup L}(A) \Delta_{L, [1, j]}(B).
$$
The functions $M_{i,j}$ descend to $\mathcal{M}_2$, and they define a coordinate chart and a positive structure on $\mathcal{M}_2$. 
We denote tropicalizations of the potential $\Phi_2$ and of the functions $M_{i,j}$ by 
$\Phi_2^t$ and $M_{i,j}^t$, respectively (for a more detailed definition of tropicalization, see \cite{ABHL}). 
The following theorem summarizes one possible geometric approach to Horn inequalities:
\begin{theorem}
\label{th:rhombus M2}
   The inequality $\Phi_2^t \leqslant 0$ is equivalent to the set of rhombus inequalities for $M^t_{i,j}$.
\end{theorem}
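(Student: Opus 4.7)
The plan is to reduce the single inequality $\Phi_2^t\leqslant 0$ to the conjunction of one elementary tropical inequality per small rhombus of $T_n$. The reduction rests on the observation that the tropicalization of a positive Laurent polynomial is the maximum of the tropicalizations of its monomial summands: if $F=\sum_\alpha m_\alpha$ and each $m_\alpha$ is a positive Laurent monomial in a positive coordinate system, then $F^t=\max_\alpha m_\alpha^t$, so $F^t\leqslant 0$ is equivalent to $m_\alpha^t\leqslant 0$ for every $\alpha$. My goal is therefore to exhibit $\Phi_2$ as a positive Laurent polynomial in the multi-corner minors $M_{i,j}$ whose monomial summands stand in bijection with the small rhombi of the triangulation $T_n$, each contributing exactly the corresponding rhombus inequality.

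First, I would recall the Chamber-Ansatz expression of the Berenstein-Kazhdan potential $\Phi_{\rm BK}$ on $\GL_n$ as a sum of $n-1$ terms indexed by simple roots, each a ratio of corner-type minors. Evaluated at $g=A$ and $g=B$ separately, this formula produces the rhombus inequalities attached to small rhombi lying along the two outer edges of $T_n$ indexed by $L(\lambda)$ and by $\sum\lambda_i + L(\mu)$, since $\Phi_{\rm BK}(A)$ only involves the corner minors $M_i(A)=M_{i,0}$, while the corner minors of $B$ coincide (up to the factor $\det(A)$, whose tropicalization is a linear combination of boundary values and causes no obstruction) with $M_{i,j}$ for $i+j=n$.

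The substantive step is the cross term $-\Phi_{\rm BK}(AB)$ combined with the remaining contributions from $\Phi_{\rm BK}(A)$ and $\Phi_{\rm BK}(B)$. Using the Cauchy-Binet formula to expand the minors of the product $AB$, the arising sums $\sum_{L}\Delta_{[1,i+j]^{\rm op},[1,i]\cup L}(A)\,\Delta_{L,[1,j]}(B)$ are exactly the multi-corner minors $M_{i,j}$. Three-term Pl\"ucker-type identities among the $M_{i,j}$'s then allow one to rewrite $\Phi_2$ as a sum of positive Laurent monomials of the form $M_{b_1}M_{b_2}/(M_{a_1}M_{a_2})$, one per interior small rhombus of $T_n$ with long-diagonal vertices $(a_1,a_2)$ and short-diagonal vertices $(b_1,b_2)$. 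The tropicalization of each such monomial is the inequality $M_{b_1}^t+M_{b_2}^t-M_{a_1}^t-M_{a_2}^t\leqslant 0$, which is exactly the rhombus inequality for the corresponding rhombus.

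The main obstacle is the combinatorial bookkeeping in the last step: verifying that after the Pl\"ucker rewriting the summands match the small rhombi of $T_n$ in a clean bijection covering all three orientations, with the correct assignment of long versus short diagonals, and checking that no extraneous terms survive after cancellation. A careful edge analysis is also needed to ensure that the boundary rhombi (those touching an outer edge of $T_n$) are accounted for by the single-matrix terms from step two rather than by the cross-term expansion. Once this bijection is established the equivalence follows in both directions: if $\Phi_2^t\leqslant 0$ then each monomial tropicalization is $\leqslant 0$, hence every rhombus inequality holds; conversely if every rhombus inequality holds then $\Phi_2^t$, being the max of these monomial tropicalizations, is non-positive.
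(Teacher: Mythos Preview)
Your high-level plan is right: one wants to express $\Phi_2$ as a positive Laurent polynomial in the $M_{i,j}$ whose monomials are in bijection with the small rhombi, and then tropicalize. The paper does exactly this (Proposition~\ref{prop:ceninM}), arriving at the explicit formula
\[
\Phi_2=\sum _{i,j} \frac{M_{i,j-1}M_{i+1,j}}{M_{i,j}M_{i+1,j-1}}+ \frac{M_{i,j-1}M_{i-1,j+1}}{M_{i,j}M_{i-1,j}}+\frac{M_{i-1,j+1}M_{i+1,j}}{M_{i,j+1}M_{i,j}},
\]
from which the equivalence is immediate. But the route you sketch to obtain this formula has a real gap, and it is precisely at the point you flag as ``the main obstacle''.

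The problem is the term $-\Phi_{\rm BK}(AB)$. You propose to expand the minors of $AB$ by Cauchy--Binet and then invoke ``three-term Pl\"ucker-type identities among the $M_{i,j}$'' to rewrite $\Phi_2$ as a positive sum. There is no mechanism given for absorbing the minus sign: Pl\"ucker relations replace one positive expression by another, they do not by themselves turn a difference of positive terms into a positive sum. Relatedly, your claim that $\Phi_{\rm BK}(A)$ ``only involves the corner minors $M_i(A)=M_{i,0}$'' is not correct: the numerators in the definition of $\Phi_{\rm BK}$ are the \emph{almost}-corner minors $\Delta_{([1,i+1]\setminus\{i\})^{\rm op},[1,i]}$ and $\Delta_{[1,i]^{\rm op},[1,i+1]\setminus\{i\}}$, which are not among the $M_{i,j}$ at all. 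So neither $\Phi_{\rm BK}(A)$ nor $\Phi_{\rm BK}(B)$ individually contributes rhombus monomials in the $M_{i,j}$, and the whole computation has to be done globally.

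The paper sidesteps this difficulty by a change of representatives rather than by algebraic cancellation. Since $\Phi_2$ and the $M_{i,j}$ are $U^3$-invariant, one may compute on the slice $H\times U\times H\hookrightarrow\mathcal{M}_2$, $(h_1,u,h_2)\mapsto[h_1\overline{w_0},\,uh_2\overline{w_0}]$. On this slice $\Phi_{\rm BK}(h_1\overline{w_0})=0$ and $\Phi_{\rm BK}(uh_2\overline{w_0})=\chi(u)$, while the crucial identity $\Phi_{\rm BK}(\overline{w_0}g^{T}\overline{w_0})=-\Phi_{\rm BK}(g)$ converts $-\Phi_{\rm BK}(AB)$ into $+\Phi_{\rm BK}(b)$ for $b=h_2u^{T}\overline{w_0}^{-1}h_1\overline{w_0}\in B_-$. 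Now the single-matrix formula for $\Phi_{\rm BK}$ in Gelfand--Zeitlin coordinates (Theorem~\ref{Thm:poten}) applies directly, and the observation $\Delta_{i,j}(b)=\overline{M}_{i,j}/\overline{M}_{i,0}$ together with an explicit formula for $\chi(u)$ in the same minors yields the three families of rhombus monomials above, one per orientation. This is the missing idea in your proposal: the minus sign is removed by a symmetry of $\Phi_{\rm BK}$ after passing to a good section, not by Pl\"ucker manipulations.
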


This result is an interpretation of some results in \cite{bl}, but expressing it in this particular form seems to be new.

\begin{remark} 
In the results stated above, we use the notion of tropicalization of \cite{ABHL} which assigns to a positive variety with potential a cone in $\mathbb{T}^d$, where $d$ is the dimension of the variety. A more common notion of tropicalization assigns to a positive variety with potential a subset of the lattice $\Lambda \cong \mathbb{Z}^d$ singled out by the set of inequalities defined by the potential. Denote by $\mathcal{M}_2^{t, \mathbb{Z}}$ the tropicalization of $\mathcal{M}_2$ in that sense, and by ${\rm hw}^t$ the tropical functions associated to the highest weight maps ${\rm hw}$ for $A,B$ and $AB$. Then, for a triple of dominant weights $(\lambda, \mu, \nu)$ of ${\rm U}(n)$, we have (\cite[Lemma 7.14]{bk2} and \cite{bl})
$$
|({\rm hw}^t)^{-1}(\lambda, \mu, \nu)| = c_{\lambda, \mu}^\nu.
$$
Here the l.h.s. is the cardinality of the finite set 
$({\rm hw}^t)^{-1}(\lambda, \mu, \nu) \subset \mathcal{M}_2^{t, \mathbb{Z}}$, and the r.h.s. is the Littlewood-Richardson coefficient in the decomposition $V_\lambda \otimes V_\mu = \oplus_\nu c_{\lambda, \mu}^\nu \, V_\nu$ of the tensor product of two simple modules of ${\rm U}(n)$.







\end{remark}

\begin{remark}
Although the varieties $\mathcal{P}_2$ (in the definition of the multiplicative Horn problem) and $\mathcal{M}_2$ (in the geometric interpretation of hives) have a very similar form, there are also important differences. In the first case, the quotient is by the action of a compact group $K^3$, and in the second case by the action of a non-compact group $U^3$ of smaller dimension. At the time of this writing, we are not aware of a direct relation between the two constructions.
    
\end{remark}

\subsection{Multiple Horn problems}

In this section, we formulate three versions of the multiple Horn problem, and then state the main results of the paper.

\subsubsection{Three versions of the problem}

Following the previous section, we state three versions of the multiple Horn problem: the additive, the multiplicative,  and the one for planar network.

{\em Additive problem.} We now define the following set
\[
    {\rm mHorn}^{\rm add}= \Set*{(\lambda, \mu, \nu, \rho, \sigma, \tau) \in \mathbb{R}^{6n} \given 
    \begin{aligned}
    &\exists \, a,b,c \in \mathcal{H}_n,
 {\rm eig}(a) =\lambda, {\rm eig}(b)=\mu, {\rm eig}(c)=\nu, \\
&{\rm eig}(a+b)=\rho, {\rm eig}(b+c)=\sigma, {\rm eig}(a+b+c)=\tau 
\end{aligned}
    } \ .
\]
The set ${\rm mHorn}^{\rm add}$ is studied in literature with motivations ranging from Linear Algebra to Quantum Information Theory (see \cite{BGS, DH, MM,PTZ}). Similar to the additive Horn problem, it is a cone since the problem is invariant under dilation by $s \in \mathbb{R}_{\neq 0}$.

Note that the sets 
\begin{align*}
    & \Set*{ (\lambda, \mu, \nu, \rho, \tau) \in \mathbb{R}^{5n} \given
    \begin{aligned}
        &\exists \, a,b,c \in \mathcal{H}_n, {\rm eig}(a) =\lambda, {\rm eig}(b)=\mu, {\rm eig}(c)=\nu,\\
        &{\rm eig}(a+b)=\rho,  {\rm eig}(a+b+c)=\tau
    \end{aligned}
    };\\
    & \Set*{(\lambda, \mu, \nu, \rho, \tau) \in \mathbb{R}^{5n} \given
    \begin{aligned}
        &\exists \, a,b,c \in \mathcal{H}_n, {\rm eig}(a) =\lambda, {\rm eig}(b)=\mu, {\rm eig}(c)=\nu,\\
        &{\rm eig}(b+c)=\sigma,  {\rm eig}(a+b+c)=\tau
    \end{aligned}
    },
\end{align*}
where we drop one of the sums ($a+b$ or $b+c$) are polyhedral cones which can be completely described in terms of trace equalities and rhombus inequalities.
It is important to stress that the set ${\rm mHorn}^{\rm add}$ is not a polyhedral cone, in general.

{\em Multiplicative problem.} The multiplicative version of the problem takes the form
\[
    {\rm mHorn}^{\rm mult}(s) = \Set*{ (\lambda, \mu, \nu, \rho, \sigma, \tau) \in \mathbb{R}^{6n} \given
    \begin{aligned}
        &\exists \, A, B, C \in {\rm GL}_n, \text{~such that ~}
        {\rm sing}(A) =e^{s\lambda},\\
        &{\rm sing}(B)=e^{s\mu},{\rm sing}(C)=e^{s\nu}, {\rm sing}(AB)=e^{s\rho},\\
        &{\rm sing}(BC)=e^{s\sigma}, {\rm sing}(ABC)=e^{s\tau}
    \end{aligned}
    }.
\]

As before, there is an action of $K^4$ on $G^3$
\begin{equation} \label{intro:action_4on3}
    (u_1, u_2, u_3, u_4): (A, B, C) \mapsto (u_1 A u_2^{-1}, 
    u_2 B u_3^{-1}, u_3 C u_4^{-1})
\end{equation}
which preserves singular values of $A, B, C, AB, BC$ and $ABC$.
Hence, one can replace ${\rm GL}_n$ by $\mathcal{B}(n)= AU_-$ in the definition above without changing the set of 6-tuples $(\lambda, \mu, \nu, \rho, \sigma, \tau)$. For this reason, similar to the multiplicative Horn problem, the sets ${\rm mHorn}_{\lambda, \mu, \nu}^{\rm mult}(s)$ carry Duistermaat-Heckman measures ${\rm DH}^{\rm mult}_{\lambda, \mu, \nu}(s)$.
Note however that the sets ${\rm mHorn}^{\rm mult}(s)$ are not polyhedral cones, and that they strongly depend on $s$.

Furthermore, one can introduce a variety
$$
\mathcal{P}_3=G^3/K^4
$$
using the action \eqref{intro:action_4on3}, and a 
singular value map $\mathcal{P}_3 \to \mathbb{R}^{6n}$. A 6-tuple
$(\lambda, \mu, \nu, \rho, \sigma, \tau)$ belongs to ${\rm mHorn}^{\rm mult}(s)$ if and only if its pre-image in $\mathcal{P}_3$ is nonempty.

{\em Tropical problem.} Similarly, for planar networks of $\Pi_1, \Pi_2, \Pi_3$ of rank $n$, we define
\[
    {\rm mHorn}^{\Pi_1, \Pi_2, \Pi_3} = \Set*{ (\lambda, \mu, \nu, \rho, \sigma, \tau) \in \mathbb{R}^{6n} \given
    \begin{aligned}
        &\exists \, w_i \in \mathbb{T}^{\Pi_i}, L(\lambda) = m(\Pi_1, w_1), L(\mu)=m(\Pi_2, w_2), \\
        &L(\nu)=m(\Pi_3, w_3), L(\rho) = m(\Pi_1 \circ \Pi_2, w_1 \circ w_2),\\
        &L(\sigma) = m(\Pi_2 \circ \Pi_3, w_2 \circ w_3), \\
        &L(\tau)=m(\Pi_1 \circ \Pi_2 \circ \Pi_3, w_1 \circ w_2\circ w_3) 
    \end{aligned}
    }.
\]
Study of this set is one of the main goals of this paper.

\subsubsection{Main results}

We are now able to state the main results of this article. 

{\em Tropical problem.} 
In this section, we address the description of the set ${\rm mHorn}^{\Pi_1, \Pi_2, \Pi_3}$. In order to present the results, we now draw a graph in the shape of a tetrahedron (see Fig.\ref{picture tetrahedron with tsv on the edges}), and write the arrays 
$$
L(\lambda), \sum_i \lambda_i +L(\mu), \sum_i \lambda_i + \sum_j \mu_j +L(\nu), L(\rho), \sum_i \lambda_i +L(\sigma), L(\tau)
$$ 
on its edges.

\begin{theorem}[Theorems \ref{theorem PN rhombi inequalities} and \ref{theorem PN tetrahedra equalities}]       \label{intro:tropical_image}
The 6-tuple $(\lambda, \mu, \nu, \rho, \sigma, \tau)$ belongs to ${\rm mHorn}^{\Pi_1, \Pi_2, \Pi_3}$ only if it satisfies the trace equalities, and there exist numbers $m_{i,j,k}$ that can be assigned to vertices of the graph such that they satisfy rhombus inequalities for all plane small rhombi, and the
tetrahedron equalities described below.
\end{theorem}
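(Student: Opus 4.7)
The overall strategy is to construct the numbers $m_{i,j,k}$ directly from the planar network data, as tropical analogs of ``multi-corner minors.'' Given weighted networks $(\Pi_1,w_1),(\Pi_2,w_2),(\Pi_3,w_3)$ realizing the prescribed arrays, we identify each vertex $(i,j,k)$ of the tetrahedron graph with a selection of $i$ sources in $\Pi_1$, $j$ sources ``inserted'' at the $\Pi_1/\Pi_2$ interface, $k$ sources inserted at the $\Pi_2/\Pi_3$ interface, and a matching set of sinks at the far right of $\Pi_1\circ\Pi_2\circ\Pi_3$. We then set $m_{i,j,k}$ equal to the maximum total weight of a non-crossing system of $i+j+k$ paths in $\Pi_1\circ\Pi_2\circ\Pi_3$ respecting this choice of endpoints. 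This is exactly the tropicalization of the multi-corner minor $M_{i,j,k}$ that the paper introduces geometrically, and reduces to the construction of \cite{APS-planar} in the planar rhombi of the tetrahedron that involve only two of the three factors.

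The boundary conditions come from specializing $(i,j,k)$ to a face of the tetrahedron. When $k=0$ (resp. $i=0$) the multi-paths live in $\Pi_1\circ\Pi_2$ (resp. $\Pi_2\circ\Pi_3$), and by the $r=2$ result this face carries the hive data associated to the pair $(\lambda,\mu)$ with total $\rho$ (resp.\ to $(\mu,\nu)$ with total $\sigma$). When $j=0$ the multi-paths decouple into a system in $\Pi_1$ and a system in $\Pi_3$ concatenated across the ``transparent'' $\Pi_2$ layer (i.e., weight-zero pass-throughs of the chosen sources), which encodes the boundary arrays on the remaining edges. The trace equality is then the identity $m_{n,n,n} = \sum_i(\lambda_i+\mu_i+\nu_i) = \sum_k \tau_k$, i.e., the total weight of the unique $n$-multi-path through all sinks.

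For the rhombus inequalities we work one planar rhombus at a time. Each small rhombus lies in a 2-face of the tetrahedron and therefore involves multi-path systems indexed by four triples $(i,j,k)$ differing in exactly two coordinates. The inequality $a_1+a_2\geqslant b_1+b_2$ is the standard tropical exchange/supermodularity lemma for non-crossing path systems: given two optimal systems realizing $b_1$ and $b_2$ and respectively starting/ending at the $(a_1,a_2)$-endpoint set, an uncrossing procedure (the Lindström--Gessel--Viennot ``swap at a crossing'' applied in the tropical semiring) produces two systems whose weights sum to at least that of the original pair and which realize $a_1$ and $a_2$. This is exactly the argument used in \cite{APS-planar} for the pair case, applied inside each 2-face of the tetrahedron.

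The tetrahedron equalities are the new ingredient and are the main obstacle. Here one must compare three quantities $\alpha,\beta,\gamma$ obtained by swapping one endpoint between the three layers of $\Pi_1\circ\Pi_2\circ\Pi_3$. The plan is to derive them as the tropical Plücker-type relation for the minors $M_{i,j,k}$: Theorem \ref{Thm:octrec} (stated later in the paper) gives such a Plücker identity at the geometric level, and tropicalization turns a three-term sum of products of positive quantities into a max-of-three relation, which forces each of $\alpha,\beta,\gamma$ to be bounded by the maximum of the other two. The combinatorial counterpart, which one verifies directly on the network, is a three-way uncrossing: out of any two competing non-crossing systems representing two of the three sides, one can always extract a system realizing the third side with weight no less than the minimum of the first two. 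The delicate point is to carry out this uncrossing in a way that simultaneously respects the three interface cuts of $\Pi_1\circ\Pi_2\circ\Pi_3$; the tropical Plücker identity is what makes this consistent in all three cyclic orientations, yielding the symmetric tropical-line description in Fig.~\ref{fig:tropline}.
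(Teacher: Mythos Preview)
Your construction of the numbers $m_{i,j,k}$ is essentially the paper's Definition~\ref{definition: map m} (modulo a reversal of sources and sinks --- in the paper all paths start at the far left and the $\alpha_l$-piece terminates at the $l$-th interface), and the boundary-matching and trace-equality discussion is fine. Both substantive steps, however, have real gaps.

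\textbf{Rhombus inequalities.} The claim that ``each small rhombus lies in a $2$-face of the tetrahedron'' and that the four triples ``differ in exactly two coordinates'' is false: the small rhombi are generated from the prototype by the full $S_4$-action on $\Delta^3(n)$ together with translations, and for $n\geqslant 3$ many sit in the interior with all three coordinates varying (e.g.\ a rhombus spanned by $(1,-1,0)$ and $(0,-1,1)$). Even for a rhombus in a slice parallel to a face, the fixed coordinate is typically nonzero, so the competing multipaths still thread through all three networks and there is no reduction to the two-factor argument of \cite{APS-planar}. The paper's proof is a genuinely three-network argument: given maximal $(p_1,p_2)\in P_\alpha\times P_\beta$ one overlays them, takes the \emph{canonical path decomposition} of $p_1\coprod p_2$ into components, records which two of the four vertical interfaces each open component connects, and then chooses orientations on the components via an Eulerian-type argument on the resulting $4$-vertex multigraph so that the re-split pair $(p_+,p_-)$ lands in $P_\gamma\times P_\delta$. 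The parity bookkeeping on this auxiliary graph (Lemma~\ref{lemma alpha-beta = Q-Q}) is what makes the swap produce the correct endpoint pattern for an arbitrary rhombus; a bare ``swap at a crossing'' does not.

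\textbf{Tetrahedron equalities.} The route through the geometric Pl\"ucker relation does not give what you need. First, the identity of Theorem~\ref{Thm:octrec} is a \emph{two}-term relation $M_aM_d=M_bM_c+M_eM_f$; its tropicalization is the asymmetric octahedron recurrence $\gamma=\max\{\alpha,\beta\}$, not the symmetric three-way bound. Second, and more fundamentally, $M_{i,j,k}^t=m_{i,j,k}$ only under the Gelfand--Zeitlin condition (Theorem~\ref{thm:bk=gz}); for general weighted networks these tropicalizations differ, so identities among the $M$'s say nothing about the $m$'s. The paper's proof of Theorem~\ref{theorem PN tetrahedra equalities} is again via the canonical path decomposition: starting from maximal $(p_1,p_2)\in P_{(i,j,k+1)}\times P_{(i+1,j+1,k)}$, the auxiliary $4$-vertex multigraph now has \emph{all} vertices of odd valency, so it decomposes into cycles plus two disjoint open paths; orienting with vertex $0$ a source and vertex $3$ a sink forces $(p_+,p_-)$ into one of the two desired pairs, yielding one inequality, and the other two follow symmetrically. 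Your ``three-way uncrossing'' is a gesture in this direction but is not a proof without the parity/orientation argument.
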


Tetrahedron equalities correspond to small tetrahedron having exactly one vertex in the middle of each edge. Denote the numbers associated to these vertices by $m_\lambda, \dots, m_\tau$, and denote
$$
\alpha=m_\lambda + m_\nu, \hskip 0.3cm
\beta=m_\mu + m_\tau, \hskip 0.3cm
\gamma = m_\rho + m_\sigma
$$
the sums of numbers corresponding to opposite edges of the tetrahedron. The tetrahedron equalities take the form
$$
\alpha \leqslant {\rm max}\{\beta, \gamma\}, \hskip 0.3cm
\beta \leqslant {\rm max}\{\gamma, \alpha\}, \hskip 0.3cm
\gamma \leqslant {\rm max}\{\alpha, \beta\}.
$$
It is natural to put forward the following conjecture:

\begin{conj}\label{conjA}
    For $\Pi_1, \Pi_2, \Pi_3$ large enough networks ({\em e.g.} standard networks), the conditions of Theorem \ref{intro:tropical_image} are necessary and sufficient. That is, the set ${\rm mHorn}^{\Pi_1, \Pi_2, \Pi_3}$ is completely described by trace and tetrahedron equalities, and by rhombus equalities.
\end{conj}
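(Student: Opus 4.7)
The plan is to prove sufficiency, since necessity is Theorem~\ref{intro:tropical_image}. The strategy is to lift the combinatorial data through the geometric framework of Section~\ref{sec:geometry}. Given a 6-tuple $(\lambda, \mu, \nu, \rho, \sigma, \tau)$ together with vertex values $m_{i,j,k}$ satisfying the trace equalities, rhombus inequalities, and tetrahedron equalities, I would try to show that these $m_{i,j,k}$ arise as the tropical evaluation of the multi-corner minors $M_{i,j,k}^t$ at an actual point of $\mathcal{M}_3$ lying in the locus $\Phi_3^t \leqslant 0$, and then transport this point along the concatenation $\Pi_1 \circ \Pi_2 \circ \Pi_3$ of standard networks to obtain edge weights $w_1, w_2, w_3$ realizing the tuple.

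First I would restrict attention to the Gelfand--Zeitlin situation of Definition~\ref{definition multi-GZ condition for PN}. Here Theorems~\ref{theorem PN octahedron recurrence} and~\ref{pro:geoocttotropoct} collapse the tetrahedron inequalities $\alpha \leqslant \max\{\beta, \gamma\}$, etc., into the octahedron recurrence $\gamma = \max\{\alpha, \beta\}$, which determines the interior values $m_{i,j,k}$ from the data on the four triangular faces of the tetrahedron. Each face carries single-Horn rhombus inequalities (supplied by Theorem~\ref{intro:tropical_image}), so by the Knutson--Tao--Woodward theorem each face-hive is realized. The core combinatorial step is to show that the octahedron recurrence, iterated through the interior of the tetrahedron, produces values consistent with the rhombus inequalities on every interior plane rhombus. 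Assuming this consistency, I would invert the tropical multi-corner minor chart, using the techniques of \cite{ABHL} for positive varieties with potential, to produce an actual point of $\mathcal{M}_3$; edge weights on each standard network can then be extracted via the Lindstr\"om--Gessel--Viennot and Fomin--Zelevinsky parametrization of totally positive triangular matrices by planar network weights, generalizing the argument of \cite{APS-planar} for the single Horn problem.

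The main obstacle I anticipate is a tetrahedral analogue of saturation: surjectivity of the tropical multi-corner minor map onto the cone cut out by $\Phi_3^t \leqslant 0$ together with the octahedron recurrence. In the single Horn case, saturation of the hive cone is precisely what makes the analogous construction work, and a nontrivial extension will be needed here. A further subtlety is that ${\rm mHorn}^{\rm add}$ is not a polyhedral cone; the valid data stratify into finitely many polyhedral pieces indexed by the combinatorial type of the tetrahedral hive (i.e., which branch of each $\max$ is attained at each octahedron equation), and one must verify that every such piece is genuinely realized by network weights rather than only a subset of them. Handling this will likely require a careful crystal-theoretic or cluster-algebraic analysis of how the Pl\"ucker relations governing the $M_{i,j,k}$ behave under tropicalization, and it is this step that I expect to carry the bulk of the technical difficulty.
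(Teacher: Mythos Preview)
The statement you are attempting to prove is \emph{Conjecture~A}; the paper does not prove it in general and only verifies it for $n=2$ in Appendix~A. So there is no ``paper's own proof'' to compare against, and any proposed argument must be judged on its own merits.

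Your strategy has a genuine gap. You propose to restrict to the Gelfand--Zeitlin situation and use Theorems~\ref{theorem PN octahedron recurrence} and~\ref{pro:geoocttotropoct} to collapse the tetrahedron equalities to the octahedron recurrence. But this restriction is \emph{not} without loss of generality: the paper explicitly notes (just after Theorem~\ref{intro:multiple_tropical}) that ${\rm mHorn}^{\rm oct} \subset {\rm mHorn}^{\rm trt}$ is a strict inclusion already for $n=2$. The tetrahedron equalities $\alpha \leqslant \max\{\beta,\gamma\}$, $\beta \leqslant \max\{\gamma,\alpha\}$, $\gamma \leqslant \max\{\alpha,\beta\}$ cut out a tropical line (Fig.~\ref{fig:tropline}), only one of whose three rays is the locus $\gamma = \max\{\alpha,\beta\}$ of the octahedron recurrence. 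Points on the other two rays satisfy the conditions of Theorem~\ref{intro:tropical_image} but are \emph{never} realized by GZ weightings. Thus inverting the tropical multi-corner minor chart on $\mathcal{M}_3$ (whose tropicalization, by Theorem~\ref{thm:sumofmt3}, lands in the octahedron-recurrence locus) can at best prove ${\rm mHorn}^{\rm oct} \subset {\rm mHorn}^{\Pi_1,\Pi_2,\Pi_3}$, which is already Theorem~\ref{theorem: m is onto the octahedron recurrence cone}. The genuinely open part of the conjecture is the complement ${\rm mHorn}^{\rm trt} \setminus {\rm mHorn}^{\rm oct}$, and your plan does not address it.

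For comparison, the paper's $n=2$ argument (Appendix~A) proceeds exactly by splitting into these two pieces: Theorem~\ref{theorem: m is onto the octahedron recurrence cone} covers the octahedron ray, and then an \emph{ad hoc} explicit weighting (with one edge set to $-\infty$, hence certainly non-GZ) is exhibited to cover the remaining ray. Any general proof will need some analogue of this second step, and neither the geometric crystal machinery of Section~\ref{sec:geometry} nor the saturation-type arguments you sketch seem to supply it, since both are tied to the octahedron recurrence rather than to the full tropical-line locus.
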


We show that this conjecture holds true for $n=2$ (see Appendix A).

In order to state the next result, we need the following notation: we label faces of the tetrahedron by the arrays written on its edges. For instance, the face $(\lambda, \mu, \rho)$ is the face with edges carrying arrays $L(\lambda), \sum_i \lambda_i + L(\mu), L(\rho)$.

\begin{theorem}[Theorem \ref{theorem PN octahedron recurrence}]      \label{intro:multiple_tropical}
    For $\Pi_1, \Pi_2, \Pi_3$ the standard networks with weights satisfying Gelfand-Zeitlin conditions, for all small tetrahedra we have
    \begin{equation}      \label{intro:octahedron}
\gamma = {\rm max}\{\alpha, \beta\}.
    \end{equation}
     Furthermore, the values of $m_{i,j,k}$ on the pair of faces $(\lambda, \mu, \rho)$ and $(\rho, \nu, \tau)$ (or the pair of faces $(\mu, \nu, \sigma)$ and $(\lambda, \sigma, \tau)$) completely determine all the other $m_{i,j,k}$'s.
\end{theorem}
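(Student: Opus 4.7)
The plan is to first identify the interior labels $m_{i,j,k}$ with explicit maximal $k$-path weights of sub-concatenations of $\Pi_1, \Pi_2, \Pi_3$ expressed in Gelfand-Zeitlin variables, then promote the tetrahedron inequalities from Theorem~\ref{theorem PN tetrahedra equalities} to equalities using the GZ constraint, and finally propagate the known values across the tetrahedron by iterating the recurrence. For the standard networks with GZ weights, the maximal weight of an $i$-path is a prescribed linear function of the GZ entries, as in \cite{APS-planar}, and the same identification extends to every sub-concatenation corresponding to a ``corner'' of the triple product: $\Pi_1$, $\Pi_2$, $\Pi_3$, $\Pi_1 \circ \Pi_2$, $\Pi_2 \circ \Pi_3$, $\Pi_1 \circ \Pi_2 \circ \Pi_3$. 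I would write down, interior vertex by interior vertex, which sub-network and which $i$-path each $m_{i,j,k}$ represents; at the six midpoints of the edges of a small tetrahedron, the six resulting sums of GZ entries share all but one summand, which is what makes the octahedron recurrence visible.

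For the equality \eqref{intro:octahedron}, Theorem~\ref{theorem PN tetrahedra equalities} already provides the inequalities $\alpha \leqslant \max\{\beta,\gamma\}$ and its cyclic analogues. Via the dictionary of the first step, the numbers $\alpha, \beta, \gamma$ differ only in one distinguished summand, so under the GZ condition one of them is automatically the maximum of the other two: this is the tropical identity $\max\{x,y\} + \min\{x,y\} = x + y$ applied to two adjacent GZ entries, which is precisely the shape of the octahedron recurrence on a crystal. Alternatively, one can transfer the identity from the geometric side: the tropicalized multi-corner minors $M_{i,j,k}^t$ are known to satisfy the octahedron recurrence under GZ (this is the content of Theorem~\ref{pro:geoocttotropoct}), and the identification of $M_{i,j,k}^t$ with the planar-network labels $m_{i,j,k}$ makes the recurrence manifest for planar networks.

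For the determination property, the relation $\gamma = \max\{\alpha, \beta\}$ on each small tetrahedron is a local rewriting rule that solves for one of the six midpoint-values in terms of the other five. Given the $m_{i,j,k}$ on the two faces $(\lambda, \mu, \rho)$ and $(\rho, \nu, \tau)$, which share the edge $\rho$ and together cover every midpoint on the edges labelled by arrays associated with $\lambda, \mu, \rho, \nu, \tau$, one processes small tetrahedra in order of increasing depth away from the shared edge; at each stage, exactly one midpoint is previously unknown, and the recurrence determines it. Iteration fills the remaining two faces $(\mu, \nu, \sigma)$ and $(\lambda, \sigma, \tau)$, including all midpoints of the $\sigma$-edge. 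The main obstacle, which I expect to be the most delicate part of the argument, is the combinatorial bookkeeping of this last step: one must organise the small tetrahedra into a propagation order in which every invocation of the recurrence has exactly one unknown summit, with no vertex ever overdetermined. This consistency should follow, after careful indexing, from the layered structure of the triangulation of the big tetrahedron together with the explicit layer-by-layer structure of the GZ patterns attached to each face, but it requires a picture-level verification rather than a pure formal manipulation.
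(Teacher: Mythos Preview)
Your primary argument for the equality $\gamma=\max\{\alpha,\beta\}$ has a genuine gap. The interior labels $m_{i,j,k}$ with $i,j,k$ all positive are \emph{not} maximal $i$-path weights of any single sub-concatenation: by Definition~\ref{definition: map m} they are maxima over $(i,j,k)$-multipaths, which are triples of mutually non-intersecting multipaths living in $\Pi_1$, $\Pi_1\circ\Pi_2$, $\Pi_1\circ\Pi_2\circ\Pi_3$ simultaneously. So the claim that each $m_{i,j,k}$ is a ``prescribed linear function of the GZ entries'' and that $\alpha,\beta,\gamma$ ``differ only in one distinguished summand'' is not justified, and the appeal to $\max\{x,y\}+\min\{x,y\}=x+y$ does not go through. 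More fundamentally, even granting Theorem~\ref{theorem PN tetrahedra equalities}, the tetrahedron equalities only say that the maximum among $\alpha,\beta,\gamma$ is attained at least twice; nothing so far singles out $\gamma$ as that maximum. The whole content of the GZ hypothesis is precisely this directionality, and your sketch does not extract it.

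The paper proves the missing inequality $\max\{\alpha,\beta\}\leqslant\gamma$ by a direct combinatorial argument on the canonical path decomposition. One takes maximal multipaths $(p_1,p_2)\in P_{(i,j+1,k+1)}\times P_{(i+1,j,k)}$; the multi-GZ condition pins their sources to the top and sinks to the bottom, which forces the graph of essential components to contain a $1\!\to\!2$ segment. The alternative configuration (a $0\!\to\!2$ and a $1\!\to\!3$ component) is ruled out by a planarity argument: such components would be forced to cross at an interior point, which is impossible for distinct components of $\Theta$. Reversing the $1\!\to\!2$ segment then exhibits a pair in $P_{(i+1,j,k+1)}\times P_{(i,j+1,k)}$ of the same total weight, giving $\beta\leqslant\gamma$, and symmetrically $\alpha\leqslant\gamma$. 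This topological step is the heart of the proof and is absent from your outline.

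Your alternative route via Proposition~\ref{pro:geoocttotropoct} is logically valid, but note that it is exactly the paper's second proof: it relies on the geometric octahedron recurrence (Theorem~\ref{Thm:octrec}) together with the identification $M_{i,j,k}^t=m_{i,j,k}$ under GZ (Theorem~\ref{thm:bk=gz}), so invoking it is not an independent argument. Your treatment of the determination-by-two-faces part is correct and matches the paper's Remark~\ref{remark: octahedron recurrence and crystals}.
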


Equations \eqref{intro:octahedron} coincide with the octahedron recurrence from the theory of crystals (see {\em e.g.} \cite{HK}). We denote the corresponding set by 
$$
{\rm mHorn}^{\rm oct} \subset {\rm mHorn}^{\rm trt},
$$
where ${\rm mHorn}^{\rm trt}$ is the set defined by the trace and tetrahedron equalities, and by rhombus inequalities.
Example of $n=2$ shows that this inclusion is strict, in general. The set ${\rm mHorn}^{\rm oct}$ is completely determined by trace equalities and rhombus inequalities for the pair of faces $(\lambda, \mu, \rho)$ and $(\rho, \nu, \tau)$ (or the pair of faces $(\mu, \nu, \sigma)$ and $(\lambda, \sigma, \tau)$) which makes this problem similar to the original Horn problem. 

Theorem \ref{intro:multiple_tropical} admits the following geometric interpretation. Similar to the geometric interpretation of rhombus inequalities in the Horn problem, we define the variety
$$
\mathcal{M}_3 = G^3/\!\!/U^4={\rm Spec}\left(\mathbb{C}[G^3]^{U^4}\right),
$$
where the action is given by \eqref{intro:action_4on3} (again, the formula is the same, but the acting group is different).
This variety is equipped with the potential
$$
\Phi_3(A, B, C) = \Phi_{\rm BK}(A) + \Phi_{\rm BK}(B) + \Phi_{\rm BK}(C) - \Phi_{\rm BK}(ABC),
$$
and with the set of multi-corner minors
$$
M_{i,j,k}(A, B, C) = \sum_{L_1, L_2} \Delta_{[1,i+j+k]^{\rm op}, [1, i] \cup L_1}(A) \Delta_{L_1, [1, j] \cup L_2}(B) \Delta_{L_2, [1,k]}(C).
$$
Our findings concerning the variety $\mathcal{M}_3$ are summarized in the following theorem:
\begin{theorem}[Theorems \ref{Thm:octrec} and \ref{thm:sumofmt3}]    \label{intro:M_3properties}
    The functions $M_{i,j,k}$ define a positive structure on $\mathcal{M}_3$, Pl\"ucker relations for the functions $M_{i,j,k}$ allow to express all of them in terms of $M_{i,j,k}$'s corresponding to the pair of faces $(\lambda, \mu, \rho)$ and $(\rho, \nu, \tau)$ (or the pair of faces $(\mu, \nu, \sigma)$ and $(\lambda, \sigma, \tau)$).

The inequality $\Phi_3^t \leqslant 0$ is equivalent to rhombus inequalities for $M^t_{i,j,k}$, and tropicalization of Pl\"ucker relations for $M_{i,j,k}$ is equivalent to the octahedron recurrence for $M^t_{i,j,k}$.
\end{theorem}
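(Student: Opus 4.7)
The plan is to address the four assertions in turn, since they split into an algebraic block (positivity and Plücker expressions) and a tropical block (octahedron recurrence and rhombus inequalities).

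First, I would verify that each $M_{i,j,k}$ is $U^4$-invariant under the action \eqref{intro:action_4on3}. Left multiplication $A \mapsto u_1 A$ preserves $\Delta_{[1,i+j+k]^{\mathrm{op}},\,[1,i]\cup L_1}(A)$ because elementary row operations among the top rows do not affect determinants built from the bottom rows; the middle action of $u_2$ acts on the right of $A$ and on the left of $B$, and Cauchy-Binet combined with the interior summation over $L_1$ shows the combination is invariant; symmetrically for $u_3$ and $u_4$. To obtain the positive structure, I would use an Iwasawa-style factorization on a Zariski open subset of $\mathcal{M}_3$, count that the number of $M_{i,j,k}$ with $i+j+k \leqslant n$, after taking into account one global relation, matches $\dim \mathcal{M}_3$, and verify subtraction-freeness by direct substitution, mirroring the argument used for $\mathcal{M}_2$.

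Second, for the Plücker-type relations I would establish identities of the form
$$
M_{i,j,k}\cdot M_{i',j',k'} = M_{\alpha}M_{\alpha'} + M_{\beta}M_{\beta'},
$$
where the subscripts correspond to the six vertices of a small octahedron in the triangulated tetrahedron of Theorem \ref{intro:tropical_image}. These follow from standard short Plücker identities applied to the row indices $[1,i+j+k]^{\mathrm{op}}$ and to the column indices $[1,i]$, $[1,j]$, $[1,k]$, combined with Cauchy-Binet over the inner summation variables $L_1, L_2$. Solving each such relation for the "interior" minor and iterating, any $M_{i,j,k}$ with all three indices strictly positive is expressed in terms of multi-corner minors lying on the two faces $(\lambda,\mu,\rho)$ and $(\rho,\nu,\tau)$ (or on the symmetric pair). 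Tropicalizing these quadratic identities turns $+$ into $\max$ and $\cdot$ into $+$, yielding
$$
M^t_{i,j,k} + M^t_{i',j',k'} = \max\bigl\{M^t_\alpha + M^t_{\alpha'},\; M^t_\beta + M^t_{\beta'}\bigr\},
$$
which, after identifying sums of opposite edge labels with $\alpha,\beta,\gamma$, becomes the octahedron recurrence $\gamma = \max\{\alpha,\beta\}$ of Theorem \ref{intro:multiple_tropical}.

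Third, for the equivalence $\Phi_3^t \leqslant 0 \Longleftrightarrow$ rhombus inequalities for $M^t_{i,j,k}$, I would reduce to the two-matrix case of Theorem \ref{th:rhombus M2} by writing
$$
\Phi_3(A,B,C) = \bigl(\Phi_{\mathrm{BK}}(A) + \Phi_{\mathrm{BK}}(B) - \Phi_{\mathrm{BK}}(AB)\bigr) + \bigl(\Phi_{\mathrm{BK}}(AB) + \Phi_{\mathrm{BK}}(C) - \Phi_{\mathrm{BK}}(ABC)\bigr),
$$
each summand being $\Phi_2$ on a two-matrix subproblem whose tropicalization gives the rhombus inequalities on one face of the tetrahedron. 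A symmetric decomposition through $BC$ supplies rhombus inequalities on the other two faces. The remaining task is to check that the boundary multi-corner minors $M^t_{i,j,0}$, $M^t_{0,j,k}$, $M^t_{i,0,k}$ on each face coincide with the two-matrix minors of Theorem \ref{th:rhombus M2} and that the four sets of rhombus inequalities glue along shared edges to recover precisely the full system for $\mathcal{M}_3$.

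The expected main obstacle is pinning down the correct Plücker relations in the second step. The two-matrix analogue essentially reduces to Desnanot–Jacobi after one application of Cauchy-Binet, but the three-matrix case involves double sums over intermediate subsets $L_1, L_2$, and it is not obvious a priori that the cross terms organize into a clean quadratic identity with exactly two monomials on the right-hand side (which is what is needed for the tropicalization to match the octahedron recurrence rather than a more permissive inequality). I expect that either the Lindström–Gessel–Viennot lemma applied to a carefully chosen network with three layers, or a direct expansion using Plücker's identity on flag minors of a block matrix built from $A, B, C$, will produce the required identity; but verifying that the "extra" monomials collapse is where the bulk of the technical work lies.
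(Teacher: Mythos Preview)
On the Pl\"ucker relations: your expected obstacle dissolves once you use the paper's definition rather than the expanded formula. Each $M_{i,j,k}$ is \emph{defined} as the single minor $\Delta_{[1,n],\,J_{i,j,k}}(\mathbf{g})$ of the $n\times 4n$ matrix $\mathbf{g}=[I_n\mid g_1\mid g_1g_2\mid g_1g_2g_3]$; the Cauchy--Binet expansion over $L_1,L_2$ is a derived expression, not the starting point. The six column sets $J_{i+1,j,k+1},\,J_{i,j+1,k},\ldots$ all contain a common part $J'$ and differ only by which pair from four indices $i_1<i_2<i_3<i_4$ is adjoined, so the ordinary three-term Pl\"ucker relation for $\mathbf{g}$ gives
\[
M_{i+1,j,k+1}M_{i,j+1,k}=M_{i,j+1,k+1}M_{i+1,j,k}+M_{i,j,k+1}M_{i+1,j+1,k}
\]
in one line. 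No inner sums, no cross terms to collapse, no Lindstr\"om argument needed.

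On the rhombus inequalities there is a genuine gap. Your two decompositions of $\Phi_3$ do yield, after the coordinate check you flag (indeed $M_{i,0,j}=M_{i,j}(A,BC)$ and $M_{n-i-j,i,j}=\det(A)\cdot M_{i,j}(B,C)$, so the $\Phi_\lozenge$ ratios match), rhombus inequalities on all four \emph{faces} of $\Delta^3(n)$. But for $n\geq 4$ there are plane rhombi in the \emph{interior} of the tetrahedron---e.g.\ the translate of the basic rhombus by $(0,0,1)$ contains the interior vertex $(1,1,1)$---and the statement covers those too; no decomposition of $\Phi_3$ into $\Phi_2$'s reaches them. The paper closes this gap by a different mechanism: to each plane rhombus $\lozenge$ associate $\Phi_\lozenge=M_{\underline{\alpha}_1}M_{\underline{\alpha}_3}\big/(M_{\underline{\alpha}_2}M_{\underline{\alpha}_4})$, observe that the octahedron recurrence itself induces relations $\Phi_{\lozenge_{12}}=\Phi_{\lozenge_{13}}+\Phi_{\lozenge_{23}}$, and induct along a partial order on rhombi to obtain $\Phi_\lozenge+(\text{positive})=L\cdot\Phi_3$ with rhombi on the right supported on $\mathcal{F}_0\cup\mathcal{F}_1$. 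Tropicalizing gives $\Phi_\lozenge^t\leq\Phi_3^t$ for every $\lozenge$. Thus the full set of rhombus inequalities is not obtained independently of the octahedron recurrence; it is \emph{propagated} by it from the two faces you already have.
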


Using a geometric version of Gelfand-Zeitlin conditions, we obtain an alternative proof of Theorem \ref{intro:multiple_tropical} based on Theorem
\ref{intro:M_3properties}.

{\em Multiplicative problem.} We now describe some results concerning the multiplicative problem. We start with the following conjecture:

\vskip 0.2cm

\begin{conj}\label{conjB}
For all $\varepsilon >0$ there is $s_0 >0$ such that for all $s>s_0$ we have
$$
{\rm mHorn}^{\rm mult}(s) \subset U_\varepsilon({\rm mHorn}^{\rm trt}),
$$
where $U_\varepsilon({\rm mHorn}^{\rm trt})$ is the $\varepsilon$-neighborhood of ${\rm mHorn}^{\rm trt}$.
\end{conj}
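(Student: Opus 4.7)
The plan is to import the algebraic structure of $\mathcal{M}_3$ (the multi-corner minors $M_{i,j,k}$, their Plücker relations, and the tropical potential inequality $\Phi_3^t \leqslant 0$) into the singular-value world via the scaling limit $s \to \infty$. Given a 6-tuple $(\lambda, \mu, \nu, \rho, \sigma, \tau) \in {\rm mHorn}^{\rm mult}(s)$, first use $K^4$-invariance to choose a representative $(A, B, C) \in \mathcal{B}(n)^3$ realizing the prescribed singular values, and form the array
\[
m_{i,j,k}(s) := \tfrac{1}{s} \log\bigl|M_{i,j,k}(A, B, C)\bigr|.
\]
The target is to show that $(m_{i,j,k}(s))$ becomes $\varepsilon$-close, as $s$ grows, to an exact point of ${\rm mHorn}^{\rm trt}$ whose outer-edge values coincide with the given 6-tuple.

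I would proceed in three steps. Step one (outer edges): on the six outer edges of the tetrahedron graph the multi-corner minors degenerate to ordinary corner minors of $A$, $B$, $C$, $AB$, $BC$, $ABC$; combined with the asymptotic identity
\[
\tfrac{1}{s}\log\|\Lambda^k g\|_{\rm op} = \tfrac{1}{s}\log \max_{|I|=|J|=k} |\Delta_{I, J}(g)| + O(1/s)
\]
and a gauge choice inside the $K^4$-orbit placing $(A,B,C)$ in a totally positive cell, the outer-edge values of $m_{i,j,k}(s)$ match $L(\lambda), \ldots, L(\tau)$ up to $O(s^{-1}\log n)$. Step two (tetrahedron inequalities): each Plücker-type relation among the $M_{i,j,k}$ (Theorem \ref{intro:M_3properties}) takes the form $M_\alpha = \pm M_\beta \pm M_\gamma + \text{(terms of smaller asymptotic order)}$, so applying $\tfrac{1}{s}\log|{\cdot}|$ and the bound $\log|x \pm y| \leqslant \max\{\log|x|, \log|y|\} + \log 2$ produces
\[
m_\alpha \leqslant \max\{m_\beta, m_\gamma\} + O(1/s)
\]
and the two symmetric variants. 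Step three (rhombus inequalities): $\Phi_3$ is bounded above on a suitably chosen positive slice of $\mathcal{M}_3$, hence $\tfrac{1}{s}\Phi_3 \to 0$ along our representatives, and the equivalence in Theorem \ref{intro:M_3properties} upgrades this to rhombus inequalities for $m_{i,j,k}(s)$ up to $O(1/s)$.

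Combining the three steps, $(m_{i,j,k}(s))$ is an $O(1/s)$-approximate point of the piecewise-linear system defining ${\rm mHorn}^{\rm trt}$ whose outer-edge data is $O(1/s)$-close to the original 6-tuple. A standard perturbation/rounding argument inside this piecewise-linear system produces an exact point of ${\rm mHorn}^{\rm trt}$ within $\varepsilon$ of the 6-tuple, provided $s$ exceeds an explicit threshold $s_0 = s_0(\varepsilon, n)$.

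The hard part will be the uniform choice of representative $(A, B, C) \in \mathcal{B}(n)^3$ that simultaneously (i) keeps $\Phi_3$ finite and controlled, (ii) keeps the multi-corner minors $M_{i,j,k}$ away from destructive sign cancellations in the Plücker relations, and (iii) asymptotically matches the corner minors to the prescribed singular values. These requirements pull in different directions: positivity favours specific gauge slices, while realising arbitrary 6-tuples requires navigating all $K^4$-orbits. Note moreover that destructive cancellations $|M_\beta| \approx |M_\gamma|$ with opposite signs can make the tetrahedron inequality slack by an $O(1)$ amount on the log-scale — this is precisely the mechanism distinguishing ${\rm mHorn}^{\rm trt}$ from the strictly smaller octahedron locus ${\rm mHorn}^{\rm oct}$, and is consistent with the conjecture, but quantifying the slack uniformly is delicate. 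The $n = 2$ verification in Appendix A is indicative that the combinatorics of these cancellations is manageable fibre-by-fibre; the general $n$ case will likely require a more refined double Bruhat cell analysis, possibly inductive in $n$, together with a continuity argument that reduces to a dense open totally positive set where the cancellations are provably absent.
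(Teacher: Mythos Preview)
The statement you are addressing is \emph{Conjecture~B}, and the paper does not prove it in general. The paper only verifies the case $n=2$ (Appendix~A), and does so by a completely different, elementary route: it introduces the scalar function $f(g)=\mathrm{Tr}(gg^*)$ on $\SU(2)^*$, proves by direct matrix computation the three inequalities of the shape $2(f(AB)f(BC)+f(A)f(C))\geqslant f(B)f(ABC)$ and its two symmetric variants, and then converts these via $f(g)=e^{s\lambda}+e^{-s\lambda}$ into the approximate tetrahedron inequalities with explicit slack $s^{-1}\log 16$. No multi-corner minors, no Pl\"ucker relations, no $\mathcal{M}_3$.

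Your proposal is an attempt at the general conjecture, and the obstructions you flag in your final paragraph are exactly why the paper leaves it open. Two concrete issues make the plan, as written, not a proof. First, the Pl\"ucker relations (Theorem~\ref{Thm:octrec}) are quadratic in the $M_{i,j,k}$: they read $M_aM_d=M_bM_c+M_eM_f$, not $M_\alpha=\pm M_\beta\pm M_\gamma+\cdots$ as you write. Taking $\tfrac{1}{s}\log|\cdot|$ of a two-term sum of \emph{products} and controlling cancellation is precisely what distinguishes the octahedron locus from the tetrahedron locus; if both summands are positive (totally positive gauge) you land in ${\rm mHorn}^{\rm oct}$, but arbitrary points of ${\rm mHorn}^{\rm mult}(s)$ need not admit such a gauge. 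Second, the functions $M_{i,j,k}$ are $U^{k+1}$-invariant, not $K^{k+1}$-invariant, so $m_{i,j,k}(s)=\tfrac{1}{s}\log|M_{i,j,k}(A,B,C)|$ genuinely depends on which representative in the $K^4$-orbit you pick; the ``uniform choice of representative'' you defer to the end is the whole difficulty, and the paper has no mechanism for it beyond $n=2$. Your Step~3 also presumes a bound on $\Phi_3$ along the chosen slice, but $\Phi_3$ is a rational function that blows up on the complement of $Bw_0B$, and nothing forces an arbitrary $K^4$-representative to stay away from that locus.
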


The conjecture above states that for $s$ large enough solutions of the multiplicative problem can be approximated by 6-tuples satisfying the trace equalities and rhombus and tetrahedron equalities. We show that this conjecture holds true for $n=2$ (see Appendix A).

Our main result for the multiplicative problem concerns the Duistermaat-Heckman measures of its solution sets.

%
%
%
\begin{theorem}[Theorem \ref{theorem: symplectic convergence}]      \label{intro:multiple_DH}
    For all $\delta>0$ and $\varepsilon >0$ there is $s_0>0$ such that for all $s>s_0$ we have
    $$
{\rm DH}^{\rm mult}_{\lambda, \mu, \nu}(U_\varepsilon({\rm mHorn}^{\rm oct}_{\lambda, \mu, \nu}))
\geqslant (1-\delta){\rm DH}^{\rm mult}_{\lambda, \mu, \nu}({\rm mHorn}^{\rm mult}(s)).
    $$
\end{theorem}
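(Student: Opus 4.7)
The plan is to follow the tropical-symplectic approach inspired by \cite{APS-symplectic}. First, lift the problem to $\mathcal{B}(n)^3$ via the Iwasawa decomposition: every $K^4$-orbit in $G^3$ has a representative in $\mathcal{B}(n)^3$, and $\mathcal{B}(n)\cong \mathrm{U}(n)^*$ carries a Poisson-Lie bracket whose symplectic leaves are precisely the level sets of the singular-value map. Consequently ${\rm DH}^{\rm mult}_{\lambda,\mu,\nu}(s)$ is the pushforward of a product of three Liouville measures on such leaves under the map $(A,B,C)\mapsto(\rho,\sigma,\tau)$, and it suffices to prove that this pushforward concentrates in $U_\varepsilon({\rm mHorn}^{\rm oct}_{\lambda,\mu,\nu})$ as $s\to\infty$.

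\textbf{Rescaled coordinates and tropical limit.} Next, parametrize $\mathcal{B}(n)^3$ by positive coordinates built from the multi-corner minors $M_{i,j,k}$ of Theorem \ref{intro:M_3properties} (applied to $A$, $B$, $C$, as well as to the products $AB$, $BC$, $ABC$), and introduce the rescaled logarithmic coordinates $x^{(s)}_{i,j,k} := s^{-1}\log M_{i,j,k}$. In these coordinates the six tuples $L(\lambda), L(\mu), L(\nu), L(\rho), L(\sigma), L(\tau)$ become positive Laurent expressions in the $x^{(s)}_{i,j,k}$ whose $s\to\infty$ limits are the corresponding tropical functions in the $M^t_{i,j,k}$. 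By Theorem \ref{intro:M_3properties}, the tropicalized Plücker relations on $M^t_{i,j,k}$ are equivalent to the octahedron recurrence, and hence every accumulation point of the rescaled support of ${\rm DH}^{\rm mult}_{\lambda,\mu,\nu}(s)$ lies in ${\rm mHorn}^{\rm oct}_{\lambda,\mu,\nu}$.

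\textbf{Concentration estimate.} The quantitative step is to show that the pushforward density, expressed in the rescaled coordinates, admits a uniform $s\to\infty$ asymptotic. The Poisson-Lie bivector on $\mathcal{B}(n)$ in logarithmic coordinates rescales, as $s\to\infty$, to the Kirillov-Kostant-Souriau bracket on $\mathfrak{u}(n)^*$, so the Jacobian of the singular-value map $(A,B,C)\mapsto(\rho,\sigma,\tau)$ has an $s$-independent leading term. A Laplace-type argument, relying on the fact that in the tropical limit the max-plus expressions for $\rho$, $\sigma$, $\tau$ become piecewise-linear functions on ${\rm mHorn}^{\rm oct}_{\lambda,\mu,\nu}$, then shows that the mass of the complement of $U_\varepsilon({\rm mHorn}^{\rm oct}_{\lambda,\mu,\nu})$ decays uniformly in the parameters, which yields the stated bound for every $\delta>0$ and every $s>s_0(\delta,\varepsilon)$.

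\textbf{Main obstacle.} The principal difficulty is the mismatch between $\mathcal{P}_3=G^3/K^4$, where the DH measure naturally lives, and $\mathcal{M}_3=G^3/\!\!/U^4$, where the positive-geometric and tropical structures of Theorem \ref{intro:M_3properties} are defined (compare the remark following Theorem \ref{th:rhombus M2}). Transporting the tropical concentration of the minors $M_{i,j,k}$ from $\mathcal{M}_3$ to a concentration of singular values on $\mathcal{P}_3$ is the heart of the argument, and requires careful use of both the Poisson-Lie calculus of $\mathcal{B}(n)^3$ and the tropicalization machinery, along the lines developed in \cite{APS-symplectic}.
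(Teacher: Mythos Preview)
Your proposal has a genuine gap, and you correctly identify it yourself in the final paragraph: you have no mechanism for transferring information between $\mathcal{P}_3=G^3/K^4$ and $\mathcal{M}_3=G^3/\!\!/U^4$. The paper explicitly notes (in the remark after Theorem \ref{th:rhombus M2}) that no direct relation between these two constructions is known. Your ``Laplace-type argument'' is a placeholder, not an argument; in particular, the singular-value functions on $\mathcal{P}_3$ are \emph{not} positive functions of the multi-corner minors $M_{i,j,k}$ on $\mathcal{M}_3$, so the tropicalized Pl\"ucker relations for $M_{i,j,k}^t$ do not by themselves constrain the support of ${\rm DH}^{\rm mult}_{\lambda,\mu,\nu}(s)$.

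The paper's proof (Theorem \ref{theorem: symplectic convergence}) bypasses $\mathcal{M}_3$ entirely and uses instead three concrete tools you do not invoke. First, the Ginzburg--Weinstein diffeomorphism $GW_s\colon(\mathcal{H},s\pi_\mathcal{H})\to(\mathcal{B},\pi_\mathcal{B})$ (Proposition \ref{proposition: existense of GW_s}), which is a Poisson isomorphism intertwining the Hermitian and multiplicative Gelfand--Zeitlin maps; this is what transports Liouville measure from coadjoint orbits to symplectic leaves of $\mathcal{B}$. Second, the Guillemin--Sternberg fact (Proposition \ref{proposition: symplectic measure and gz}) that $gz^\mathcal{H}$ pushes Liouville measure on $\mathcal{H}_\lambda$ forward to Lebesgue measure on the GZ polytope; this is the source of the measure estimate, replacing your unspecified Laplace argument. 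Third, the parametrization of $\mathcal{B}$ by \emph{planar-network weights} via the maps $M_s$ and $\zeta_s=M_s^{-1}$ (Lemma \ref{lemma: M_s is birational}), together with the quantitative estimate $|gz_s\circ M_s-gz^\T|\leqslant Ce^{-s\delta}$ on a good set (Proposition \ref{proposition: gz_s M_s close to gz^T}). The octahedron recurrence then enters through the planar-network Theorem \ref{theorem PN octahedron recurrence}, not through $\mathcal{M}_3$. The actual argument is a short chain of inclusions tracking a large-measure set $U\subset\mathcal{H}_{\lambda}\times\mathcal{H}_{\mu}\times\mathcal{H}_{\nu}$ through $GW_s^{\times3}$, $M_s^{\times3}$, and $Horn^\T$.
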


Recall that ${\rm mHorn}^{\rm oct}_{\lambda, \mu, \nu} \subset {\rm mHorn}^{\rm trt}_{\lambda, \mu, \nu}$, and that the inclusion is strict, in general. Theorem
\ref{intro:multiple_DH} states that for $s$ large enough the Duistermaat-Heckman measure of the multiplicative problem converges towards the set ${\rm mHorn}^{\rm oct}_{\lambda, \mu, \nu}$ while Conjecture \ref{conjB} stated above claims that (again, for $s$ large enough) the whole set ${\rm mHorn}^{\rm mult}(s)_{\lambda, \mu, \nu}$ is contained in a small neighborhood of the set ${\rm mHorn}^{\rm trt}_{\lambda, \mu, \nu}$.

\section{Planar networks}\label{sec:planar}

\subsection{Preliminaries}

In this section, we recall the notion of planar networks, more details can be found in \cite{APS-planar}.

A \emph{planar network} is a planar graph embedded in a strip $1\leqslant y\leqslant n$, $0\leqslant x\leqslant r$ such that its edges are never vertical. A planar network is naturally oriented from left to right. A planar network is of \emph{rank}  $n$ if it has $n$ sources on the line $x=0$ and $n$ sinks on the line $x=r$. We will enumerate the sources and the sinks by the numbers $\{1,\ldots, n\}$ from bottom to top.

\begin{figure}[H]
    \centering
    \begin{tikzpicture}
    \draw (0,0)[red] node[black,left] {3} -- (4,0) node [black,anchor=south east]{$d$};
    \draw (0,-1) node[left] {2}  -- (2,-1);
    \draw[red] (2,-1) -- (4,-1) node [black,anchor=south east]{$e$};
    \draw[red] (0,-2) node[black, left] {1} -- (1.5,-2);
    \draw (1.5,-2) -- (4,-2) node [anchor=south east]{$f$};
    \draw (0.5,0) -- (1, -1) node [above, yshift=7]{$a$};
    \draw (2,0) -- (2.5, -1) node [above, yshift=7]{$c$};
    \draw[red] (2,-1) -- (1.5, -2) node [black,above, yshift=7]{$b$};
\end{tikzpicture}
    \caption{A planar network of rank 3.
    In red is a 2-multipath of weight $(b\cdot e)\cdot d$.}
\end{figure}
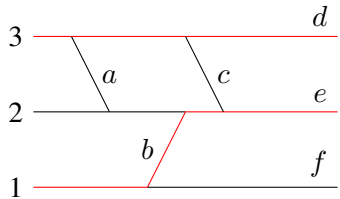

A crucial role will be played by
\begin{definition}
   For $\alpha\in\N$ an \emph{$\alpha$-multipath} in a planar network $\Pi$ is a collection of $\alpha$ non-intersecting oriented (by the planar network orientation) paths whose sources and sinks are among those of $\Pi$. We write $s(p)=I, t(p)=J$ or $p\colon I\to J$ if a multipath $p$ connects the sources labeled by $I$ to the sinks labeled by $J$. The set of all $\alpha$-multipaths in $\Pi$ is denoted by $P_\alpha(\Pi)$.
\end{definition}

Let $(R,\boxplus,\mathbf{0},\boxtimes,\mathbf{1})$ be a semiring and $\Pi$ a planar network. Denote by $\edge(\Pi)$ the set of all edges of $\Pi$. An \emph{$R$-weighting} of $\Pi$ is an assignment $w\colon \edge(\Pi)\to R$. The values $w(e)$ are called \emph{the weights}. Denote by $R^\Pi$ the set of all $R$-weightings of $\Pi$. In this paper $R$ will be $\R_{\geqslant 0}$ or $\C$ with operations $+, \times$, or the tropical numbers $\T=\R\cup\{-\infty\}$ with the operations $\boxplus=\max$ (identity $-\infty$) and $\boxtimes=+$ (identity 0). 

Given a weighted planar network $(\Pi,w)$, we define a \emph{weight} of a multipath $p$ by $w(p):=\prod_{e\in p}w(e)$. To simplify the notation, if the weight of an edge is $\mathbf{1}$, we will omit the label (as in the picture above).

For any two weighted planar networks ($\Pi_1, w_1$) and ($\Pi_2, w_2$) of rank $n$, we denote their \textit{concatenation} and the corresponding weighting by ($\Pi_1\circ \Pi_2, w_1\circ w_2$). We will also call the resulting weighted network a \textit{product} of $\Pi_1$ and $\Pi_2$.

As an analogue of a multipath in one planar network, we introduce the following notion.
\begin{definition}
    For a $k$-tuple of weighted planar networks $(\Pi_i, w_i)$ of rank $n$ and a $k$-tuple of non-negative integers $\underline{\alpha}=(\alpha_1,\ldots,\alpha_k)$, an \emph{$\underline{\alpha}$-multipath} $p=(p_1,\ldots,p_k)$ in $(\Pi_1\circ\cdots\circ\Pi_k)$ is a collection of pairwise non-intersecting multipaths $p_i\in P_{\alpha_i}(\Pi_1\circ\cdots\circ\Pi_i)$. The set of all $\underline{\alpha}$ multipaths is denoted by $P_{\underline{\alpha}}(\Pi_1\circ\cdots\circ\Pi_k)$ or  $P_{\underline{\alpha}}$ for simplicity. The \emph{weight} of $p\in P_{\underline{\alpha}}$ is given by $w(p) = \prod w_i(p_i)$. The {\em sinks} of such a multipath is the tuple of sets $(sinks(p_1),\ldots, sinks(p_k))$.
\end{definition}
Notice that $P_{\underline{\alpha}}(\Pi_1\circ\cdots\circ\Pi_k)$ is nonempty only if $0\leqslant\alpha_i\leqslant n$ and $0\leqslant\sum \alpha_i\leqslant n$.

\begin{figure}[H]
    \centering
    \begin{tikzpicture}
    \draw (0,0)[red] node[black,left] {3} -- (4,0) node [black,anchor=south east]{$d_1$};
    \draw (0,-1) node[left] {2}  -- (2,-1);
    \draw[red] (2,-1) -- (4,-1) node [black,anchor=south east]{$e_1$};
    \draw[red] (0,-2) node[black, left] {1} -- (1.5,-2);
    \draw (1.5,-2) -- (4,-2) node [anchor=south east]{$f_1$};
    \draw (0.5,0) -- (1, -1) node [above, yshift=7]{$a_1$};
    \draw (2,0) -- (2.5, -1) node [above, yshift=7]{$c_1$};
    \draw[red] (2,-1) -- (1.5, -2) node [black,above, yshift=7]{$b_1$};

    \draw (4,.3) -- (4,-2.3);
    \begin{scope}[xshift=4cm]
    \draw (0,0) -- (1.5,0); 
    \draw[red] (1.5,0) -- (4,0) node [black,anchor=south east]{$d_2$};
    \draw (0,-1)[red]  -- (1,-1);
    \draw (1,-1) -- (4,-1) node [black,anchor=south east]{$e_2$};
    \draw (0,-2) -- (1.5,-2);
    \draw (1.5,-2) -- (4,-2) node [anchor=south east]{$f_2$};
    \draw[red] (1.5,0) -- (1, -1) node [black,above, yshift=7]{$a_2$};
    \draw (2.5,0) -- (3, -1) node [above, yshift=7]{$c_2$};
    \draw (2,-1) -- (2.5, -2) node [black,above, yshift=7]{$b_2$};
    \end{scope}
\end{tikzpicture}
    \caption{A concatenation of two planar networks. In red is a (1,1)-multipath of weight $b_1\cdot e_1\cdot a_2\cdot d_2\cdot d_1$.}
\end{figure}

\subsection{Tropical multiple Horn  problem} 

Let $(\Pi, w)$ be a $\T$-weighted planar network of rank $n$.  
\begin{definition}
    The \emph{tropical singular values} $(\la_1,\ldots, \la_n)$ of $(\Pi, w)$ are defined by \[\la_1+\cdots+\la_l = \max_{p\in P_l(\Pi)} w(p),\]
     where we set $\max_{p\in P_l(\Pi)} = -\infty$ if $P_l(\Pi)$ is empty.
\end{definition}

\begin{definition}\label{definition: map m}
    Let $(\Pi_i, w_i)$ be a $k$-tuple of $\T$-weighted planar networks of rank $n$ such that $P_n(\Pi_i)\neq\emptyset$. Given a $k$-tuple of integers $\underline{\alpha}$ satisfying $0\leqslant\alpha_i\leqslant n$ and $0\leqslant\sum \alpha_i\leqslant n$ we define the following functions on $\prod_{i=1}^k\T^{\Pi_i}$:
\begin{equation}\label{equation: m_alpha tropical}
    m_{\underline{\alpha}}(w_1,\ldots , w_k) := \max_{p\in P_{\underline{\alpha}}}w(p),
\end{equation}
where we set $m_{0,\ldots, 0}=0$, and $m_{\underline{\alpha}}=-\infty$ if $P_{\underline{\alpha}}$ is empty. 
The set of all such $\alpha$'s can be identified with the set of integer points of the $k$-dimensional simplex $\{x \mid  0\leqslant x_i\leqslant n \text{ for } i=1,\ldots, k, 0\leqslant \sum_1^k x_i \leqslant n\}$. Denote this set by $\Delta^k(n)$. Denote by $\T^{\Delta^k(n)}$ the set of $\Delta^k(n)$-tuples of tropical numbers.
Let
\begin{equation}\label{equation: the map m}
    m: \prod_{i=1}^k \T^{\Pi_i} \rightarrow \T^{\Delta^k(n)}
\end{equation}
 be a map whose components are $m_{\underline{\alpha}}$. Pictorially this is represented as follows:
\begin{center}
\begin{tikzpicture}[scale=.9]
        \tikzmath{ 
            \l1=0; \l2=0; 
            \r1=4; \r2=.5;
            \u1=2; \u2=2.5;
            \d1=2.5; \d2=-1;}
        \draw (\u1,\u2) node[above] {$ m_{000}$} -- (\l1,\l2) node[left] {$ m_{200}$} -- (\d1,\d2) node[below right] {$ m_{020}$} -- (\r1,\r2) node[right] {$ m_{002}$}  -- cycle;
        \draw[dashed] (\l1,\l2) -- (\r1,\r2);
        \draw (\u1,\u2) -- (\d1,\d2);
        \fill(.5*\d1+.5*\u1, .5*\d2+.5*\u2) circle (2pt) node[right] {$m_{010}$};
        \fill(.5*\l1+.5*\r1, .5*\l2+.5*\r2) circle (2pt) node[above,xshift=-7] {$m_{101}$};
        \fill(.5*\l1+.5*\u1, .5*\l2+.5*\u2) circle (2pt) node[left] {$m_{100}$};
        \fill(.5*\r1+.5*\d1, .5*\r2+.5*\d2) circle (2pt) node[right] {$m_{011}$};
        \fill(.5*\r1+.5*\u1, .5*\r2+.5*\u2) circle (2pt) node[right] {$m_{001}$};
        \fill(.5*\l1+.5*\d1, .5*\l2+.5*\d2) circle (2pt) node[below] {$m_{110}$};
        \draw[->] (\l1,\l2) -- (1.3*\l1-.3*\u1,1.3*\l2-.3*\u2) node[below] {$x_1$};
        \draw[->] (\d1,\d2) -- (1.23*\d1-.23*\u1,1.23*\d2-.23*\u2) node[below] {$x_2$};
        \draw[->] (\r1,\r2) -- (1.4*\r1-.4*\u1,1.4*\r2-.4*\u2) node[below] {$x_3$};
\end{tikzpicture}
\end{center}
\end{definition}

\begin{example}\label{example: planar network and the map m}
    Here is an example of a triple of $\T$-weighted planar networks and the result of applying the function $m$.
\begin{center}
\begin{tikzpicture}
\begin{scope}[yshift=1cm,scale=.6]
    \draw (0,0) -- (4,0) -- (8,0) -- (12,0);
    \draw (0,-2) -- (4,-2) -- (8,-2) -- (12,-2);
    \draw (4,.2) -- (4,-2.2);
    \draw (8,.2) -- (8,-2.2);
    \draw (1.5,0) -- (2.5,-2);
    \draw (5.5,0) -- (6.5,-2);
    \draw (9.5,0) -- (10.5,-2);
    \node[above] at (1,-2) {2};
    \node[above] at (2.2,-1.2) {1};
    \node[above] at (3,0) {1};
    
    \node[above] at (5,-2) {1};
    \node[above] at (6.3,-1.2) {-1};
    \node[above] at (7,0) {1};
    
    \node[above] at (9,-2) {1};
    \node[above] at (10.2,-1.2) {2};
    \node[above] at (11,0) {-1};

    \node at (14,-1)[font=\Large]{$\xrightarrow{m}$};
\end{scope}

\begin{scope}[xshift= 10cm,scale=.9]
        \tikzmath{ 
            \l1=0; \l2=0; 
            \r1=4; \r2=.5;
            \u1=2; \u2=2.5;
            \d1=2.5; \d2=-1;}
        \draw (\u1,\u2) node[above] {$ 0$} -- (\l1,\l2) node[left] {$ 3$} -- (\d1,\d2) node[below] {$ 5$} -- (\r1,\r2) node[right] {$ 5$}  -- cycle;
        \draw[dashed] (\l1,\l2) -- (\r1,\r2);
        \draw (\u1,\u2) -- (\d1,\d2);
        \fill(.5*\d1+.5*\u1, .5*\d2+.5*\u2) circle (2pt) node[right] {$3$};
        \fill(.5*\l1+.5*\r1, .5*\l2+.5*\r2) circle (2pt) node[above,xshift=-7] {$6$};
        \fill(.5*\l1+.5*\u1, .5*\l2+.5*\u2) circle (2pt) node[left] {$2$};
        \fill(.5*\r1+.5*\d1, .5*\r2+.5*\d2) circle (2pt) node[right] {$7$};
        \fill(.5*\r1+.5*\u1, .5*\r2+.5*\u2) circle (2pt) node[right] {$4$};
        \fill(.5*\l1+.5*\d1, .5*\l2+.5*\d2) circle (2pt) node[below] {$4$};
\end{scope}
\end{tikzpicture}
\end{center}
\end{example}

Notice that $m_{l,0,\ldots, 0}=\la_1(\Pi_1)+\cdots+\la_l(\Pi_1)$, the sum of the tropical singular values of $(\Pi_1,w_1)$. Similarly we have the following expression for the tropical singular values of $\Pi_i$ for $i\geqslant 2$:
\[m_{0^{i-2},n-l,l, 0^{k-i}} = \sum_{s=1}^{i-1}\sum_{r=1}^n\la_r(\Pi_s)+\sum_{r=1}^l\la_r(\Pi_i),\]
and for the tropical singular values of  $\Pi_i\circ\Pi_{i+1}\circ\cdots\circ\Pi_{i+j}$: 
\[m_{0^{i-2}, n-l, 0^j, l,  0^{k-i-j}} = \sum_{s=1}^{i-1}\sum_{r=1}^n\la_r(\Pi_s)+\sum_{r=1}^l\la_r(\Pi_i\circ\cdots\circ\Pi_{i+j}),\]
where the l.h.s. should be interpreted as $m_{0^j,l,0^{k-1-j}}$ if $i=1$.

\begin{figure}[H]
\centering
    \begin{subfigure}[t]{0.6\textwidth}
    \centering
        \begin{tikzpicture}[scale=1.1]
        \tikzmath{
            \l1=-2; \l2=-2.5; 
            \r1=2; \r2=-2;
            \u1=0; \u2=0;
            \d1=.25; \d2=-3.5;}
        \draw (\l1,\l2) -- (\u1,\u2) -- (\r1,\r2) -- (\d1,\d2) -- cycle;
        \draw[dashed] (\l1,\l2) -- (\r1,\r2);
        \draw (\u1,\u2) -- (\d1,\d2);
        
        \tikzmath{\k=.33333;
            \v1=\k*(\l1-\u1); \v2=\k*(\l2-\u2);
            \vv1=\k*(\d1-\u1); \vv2=\k*(\d2-\u2);
            \vvv1=\k*(\r1-\u1); \vvv2=\k*(\r2-\u2);}
            
        \foreach \z/\x in {1/0, 2/0, 1/1, 2/1, 1/2} \draw[black!30] (\z*\vvv1+\x*\v1,\z*\vvv2+\x*\v2) -- (\z*\vv1+\x*\v1,\z*\vv2+\x*\v2);
        \foreach \x/\y in {1/0, 2/0, 1/1, 2/1, 1/2} \draw[black!30] (\x*\v1+\y*\vv1,\x*\v2+\y*\vv2) -- (\x*\vvv1+\y*\vv1,\x*\vvv2+\y*\vv2);
        \foreach \y/\z in {1/0, 2/0, 1/1, 2/1, 1/2} \draw[black!30] (\y*\vv1+\z*\vvv1,\y*\vv2+\z*\vvv2) -- (\y*\v1+\z*\vvv1,\y*\v2+\z*\vvv2);
        
        \foreach \x/\y/\z in {0/1/2, 0/2/1, 1/0/2, 1/1/1, 2/0/1} \draw[black!30] (\x*\v1+\y*\vv1,\x*\v2+\y*\vv2) -- (\x*\v1+\y*\vv1+\z*\vvv1,\x*\v2+\y*\vv2+\z*\vvv2);
        \foreach \x/\y/\z in {1/2/0,2/1/0, 0/2/1, 1/1/1,0/1/2} \draw[black!30] (\x*\v1+\z*\vvv1,\x*\v2+\z*\vvv2) -- (\x*\v1+\y*\vv1+\z*\vvv1,\x*\v2+\y*\vv2+\z*\vvv2);
        \foreach \x/\y/\z in {2/0/1,1/0/2,2/1/0,1/1/1,1/2/0} \draw[black!30] (\y*\vv1+\z*\vvv1,\y*\vv2+\z*\vvv2) -- (\x*\v1+\y*\vv1+\z*\vvv1,\x*\v2+\y*\vv2+\z*\vvv2);

        \tikzmath{\x=0;\y=0;\z=0;}
            \fill (\x*\v1+\y*\vv1+\z*\vvv1,\x*\v2+\y*\vv2+\z*\vvv2) circle(1.3pt) node[left] {0};
        
        \tikzmath{\x=1;\y=0;\z=0;}
            \fill (\x*\v1+\y*\vv1+\z*\vvv1,\x*\v2+\y*\vv2+\z*\vvv2) circle(1.3pt) node[left] {$\scriptstyle\la_1$};
        \tikzmath{\x=0;\y=1;\z=0;}
            \fill (\x*\v1+\y*\vv1+\z*\vvv1,\x*\v2+\y*\vv2+\z*\vvv2) circle(1.3pt);
        \tikzmath{\x=0;\y=0;\z=1;}
            \fill (\x*\v1+\y*\vv1+\z*\vvv1,\x*\v2+\y*\vv2+\z*\vvv2) circle(1.3pt);
        
        \tikzmath{\x=2;\y=0;\z=0;}
            \fill (\x*\v1+\y*\vv1+\z*\vvv1,\x*\v2+\y*\vv2+\z*\vvv2) circle(1.3pt) node[left] {$\scriptstyle\la_1+\la_2$};
        \tikzmath{\x=1;\y=1;\z=0;}
            \fill (\x*\v1+\y*\vv1+\z*\vvv1,\x*\v2+\y*\vv2+\z*\vvv2) circle(1.3pt);
        \tikzmath{\x=0;\y=2;\z=0;}
            \fill (\x*\v1+\y*\vv1+\z*\vvv1,\x*\v2+\y*\vv2+\z*\vvv2) circle(1.3pt);
        \tikzmath{\x=1;\y=0;\z=1;}
            \fill (\x*\v1+\y*\vv1+\z*\vvv1,\x*\v2+\y*\vv2+\z*\vvv2) circle(1.3pt);
        \tikzmath{\x=0;\y=1;\z=1;}
            \fill (\x*\v1+\y*\vv1+\z*\vvv1,\x*\v2+\y*\vv2+\z*\vvv2) circle(1.3pt);
        \tikzmath{\x=0;\y=0;\z=2;}
            \fill (\x*\v1+\y*\vv1+\z*\vvv1,\x*\v2+\y*\vv2+\z*\vvv2) circle(1.3pt);
        
        \tikzmath{\x=3;\y=0;\z=0;}
            \fill (\x*\v1+\y*\vv1+\z*\vvv1,\x*\v2+\y*\vv2+\z*\vvv2) circle(1.3pt) node[left] {$\scriptstyle\sum\la_i$};
        \tikzmath{\x=2;\y=1;\z=0;}
            \fill (\x*\v1+\y*\vv1+\z*\vvv1,\x*\v2+\y*\vv2+\z*\vvv2) circle(1.3pt) node[below left] {$\scriptstyle\sum\la_i+\mu_1$};
        \tikzmath{\x=1;\y=2;\z=0;}
            \fill (\x*\v1+\y*\vv1+\z*\vvv1,\x*\v2+\y*\vv2+\z*\vvv2) circle(1.3pt) node[below left,yshift=-3,xshift=15] {$\scriptstyle\sum\la_i+\mu_1+\mu_2$};
        \tikzmath{\x=0;\y=3;\z=0;}
            \fill (\x*\v1+\y*\vv1+\z*\vvv1,\x*\v2+\y*\vv2+\z*\vvv2) circle(1.3pt) node[below, yshift=-4] {$\scriptstyle\sum\la_i+\sum\mu_i$};

        \tikzmath{\x=2;\y=0;\z=1;}
            \fill (\x*\v1+\y*\vv1+\z*\vvv1,\x*\v2+\y*\vv2+\z*\vvv2) circle(1.3pt);
        \tikzmath{\x=1;\y=1;\z=1;}
            \fill (\x*\v1+\y*\vv1+\z*\vvv1,\x*\v2+\y*\vv2+\z*\vvv2) circle(1.3pt);
        \tikzmath{\x=0;\y=2;\z=1;}
        \fill (\x*\v1+\y*\vv1+\z*\vvv1,\x*\v2+\y*\vv2+\z*\vvv2) circle(1.3pt) node[below right] {$\scriptstyle\cdots+\nu_1$};;
        \tikzmath{\x=1;\y=0;\z=2;}
        \fill (\x*\v1+\y*\vv1+\z*\vvv1,\x*\v2+\y*\vv2+\z*\vvv2) circle(1.3pt);
        \tikzmath{\x=0;\y=1;\z=2;}
        \fill (\x*\v1+\y*\vv1+\z*\vvv1,\x*\v2+\y*\vv2+\z*\vvv2) circle(1.3pt) node[below right] {$\scriptstyle\cdots+\nu_1+\nu_2$};
        \tikzmath{\x=0;\y=0;\z=3;}
        \fill (\x*\v1+\y*\vv1+\z*\vvv1,\x*\v2+\y*\vv2+\z*\vvv2) circle(1.3pt) node[right] {$\scriptstyle\cdots+\sum\nu_i$};
    \end{tikzpicture}
        \caption{$\Delta^3(3)$. $\la$, $\mu$, $\nu$ denote the tropical singular values of $\Pi_1$, $\Pi_2$, $\Pi_3$ respectively.}
        \label{picture tetrahedron with tsv on the edges}
     \end{subfigure}
    \hspace{2em}
    \begin{subfigure}[t]{0.3\textwidth}
    \centering
    \begin{tikzpicture}[scale=0.9]
        \tikzmath{ 
            \l1=0; \l2=0; 
            \r1=4; \r2=.5;
            \u1=2; \u2=2.5;
            \d1=2.5; \d2=-1;} 
        \draw[-{Stealth}________________________________________]  (\u1,\u2) -- (\l1,\l2);
        \node[left] at (.5*\l1+.5*\u1, .5*\l2+.5*\u2) {$\la(1)$};
        
        \draw[-{Stealth}____________________________________________]  (\u1,\u2) -- (\d1,\d2); 
        \node[right] at (.5*\d1+.5*\u1, .5*\d2+.5*\u2) {$\la(12)$};
        
        \draw[-{Stealth}____________________________]  (\u1,\u2) -- (\r1,\r2);
        \node[right,xshift=3] at (.5*\r1+.5*\u1, .5*\r2+.5*\u2) {$\la(123)$};
        
        \draw[-{Stealth}_______________________________]  (\l1,\l2) -- (\d1,\d2);
        \node[below] at (.5*\l1+.5*\d1, .5*\l2+.5*\d2) {$\la(2)$};
        
        \draw[-{Stealth}___________________________________]  (\d1,\d2) -- (\r1,\r2);
        \node[below,xshift=5] at (.5*\d1+.5*\r1, .5*\d2+.5*\r2) {$\la(3)$};
        
        \draw[dashed, -{Stealth}_______________________________________________________]  (\l1,\l2) -- (\r1,\r2);
        \node[below,xshift=-3] at (.5*\r1+.5*\l1, .5*\r2+.5*\l2) {$\la(23)$};    
    \end{tikzpicture}
    \caption{$\la(1)$ denotes the tropical singular values of $\Pi_1$; $\la(12)$,  of $\Pi_{12}$, etc.}\label{picture: edges of simplex and tsv of products}
\end{subfigure}\caption{}
\end{figure}

Thus we can extract the tropical singular values of a product of the form $\Pi_i\circ\Pi_{i+1}\circ\cdots\circ\Pi_{i+j}$ as consecutive differences of the numbers standing on a certain edge of the simplex (see Fig.\ref{picture: edges of simplex and tsv of products}). For example, the tropical singular values of $\Pi_2\circ\Pi_3$  from Example \ref{example: planar network and the map m} are $\la_1=6-3=3,\la_2=5-6=-1$.
Denote by $\partial\Delta^k(n)$ the set of integer points on the edges of the simplex, {\em i.e.},
\[\partial\Delta^k(n) = \Big(\bigcup_{i=1}^k\{x \mid x_j=0 \text{ for } j\neq i\}\cup \bigcup_{1\leqslant i<j\leqslant k}\{x \mid x_i+x_j=n, x_l=0 \text{ for } l\neq i, j\}\Big)\cap \Delta^k(n).\]
We define the map 
\begin{equation}\label{equation partial on R^tetrahedron}
\partial\colon \T^{\Delta^k(n)}\rightarrow \T^{\partial\Delta^k(n)},
\end{equation} extracting the values at the points of $\partial\Delta^k(n)$.

We are ready to formulate the tropical multiple Horn problem:
\begin{center}
\textit{Describe the image of the map} $\displaystyle{\partial\circ m:\prod_{i=1}^k \T^{\Pi_i} \rightarrow\T^{\partial\Delta^k(n)}}.$
\end{center}
In other words, it asks to define the possible collections of tropical singular values of the consecutive products $(\Pi_i\circ\Pi_{i+1}\circ\cdots\circ\Pi_{i+j}, w_i\circ\cdots\circ w_{i+j})$. 

\subsection{Results}

We restrict ourselves to the case of three planar networks. The proofs are given in the next section.

Consider the rhombus with vertices  $(0,0,0), (1,0,0), (1,1,0), (0,1,0)$. The set of the \emph{small rhombi} in a tetrahedron $\Delta^3(n)$ is obtained from this rhombus by the $S_4$-action on the tetrahedron and the translations. Each small rhombus has a long diagonal, which is the image of $(0,0,0), (1,1,0)$, and a short diagonal, which is the image of $(1,0,0), (0,1,0)$.

    \begin{center}
    \begin{tikzpicture}
        \draw (0,0) -- (.5,.866) --(1,0) -- (.5, -.866) -- cycle;
        \draw[black!40, ultra thin] (0,0) -- (1,0);
        \node[anchor=east] at (0,0) {$a_1$};
        \node[anchor=west] at (1,0) {$a_2$};
        \node[anchor=south] at (.5,.866) {$b_1$};
        \node[anchor=north] at (.5,-.866) {$b_2$};
        
        \begin{scope}[xshift=4cm, yshift=-1cm, scale=1.2]
            \tikzmath{ 
            \l1=0; \l2=0; 
            \r1=4; \r2=.5;
            \u1=2; \u2=2.5;
            \d1=2.5; \d2=-1;}
            \draw (\l1,\l2) -- (\u1,\u2) -- (\r1,\r2) -- (\d1,\d2) -- cycle;
            \draw[dashed] (\l1,\l2) -- (\r1,\r2);
            \draw (\u1,\u2) -- (\d1,\d2);
                        
            \tikzmath{\k=.2;
            \v1=\k*(\u1-\l1); \v2=\k*(\u2-\l2);
            \w1=\k*(\d1-\l1); \w2=\k*(\d2-\l2);
            \o1=\v1+\w1; \o2=\v2+\w2;}
            \draw (\o1,\o2) -- (\o1+\v1,\o2+\v2) -- (\o1+\v1+\w1,\o2+\v2+\w2) -- (\o1+\w1,\o2+\w2) -- cycle; 
            \draw[black!40, ultra thin] (\o1+\v1,\o2+\v2) --(\o1+\w1,\o2+\w2);

             \tikzmath{\k=.2;
            \v1=\k*(\r1-\l1); \v2=\k*(\r2-\l2);
            \w1=\k*(\d1-\l1); \w2=\k*(\d2-\l2);
            \o1=2*\v1+\w1; \o2=2*\v2+\w2;}
            \draw (\o1,\o2) -- (\o1+\v1,\o2+\v2) -- (\o1+\w1,\o2+\w2) -- (\o1-\v1+\w1,\o2-\v2+\w2)-- cycle; 
            \draw[black!40, ultra thin] (\o1,\o2) --(\o1+\w1,\o2+\w2);

            \tikzmath{\k=.2;
            \v1=\k*(\u1-\d1); \v2=\k*(\u2-\d2);
            \w1=\k*(\r1-\d1); \w2=\k*(\r2-\d2);
            \o1=\d1+2*\v1+\w1; \o2=\d2+2*\v2+\w2;}
            \draw (\o1,\o2) -- (\o1+\v1,\o2+\v2) -- (\o1+\w1,\o2+\w2) -- (\o1-\v1+\w1,\o2-\v2+\w2)-- cycle; 
            \draw[black!40, ultra thin] (\o1,\o2) --(\o1+\w1,\o2+\w2);
        \end{scope}
    \end{tikzpicture}
    \end{center}

\begin{theorem}\label{theorem PN rhombi inequalities}
    The image of $m$ satisfies the rhombus inequalities: for each small rhombus in $\Delta^3(n)$ the sum of the values on the short diagonal is greater or equal to the sum of the values on the long diagonal, {\em i.e.}, $a_1+a_2\geqslant b_1+b_2$ in the notations above.
\end{theorem}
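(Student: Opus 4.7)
The rhombus inequality $a_1 + a_2 \geqslant b_1 + b_2$ is a tropical log-concavity statement for the functions $m_{\underline{\alpha}}$, and the natural tool is a Lindstr\"om--Gessel--Viennot (LGV) style uncrossing argument carried out in the tropical semiring. The plan is: given a small rhombus, pick $\underline{\alpha}$-multipaths $p, q$ realizing the two long-diagonal weights $b_1, b_2$, look at their superposition $p \sqcup q$ inside the concatenation $\Pi_1 \circ \Pi_2 \circ \Pi_3$, and use swaps at intersection points to produce a new pair of multipaths of the types corresponding to the two short-diagonal vertices whose combined tropical weight equals $w(p) + w(q) = b_1 + b_2$. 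Since $a_1$ and $a_2$ are maxima over the respective types, the inequality $a_1 + a_2 \geqslant b_1 + b_2$ follows.

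It suffices to establish this in a short list of combinatorial cases. The small rhombi in $\Delta^3(n)$ are obtained from the standard rhombus $\{(0,0,0),(1,0,0),(1,1,0),(0,1,0)\}$ by translations and the $S_4$-action on the tetrahedron, so up to translation there are finitely many local types. When the rhombus lies inside a face of $\Delta^3(n)$ involving only two of the three planar networks, the statement reduces to the two-network rhombus inequality, which is already contained in \cite{APS-planar}. The genuinely new cases are those for which the rhombus involves strands from all three networks, and the main work is to carry out the tropical LGV argument in the presence of the nested non-intersection condition that defines $\underline{\alpha}$-multipaths.

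Concretely, the uncrossing is done slice by slice: inside each strip $\Pi_r$, the strands contributed by the different layers $p_i$ and $q_i$ give a union of paths whose intersections may be resolved by local swaps, each of which preserves the total tropical weight (since weights of edges multiply and the same edges appear on either side of the swap). After finishing all resolutions, one re-reads the slice-by-slice strand counts, and verifies that the two resulting disjoint multipaths have types matching the two short-diagonal vertices of the rhombus. The main obstacle I expect is the bookkeeping: a swap inside a strip $\Pi_r$ rewires sink labels at an internal interface, and one must check that the two outputs are still of the nested form $p = (p_1, \ldots, p_k)$ with components pairwise non-intersecting not only inside $\Pi_r$ but across the entire concatenation. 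This is a standard feature of LGV for concatenated networks, but the nested indexing and the need to identify the resulting strand counts with the specific short-diagonal vertices of each $S_4$-orbit of rhombi requires careful case analysis.
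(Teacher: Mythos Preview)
Your overall strategy---take a maximal pair $(p,q)$ realizing the long diagonal, overlay them, and rearrange into a pair realizing the short diagonal with the same total weight---is exactly the paper's strategy, and is correct in spirit. However, the mechanism you describe (local swaps at intersection points, carried out slice by slice) is not how the paper proceeds, and I do not think it would close cleanly. The difficulty is the one you already flag: after a sequence of local swaps you must end with two objects that are still valid $\underline{\alpha}$-multipaths (pairwise non-intersecting, with the nested support $p_i \subset \Pi_1 \circ \cdots \circ \Pi_i$) \emph{and} whose types are precisely the two short-diagonal vertices $\gamma,\delta$. Local swaps do not control the global type; you would need an invariant telling you that after all swaps the strand counts come out exactly right, and you have not supplied one.

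The paper resolves this with a different, more global construction. Given $(p_1,p_2)\in P_\alpha\times P_\beta$ it forms the \emph{canonical path decomposition} $\Theta(p_1,p_2)$ of the disjoint edge-union $p_1\sqcup p_2$: an equivalence relation on edges whose classes are unoriented paths and cycles. Reversing orientation on the second multipath turns each component into an oriented path; conversely, choosing an orientation of each component of $\Theta$ determines a decomposition into a new ordered pair $(p_+,p_-)$ of multipaths with the same total weight. The key lemma is that if $(p_+,p_-)\in P_{\alpha'}\times P_{\beta'}$ then $\alpha'_i-\beta'_i=\sum_j Q_{ji}-\sum_j Q_{ij}$, where $Q_{ij}$ counts essential components oriented from vertical $i$ to vertical $j$. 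This reduces the problem to a pure graph-theory statement: on the auxiliary multigraph on vertices $0,1,2,3$ with $Q_{ij}+Q_{ji}$ edges, the parity of $\gamma-\delta$ forces exactly two odd-degree vertices, hence an Euler-style decomposition into cycles plus one open path, and orienting these gives the required $(Q'_{ji}-Q'_{ij})_i = \pm(\gamma-\delta)$. This handles all rhombi uniformly---no case split by face or by $S_4$-orbit is needed---and completely bypasses the bookkeeping you were worried about. If you want to repair your approach, the missing ingredient is precisely this component decomposition and the lemma linking types to the in/out balance at each vertical.
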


Consider the tetrahedron with vertices $(0,0,0), (2,0,0), (0,2,0), (0,0,2)$. The set of the \emph{small tetrahedra} is obtained from it by translations. Each small tetrahedron has three pairs of the opposite edges:

\begin{figure}[H]
    \centering
\begin{tikzpicture}[scale=.9]
        \tikzmath{ 
            \l1=0; \l2=0; 
            \r1=4; \r2=.5;
            \u1=2; \u2=2.5;
            \d1=2.5; \d2=-1;}
        \draw (\u1,\u2) node[above] {$\scriptstyle 000$} -- (\l1,\l2) node[left] {$\scriptstyle 200$} -- (\d1,\d2) node[below] {$\scriptstyle 020$} -- (\r1,\r2) node[right] {$\scriptstyle 002$}  -- cycle;
        \draw[dashed] (\l1,\l2) -- (\r1,\r2);
        \draw (\u1,\u2) -- (\d1,\d2);
        \fill(.5*\d1+.5*\u1, .5*\d2+.5*\u2) circle (2pt) node[right] {$a_1$};
        \fill(.5*\l1+.5*\r1, .5*\l2+.5*\r2) circle (2pt) node[above] {$a_2$};
        \fill(.5*\l1+.5*\u1, .5*\l2+.5*\u2) circle (2pt) node[left] {$b_1$};
        \fill(.5*\r1+.5*\d1, .5*\r2+.5*\d2) circle (2pt) node[right] {$b_2$};
        \fill(.5*\r1+.5*\u1, .5*\r2+.5*\u2) circle (2pt) node[right] {$c_1$};
        \fill(.5*\l1+.5*\d1, .5*\l2+.5*\d2) circle (2pt) node[below] {$c_2$};
\end{tikzpicture}
\caption{Small tetrahedron.}\label{picture small tetrahedron}
\end{figure}
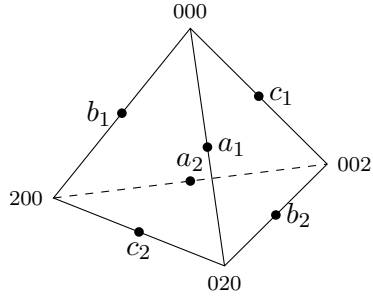

\begin{theorem}\label{theorem PN tetrahedra equalities}
    The image of $m$ satisfies the tropical tetrahedra equalities: the image of $m$ lies in a non-smooth locus of the function
    \[\max\{m_{i,j,k+1}+m_{i+1,j+1,k}, m_{i,j+1,k}+m_{i+1,j,k+1}, m_{i+1,j,k}+m_{i,j+1,k+1}\}.\]
\end{theorem}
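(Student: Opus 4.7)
The goal is to show that at every point in the image of $m$, the maximum of the three expressions
$\alpha=m_{i,j,k+1}+m_{i+1,j+1,k}$, $\beta=m_{i,j+1,k}+m_{i+1,j,k+1}$, and $\gamma=m_{i+1,j,k}+m_{i,j+1,k+1}$
is attained by at least two of them. My plan is to derive this as the tropicalization of a three-term Plücker identity satisfied by the multi-corner minors $M_{i,j,k}$ introduced in the geometric part of the paper.

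The first step is to interpret $m_{i,j,k}$ as the tropicalization $M_{i,j,k}^t$ evaluated on the totally nonnegative matrices $A_r$ produced from $(\Pi_r,w_r)$ by Lindström–Gessel–Viennot. Each ordinary minor of a product $A_{r_1}A_{r_1+1}\cdots A_{r_2}$ is the positive generating function of noncrossing multipaths in $\Pi_{r_1}\circ\cdots\circ\Pi_{r_2}$; substituting this into the sum defining $M_{i,j,k}$ and telescoping over the intermediate index sets $L_1,L_2$ exhibits $M_{i,j,k}(A_1,A_2,A_3)$ as a positive sum of weights of $(i,j,k)$-multipath configurations in $\Pi_1\circ\Pi_2\circ\Pi_3$. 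Positivity then forces $M_{i,j,k}^t=m_{i,j,k}$.

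The second step is a three-term Plücker-type identity of the shape
\[
M_{i,j+1,k}\,M_{i+1,j,k+1} \;=\; M_{i,j,k+1}\,M_{i+1,j+1,k} \;+\; M_{i+1,j,k}\,M_{i,j+1,k+1},
\]
with all coefficients $+1$. I would obtain it by realizing each $M_{i,j,k}(A_1,A_2,A_3)$ as a single corner minor of a block matrix built out of $A_1,A_2,A_3$ with $-\mathrm{Id}$ off-diagonal blocks (so that iterated Cauchy–Binet recovers the summation over $L_1,L_2$); the standard three-term Plücker relation for three adjacent corner minors of a $\GL_N$ matrix then specializes to the claimed identity. Because every coefficient is $+1$, no sign cancellations occur and the identity tropicalizes to $\beta=\max(\alpha,\gamma)$. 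This forces $\beta$ to be the maximum of $\{\alpha,\beta,\gamma\}$ and to coincide with at least one of $\alpha,\gamma$, so the triple lies in the non-smooth locus of the max, as required.

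The principal obstacle is the combinatorial bookkeeping in the second step: one must choose the row and column sets of the corner minors in the block matrix so that they correspond precisely to the multi-indices $(i,j,k+1)$, $(i+1,j+1,k)$, and so on, and the signs of the three products have to match up so that tropicalization is straightforward. A purely combinatorial alternative through a Lindström–Gessel–Viennot sign-reversing involution applied to pairs of multipath systems realizing each product is possible but heavier on case analysis, since crossings between paths of two distinct systems can redistribute the paths across the three depths of $\Pi_1\circ\Pi_2\circ\Pi_3$.
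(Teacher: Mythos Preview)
Your approach has a genuine gap in Step~1. The identification $M_{i,j,k}^t=m_{i,j,k}$ does \emph{not} hold for arbitrary weightings; it holds only under the Gelfand--Zeitlin conditions. The multi-corner minor $M_{i,j,k}$ is not a positive sum over \emph{all} $(i,j,k)$-multipaths: by its definition (via Proposition~\ref{pro:expofM}), the sources are fixed to $[1,i+j+k]^{\rm op}$ and the sinks to $[1,i],[1,j],[1,k]$. Its tropicalization is therefore the maximum weight over this restricted family, which is $\leqslant m_{i,j,k}$ in general, with equality precisely when each $(\Pi_r,w_r)$ satisfies the GZ condition (this is the content of Theorem~\ref{thm:bk=gz} and Lemma~\ref{lemma A(w) in GZ implies multi-GZ condition on planar networks}). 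The correct detropicalization of $m_{\underline\alpha}$ for arbitrary weightings is $\widetilde M_{\underline\alpha}$ (Proposition~\ref{pro:detropofma}), which sums over all column sets $J$ with the prescribed block sizes, and these $\widetilde M_{\underline\alpha}$ do not satisfy the three-term Pl\"ucker relation.

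Consequently, your argument actually proves the stronger octahedron recurrence $\beta=\max\{\alpha,\gamma\}$, but only on the GZ locus---that is Theorem~\ref{theorem PN octahedron recurrence} (and indeed the paper gives exactly your Pl\"ucker argument as an alternative proof of that theorem, see Proposition~\ref{pro:geoocttotropoct}). Theorem~\ref{theorem PN tetrahedra equalities}, however, is asserted for \emph{all} weightings of \emph{arbitrary} planar networks, where the octahedron recurrence can genuinely fail while the weaker tetrahedron equalities still hold. The paper's proof handles this general case combinatorially: given a maximal pair of multipaths realizing any one of the three sums, it reorients components of their canonical path decomposition to produce a pair realizing one of the other two sums with the same total weight, yielding the three cyclic inequalities $\alpha\leqslant\max\{\beta,\gamma\}$, etc. Your proposed ``combinatorial alternative'' via a sign-reversing involution is closer in spirit to what is actually needed, though the paper's reorientation argument avoids sign issues entirely.
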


In other words, if we denote the values on the edges of a small tetrahedron as in the Fig.\ref{picture small tetrahedron}, then the image of $m$ lies in the non-smooth locus of the function $\max\{a_1+a_2, b_1+b_2, c_1+c_2\}$.

\vspace{5pt}

For two integers $i\leqslant j$ denote by $[i,j]$ the set $\{i, i+1\ldots j-1, j\}$.
\begin{definition}\label{definition multi-GZ condition for PN}
We say that a weighting $w$ of a composite planar network $\Pi_1\circ\cdots\circ\Pi_k$ satisfies muli-Gelfand-Zeitlin condition if for any $\underline{\alpha}$ there is an $\underline{\alpha}$-multipath of maximal weight with sources $[n-\sum\alpha_i+1,n]$ and sinks $[1,\alpha_i]$. 
\end{definition}

\begin{theorem}\label{theorem PN octahedron recurrence}
    Suppose that the weightings $w_i\in \T^{\Pi_i}, i=1,2,3,$ are such that $(\Pi_i,w_i)$, $(\Pi_1\circ\Pi_2, w_1\circ w_2)$, $(\Pi_2\circ\Pi_3, w_2\circ w_3),$ $(\Pi_1\circ\Pi_2\circ\Pi_3, w_1\circ w_2\circ w_3)$ satisfy the multi-Gelfand-Zeitlin condition. Then the function $m$ of these weightings satisfies
\begin{equation}\label{equation: octahedron recurrence}
m_{i,j+1,k}+m_{i+1,j,k+1}=\max\{m_{i+1,j,k}+m_{i,j+1,k+1}, m_{i,j,k+1}+m_{i+1,j+1,k}\}.
\end{equation}
In other words, for any small tetrahedron, we have $a_1+a_2=\max\{b_1+b_2,c_1+c_2\}$ in the notations as in Fig.\ref{picture small tetrahedron}.
\end{theorem}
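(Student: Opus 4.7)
The plan is to reduce the claimed equality to a pair of one-sided inequalities and then establish those inequalities by a tropical multi-path exchange argument that exploits the multi-Gelfand–Zeitlin hypothesis. Denote the three opposite-edge pair-sums of a small tetrahedron by
\[
S_a = m_{i,j+1,k}+m_{i+1,j,k+1},\quad S_b = m_{i+1,j,k}+m_{i,j+1,k+1},\quad S_c = m_{i,j,k+1}+m_{i+1,j+1,k}.
\]
By Theorem \ref{theorem PN tetrahedra equalities}, $\max\{S_a,S_b,S_c\}$ is attained at least twice. Consequently \eqref{equation: octahedron recurrence} is equivalent to the two inequalities $S_a\geqslant S_b$ and $S_a\geqslant S_c$: once these are known, $S_a$ is the (tied) maximum and equals $\max\{S_b,S_c\}$.

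I would next invoke the multi-GZ hypothesis to choose very specific maximizers. By Definition \ref{definition multi-GZ condition for PN}, for each multi-index $\underline{\alpha}$ there is an $\underline{\alpha}$-multipath $p^{\underline\alpha}=(p^{\underline\alpha}_1,p^{\underline\alpha}_2,p^{\underline\alpha}_3)$ of weight $m_{\underline\alpha}$ whose combined source set in $\Pi_1$ is the top $\alpha_1+\alpha_2+\alpha_3$ labels and whose sink set in $\Pi_s$ is $[1,\alpha_s]$. To prove $S_a\geqslant S_b$, I would start from the pair $(p,q):=(p^{(i+1,j,k)},\, p^{(i,j+1,k+1)})$. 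The GZ sink data force the following overlap: $p_1$ terminates at sinks $[1,i+1]$ of $\Pi_1$ while $q_1$ terminates at sinks $[1,i]$; and $q_2$ terminates at sinks $[1,j+1]$ of $\Pi_2$ while $p_2$ terminates at $[1,j]$. Thus, when one views $p$ and $q$ simultaneously inside $\Pi_1\circ\Pi_2\circ\Pi_3$, the topmost thread of $p_1$ (which must leave $\Pi_1$ from row $i+1$) and the extra thread of $q_2$ (which must reach row $j+1$ of $\Pi_2$'s sinks) are forced to share at least one vertex. At such a vertex one performs the standard tropical swap: cut both paths and recombine the tails. The result is a new pair of multipaths of types $(i,j+1,k)$ and $(i+1,j,k+1)$, of total weight $w(p)+w(q)=S_b$, whence $S_a\geqslant S_b$.

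The inequality $S_a\geqslant S_c$ is proved by the symmetric construction, starting from $(p^{(i,j,k+1)},\,p^{(i+1,j+1,k)})$ and exchanging between the $p_2$ and $q_3$ components (or between $p_1$ and $q_2$), again using the GZ-prescribed sink sets to locate a forced intersection vertex. Combining both inequalities with the tetrahedron equality yields \eqref{equation: octahedron recurrence}.

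The main obstacle I anticipate is the rigorous execution of the swap: one must verify that, after recombination, the three components of each resulting multipath are still pairwise non-intersecting. This requires showing (i) that the forced intersection actually occurs within the correct component pair and in a position where swapping tails preserves the left-to-right consistency of the concatenation $\Pi_1\circ\Pi_2\circ\Pi_3$, and (ii) that the remaining components of $p$ and $q$, which are untouched by the swap, do not collide with the new rerouted paths. The GZ normalization, which packs all paths towards the bottom rows of each $\Pi_s$, is precisely what makes this crossing/uncrossing analysis tractable: it reduces the combinatorics to a layered nesting in which the swap operates on a single pair of crossing strands at a time.
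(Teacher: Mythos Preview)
Your reduction is the same as the paper's: from Theorem~\ref{theorem PN tetrahedra equalities} one has $S_a\leqslant\max\{S_b,S_c\}$, so it remains to prove $S_a\geqslant S_b$ and $S_a\geqslant S_c$, starting from GZ-normalised maximal pairs $(p,q)\in P_{(i,j+1,k+1)}\times P_{(i+1,j,k)}$ (respectively $P_{(i+1,j+1,k)}\times P_{(i,j,k+1)}$).

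The substantive gap is in your swap step. Your claim that ``the topmost thread of $p_1$ and the extra thread of $q_2$ are forced to share at least one vertex'' is exactly the point that is \emph{not} automatic, and your proposal gives no mechanism to verify it. The paper does not attempt a local tail-swap at all; instead it feeds the pair $(p,q)$ into the canonical path decomposition $\Theta(p,q)$ introduced for Theorems~\ref{theorem PN rhombi inequalities}--\ref{theorem PN tetrahedra equalities}. By Lemma~\ref{lemma alpha-beta = Q-Q} the flow vector across the verticals is $(-1,-1,1,1)$, leaving two cases: either (1) some essential component of $\Theta$ runs from vertical $1$ to vertical $2$, and simply reversing its orientation produces the desired pair in $P_{(i+1,j,k+1)}\times P_{(i,j+1,k)}$; or (2) the essential components are $0\!\to\!2$ and $1\!\to\!3$. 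Case~(2) is where the GZ hypothesis actually does work: the prescribed sink/source levels pin the endpoints of these two components so that a Jordan-curve style argument forces them to meet at an interior point --- which is impossible for distinct components of $\Theta$. Thus case~(2) cannot occur.

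So the missing idea is twofold. First, work with the canonical path decomposition rather than a local crossing; this sidesteps entirely your anticipated difficulty about non-intersection after recombination, since reorienting a component of $\Theta$ always yields a valid pair of multipaths. Second, the role of the GZ condition is not to guarantee a direct crossing of two specific threads, but to rule out the bad component pattern~(2) via a topological obstruction.
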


\begin{remark}\label{remark: octahedron recurrence and crystals}
    In the conditions of Theorem \ref{theorem PN octahedron recurrence}, the tetrahedron equalities give a recurrent functional relation between the values on a couple of faces corresponding to $\la(2),\la(3),\la(23)$ and $\la(1), \la(23),\la(123)$, and a couple corresponding to $\la(1),\la(2),\la(12)$ and $\la(12),\la(3),\la(123)$:
  \begin{center}
    \begin{tikzpicture}
    \begin{scope}[xshift=9cm]
        \tikzmath{
            \l1=-2; \l2=-2.5; 
            \r1=2; \r2=-2;
            \u1=0; \u2=0;
            \d1=.25; \d2=-3.5;}
        \fill[red!8] (\u1,\u2) -- (\r1,\r2) -- (\d1,\d2) -- cycle;
        \fill[red!15] (\u1,\u2) -- (\d1,\d2) -- (\l1,\l2) -- cycle;
        
        \draw[-{Stealth}________________________________________]  (\u1,\u2) -- (\l1,\l2);
        \node[left] at (.5*\l1+.5*\u1, .5*\l2+.5*\u2) {$\la(1)$};
        
        \draw[-{Stealth}____________________________________________]  (\u1,\u2) -- (\d1,\d2); 
        \node[right] at (.5*\d1+.5*\u1, .5*\d2+.5*\u2) {$\la(12)$};
        
        \draw[-{Stealth}____________________________]  (\u1,\u2) -- (\r1,\r2);
        \node[right,xshift=3] at (.5*\r1+.5*\u1, .5*\r2+.5*\u2) {$\la(123)$};
        
        \draw[-{Stealth}_______________________________]  (\l1,\l2) -- (\d1,\d2);
        \node[below] at (.5*\l1+.5*\d1, .5*\l2+.5*\d2) {$\la(2)$};
        
        \draw[-{Stealth}___________________________________]  (\d1,\d2) -- (\r1,\r2);
        \node[below,xshift=5] at (.5*\d1+.5*\r1, .5*\d2+.5*\r2) {$\la(3)$};
        
        \draw[dashed, -{Stealth}_______________________________________________________]  (\l1,\l2) -- (\r1,\r2);
        
        \end{scope}

            \tikzmath{
            \l1=-2; \l2=-2.5; 
            \r1=2; \r2=-2;
            \u1=0; \u2=0;
            \d1=.25; \d2=-3.5;}
                    \draw[->] (\r1+1.5,\r2) -- (\r1+3.5,\r2);
        
        \fill[red!8] (\u1,\u2) -- (\r1,\r2) -- (\l1,\l2) -- cycle;
        \fill[red!15] (\l1,\l2) -- (\d1,\d2) -- (\r1,\r2) -- cycle;
        
        \draw[-{Stealth}________________________________________]  (\u1,\u2) -- (\l1,\l2);
        \node[left] at (.5*\l1+.5*\u1, .5*\l2+.5*\u2) {$\la(1)$};
        
        \draw[-{Stealth}____________________________________________]  (\u1,\u2) -- (\d1,\d2);

        \draw[-{Stealth}____________________________]  (\u1,\u2) -- (\r1,\r2);
        \node[right,xshift=3] at (.5*\r1+.5*\u1, .5*\r2+.5*\u2) {$\la(123)$};
        
        \draw[-{Stealth}_______________________________]  (\l1,\l2) -- (\d1,\d2);
        \node[below] at (.5*\l1+.5*\d1, .5*\l2+.5*\d2) {$\la(2)$};
        
        \draw[-{Stealth}___________________________________]  (\d1,\d2) -- (\r1,\r2);
        \node[below,xshift=5] at (.5*\d1+.5*\r1, .5*\d2+.5*\r2) {$\la(3)$};
        
        \draw[dashed, -{Stealth}_______________________________________________________]  (\l1,\l2) -- (\r1,\r2);
        \node[below,xshift=-10] at (.5*\r1+.5*\l1, .5*\r2+.5*\l2) {$\la(23)$};  
        
    \end{tikzpicture}
  \end{center}  
    This is the octahedron recurrence of \cite{HK}, which implements the action of the crystal associator on the multiplicities of $B_{\la(123)}$ in $B_{\la(1)}\otimes (B_{\la(2)} \otimes B_{\la(3)}) \xrightarrow[]{\sim} (B_{\la(1)}\otimes B_{\la(2)}) \otimes B_{\la(3)}$. To see that these recurrences are the same, one writes $a_1=\max\{b_1+b_2, c_1+c_2\}-a_2$ in the notations of Theorem \ref{theorem PN octahedron recurrence}:

    \begin{center}
\begin{tikzpicture}[scale=.9]
        \tikzmath{ 
            \l1=0; \l2=0; 
            \r1=4; \r2=.5;
            \u1=2; \u2=2.5;
            \d1=2.5; \d2=-1;}
        \draw (\u1,\u2) -- (\l1,\l2) -- (\d1,\d2) -- (\r1,\r2) -- cycle;
        \draw[dashed] (\l1,\l2) -- (\r1,\r2);
        \draw (\u1,\u2) -- (\d1,\d2);
        \fill(.5*\d1+.5*\u1, .5*\d2+.5*\u2) circle (2pt);
        \draw[->__] (\r1,.5*\r2+.5*\u2) node[right] {$\max\{b_1+b_2, c_1+c_2\}-a_2$} -- (.5*\d1+.5*\u1, .5*\d2+.5*\u2);
        \fill(.5*\l1+.5*\r1, .5*\l2+.5*\r2) circle (2pt) node[below] {$a_2$};
        \fill(.5*\l1+.5*\u1, .5*\l2+.5*\u2) circle (2pt) node[left] {$b_1$};
        \fill(.5*\r1+.5*\d1, .5*\r2+.5*\d2) circle (2pt) node[right] {$b_2$};
        \fill(.5*\r1+.5*\u1, .5*\r2+.5*\u2) circle (2pt) node[right] {$c_1$};
        \fill(.5*\l1+.5*\d1, .5*\l2+.5*\d2) circle (2pt) node[below] {$c_2$};      
\end{tikzpicture}
\end{center}
 and compares it to the octahedron rule on the first page of \cite{HK}. See Example 4.1 in \cite{HK} for an algorithm.
\end{remark}

The next theorem is a converse to Theorems \ref{theorem PN rhombi inequalities} and \ref{theorem PN octahedron recurrence}. Before stating it let us introduce the following notion.
\begin{definition}
    The \emph{standard planar network} of rank $n$, denoted $\Pi_{\rm st}(n)$, is the following planar network:

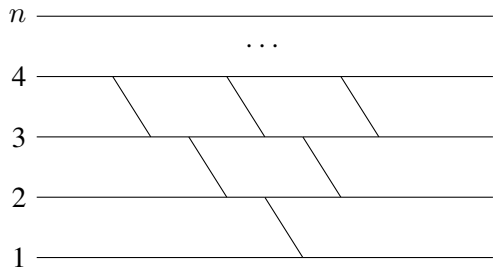
\begin{figure}[H]
    \centering
\begin{tikzpicture}[yscale=.8]
    \draw (0,1) node[left]{$n$} -- (6,1);
    \node at (3,0.5) {$\cdots$};
    \draw (0,0) node[left]{4} -- (6,0);
    \draw (0,-1) node[left]{3} -- (3.5,-1);
    \draw (3.5,-1) -- (6,-1);
    \draw (0,-2) node[left]{2} -- (3,-2);
    \draw (3,-2) -- (4,-2);
    \draw (4,-2) -- (6,-2);
    \draw (0,-3) node[left]{1} -- (3.5,-3);
    \draw (3.5,-3) -- (6,-3);
    \draw (1,0) -- (1.5,-1);
    \draw (2.5,0) -- (3,-1);
    \draw (4,0) -- (4.5,-1);
    \draw (2,-1) -- (2.5,-2);
    \draw (3.5,-1) -- (4,-2);
    \draw (3,-2) -- (3.5,-3);
\end{tikzpicture}
\caption{Standard planar network.}\label{fig:stplanar}
\end{figure}
\end{definition}

\begin{theorem}\label{theorem: m is onto the octahedron recurrence cone}
    Assume that $\Pi_i=\Pi_{\rm st}(n)$. Then $m$ maps the set of weightings of $\Pi_i$ satisfying the conditions of Theorem \ref{theorem PN octahedron recurrence} onto the cone defined by the rhombus inequalities and the octahedron recurrence equalities (\ref{equation: octahedron recurrence}).
\end{theorem}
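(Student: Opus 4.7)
The plan is to reduce the problem to two successive applications of the single-Horn surjectivity theorem for standard planar networks (cited as the $\Pi$-case of Theorem \ref{intro:thm_Horn}, from \cite{APS-planar}). The key observation, borrowed from Remark \ref{remark: octahedron recurrence and crystals} together with Theorem \ref{theorem PN octahedron recurrence}, is that any point of the target cone is uniquely reconstructed by the octahedron recurrence from its restriction to the pair of faces $(\lambda, \mu, \rho)$ (i.e., $k=0$) and $(\rho, \nu, \tau)$ (i.e., $i=0$), and on each of these faces the restriction is a classical hive.

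In the first stage, given data $\{m_{i,j,k}\}$ in the cone, I would restrict to the face $k=0$. This restriction satisfies the trace equality and the rhombus inequalities, so by single-Horn surjectivity for $\Pi_{\rm st}(n)$ there exist GZ weightings $w_1, w_2$ on $\Pi_{\rm st}(n)$ with $m(w_1, w_2)_{i,j} = m_{i,j,0}$ for every admissible $(i,j)$. In particular, the tropical singular values of $(\Pi_1\circ\Pi_2, w_1\circ w_2)$ are exactly $\rho$, and the multi-GZ property of the concatenation is inherited from the single-GZ property of $w_1$ and $w_2$.

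In the second stage, restrict the data to the face $i=0$. This restriction is a hive for the Horn problem on the pair $(\Pi_1\circ\Pi_2, \Pi_3)$, and I seek a GZ weighting $w_3$ of $\Pi_3=\Pi_{\rm st}(n)$ that realizes it given the already-fixed weighting $w_1\circ w_2$. This step is the main obstacle: one must promote the single-Horn surjectivity to a statement where one of the two input weightings is prescribed, provided it satisfies the multi-Gelfand-Zeitlin condition. The strategy is to argue this by an inductive construction along the layers of $\Pi_{\rm st}(n)$: at each elementary box of the standard network, the multi-GZ condition on $w_1\circ w_2$ guarantees the existence of a maximal sub-multipath in the prescribed position, and this provides enough freedom to solve for the weights of $w_3$ one layer at a time so that the prescribed values $m_{0,j,k}$ are matched. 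An alternative is to apply the geometric statement of Theorem \ref{intro:M_3properties}: since $M_{i,j,k}$ define a positive structure on $\mathcal{M}_3$, any consistent tropical point can be lifted; restricting to the multi-GZ locus then amounts to imposing the octahedron identities, matching our cone exactly.

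Once $(w_1, w_2, w_3)$ is obtained, Theorem \ref{theorem PN octahedron recurrence} ensures that $m(w_1, w_2, w_3)$ satisfies the octahedron recurrence. Since it agrees with the prescribed data on the two faces $(\lambda, \mu, \rho)$ and $(\rho, \nu, \tau)$, the recurrence forces agreement on the entire tetrahedron $\Delta^3(n)$. Finally, the multi-Gelfand-Zeitlin condition for the triple and for all of their concatenations follows from the combinatorial choices made in the two stages: for each $\underline{\alpha}$ one exhibits a maximum-weight $\underline{\alpha}$-multipath with sources $[n-\sum\alpha_i+1, n]$ and sinks $[1, \alpha_i]$ by concatenating the maximal multipaths produced at each inductive step.
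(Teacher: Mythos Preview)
Your two-stage plan has a genuine gap at the second stage, and you have correctly flagged it yourself as the ``main obstacle'' without actually overcoming it. Single-Horn surjectivity (Theorem~\ref{intro:thm_Horn} via \cite{APS-planar}) says that for any hive there \emph{exists} a pair of GZ weightings realizing it; it does not say that, having fixed one of the two weightings arbitrarily (subject only to the correct boundary singular values), the other can still be found. Concretely, after Stage~1 the composite $w_1\circ w_2$ is a specific point in the fibre of the single-Horn map over the face $k=0$, and there is no reason the second Horn map $(\Pi_1\circ\Pi_2,\Pi_3)\to\{\text{hives}\}$, restricted to this fixed first coordinate, remains surjective. Moreover $\Pi_1\circ\Pi_2$ is not $\Pi_{\rm st}(n)$, so the cited surjectivity does not even apply to that network directly. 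Your two proposed fixes do not close the gap: the ``layer-by-layer'' sketch does not address why the already-committed choices in $w_1,w_2$ are compatible with the required values $m_{0,j,k}$, and the appeal to Theorem~\ref{intro:M_3properties} produces a tropical point of $\mathcal{M}_3$, not a triple of actual GZ weightings on $\Pi_{\rm st}(n)^3$.

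The paper avoids this difficulty by \emph{not} proceeding in two successive Horn steps. Instead it works with the other pair of faces ($j=0$ and $i+j+k=n$), restricts the third weighting $c$ to a special $n$-dimensional form (nonzero only on the horizontal source-adjacent edges), and shows that evaluation on a family of explicit multipaths $\beta_{i,j,k}$ gives a single \emph{linear isomorphism} $\beta\colon W\to \mathbb{T}^{n(n+2)}$ from this restricted weight space onto the two-face data. It then checks (Claims~2 and~3) that whenever the image satisfies the rhombus inequalities, the resulting $a,b,c$ are automatically Gelfand--Zeitlin and $\beta(w)=m(w)$. So the surjectivity comes from inverting one explicit linear map, not from chaining two existence statements; the restricted form of $c$ is exactly what makes the dimension count work.
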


The next result, combined with results in Section \ref{subsection: geometric preliminaries: BK potential and interlacing ineq}, explains why multi-Gelfand-Zeitlin condition is natural.

\begin{definition}\label{definition: standard network}
For a semiring $R$, let $W_R(\Pi_{\rm st})$ denote the set of $R$-weightings of $\Pi_{\rm st}$ which are nonzero only on the slanted edges or adjacent to the sinks edges. This set is isomorphic to $R^{\frac{n(n+1)}{2}}$. We call these edges {\em essential} and denote them by $essedges(\Pi_{\rm st})$.
\begin{figure}[H]
    \centering
    \begin{subfigure}{.45\linewidth}
        \begin{tikzpicture}[yscale=.8]
            \draw (0,1) node[left]{$n$} -- (6,1) node[above left]{$a_{n,n}$};
            \node at (3,0.5) {$\cdots$};
            \draw (0,0) node[left]{4} -- (6,0) node[above left]{$a_{4,4}$};
            \draw (0,-1) node[left]{3} -- (3.5,-1);
            \draw (3.5,-1) -- (6,-1) node[above left]{$a_{3,3}$};
            \draw (0,-2) node[left]{2} -- (3,-2);
            \draw (3,-2) -- (4,-2);
            \draw (4,-2) -- (6,-2) node[above left]{$a_{2,2}$};
            \draw (0,-3) node[left]{1} -- (3.5,-3);
            \draw (3.5,-3) -- (6,-3) node[above left]{$a_{1,1}$};
            \draw (1,0) -- (1.5,-1) node [above, yshift=4,xshift=4]{$a_{4,1}$};
            \draw (2.5,0) -- (3,-1) node [above, yshift=4,xshift=4]{$a_{4,2}$};
            \draw (4,0) -- (4.5,-1) node [above, yshift=4,xshift=4]{$a_{4,3}$};
            \draw (2,-1) -- (2.5,-2) node [above, yshift=4,xshift=4]{$a_{3,1}$};
            \draw (3.5,-1) -- (4,-2) node [above, yshift=4,xshift=4]{$a_{3,2}$};
            \draw (3,-2) -- (3.5,-3) node [above, yshift=4,xshift=4]{$a_{2,1}$};
        \end{tikzpicture}
        \caption{Essential edges and $W_R(\Pi_{\rm st})$}
    \end{subfigure}
    \begin{subfigure}{.45\linewidth}
        \centering
        \begin{tikzpicture}[yscale=.8,xscale=.9]
            \draw (0,0) node[left]{4} -- (6,0);
            \draw[red] (0,-1) node[left]{3} -- (3.5,-1);
            \draw (3.5,-1) -- (6,-1);
            \draw[red] (0,-2) node[left]{2} -- (3,-2);
            \draw (3,-2) -- (4,-2);
            \draw[red] (4,-2) -- (6,-2);
            \draw (0,-3) node[left]{1} -- (3.5,-3);
            \draw[red] (3.5,-3) -- (6,-3);
            \draw (1,0) -- (1.5,-1);
            \draw (2.5,0) -- (3,-1);
            \draw (4,0) -- (4.5,-1);
            \draw (2,-1) -- (2.5,-2);
            \draw[red] (3.5,-1) -- (4,-2);
            \draw[red] (3,-2) -- (3.5,-3);
        \end{tikzpicture}
        \caption{In red is $\alpha_2^{(3)}$}
    \end{subfigure}
\end{figure}

For any $i\leqslant l$ there is a unique $i$-multipath in $\Pi_{\rm st}$ with sources $[l-i+1,l]$ and sinks $[1,i]$. We denote it by  $\alpha_i^{(l)}$. Denote by $\mathcal{A}\colon W_R(\Pi_{\rm st})\to R^{\frac{n(n+1)}{2}}$ the map with components $w\mapsto w(\alpha_i^{(l)})$. Since $\alpha_i^{(l)}=a_{l,i}+\cdots$, where $\cdots$ contains only $a_{l',i'}$ with $i'<i, l'<l$, the matrix of $\mathcal{A}$ is upper-triangular with 1-s on the diagonal. In particular, $\mathcal{A}$ is a linear isomorphism.
\end{definition}

\begin{definition}
    The \emph{Gelfand-Zeitlin cone} $\Delta_{GZ}\subset\T^{\frac{n(n+1)}{2}}$ is the set of tuples $(m_i^{(l)})_{0\leqslant i\leqslant l\leqslant n}$ satisfying the inequalities 
    \begin{equation}\label{equation: GZ inequalities} m_i^{(l+1)}+ m_{i-1}^{(l)}\geqslant m_{i-1}^{(l+1)}+ m_{i}^{(l)}, \qquad m_i^{(l+1)}+ m_{i}^{(l)}\geqslant m_{i+1}^{(l+1)}+ m_{i-1}^{(l)}\end{equation}
    for all $0< i\leqslant l \leqslant n$.

    Equivalently, $(\lambda_i^{(l)})=(m_i^{(l)}-m_{i-1}^{(l)})$ satisfies the interlacing inequalities \[\lambda_i^{(l)}\geqslant\lambda_i^{(l-1)}\geqslant\lambda_{i+1}^{(l)}.\]
\end{definition} 

\begin{definition}
    A weigthing of the standard planar network is \emph{Gelfand-Zeitlin} if $\mathcal{A}(w)\in\Delta_{GZ}$.
\end{definition}

\begin{lemma}\label{lemma A(w) in GZ implies multi-GZ condition on planar networks}
     Let $\Pi_1=\cdots=\Pi_k=\Pi_{\rm st}$, and suppose $w_i\in W_\T(\Pi_i)$ are Gelfand-Zeitlin for all $i$. Then for any $1\leqslant i\leqslant j\leqslant k$ the weighted networks $(\Pi_i\circ\cdots\circ\Pi_j,w_i\circ\cdots\circ w_{j})$ satisfy the multi-Gelfand-Zeitlin condition (Definition \ref{definition multi-GZ condition for PN}).
\end{lemma}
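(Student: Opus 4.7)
The plan is to proceed by induction on the number of factors $k = j - i + 1$. Since any subcomposite of GZ-weighted standard networks is itself a GZ-weighted composite of standard networks, it suffices to establish the statement for the full product $\Pi_1 \circ \cdots \circ \Pi_k$. The base case $k = 1$ asserts that for a GZ-weighted $(\Pi_{\rm st}(n), w)$ and every $\alpha$, the canonical multipath $\alpha_\alpha^{(n)}$, with sources $[n-\alpha+1, n]$ and sinks $[1, \alpha]$, realizes the maximum weight among all $\alpha$-multipaths. I would prove this by a local rhombus exchange argument: if an $\alpha$-multipath deviates from $\alpha_\alpha^{(n)}$, there is a small rhombus of $\Pi_{\rm st}$ inside which two incident sub-paths can be swapped, and the GZ inequalities \eqref{equation: GZ inequalities} imposed on $\mathcal{A}(w)$ ensure that the swap does not decrease the total tropical weight. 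Iterating brings any candidate to $\alpha_\alpha^{(n)}$.

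For the inductive step, take a maximal-weight $\underline{\alpha}$-multipath $p = (p_1, \ldots, p_k)$ in $\Pi_1 \circ \cdots \circ \Pi_k$. The portion of $p_k$ that lies in the last factor $\Pi_k$ is an $\alpha_k$-multipath inside the GZ-weighted standard network $\Pi_k$, disjoint from $p_1, \ldots, p_{k-1}$ since those terminate before $\Pi_k$. Applying the base case inside $\Pi_k$, this portion can be canonicalized to have sinks $[1, \alpha_k]$; the same exchange steps arrange its entry points into $\Pi_k$ to form the initial segment $[1, \alpha_k]$ at the seam between $\Pi_{k-1}$ and $\Pi_k$. With the behavior of $p_k$ inside $\Pi_k$ fixed, the inductive hypothesis applied to $\Pi_1 \circ \cdots \circ \Pi_{k-1}$ — treating the restriction of $p_k$ to the first $k-1$ factors as an extra component ending in the prescribed initial segment — then canonicalizes the sinks of each $p_s$ for $s < k$ to $[1, \alpha_s]$ and places the common sources into $[n - \sum_s \alpha_s + 1, n]$ in $\Pi_1$.

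The main obstacle is the coupling between the canonicalizations at different seams, subject to the pairwise non-intersection constraint on the components of $p$. I plan to resolve this via the structural feature of the GZ regime: the rhombus exchanges move paths monotonically toward initial segments at each interface, so once all sinks at each seam are initial segments the planarity of each $\Pi_s$ forces the configuration to nest and be automatically non-intersecting. A further technical subtlety is the legitimacy of treating the portion of $p_k$ in the first $k-1$ factors as an extra component for the induction; this is cleanest to handle either by strengthening the inductive hypothesis to allow a prescribed initial segment of sinks for one additional component at the final interface, or equivalently by recasting the whole claim in a Lindström-type framework, where $p$ is viewed as $\sum_s \alpha_s$ disjoint paths in the composite network with prescribed source data in $\Pi_1$ and prescribed initial-segment sink data at each seam.
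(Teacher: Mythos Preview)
Your inductive approach differs from the paper's direct argument, and the inductive step contains a genuine gap. The problematic claim is that after canonicalizing the sinks of $p_k$ inside $\Pi_k$ to $[1,\alpha_k]$, ``the same exchange steps arrange its entry points into $\Pi_k$ to form the initial segment $[1,\alpha_k]$ at the seam between $\Pi_{k-1}$ and $\Pi_k$.'' This cannot be right: at that seam the sinks of $p_{k-1}$ and the transversal crossings of $p_k$ must be disjoint, so once the sinks of $p_{k-1}$ are to occupy $[1,\alpha_{k-1}]$ the crossings of $p_k$ cannot occupy $[1,\alpha_k]$. Moreover, exchanges performed inside $\Pi_k$ alone leave the entry points (which are fixed by what $p_k$ does in $\Pi_{k-1}$) untouched. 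You correctly identify the seam coupling as the main obstacle, but neither proposed resolution (monotone exchanges forcing automatic nesting, or a vaguely strengthened inductive hypothesis) supplies the missing mechanism.

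The paper does not induct on $k$; its key step, absent from your outline, is a local exchange \emph{across} each seam. Whenever a sink sits at level $a$ and a transversal crossing sits at a lower level $b<a$ on the same vertical $l-1$, one re-routes the continuing path inside the next factor $\Pi_l$ using the inequalities $r_{s,0}^\nearrow(w_l)\geqslant 0$ (a reformulation of the GZ condition on $w_l$, via \cite{APS-planar}) so that the sink moves down to $b$ and the crossing moves up to $a$, without decreasing the weight. Iterating this places all sinks below all crossings at every seam; only then do the single-network arguments of \cite{APS-planar} (pushing sources to the top and sinks to the bottom in each factor) apply without interference between components. If you want to salvage the inductive route, this sink-below-crossing exchange is precisely the device you need to pass cleanly from one seam to the next.
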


\subsection{Proofs}

In the proofs of Theorems \ref{theorem PN rhombi inequalities} and \ref{theorem PN tetrahedra equalities} we will need the following notions.

\begin{definition}
      Let ($p_1, p_2$) be a pair of multipaths in $(\Pi_1\circ\Pi_2\circ\Pi_3)$. Let $p_1\coprod p_2$ be a graph with the set of edges $edges(p_1)\coprod edges (p_2)$ (disjoint union), the set of vertices $vertices(p_1)\cup vertices(p_2)$, and the adjacency relation induced from  $(\Pi_1\circ\Pi_2\circ\Pi_3)$.  Recall that the edges of a planar network are oriented (from left to right). Introduce an equivalence relation on the set $edges(p_1)\coprod edges (p_2)$ generated by the following equivalences: $e_1\sim e_2$ if (1) they have a common source or target, or (2) they have a common vertex of valency exactly 2 in $p_1\coprod p_2$ ({\em i.e.}, they are the only edges of $p_1\sqcup p_2$ adjacent to this vertex):
  
      \begin{figure}[H]
          \centering
      \begin{tikzpicture}[scale=.55]
          \draw[->___________,green] (0,0) -- (2,-1);
          \draw[->___________,green](2,-1) -- (4,-1);
          \filldraw (2,-1) circle (2pt);
          \begin{scope}[xshift=7cm]
              \draw[->_____________,green] (0,0) -- (2,-1);
              \draw[->_____________,green] (0,-2) -- (2,-1);
              \filldraw (2,-1) circle (2pt);
        \end{scope}
        \begin{scope}[xshift=13cm]
              \draw[->__________________,green] (0,0) -- (2,-1);
              \draw[->__________________,green] (0,-2) -- (2,-1);
              \draw[->__________________,magenta] (2,-1) -- (4,0);
              \filldraw (2,-1) circle (2pt);
          \end{scope}
          \begin{scope}[xshift=20cm]
              \draw[->__________________,green] (0,0) -- (2,-1);
              \draw[->__________________,green] (0,-2) -- (2,-1);
              \draw[->__________________,magenta] (2,-1) -- (4,0);
              \draw[->__________________,magenta] (2,-1) -- (4,-2);
              \filldraw (2,-1) circle (2pt);
          \end{scope}
      \end{tikzpicture}
          \caption{Equivalent edges have the same colour.  \\ Pictures 2-4 illustrate the rule (1); picture 1, the rule (2).}
          \label{fig:enter-label}
      \end{figure}
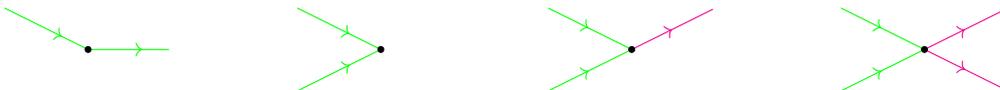
      
      A \emph{canonical path decomposition} of ($p_1, p_2$), denoted $\Theta(p_1,p_2)$, is the decomposition of the set $edges(p_1)\coprod edges (p_2)$ into the equivalence classes of this relation, which we call \emph{components}. 

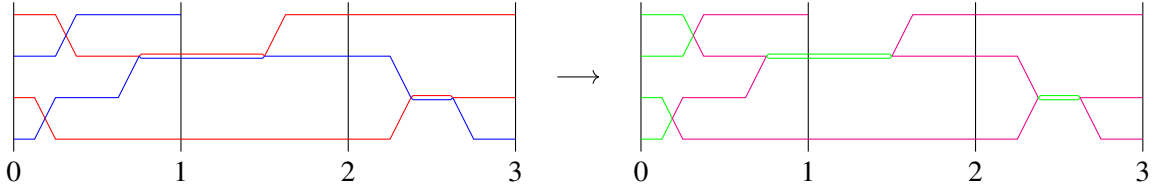
\begin{figure}[H]
    \centering 
\begin{tikzpicture}[scale=.55]
    \draw (0,.3) -- (0,-3.3) node[below]{0};
    \draw (4,.3) -- (4,-3.3) node[below]{1};
    \draw (8,.3) -- (8,-3.3) node[below]{2};
    \draw (12,.3) -- (12,-3.3) node[below]{3};
    \draw[red] (0,0) -- (1,0) -- (1.25,-.5);
    \draw[blue] (1.25,-.5) -- (1,-1) -- (0,-1);
    \draw[red] (0,-2) -- (.5,-2) -- (.75,-2.5);
    \draw[blue] (.75,-2.5) -- (.5,-3) -- (0,-3);
    \draw[blue] (4,0) -- (1.5, 0) -- (1.25,-.5);
    \draw[red] (1.25,-.5) -- (1.5, -1) -- (3,-1);
    \draw[blue] (3,-1) -- (2.5,-2) -- (1,-2) -- (.75,-2.5);
    \draw[red] (.75,-2.5) -- (1,-3) -- (9,-3) -- (9.5,-2);
    \draw[blue] (9.5,-2) -- (9,-1) -- (6,-1);
    \draw[red] (6,-1) -- (6.5,0) -- (12,0);
    \draw[red] (3,-1) -- (3.05,-.95) -- (5.95,-.95); \draw[red] (5.95,-.95) -- (6,-1);
    \draw[blue] (6,-1) -- (5.95,-1.05) -- (3.05,-1.05); \draw[blue] (3.05,-1.05) -- (3,-1);

    \draw[red] (9.5,-2) -- (9.55,-1.95) -- (10.45,-1.95);\draw[red] (10.45,-1.95) -- (10.5,-2);
    \draw[blue] (10.5,-2) -- (10.45,-2.05) -- (9.55,-2.05);\draw[blue] (9.55,-2.05) -- (9.5,-2);
    \draw[blue] (12,-3) -- (11,-3) -- (10.5,-2);
    \draw[red] (10.5,-2) -- (12,-2);  
    \draw[->] (13, -1.5) -- (14,-1.5);
\begin{scope}[xshift=15cm]
    \draw (0,.3) -- (0,-3.3) node[below]{0};
    \draw (4,.3) -- (4,-3.3) node[below]{1};
    \draw (8,.3) -- (8,-3.3) node[below]{2};
    \draw (12,.3) -- (12,-3.3) node[below]{3};
    \draw[green] (0,0) -- (1,0) -- (1.25,-.5);
    \draw[green] (1.25,-.5) -- (1,-1) -- (0,-1);
    \draw[green] (0,-2) -- (.5,-2) -- (.75,-2.5);
    \draw[green] (.75,-2.5) -- (.5,-3) -- (0,-3);
    \draw[magenta] (4,0) -- (1.5, 0) -- (1.25,-.5);
    \draw[magenta] (1.25,-.5) -- (1.5, -1) -- (3,-1);
    \draw[magenta] (3,-1) -- (2.5,-2) -- (1,-2) -- (.75,-2.5);
    \draw[magenta] (.75,-2.5) -- (1,-3) -- (9,-3) -- (9.5,-2);
    \draw[magenta] (9.5,-2) -- (9,-1) -- (6,-1);
    \draw[magenta] (6,-1) -- (6.5,0) -- (12,0);
    \draw[green] (3,-1) -- (3.05,-.95) -- (5.95,-.95); \draw[green] (5.95,-.95) -- (6,-1);
    \draw[green] (6,-1) -- (5.95,-1.05) -- (3.05,-1.05);
    \draw[green] (3.05,-1.05) -- (3,-1);

    \draw[green] (9.5,-2) -- (9.55,-1.95) -- (10.45,-1.95);\draw[green] (10.45,-1.95) -- (10.5,-2);
    \draw[green] (10.5,-2) -- (10.45,-2.05) -- (9.55,-2.05);\draw[green] (9.55,-2.05) -- (9.5,-2);
    \draw[magenta] (12,-3) -- (11,-3) -- (10.5,-2);
    \draw[magenta] (10.5,-2) -- (12,-2);    
\end{scope}
\end{tikzpicture}
    \caption{A pair of a \textcolor{red}{(0,0,2)} and a \textcolor{blue}{(1,0,1)} multipath, its canonical path decomposition (6 components).}
    \label{picture: cpd}
\end{figure}  

\end{definition}

     Any component of $\Theta$ is an unoriented path (closed or open), and the endpoints of any open path lie on the verticals separating $\Pi_i$'s. Enumerate these verticals from left to right by $0,\ldots, 3$. We call the open path components of $\Theta$ with the endpoints on different verticals \emph{essential}. For example, in Fig.\ref{picture: cpd}, there is 1 essential and 5 non-essential components.

If we reverse the orientation in the multipath $p_2$, it gives a path orientation to each component in  $\Theta(p_1, p_2)$. Vice versa, giving a path orientation to each component defines a decomposition of $\Theta$ into a couple of multipaths. Therefore we have a bijective map $2^{\# components(\Theta)}\xrightarrow{(p_+, p_-)}$ \{Couples of multipaths with c.p.d. $\Theta$\}. 

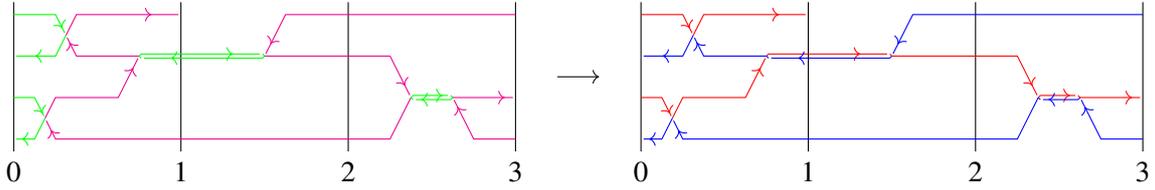
\begin{figure}[H]
    \centering 
\begin{tikzpicture}[scale=.55]
    \draw (0,.3) -- (0,-3.3) node[below]{0};
    \draw (4,.3) -- (4,-3.3) node[below]{1};
    \draw (8,.3) -- (8,-3.3) node[below]{2};
    \draw (12,.3) -- (12,-3.3) node[below]{3};
    \draw[green, ->___] (0,0) -- (1,0) -- (1.25,-.5);
    \draw[green, ->________] (1.25,-.5) -- (1,-1) -- (0,-1);
    \draw[green, ->__] (0,-2) -- (.5,-2) -- (.75,-2.5);
    \draw[green, ->___] (.75,-2.5) -- (.5,-3) -- (0,-3);
    \draw[magenta, ___________<-] (4,0) -- (1.5, 0) -- (1.25,-.5);
    \draw[magenta,___<-] (1.25,-.5) -- (1.5, -1) -- (3,-1);
    \draw[magenta,_____<-] (3,-1) -- (2.5,-2) -- (1,-2) -- (.75,-2.5);
    \draw[magenta,_____<-] (.75,-2.5) -- (1,-3) -- (9,-3) -- (9.5,-2);
    \draw[magenta,_____<-] (9.5,-2) -- (9,-1) -- (6,-1);
    \draw[magenta,_____<-] (6,-1) -- (6.5,0) -- (12,0);
    \draw[green,->___________] (3,-1) -- (3.05,-.95) -- (5.95,-.95); \draw[green] (5.95,-.95) -- (6,-1);
    \draw[green,->___________] (6,-1) -- (5.95,-1.05) -- (3.05,-1.05);
    \draw[green] (3.05,-1.05) -- (3,-1);

    \draw[green, ->___] (9.5,-2) -- (9.55,-1.95) -- (10.45,-1.95);\draw[green] (10.45,-1.95) -- (10.5,-2);
    \draw[green,->___] (10.5,-2) -- (10.45,-2.05) -- (9.55,-2.05);\draw[green] (9.55,-2.05) -- (9.5,-2);
    \draw[magenta,->____] (12,-3) -- (11,-3) -- (10.5,-2);
    \draw[magenta,->____] (10.5,-2) -- (12,-2);   
    \draw[->] (13, -1.5) -- (14,-1.5);
\begin{scope}[xshift=15cm]
    \draw (0,.3) -- (0,-3.3) node[below]{0};
    \draw (4,.3) -- (4,-3.3) node[below]{1};
    \draw (8,.3) -- (8,-3.3) node[below]{2};
    \draw (12,.3) -- (12,-3.3) node[below]{3};
    \draw[red, ->___] (0,0) -- (1,0) -- (1.25,-.5);
    \draw[blue, ->________] (1.25,-.5) -- (1,-1) -- (0,-1);
    \draw[red, ->__] (0,-2) -- (.5,-2) -- (.75,-2.5);
    \draw[blue, ->___] (.75,-2.5) -- (.5,-3) -- (0,-3);
    \draw[red, ___________<-] (4,0) -- (1.5, 0) -- (1.25,-.5);
    \draw[blue,___<-] (1.25,-.5) -- (1.5, -1) -- (3,-1);
    \draw[red,_____<-] (3,-1) -- (2.5,-2) -- (1,-2) -- (.75,-2.5);
    \draw[blue,_____<-] (.75,-2.5) -- (1,-3) -- (9,-3) -- (9.5,-2);
    \draw[red,_____<-] (9.5,-2) -- (9,-1) -- (6,-1);
    \draw[blue,_____<-] (6,-1) -- (6.5,0) -- (12,0);
    \draw[red,->___________] (3,-1) -- (3.05,-.95) -- (5.95,-.95); \draw[red] (5.95,-.95) -- (6,-1);
    \draw[blue,->___________] (6,-1) -- (5.95,-1.05) -- (3.05,-1.05); \draw[blue] (3.05,-1.05) -- (3,-1);
    \draw[red, ->___] (9.5,-2) -- (9.55,-1.95) -- (10.45,-1.95);\draw[red] (10.45,-1.95) -- (10.5,-2);
    \draw[blue,->___] (10.5,-2) -- (10.45,-2.05) -- (9.55,-2.05);\draw[blue] (9.55,-2.05) -- (9.5,-2);
    \draw[blue,->____] (12,-3) -- (11,-3) -- (10.5,-2);
    \draw[red,->____] (10.5,-2) -- (12,-2);   
\end{scope}
\end{tikzpicture}
    \caption{An orientation on the components and the corresponding pair $(\textcolor{red}{p_+},\textcolor{blue}{p_-})$.}
    \label{picture: orientation on cpd -> decomposition into (p_+, p_-)}
\end{figure}  

Given a path orientation on the essential components of $\Theta$, for each $i\neq j$ define the following numbers: $Q_{ij}=$\#\{path components going from $i$-th to $j$-th vertical\}. For example, in Fig.\ref{picture: orientation on cpd -> decomposition into (p_+, p_-)}, $Q_{31}=1$. These numbers define an oriented graph with $Q_{ij}$ arrows from $i$ to $j$. Conversely, an orientation on the corresponding unoriented graph defines an orientation on the essential components of $\Theta$.

\begin{lemma}\label{lemma alpha-beta = Q-Q}
    If $(p_+, p_-)\in P_{\alpha_1,\alpha_2,\alpha_3}\times P_{\beta_1,\beta_2,\beta_3}$, then
    $\alpha_i-\beta_i = \displaystyle\sum_j Q_{ji}-\displaystyle\sum_j Q_{ij}$.
\end{lemma}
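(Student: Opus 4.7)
The plan is to perform a double count of a signed flow in the oriented graph $G := p_+ \sqcup p_-^{\mathrm{rev}}$, obtained from $p_+ \sqcup p_-$ by reversing the orientation of every edge of $p_-$. For each vertex $v$, let $a(v), b(v), c(v), d(v)$ count respectively the $p_+$-edges with target $v$, the $p_+$-edges with source $v$, the $p_-$-edges with target $v$, and the $p_-$-edges with source $v$; then in $G$ one has $\mathrm{in}_G(v)=a(v)+d(v)$ and $\mathrm{out}_G(v)=b(v)+c(v)$. Writing $V_i$ for the set of vertices on the $i$th vertical, I will compute
\[
S_i:=\sum_{v\in V_i}\bigl(\mathrm{out}_G(v)-\mathrm{in}_G(v)\bigr)=\sum_{v\in V_i}\bigl(b(v)+c(v)-a(v)-d(v)\bigr)
\]
in two ways, for $i\in\{1,2,3\}$.

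For the \emph{local count}, I use that each path of $p_+^k$ with $k\geqslant i$ crosses vertical $i$ exactly once, contributing one $p_+$-edge with target in $V_i$ and, when $k>i$, one $p_+$-edge with source in $V_i$. Summing gives $\sum_{v\in V_i}a(v)=\sum_{k\geqslant i}\alpha_k$ and $\sum_{v\in V_i}b(v)=\sum_{k>i}\alpha_k$, so $\sum_{v\in V_i}(a-b)=\alpha_i$. The same argument applied to $p_-$ yields $\sum_{v\in V_i}(c-d)=\beta_i$. Combining, $S_i=\beta_i-\alpha_i$.

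For the \emph{component count}, each component of $\Theta$ is an oriented path or cycle in $G$ (this is exactly the content of the path orientation discussed right after the definition of $\Theta$), contributing $+1$ to $S_i$ at its source and $-1$ at its target, and $0$ at interior vertices and cycles. The crucial step is to verify that the endpoints of any open component lie on one of the verticals $0,1,2,3$, which I would establish by a short local case analysis: at any planar vertex $v$ not on a vertical, $v$ is neither a source nor sink of the network, forcing $a(v)=b(v)$ and $c(v)=d(v)$; in every admissible configuration $(a,b,c,d)\in\{0,1\}^4$ with $a=b$, $c=d$, the reversal of $p_-$ makes each equivalence class at $v$ carry equal in- and out-degree in $G$, so $v$ is interior to each component to which it belongs. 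Given this, non-essential open components (both endpoints on the same vertical) contribute net $0$ to $S_i$, while an essential component oriented from vertical $a$ to vertical $b$ contributes $+1$ to $S_a$ and $-1$ to $S_b$. Hence $S_i=\sum_j Q_{ij}-\sum_j Q_{ji}$.

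Equating the two expressions yields $\beta_i-\alpha_i=\sum_j Q_{ij}-\sum_j Q_{ji}$, which rearranges to the claimed identity. The only genuinely non-formal ingredient is the local verification that component endpoints never sit at an interior planar vertex; once this finite check is done, the two counts of $S_i$ match automatically.
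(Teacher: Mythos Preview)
Your argument is correct and is essentially the paper's own proof: both compute the signed edge balance at the $i$th vertical in two ways, once from the definition of $(\underline{\alpha},\underline{\beta})$-multipaths and once from the component structure of $\Theta$. The only difference is cosmetic --- you introduce the vertex-level counters $a,b,c,d$ and spell out the verification that component endpoints lie on verticals, whereas the paper simply records this fact before the lemma and phrases the double count directly in terms of edges with source/target on the $i$th vertical.
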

\begin{proof}
    Let us fix a left-to-right orientation on $p_+$ and a right-to-left orientation on $p_-$. Then
    \begin{align*}
         \alpha_i&=\#\{e\in \edge(p_+) \mid t(e)\in i\text{-th vertical}\}-\#\{e\in \edge(p_+) \mid s(e)\in i\text{-th vertical}\}, \\
        \beta_i&=\#\{e\in \edge(p_-) \mid s(e)\in i\text{-th vertical}\}-\#\{e\in \edge(p_-) \mid t(e)\in i\text{-th vertical}\},
    \end{align*}
    hence $\alpha_i-\beta_i=\#\{e\in \edge(\Theta) \mid t(e)\in i\text{-th vertical}\} -\#\{e\in \edge(\Theta) \mid s(e)\in i\text{-th vertical}\}$.

    On the other hand, the last quantity equals the number of path components ending at $i$-th vertical minus the number of path components starting at $i$-th vertical, {\em i.e.}, $\sum_j Q_{ji}-\sum_j Q_{ij}$.
\end{proof}

    It will be convenient to assume that $P_n(\Pi)$ is nonempty (hence all the sets $P_k(\Pi), k\leqslant n$ are nonempty). We can  achieve this by adding edges of weight $-\infty$, this does not affect the values $m_{\underline{\alpha}}$.

\begin{proof}[Proof of Theorem \ref{theorem PN rhombi inequalities}]
    Take a small rhombus with the vertices labeled as follows:
    \begin{center}
    \begin{tikzpicture}
        \draw (0,0) -- (0.5,0.866) --(1,0) -- (0.5, -0.866) -- cycle;
        \node[anchor=east] at (0,0) {$\gamma$};
        \node[anchor=west] at (1,0) {$\delta$};
        \node[anchor=south] at (.5,.866) {$\alpha$};
        \node[anchor=north] at (.5,-.866) {$\beta$};
    \end{tikzpicture}
    \end{center}
    where $\alpha,\beta, \gamma, \delta$ are the triples of integers. Notice that in any small rhombus $\alpha+\beta=\gamma+\delta$, and $\delta-\gamma$ is a permutation of $\pm(1,-1,0)$ or $\pm(1,0,0)$.
    
    The rhombus inequality reads: $m_\alpha+m_\beta\leqslant m_\gamma+m_\delta$. It is enough to prove the following statement: given a pair of multipaths $(p_1, p_2)\in P_\alpha\times P_\beta$, there exists an orientation on the components of $\Theta(p_1,p_2)$ such that $(p_+,p_-)\in P_\gamma\times P_\delta$.
    
Take $(p_1, p_2)\in P_\alpha\times P_\beta $ and consider its canonical path decomposition $\Theta$. Then Lemma \ref{lemma alpha-beta = Q-Q} gives $\sum_j Q_{ji}+\sum_j Q_{ij}\equiv\alpha_i+\beta_i\equiv\gamma_i+\delta_i\text{ mod }2$. The possible values of $\gamma-\delta$ imply that among the numbers $(\sum_j Q_{j1}+\sum_j Q_{1j}, \sum_j Q_{j2}+\sum_j Q_{2j},\sum_j Q_{j3}+\sum_j Q_{3j})$ there are one or two odd. 
Construct an unoriented graph on vertices $0,\ldots, 3$ with $Q_{ij}+Q_{ji}$ edges between $i$ and $j$. Then among the  vertices $1,2,3$ there are two or one odd-valent vertices. Since the sum of valencies of all the vertices is even, there are exactly 2 vertices with odd valency among $0,1,2,3$. Therefore the graph can be decomposed into several closed paths and an open path connecting two vertices with odd valency. Give an orientation to the graph in such a way that these paths become oriented paths. This defines an orientation on the essential components of $\Theta$. Orient  the non-essential components of $\Theta$ in any way. 
In the resulting orientation $\big(
\sum_j Q'_{ji}-
\sum_j Q'_{ij}\big)_{i=1,\ldots, 3} \equiv \pm(\gamma-\delta)\text{ mod }2$ and is a permutation of $(1,0,0)$ or $(1,-1,0)$, therefore it equals $\pm(\gamma-\delta)$. Changing orientation if necessary we may assume that it equals $\gamma-\delta$. Therefore if $(p_+,p_-)\in P_{\gamma'}\times P_{\delta'}$, then $\gamma'-\delta'=\gamma-\delta$. Moreover, $\gamma'+\delta'=\alpha+\beta=\gamma+\delta$.
This implies $(p_+, p_-)\in P_\gamma\times P_\delta $.
\end{proof}

Let us illustrate the algorithm  of the proof of theorem \ref{theorem PN rhombi inequalities}.

Input: $(p_1,p_2)\in P_{(0,0,2)}\times P_{(1,1,0)}$.

\begin{tikzpicture}[scale=.55]
    \draw (0,.3) -- (0,-3.3) node[below]{0};
    \draw (4,.3) -- (4,-3.3) node[below]{1};
    \draw (8,.3) -- (8,-3.3) node[below]{2};
    \draw (12,.3) -- (12,-3.3) node[below]{3};
    \draw[red] (0,0) -- (1,0) -- (1.25,-.5);
    \draw[blue] (1.25,-.5) -- (1,-1) -- (0,-1);
    \draw[red] (0,-2) -- (.5,-2) -- (.75,-2.5);
    \draw[blue] (.75,-2.5) -- (.5,-3) -- (0,-3);
    \draw[blue] (4,0) -- (1.5, 0) -- (1.25,-.5);
    \draw[red] (1.25,-.5) -- (1.5, -1) -- (3,-1);
    \draw[blue] (3,-1) -- (2.5,-2) -- (1,-2) -- (.75,-2.5);
    \draw[red] (.75,-2.5) -- (1,-3) -- (9,-3) -- (9.5,-2) -- (12,-2);
    \draw[blue] (8,-1) -- (6,-1);
    \draw[red] (6,-1) -- (6.5,0) -- (12,0);
    \draw[red] (3,-1) -- (3.05,-.95) -- (5.95,-.95); \draw[red] (5.95,-.95) -- (6,-1);
    \draw[blue] (6,-1) -- (5.95,-1.05) -- (3.05,-1.05); \draw[blue] (3.05,-1.05) -- (3,-1);

    \draw[->] (13, -1.5) -- (14,-1.5);
\begin{scope}[xshift=15cm]
    \draw (0,.3) -- (0,-3.3) node[below]{0};
    \draw (4,.3) -- (4,-3.3) node[below]{1};
    \draw (8,.3) -- (8,-3.3) node[below]{2};
    \draw (12,.3) -- (12,-3.3) node[below]{3};
    \draw[green,->_____] (0,0) -- (1,0) -- (1.25,-.5);
    \draw[green,->_____] (1.25,-.5) -- (1,-1) -- (0,-1);
    \draw[green,->_____] (0,-2) -- (.5,-2) -- (.75,-2.5);
    \draw[green,->_____] (.75,-2.5) -- (.5,-3) -- (0,-3);
    \draw[magenta,->_____] (4,0) -- (1.5, 0) -- (1.25,-.5);
    \draw[magenta,->_____] (1.25,-.5) -- (1.5, -1) -- (3,-1);
    \draw[magenta,->_____] (3,-1) -- (2.5,-2) -- (1,-2) -- (.75,-2.5);
    \draw[magenta,->_____] (.75,-2.5) -- (1,-3) -- (9,-3) -- (9.5,-2) -- (12,-2);
    \draw[magenta,->_____] (8,-1) -- (6,-1);
    \draw[magenta,->_____] (6,-1) -- (6.5,0) -- (12,0);
    \draw[green,->_____] (3,-1) -- (3.05,-.95) -- (5.95,-.95); \draw[green] (5.95,-.95) -- (6,-1);
    \draw[green,->_____] (6,-1) -- (5.95,-1.05) -- (3.05,-1.05);\draw[green] (3.05,-1.05) -- (3,-1);
\end{scope}
\end{tikzpicture}

Step 1: take a canonical path decomposition of $(p_1,p_2)$.

\vspace{.7cm}

\begin{tikzpicture}
    \fill (0,0) circle (2pt) node[below] {0};
    \fill (1,0) circle (2pt) node[below] {1};
    \fill (2,0) circle (2pt) node[below] {2};
    \fill (3,0) circle (2pt) node[below] {3};
    \draw[->________________] (1,0) .. controls (2,.45) .. (3,0);
    \draw[________<-] (2,0) .. controls (2.5,-.25) .. (3,0);
\end{tikzpicture}

Step 2: form a graph of essential components. Decompose it into several cycles and a path and orient them in any way.

\vspace{.8cm}

\begin{tikzpicture}[scale=.55]
    \draw (0,.3) -- (0,-3.3) node[below]{0};
    \draw (4,.3) -- (4,-3.3) node[below]{1};
    \draw (8,.3) -- (8,-3.3) node[below]{2};
    \draw (12,.3) -- (12,-3.3) node[below]{3};
    \draw[green,->_____] (0,0) -- (1,0) -- (1.25,-.5);
    \draw[green,->_____] (1.25,-.5) -- (1,-1) -- (0,-1);
    \draw[green,->_____] (0,-2) -- (.5,-2) -- (.75,-2.5);
    \draw[green,->_____] (.75,-2.5) -- (.5,-3) -- (0,-3);
    \draw[magenta,->_____] (4,0) -- (1.5, 0) -- (1.25,-.5);
    \draw[magenta,->_____] (1.25,-.5) -- (1.5, -1) -- (3,-1);
    \draw[magenta,->_____] (3,-1) -- (2.5,-2) -- (1,-2) -- (.75,-2.5);
    \draw[magenta,->_____] (.75,-2.5) -- (1,-3) -- (9,-3) -- (9.5,-2) -- (12,-2);
    \draw[magenta,_________<-] (8,-1) -- (6,-1);
    \draw[magenta,_____<-] (6,-1) -- (6.5,0) -- (12,0);
    \draw[green,->_____] (3,-1) -- (3.05,-.95) -- (5.95,-.95); \draw[green] (5.95,-.95) -- (6,-1);
    \draw[green,->_____] (6,-1) -- (5.95,-1.05) -- (3.05,-1.05);\draw[green] (3.05,-1.05) -- (3,-1);
    \draw[->] (13,-1.5) -- (14,-1.5);
    
    \begin{scope}[xshift=15cm]
    \draw (0,.3) -- (0,-3.3) node[below]{0};
    \draw (4,.3) -- (4,-3.3) node[below]{1};
    \draw (8,.3) -- (8,-3.3) node[below]{2};
    \draw (12,.3) -- (12,-3.3) node[below]{3};
    \draw[red] (0,0) -- (1,0) -- (1.25,-.5);
    \draw[blue] (1.25,-.5) -- (1,-1) -- (0,-1);
    \draw[red] (0,-2) -- (.5,-2) -- (.75,-2.5);
    \draw[blue] (.75,-2.5) -- (.5,-3) -- (0,-3);
    \draw[blue] (4,0) -- (1.5, 0) -- (1.25,-.5);
    \draw[red] (1.25,-.5) -- (1.5, -1) -- (3,-1);
    \draw[blue] (3,-1) -- (2.5,-2) -- (1,-2) -- (.75,-2.5);
    \draw[red] (.75,-2.5) -- (1,-3) -- (9,-3) -- (9.5,-2) -- (12,-2);
    \draw[red] (8,-1) -- (6,-1);
    \draw[blue] (6,-1) -- (6.5,0) -- (12,0);
    \draw[red] (3,-1) -- (3.05,-.95) -- (5.95,-.95); \draw[red] (5.95,-.95) -- (6,-1);
    \draw[blue] (6,-1) -- (5.95,-1.05) -- (3.05,-1.05); \draw[blue] (3.05,-1.05) -- (3,-1);
    \end{scope}
\end{tikzpicture}

Step 3: give an orientation to the components of $\Theta(p_1,p_2)$ according to the orientation on the graph of step 2. Reconstruct $(p_+,p_-)$.

Output: $(p_+,p_-)\in P_{(0,1,1)}\times P_{(1,0,1)}$.

\vspace{7pt}

\begin{proof}[Proof of Theorem \ref{theorem PN tetrahedra equalities}]
The non-smooth locus of the function in question is the intersection of the three sets given by the inequalities  
\begin{gather*}
 m_{(i,j,k+1)}+m_{(i+1,j+1,k)}\leqslant\max\{ m_{(i,j+1,k)}+m_{(i+1,j,k+1)}, m_{(i+1,j,k)}+m_{(i,j+1,k+1)}\}, \\
 m_{(i,j+1,k)}+m_{(i+1,j,k+1)}\leqslant\max\{ m_{(i,j,k+1)}+m_{(i+1,j+1,k)}, m_{(i+1,j,k)}+m_{(i,j+1,k+1)}\},\\
 m_{(i+1,j,k)}+m_{(i,j+1,k+1)}\leqslant\max\{m_{(i,j+1,k)}+m_{(i+1,j,k+1)}, m_{(i,j,k+1)}+m_{(i+1,j+1,k)}\}.
 \end{gather*}

Take $(p_1, p_2)\in P_{(i,j,k+1)}\times P_{(i+1,j+1,k)}$ and consider its canonical path decomposition $\Theta$. To prove the statement, it is enough to show that $\Theta$ can be decomposed into $(p_+, p_-)$ belonging to either  $P_{(i+1,j,k)}\times P_{(i,j+1,k+1)}$ or $P_{(i,j+1,k)}\times P_{(i+1,j,k+1)}$. By Lemma \ref{lemma alpha-beta = Q-Q}, we have $\sum_k Q_{kl}+\sum_k Q_{lk} = 1$ mod~$2$ for $l=1,2,3$.
 This means that in the unoriented graph on vertices $0,\ldots, 3$ with $Q_{kl}+Q_{lk}$ edges between $k$ and $l$ all the vertices have odd valency. Thus it can be decomposed into several closed paths and two non-intersecting open paths. Give an orientation to the graph in such a way that these paths become oriented paths, $3$ being a sink and $0$ a source. Then the numbers $Q'_{kl}$ corresponding to this oriented graph satisfy $(\sum_k Q'_{kl}-\sum_k Q'_{lk})_{l=0,\ldots, 3} = (-1,x,y,1)$, where $(x,y)=\pm(1,-1)$. Orient the essential components of $\Theta$ in accordance with the orientation on the graph, and the non-essential components in any way. 
 By the construction, the resulting $(p_+, p_-)$ belongs to $P_\alpha\times P_\beta$ with $\alpha-\beta = (-1,1,1)$ or $(1,-1,1)$ and $\alpha+\beta = (2i+1,2j+1,2k+1)$. This implies $(\alpha, \beta) = \big((i,j+1,k+1),(i+1,j,k)\big)$ or $\big((i+1,j,k+1),(i,j+1,k)\big)$ which proves the statement. 
\end{proof}

Let us illustrate the algorithm of the proof of Theorem \ref{theorem PN tetrahedra equalities}.

Input: $(p_1,p_2)\in P_{(0,0,1)}\times P_{(1,1,0)}$.

\begin{tikzpicture}[scale=.55]
    \draw (0,.3) -- (0,-2.3) node[below]{0};
    \draw (4,.3) -- (4,-2.3) node[below]{1};
    \draw (8,.3) -- (8,-2.3) node[below]{2};
    \draw (12,.3) -- (12,-2.3) node[below]{3};
    \draw[red] (0,0) -- (1,0) -- (1.25,-.5);
    \draw[blue] (1.25,-.5) -- (1,-1) -- (0,-1);   
    \draw[blue] (4,0) -- (1.5, 0) -- (1.25,-.5);
    \draw[red] (1.25,-.5) -- (1.5, -1) -- (3,-1);
    \draw[blue] (3,-1) -- (2.5,-2) -- (0,-2);
    \draw[blue] (8,-1) -- (6,-1);
    \draw[red] (6,-1) -- (6.5,0) -- (12,0);
    \draw[red] (3,-1) -- (3.05,-.95) -- (5.95,-.95); \draw[red] (5.95,-.95) -- (6,-1);
    \draw[blue] (6,-1) -- (5.95,-1.05) -- (3.05,-1.05); \draw[blue] (3.05,-1.05) -- (3,-1);

    \draw[->] (13, -1.5) -- (14,-1.5);
\begin{scope}[xshift=15cm]
    \draw (0,.3) -- (0,-2.3) node[below]{0};
    \draw (4,.3) -- (4,-2.3) node[below]{1};
    \draw (8,.3) -- (8,-2.3) node[below]{2};
    \draw (12,.3) -- (12,-2.3) node[below]{3};
    \draw[green,->_____] (0,0) -- (1,0) -- (1.25,-.5);
    \draw[green,->_____] (1.25,-.5) -- (1,-1) -- (0,-1);
    \draw[magenta,->_____] (4,0) -- (1.5, 0) -- (1.25,-.5);
    \draw[magenta,->_____] (1.25,-.5) -- (1.5, -1) -- (3,-1);
    \draw[magenta,->_____] (3,-1) -- (2.5,-2) -- (0,-2);
    \draw[green,->_____] (3,-1) -- (3.05,-.95) -- (5.95,-.95); \draw[green] (5.95,-.95) -- (6,-1);
    \draw[green,->_____] (6,-1) -- (5.95,-1.05) -- (3.05,-1.05);\draw[green] (3.05,-1.05) -- (3,-1);
     \draw[magenta,->_____] (8,-1) -- (6,-1);
    \draw[magenta,->_____] (6,-1) -- (6.5,0) -- (12,0);
\end{scope}
\end{tikzpicture}

Step 1: take a canonical path decomposition of $(p_1,p_2)$.

\vspace{.7cm}

\begin{tikzpicture}
    \fill (0,0) circle (2pt) node[below] {0};
    \fill (1,0) circle (2pt) node[below] {1};
    \fill (2,0) circle (2pt) node[below] {2};
    \fill (3,0) circle (2pt) node[below] {3};
    \draw[->____________] (0,0) -- (1,0);
    \draw[->____________] (2,0)-- (3,0);
\end{tikzpicture}

Step 2: form a graph of essential components. Decompose it into several cycles and two non-intersecting paths. Orient them in such a way that $3$ is a sink and $0$ is a source.

\vspace{.7cm}

\begin{tikzpicture}[scale=.55]
    \begin{scope}[xshift=15cm]
    \draw (0,.3) -- (0,-2.3) node[below]{0};
    \draw (4,.3) -- (4,-2.3) node[below]{1};
    \draw (8,.3) -- (8,-2.3) node[below]{2};
    \draw (12,.3) -- (12,-2.3) node[below]{3};
    \draw[red] (0,0) -- (1,0) -- (1.25,-.5);
    \draw[blue] (1.25,-.5) -- (1,-1) -- (0,-1);   
    \draw[red] (4,0) -- (1.5, 0) -- (1.25,-.5);
    \draw[blue] (1.25,-.5) -- (1.5, -1) -- (3,-1);
    \draw[red] (3,-1) -- (2.5,-2) -- (0,-2);
    \draw[blue] (8,-1) -- (6,-1);
    \draw[red] (6,-1) -- (6.5,0) -- (12,0);
    \draw[red] (3,-1) -- (3.05,-.95) -- (5.95,-.95); \draw[red] (5.95,-.95) -- (6,-1);
    \draw[blue] (6,-1) -- (5.95,-1.05) -- (3.05,-1.05); \draw[blue] (3.05,-1.05) -- (3,-1);
    \end{scope}
    \draw[->] (13, -1.5) -- (14,-1.5);

    \draw (0,.3) -- (0,-2.3) node[below]{0};
    \draw (4,.3) -- (4,-2.3) node[below]{1};
    \draw (8,.3) -- (8,-2.3) node[below]{2};
    \draw (12,.3) -- (12,-2.3) node[below]{3};
    \draw[green,->_____] (0,0) -- (1,0) -- (1.25,-.5);
    \draw[green,->_____] (1.25,-.5) -- (1,-1) -- (0,-1);
    \draw[magenta,_____<-] (4,0) -- (1.5, 0) -- (1.25,-.5);
    \draw[magenta,_____<-] (1.25,-.5) -- (1.5, -1) -- (3,-1);
    \draw[magenta,_____<-] (3,-1) -- (2.5,-2) -- (0,-2);
    \draw[green,->_____] (3,-1) -- (3.05,-.95) -- (5.95,-.95); \draw[green] (5.95,-.95) -- (6,-1);
    \draw[green,->_____] (6,-1) -- (5.95,-1.05) -- (3.05,-1.05);\draw[green] (3.05,-1.05) -- (3,-1);
     \draw[magenta,->_____] (8,-1) -- (6,-1);
    \draw[magenta,->_____] (6,-1) -- (6.5,0) -- (12,0);
\end{tikzpicture}

Step 3: give an orientation to the components of $\Theta(p_1,p_2)$ according to the orientation on the graph of step 2. Reconstruct $(p_+,p_-)$.

Output: $(p_+,p_-)\in P_{(1,0,1)}\times P_{(0,1,0)}$.

\vspace{7pt}


\begin{proof}[Proof of Theorem \ref{theorem PN octahedron recurrence}]
By theorem \ref{theorem PN tetrahedra equalities} we have an inequality \[m_{(i,j+1,k)}+m_{(i+1,j,k+1)}\leqslant\max\{ m_{(i,j,k+1)}+m_{(i+1,j+1,k)}, m_{(i+1,j,k)}+m_{(i,j+1,k+1)}\}.\] Thus it is enough to prove \[m_{(i,j+1,k)}+m_{(i+1,j,k+1)}\geqslant\max\{ m_{(i,j,k+1)}+m_{(i+1,j+1,k)}, m_{(i+1,j,k)}+m_{(i,j+1,k+1)}\}.\]

Take a couple of maximal multipaths $(p_1,p_2)\in P_{(i,j+1,k+1)}\times P_{(i+1,j,k)}$ which start at the top and end at the bottom. We have an equality $(\sum_j Q_{ji}-\sum_j Q_{ij})_{i=0,\ldots, 3} = (-1,-1,1,1)$. There are two cases:

(1) There exists an essential component traveling from 1 to 2.

(2) There exist essential components traveling from 0 to 2, from 1 to 3.

In the case (1) reversing the orientation of this component gives $(p_+, p_-)\in P_{(i+1,j,k+1)}\times P_{(i,j+1,k)}$.
Let us prove that the case (2) is impossible. Indeed, the $0 \rightarrow 2$ component necessarily ends at the vertex $j+1$, since all other vertices on the vertical $2$ are 2-valent, thus their two adjacent edges belong to the same component. Also, the $0 \rightarrow 2$ component cannot cross the vertical $1$ at vertices $[1, i+1]$: the crossing point must be a vertex with exactly 1 incoming and 1 outgoing edge.

\begin{center}
\begin{tikzpicture}[scale=.5]
    \draw (0,.3) -- (0,-4.3) node[below]{0};
    \draw (4,.3) -- (4,-4.3) node[below]{1};
    \draw (8,.3) -- (8,-4.3) node[below]{2};
    \draw (12,.3) -- (12,-4.3) node[below]{3};
    \draw[red,->] (0,0) -- (1,.2);
    \draw[blue,______<-](0,0) -- (1,-.2);
    \draw[red,->] (0,-1) -- (1,-.8);
    \draw[blue,______<-](0,-1) -- (1,-1.2);
    \draw[red,->] (0,-2) -- (1,-1.8);
    \draw[blue,______<-](0,-2) -- (1,-2.2);
    \draw[red,->](0,-3) -- (1,-2.8);
    
    \draw[blue,<-](3,-4.2) -- (4,-4);
    \draw[red,->______](3,-3.8) -- (4,-4);
    \draw[blue,<-](3,-3.2) -- (4,-3);
    \draw[red, dashed] (3,-2.8) -- (5,-3.2);

    \draw[blue,<-](7,-4.2) -- (8,-4);
    \draw[red,->______](7,-3.8) -- (8,-4);
    \draw[red,->______](7,-2.8) -- (8,-3);
    
    \draw[red,->___](11,-3.8) -- (12,-4);

    \draw[magenta,nearly transparent, very thick](0,-3) -- (2,-2.6) -- (4,-1) -- (9,-2) -- (6,-2.6) -- (8,-3);
    \filldraw[magenta] (0,-3) circle(3pt);
    \filldraw[magenta] (8,-3) circle(3pt);
    \fill[magenta!40,nearly transparent](0,-4.3) -- (0,-3) -- (2,-2.6) -- (4,-1) -- (9,-2) -- (6,-2.6) -- (8,-3) -- (8,-4.3) -- cycle;

    \filldraw[green](4,-3) circle(3pt);
    \filldraw[green](12,-4) circle(3pt);
    
    \end{tikzpicture} 
\end{center}

Similarly, the $1 \rightarrow 3$ component necessarily starts at the vertex $i+1$ and cannot cross the vertical $2$ at $[1, j+1]$. This means that these components intersect. Indeed, the $0\rightarrow 2$ component together with the parts of verticals $0$ and $2$ which lie below its endpoints divide the strip into two parts (see the picture). The $1\rightarrow 3$ component has endpoints in different parts, thus it must cross the boundary: there exists a point $x$ and two edges $e_1, e_2$ adjacent to $x$ which lie in different parts. Note $x$ cannot belong to a vertical, thus $x$ lies in the interior of the $0\rightarrow 2$ component. On the other hand, a point of intersection of two components which is the interior for both of them looks as follows:

\begin{center}
\begin{tikzpicture}[scale=.4]
    \draw[green] (0,0) node[black,right,xshift=.5cm]{$e_1$} -- (4,-1) node[black,above]{$x$} -- (0,-2) node[black,right,xshift=.5cm]{$e_2$};
    \draw[magenta] (8,0) -- (4,-1) -- (8,-2);
\end{tikzpicture}
\end{center}
{\em i.e.}, $e_1$ and $e_2$ lie in the same part. This proves the impossibility of (2).

The case $(p_1,p_2)\in P_{(i+1,j+1,k)}\times P_{(i,j,k+1)}$ differs from the previous one by exchanging $1$ and $3$ and is proved in the same way.  
\end{proof}

\begin{proof}[Proof of Lemma \ref{lemma A(w) in GZ implies multi-GZ condition on planar networks}]
    It is enough to take $i=1, j=k$.
    By Lemma 9 in \cite{APS-planar}, the condition $\mathcal{A}(w)\in \Delta_{GZ}(\delta)$ is equivalent to the inequalities \[r_{j,i}^\nearrow(w)\geqslant 0, \quad 0\leqslant i<j\leqslant n; \qquad \qquad \qquad r_{j,i}^\searrow(w)\leqslant 0, \quad 0<i\leqslant j\leqslant n,\] where $r^\nearrow,r^\searrow$ are the following functions. Take a path bounding the shaded region in the picture below with orientation as indicated and take the alternating sum of the weights on its edges: the sign is "+" if an edge is oriented from left to right, and "\textminus" otherwise.
    \begin{figure}[H]
        \centering
        \hspace{-4em}
        \begin{subfigure}{.4\linewidth}
        \centering
        \begin{tikzpicture}[scale=.9]
        \draw (-1,3) -- (6.5,3);
        \draw (3,2.75) node{$\dots\dots$};
        \draw (-1,0) -- (6.5,0);
        \draw (-1,0.5) -- (6.5,0.5);
        \draw (-1,1) -- (6.5,1);
        \draw (-1,1.5) -- (6.5,1.5);
        \draw (-1,2) -- (6.5,2);
        \draw (-1,2.5) -- (6.5,2.5);
        \draw (0.5,2) -- (1,1.5);
        \draw (2,2) -- (2.5,1.5);
        \draw (3.5,2) -- (4,1.5);
        \draw (5,2) -- (5.5,1.5);
        \draw (1.5,1.5) -- (2,1);
        \draw (3,1.5) -- (3.5,1);
        \draw (4.5,1.5) -- (5,1);
        \draw (2.5,1) -- (3,0.5);
        \draw (4,1) -- (4.5,0.5);
        \draw (2.8,0.25) node{$\dots\dots$};
        \draw (-0.5,2.5) -- (0,2);
        \draw (1,2.5) -- (1.5,2);
        \draw (2.5,2.5) -- (3,2);
        \draw (4,2.5) -- (4.5,2);
        \draw (5.5,2.5) -- (6,2);
        \filldraw[gray,opacity=0.15] (-1,0.5) -- (3,0.5) -- (2.5,1) -- (-1,1) -- cycle;
        \filldraw[gray, opacity=0.15] (2,1) -- (3.5,1) -- (3,1.5) -- (1.5,1.5) -- cycle;
        \filldraw[gray,opacity=0.15] (2.5,1.5) -- (4,1.5) -- (3.5,2) -- (2,2) -- cycle;
        \filldraw[gray,opacity=0.15] (3,2) -- (4.5,2) -- (4,2.5) -- (2.5,2.5) -- cycle;
        \draw[->] (0,.5)--(-.1,.5);
        \draw[->] (0,1)--(.1,1);
        \fill[black] (-1,0) circle (0pt) node[font=\small,left]{1};
        \fill[black] (-1,0.5) circle (0pt) node[font=\small,left]{$j-i$};
        \fill[black] (-1,1) circle (0pt) node[font=\small,left]{};
        \fill[black] (-1,3) circle (0pt) node[font=\small,left]{$n$};
        \fill[black] (-1,2.5) circle (0pt) node[font=\small,left]{$j+1$};
        \fill[black] (6.5,0) circle (0pt);
        \fill[black] (6.5,0.5) circle (0pt);
        \fill[black] (6.5,1) circle (0pt);
        \fill[black] (6.5,3) circle (0pt);
        \fill[black] (6.5,2.5) circle (0pt);
        \path (3.5,2.25) node [font=\small] {$[j,i]$};
        \path (2,0.75) node [left, font=\small] {$[j-i,0]$};
        \end{tikzpicture}
        \caption{$r^\nearrow_{[j,i]}$}\label{net:regions-up}
        \end{subfigure}
\hspace{4em}
        \begin{subfigure}{.4\linewidth}
        \centering
        \begin{tikzpicture}[scale=.9]
        \draw (-1,3) -- (6.5,3);
        \draw (2.8,2.75) node{$\dots\dots$};
        \draw (-1,0) -- (6.5,0);
        \draw (-1,0.5) -- (6.5,0.5);
        \draw (-1,1) -- (6.5,1);
        \draw (-1,1.5) -- (6.5,1.5);
        \draw (-1,2) -- (6.5,2);
        \draw (-1,2.5) -- (6.5,2.5);
        \draw (0.5,2) -- (1,1.5);
        \draw (2,2) -- (2.5,1.5);
        \draw (3.5,2) -- (4,1.5);
        \draw (5,2) -- (5.5,1.5);
        \draw (1.5,1.5) -- (2,1);
        \draw (3,1.5) -- (3.5,1);
        \draw (4.5,1.5) -- (5,1);
        \draw (2.5,1) -- (3,0.5);
        \draw (4,1) -- (4.5,0.5);
        \draw (2.8,0.25) node{$\dots\dots$};
        \draw (-0.5,2.5) -- (0,2);
        \draw (1,2.5) -- (1.5,2);
        \draw (2.5,2.5) -- (3,2);
        \draw (4,2.5) -- (4.5,2);
        \draw (5.5,2.5) -- (6,2);
        \filldraw[gray,opacity=0.15] (5,1) -- (6.5,1) -- (6.5,1.5) -- (4.5,1.5) -- cycle;
        \filldraw[gray,opacity=0.15] (4,1.5) -- (5.5,1.5) -- (5,2) -- (3.5,2) -- cycle;
        \filldraw[gray,opacity=0.15] (3,2) -- (4.5,2) -- (4,2.5) -- (2.5,2.5) -- cycle;
        \draw[->] (6,1.5)--(6.1,1.5);
        \draw[->] (6,1)--(5.9,1);
        \fill[black] (-1,0) circle (0pt) node[font=\small, left]{1};
        \fill[black] (-1,0.5) circle (0pt) node[font=\small, left]{};
        \fill[black] (-1,1) circle (0pt) node[font=\small, left]{$i$};
        \fill[black] (-1,3) circle (0pt) node[font=\small, left]{$n$};
        \fill[black] (-1,2.5) circle (0pt) node[font=\small, left]{$j+1$};
        \fill[black] (6.5,0) circle (0pt);
        \fill[black] (6.5,0.5) circle (0pt);
        \fill[black] (6.5,1) circle (0pt);
        \fill[black] (6.5,3) circle (0pt);
        \fill[black] (6.5,2.5) circle (0pt);
        \path (3.5,2.25) node [font=\small] {$[j,i]$};
        \path (5.5,1.25) node [font=\small] {$[i,i]$};
        \end{tikzpicture}
        \caption{$r^\searrow_{[j,i]}$}\label{net:regions-down}
        \end{subfigure}
    \end{figure}
    
    Take a path $p\in P_{\underline{\alpha}}(\Pi_i\circ\cdots\circ\Pi_j)$. Assume $p$ has a sink $a$ on $l-1$-st vertical and crosses this vertical transversely at $b$, and $a>b$. Then using $r_{s,0}(w_l)\geqslant 0$ for $s=b\ldots a-1$ we can find a new path $\Tilde{p}$ with $w(\Tilde{p})\geqslant w(p)$ which has $b$ as a sink and $a$ as a transversal intersection:
    
    \begin{tikzpicture}[yscale=.7]
        \node at (-.2,-1.5)[font=\small]{$\cdots$};
        \node at (2,.5)[font=\small]{$\cdots$};
        \node at (2,-3.5)[font=\small]{$\cdots$};
        \draw(1,.1)--(1,-3.1);
        \draw[red] (0,0) -- (1,0) node[black, above left]{$a$};
        \draw (0,-1) -- (1,-1);
        \draw (0,-2) -- (1,-2);
        \draw[red] (0,-3) -- (1,-3) node[black, above left]{$b$};
        \begin{scope}[xshift=1.5cm]
            \draw (-.5,0) -- (4,0);
            \draw(-.5,-1) -- (4,-1);
            \draw(-.5,-2)  -- (4,-2);
            \draw[red] (-.5,-3) -- (3.5,-3);
            \draw(3.5,-3) -- (4,-3);
            \draw (1,0) -- (1.5,-1);
            \node at (1.8,-.5)[font=\small] {$\cdots$};
            \draw (2,-1) -- (2.5,-2);
            \node at (2.8,-1.5)[font=\small] {$\cdots$};
            \draw(3,-2) -- (3.5,-3);
            \node at (3.8,-2.5)[font=\small] {$\cdots$};
            \filldraw[gray,opacity=.15] (-.5,0) -- (1,0) -- (1.5,-1) -- (2,-1) -- (2.5,-2) -- (3,-2) -- (3.5,-3) -- (-.5,-3);
            \node at (.3,-.5) [font=\small] {$[a-1,0]$};
            \node at (1.5,-2.5) [font=\small] {$[b,0]$};
        \end{scope}
        
        \begin{scope}[xshift=7cm]
            \node at (-1,-1.5){$\rightarrow$};
            \node at (-.2,-1.5)[font=\small]{$\cdots$};
            \node at (2,.5)[font=\small]{$\cdots$};
            \node at (2,-3.5)[font=\small]{$\cdots$};
            \draw(1,.1)--(1,-3.1);
            \draw[red] (0,0) -- (1,0) node[black, above left]{$a$};
            \draw (0,-1) -- (1,-1);
            \draw (0,-2) -- (1,-2);
            \draw[red] (0,-3) -- (1,-3) node[black, above left]{$b$};
            \begin{scope}[xshift=1.5cm]
                \draw[red] (-.5,0) -- (1,0) -- (1.5,-1) -- (2,-1) -- (2.5,-2) -- (3,-2) -- (3.5,-3);
                \draw (1,0) -- (4,0);
                \draw(-.5,-1) -- (1.5,-1);
                \draw(2,-1) -- (4,-1);
                \draw(-.5,-2)  -- (2.5,-2);
                \draw(3,-2) -- (4,-2);
                \draw(-.5,-3) -- (4,-3);
                \node at (1.8,-.5)[font=\small] {$\cdots$};
                \node at (2.8,-1.5)[font=\small] {$\cdots$};
                \node at (3.8,-2.5)[font=\small] {$\cdots$};
                \filldraw[gray,opacity=.15] (-.5,0) -- (1,0) -- (1.5,-1) -- (2,-1) -- (2.5,-2) -- (3,-2) -- (3.5,-3) -- (-.5,-3);
                \node at (.3,-.5) [font=\small] {$[a-1,0]$};
                \node at (1.5,-2.5) [font=\small] {$[b,0]$};
            \end{scope}    
        \end{scope}
        
    \end{tikzpicture}
    
    An induction argument then shows that there is $p'$ with $w(p')\geqslant w(p)$ such that on ech vertical all the sinks lie below all the transversal intersections.



    
    Then an argument as in Lemmas 10 and 11 in \cite{APS-planar} shows that there is a multipath $p''$ with $w(p'')\geqslant w(p)$ with sources $[n-\sum\alpha_i+1,n]$ and sinks $[1,\alpha_1],\ldots,[1,\alpha_k]$.
\end{proof}

Now we turn to the proof of Theorem \ref{theorem: m is onto the octahedron recurrence cone}.

\begin{notation}
    We enumerate the faces of the tetrahedron as follows: if  its edges encode singular values $\la(1),\la(23),\la(123)$, then this face is enumerated $1;23$, if $\la(2),\la(3),\la(23)$, then $2;3$ etc.

        
        
        
        
        


    Denote by $m^{(1;23)}$ the composition of $m$ with the projection on the coordinates corresponding to a couple faces $1;23$ and $2;3$. 
 In other words, $m^{1;23}(w)=\big(m_{i,j,k}\big)_{j=0\text{ or } i+j+k=n}$. 
\end{notation}

\begin{proof}[Proof of Theorem \ref{theorem: m is onto the octahedron recurrence cone}]
In view of Theorems \ref{theorem PN rhombi inequalities}, \ref{theorem PN octahedron recurrence} and Remark \ref{remark: octahedron recurrence and crystals} it is enough to show that for any collection of numbers labeled by the integer points on the  couple of faces $1;23$ and $2;3$,
satisfying the rhombus inequalities for all rhombi contained in these faces, there exists a triple of weightings $a,b,c\in \T^{\Pi_{\rm st}}$ satisfying multi-Gelfand-Zeitlin condition, such that $m^{(1;23)}(a\circ b\circ c)$ equals this collection. The proof of this fact is a modification of the proof of Theorem 5 in \cite{APS-planar}.

Let $W\subset(\T^{\Pi_{\rm st}})^3$ be the set of triples of weightings $(a,b,c)$ such that $a,b\in W_\T(\Pi_{\rm st})$ and $c$ is non-zero on the following edges:
\begin{figure}[H]
    \centering
\begin{tikzpicture}[yscale=.6,xscale=.8]
    \draw (0,0) node[above right]{$c_n$} -- (6,0);
    \node at (3,-0.5) {$\cdots$};
    \draw (0,-1) node[above right]{$c_3$} -- (3.5,-1);
    \draw (3.5,-1) -- (6,-1);
    \draw (0,-2) node[above right]{$c_2$} -- (3,-2);
    \draw (3,-2) -- (4,-2);
    \draw (4,-2) -- (6,-2);
    \draw (0,-3) node[above right]{$c_1$} -- (3.5,-3);
    \draw (3.5,-3) -- (6,-3);
    \draw (2,-1) -- (2.5,-2);
    \draw (3.5,-1) -- (4,-2);
    \draw (3,-2) -- (3.5,-3);
\end{tikzpicture}
\end{figure}

This set is isomorphic to $\T^{n(n+2)}$. Denote $w=a\circ b\circ c$. Notice that for any multipath $p\in P_{i,j,k}(\Pi_{\rm st}^{\circ3})$ there is a multipath $p'\in P_{i,j,k}$ of the same weight $w(p')=w(p)$ which does not contain slanted edges of the last network.

Let $\beta_{i,0,j}$ be a unique multipath in $\Pi_{\rm st}^{\circ 3}$ with sources $[n-i-j+1,n]$ and sinks $[1,i],\emptyset,[n-j+1,n]$. Let $\beta_{n-j-k,j,k}$ be a unique multipath with sources $[1,n]$ and sinks $[1,n-j-k], [1,j], [n-k+1,n]$.
\begin{figure}[H]
\centering
\begin{subfigure}{.8\textwidth}
    \centering
    \begin{tikzpicture}[scale=.6]
        \draw[red] (0,0) node[left,black]{4} -- (6,0);
        \draw[red] (0,-1) node[left,black]{3} -- (3.5,-1);
        \draw (3.5,-1) -- (6,-1);
        \draw[red] (0,-2) node[left,black]{2} -- (3,-2);
        \draw (3,-2) -- (4,-2);
        \draw[red] (4,-2) -- (6,-2);
        \draw (0,-3) node[left]{1} -- (3.5,-3);
        \draw[red] (3.5,-3) -- (6,-3);
        \draw (1,0) -- (1.5,-1);
        \draw (2.5,0) -- (3,-1);
        \draw (4,0) -- (4.5,-1);
        \draw (2,-1) -- (2.5,-2);
        \draw[red] (3.5,-1) -- (4,-2);
        \draw[red] (3,-2) -- (3.5,-3);
        \draw (6,.1) -- (6,-3.1);
        \begin{scope}[xshift=6cm]
            \draw[red] (0,0) -- (6,0);
            \draw (0,-1) -- (3.5,-1);
            \draw (3.5,-1) -- (6,-1);
            \draw (0,-2) -- (3,-2);
            \draw (3,-2) -- (4,-2);
            \draw (4,-2) -- (6,-2);
            \draw (0,-3) -- (3.5,-3);
            \draw (3.5,-3) -- (6,-3);
            \draw (1,0) -- (1.5,-1);
            \draw (2.5,0) -- (3,-1);
            \draw (4,0) -- (4.5,-1);
            \draw (2,-1) -- (2.5,-2);
            \draw (3.5,-1) -- (4,-2);
            \draw (3,-2) -- (3.5,-3);
            \draw (6,.1) -- (6,-3.1);
        \end{scope}
        \begin{scope}[xshift=12cm]
            \draw[red] (0,0) -- (6,0);
            \draw (0,-1) -- (3.5,-1);
            \draw (3.5,-1) -- (6,-1);
            \draw (0,-2) -- (3,-2);
            \draw (3,-2) -- (4,-2);
            \draw (4,-2) -- (6,-2);
            \draw (0,-3) -- (3.5,-3);
            \draw (3.5,-3) -- (6,-3);
            \draw (1,0) -- (1.5,-1);
            \draw (2.5,0) -- (3,-1);
            \draw (4,0) -- (4.5,-1);
            \draw (2,-1) -- (2.5,-2);
            \draw (3.5,-1) -- (4,-2);
            \draw (3,-2) -- (3.5,-3);
        \end{scope}
    \end{tikzpicture}
    \caption{$\beta_{2,0,1}$}
\end{subfigure}
\begin{subfigure}{.8\linewidth}
    \centering
    \begin{tikzpicture}[scale=.6]
            \draw[red] (0,0) node[left,black]{4} -- (6,0);
            \draw[red] (0,-1) node[left,black]{3} -- (6,-1);
            \draw[red] (0,-2) node[left,black]{2} -- (6,-2);
            \draw[red] (0,-3) node[left,black]{1} -- (6,-3);
            \draw (1,0) -- (1.5,-1);
            \draw (2.5,0) -- (3,-1);
            \draw (4,0) -- (4.5,-1);
            \draw (2,-1) -- (2.5,-2);
            \draw (3.5,-1) -- (4,-2);
            \draw (3,-2) -- (3.5,-3);
            \draw (6,.1) -- (6,-3.1);

        \begin{scope}[xshift=6cm]
            \draw[red] (0,0) -- (6,0);
            \draw[red] (0,-1) -- (3.5,-1);
            \draw (3.5,-1) -- (6,-1);
            \draw[red] (0,-2) -- (3,-2);
            \draw (3,-2) -- (4,-2);
            \draw[red] (4,-2) -- (6,-2);
            \draw (0,-3) -- (3.5,-3);
            \draw[red] (3.5,-3) -- (6,-3);
            \draw (1,0) -- (1.5,-1);
            \draw (2.5,0) -- (3,-1);
            \draw (4,0) -- (4.5,-1);
            \draw (2,-1) -- (2.5,-2);
            \draw[red] (3.5,-1) -- (4,-2);
            \draw[red] (3,-2) -- (3.5,-3);
            \draw (6,.1) -- (6,-3.1);
        \end{scope}
        \begin{scope}[xshift=12cm]
            \draw[red] (0,0) -- (6,0);
            \draw (0,-1) -- (3.5,-1);
            \draw (3.5,-1) -- (6,-1);
            \draw (0,-2) -- (3,-2);
            \draw (3,-2) -- (4,-2);
            \draw (4,-2) -- (6,-2);
            \draw (0,-3) -- (3.5,-3);
            \draw (3.5,-3) -- (6,-3);
            \draw (1,0) -- (1.5,-1);
            \draw (2.5,0) -- (3,-1);
            \draw (4,0) -- (4.5,-1);
            \draw (2,-1) -- (2.5,-2);
            \draw (3.5,-1) -- (4,-2);
            \draw (3,-2) -- (3.5,-3);
        \end{scope}
    \end{tikzpicture}
    \caption{$\beta_{1,2,1}$}
\end{subfigure}
\end{figure}
Let $\beta$ be the map $W\to\T^{n(n+2)}$ with components $w\mapsto w(\beta_{i,j,k})$ where $j=0$ or $i+j+k=n$.

\textit{Claim 1.} The map $\beta$ is a linear isomorphism.

\textit{Proof.} We have: $c_j=w(\beta_{0,j-1,n-j+1})-w(\beta_{0,j,n-j})$, \; $b_{j,j}+c_j=w(\beta_{j-1,0,n-j+1})-w(\beta_{j,0,n-j})$,  $a_{j,j}+b_{j,j}+c_j=w(\beta_{0,0,n-j+1})-w(\beta_{0,0,n-j})$, therefore we can recover $a_{j,j}, b_{j,j}, c_j$ from $\beta(w)$. Moreover, $\beta_{i,0,n-l}=a_{l,i}+\ldots$ where $\ldots$ contains $a_{l',i'}$ with $i'< i$ or $l'<l$, $a_{j,j}, b_{j,j}, c_j$. Therefore we can recover $a_{l,i}$ from $\beta(w)$. Similarly, $\beta_{l-i,i,n-l}=b_{l,i}+\ldots$ where $\ldots$ contains $b_{l',i'}$ with $i'< i$ or $l'<l$, $a_{j,j}, b_{j,j}, c_j$, and we recover $b_{l,i}$.

\textit{Claim 2.} If $\beta(w)$ satisfies the rhombus inequalities for all rhombi contained in the faces $1;23$ and $2;3$, then $a,b,c$ are Gelfand-Zeitlin.

\textit{Proof.} Notice that if $(m_i^{(l)})\in\Delta_{GZ}$ and $C_l$ are some constants, then $(m_i^{(l)}+C_l)\in\Delta_{GZ}$ as well.

$a$ is Gelfand-Zeitlin: $a(\alpha_i^{(l)})=w(\beta_{i,0,n-l})-\sum_{j=l+1}^n(a_{j,j}+b_{j,j}+c_j)$. Since $\big(w(\beta_{i,0,n-l})\big)_{i\leq l}\in\Delta_{GZ}$, so is $\mathcal{A}(a)$. 

Similarly, $b(\alpha_i^{(l)})=w(\beta_{l-i,i,n-l})-\sum_1^n a_{j,j}-\sum_{l+1}^n (b_{j,j}+c_j)$, and $\big(w(\beta_{l-i,i,n-l})\big)_{i\leq l}\in \Delta_{GZ}$ implies that $\mathcal{A}(b)\in\Delta_{GZ}$.

$c$ is Gelfand-Zeitlin since $c(\alpha_i^{(l)}) = c_{l-i+1}+\cdots+c_l = w(\beta_{0,l-i,n-l+i})-\sum_1^n(a_{j,j}+b_{j,j})-\sum_{l+1}^n c_j$.

\textit{Claim 3.} If $\beta(w)$ satisfies the rhombus inequalities for all rhombi contained in the  faces $1;23$ and $2;3$, then $\beta(w)=m(w)$.

\textit{Proof.} From the proof of Theorem 5 in \cite{APS-planar} it follows that: (i) if $\beta(w)$ satisfies rhombus inequalities, then $w(\beta_{n-i-j,i,j})=m_{n-i-j,i,j}(w)$; \; (ii) the rhombus inequalities for $\beta(w)$ are equivalent to
\begin{gather*}
    r_{j,i}^\nearrow(a), r_{j,i}^\nearrow(b)\geqslant 0, \quad 0\leqslant i<j\leqslant n; \qquad 
    r_{j,i}^\searrow(a), r_{j,i}^\searrow(b)\leqslant 0, \quad 0<i\leqslant j\leqslant n; \qquad \\
    r_{j,i}^\rightarrow(b;c), r_{j,i}^\rightarrow(a;b\circ c)\geqslant0, \; 0<i\leqslant j\leqslant n.
\end{gather*}
Here $r^\rightarrow(w_1;w_2)$ is a function of two weightings: $w_1\in \T^{\Pi_{\rm st}(n)}$ and $w_2\in\T^\Pi$, where $\Pi$ is a planar network which contains a horizontal line between $i$-th source and $i$-th sink.
It is defined similarly to $r^\nearrow, r^\searrow$ (see proof of lemma \ref{lemma A(w) in GZ implies multi-GZ condition on planar networks}) via the following picture:

\begin{figure}[H]
\centering
\begin{tikzpicture}
\node at (3.5, .7){$\cdots$};
\draw (-1,1) -- (8.5,1);
\draw (-1,1.5)  node[left]{$j$} -- (8.5,1.5) node[right]{$j$};
\draw[->] (7.5,1.5) -- (7.4,1.5);
\draw[->_______________________] (-1,2) -- (8.5,2);
\draw (-1,2.5) -- (8.5,2.5);
\node at (3.5, 2.7){$\cdots$};
\draw (0.5,2) -- (1,1.5);
\draw (2,2) -- (2.5,1.5);
\draw (3.5,2) -- (4,1.5);
\draw (5,2) -- (5.5,1.5);
\draw (1.5,1.5) -- (2,1);
\draw (3,1.5) -- (3.5,1);
\draw (4.5,1.5) -- (5,1);
\draw (-0.5,2.5) -- (0,2);
\draw (1,2.5) -- (1.5,2);
\draw (2.5,2.5) -- (3,2);
\draw (4,2.5) -- (4.5,2);
\draw (5.5,2.5) -- (6,2);
\draw[opacity=0.3] (6.5,1) -- (6.5,2.5);
\filldraw[gray,opacity=0.15] (2.5,1.5) -- (6.5,1.5) -- (6.5,2) -- (2,2);
\filldraw[gray,opacity=0.15] (6.5,1.5) -- (6.5,2) -- (8.5,2) -- (8.5,1.5);
\node at (3,1.75) [font=\small] {$[j,i]$};
\node at (6,1.75) [font=\small] {$[j,j]$};
\node at (7.5,1.75) [font=\small] {$\cdots$};
\end{tikzpicture}
 \caption{$r^\rightarrow_{[j,i]}$}
\end{figure}

Let us prove that $w(\beta_{i,0,j})=m_{i,0,j}(w)$. By Claim 2 and Lemma \ref{lemma A(w) in GZ implies multi-GZ condition on planar networks}, $w$ satisfies multi-Gelfand-Zeitlin condition. Therefore there is a maximal multipath $p\in P_{i,0,j}$ with sources $[n-i-j+1,n]$ and sinks $[1,i], \emptyset, [1,j]$. 
In particular, all the single sub-paths of $p$ ending on the 3rd vertical lie above those ending at the 1st vertical. Change $p$ to a path of the same weight so that it does not contain slanted edges of the 3rd network. Take the highest non-straight sub-path $p_l$ of $p$ ending on 3rd vertical. Take a cell which $p_l$ bounds from left and below (this cell can lie in the 1st or 2nd planar network):
\begin{figure}[H]
    \centering
\begin{tikzpicture}[xscale=.7, yscale=1.1]
\node at (3.5, .7){$\cdots$};
\node at (-.4,1.74){$\cdots$};
\draw (0,1) -- (8.5,1);
\draw (0,1.5) -- (2.5,1.5);
\draw[red] (2.5,1.5) -- (8.5,1.5) node[right, black]{$j$};
\draw (0,2) -- (0,2);
\draw[red] (0,2) -- (2,2);
\draw[red,dashed] (2,2) -- (8.5,2);
\draw (0,2.5) -- (8.5,2.5);
\node at (3.5, 2.7){$\cdots$};
\draw (0.5,2) -- (1,1.5);
\draw[red] (2,2) -- (2.5,1.5);
\draw (3.5,2) -- (4,1.5);
\draw (5,2) -- (5.5,1.5);
\draw (1.5,1.5) -- (2,1);
\draw (3,1.5) -- (3.5,1);
\draw (4.5,1.5) -- (5,1);
\draw (1,2.5) -- (1.5,2);
\draw (2.5,2.5) -- (3,2);
\draw (4,2.5) -- (4.5,2);
\draw (5.5,2.5) -- (6,2);
\draw[opacity=0.3] (6.5,1) -- (6.5,2.5);
\filldraw[gray,opacity=0.15] (2.5,1.5) -- (6.5,1.5) -- (6.5,2) -- (2,2);
\filldraw[gray,opacity=0.15] (6.5,1.5) -- (6.5,2) -- (8.5,2) -- (8.5,1.5);
\node at (3,1.75) [font=\small] {$[j,i]$};
\node at (7.5,1.75) [font=\small] {$\cdots$};
\end{tikzpicture}
\end{figure}

Since all the sub-paths of $p$ lying above $p_l$ are straight, one can use an inequality $r^\rightarrow\geqslant 0$ corresponding to this cell to move the sink of $p_l$ higher, {\em i.e.}, there is a multipath $\Tilde{p}$ (dashed in the picture) of weight $w(\Tilde{p})\geqslant w(p)$ with the sum of labels of sinks bigger than that of $p$. An induction argument then shows that there is a maximal multipath $p'\in P_{i,0,j}$ which has sources and sinks as $\beta_{i,0,j}$, hence $p'=\beta_{i,0,j}$.

\vspace{5pt}

To finish the proof take a collection of numbers $x=\big(x_{i,j,k}\big)_{j=0\text{ or } i+j+k=n}$. Let $(a,b,c)=\beta^{-1}(x)$. By definition, $\beta(a\circ b\circ c)$ satisfies rhombus inequalities, hence the statements of claims 2 and 3 are true for $(a,b,c)$: $a,b,c$ are Gelfand-Zeitlin and $m(a\circ b\circ c) = x$. 
\end{proof}

\section{Positive functions and inequalities}\label{sec:geometry}

This section should be thought of as a geometric interpretation of the multiple Horn problem. Also, via the theory of geometric crystals, one can think of the results in this section as a geometrization of some results in \cite{HK}.

Denote by $\GL_n$ the space of $n\times n$ invertible matrices over $\mathbb{C}$. We shall introduce a bunch of functions $M_{\underline{\alpha}}$, which will be called multi-corner minors, on $\GL_n\times \cdots \times \GL_n$ and investigate their properties. Via the celebrated Loewner-Whitney Theorem, one should think of $m_{\underline{\alpha}}$'s as the ``tropicalization'' of $M_{\underline{\alpha}}$'s.

Throughout this section, we denote by $U$ the subgroup of $\GL_n$ consisting of  unipotent upper triangular matrices and denote by $B_-$ the subgroup of $\GL_n$ consisting of  invertible lower triangular matrices. For any integer $n>0$, denote by $[1,n]$ the set of integers $\{1,2,\ldots,n\}$.  For simplicity, $[1,n]$ is the emptyset if $n\leqslant 0$. For any subset $I=\{a_1,\ldots,a_k\}\subset [1,n]$ and $b\in [1,n]$ such that $a_i+b\in [1,n]$, denote by 
\[
    I^{\op}:=\{n+1-a_1,\ldots,n+1-a_k\}, \quad I+b:=\{a_1+b,\ldots,a_k+b\}
\]
the opposite subset of $I$ and $b$-shifted subset of $I$. Denote by $|I|$ the cardinality of  $I$.
Given a matrix $g$ of size $n\times m$, for any subsets $I\subset [1,n]$ and $J\subset [1,m]$ such that $|I|=|J|$, denote by $\Delta_{I,J}(g)$ the determinant of the submatrix of $g$ corresponding to $I$'s rows and $J$'s columns.


\subsection{Preliminaries: the Berenstein-Kazhdan potential and interlacing inequalities}\label{subsection: geometric preliminaries: BK potential and interlacing ineq}
In this section, we recall the notion of the Berenstein-Kazhdan potential, which is a rational function on $\GL_n$, and its relation with interlacing inequalities. It was first introduced in \cite{bk2} as a $U\times U$-linear function on $\GL_n$ in the study of  geometric crystals, see Eq \eqref{eq:propofBK}. It also appears independently in \cite{R} in the study of quantum cohomology of flag varieties.  We recall

\begin{definition}\cite[Corollary 1.25]{bk2}
    As a rational function, the potential $\Phi_{\rm BK}$ on $\GL_n$ is given by 
    \[
    \Phi_{\rm BK}(g)=\sum_{i=1}^{n-1} \frac{\Delta_{([1,i+1]\setminus \{i\})^{\op},[1,i]}(g)+\Delta_{[1,i]^{\op},[1,i+1]\setminus \{i\}}(g)}{\Delta_{[1,i]^{\op},[1,i]}(g)}.
    \]
\end{definition}

Note that for $(u_1, u_2)\in U^2$, following \cite[Corollary 1.21]{bk2}, we have
\begin{equation}\label{eq:propofBK}
    \Phi_{\rm BK}(u_1gu_2)=\chi(u_1)+\Phi_{\rm BK}(g)+\chi(u_2),
\end{equation}
where $\chi(u)$ is the additive character of $U$ defined by $\chi(u)=\sum_{i=1}^n u_{i,i+1}$ for $[u_{ij}]\in U$. 

Now let us restrict the rational function $\Phi_{\rm BK}$ to $B_-$ and consider the pair $(B_-,\Phi_{\rm BK})$, which is a {\em positive variety with potential} in the following sense. Recall that given a complex variety $P$ of dimension $d$ and a rational function $\Phi$ on $P$, there is a notion of a {\em positive structure}, which is a collection $\Theta$ of open embeddings $\theta_i\colon (\mathbb{C}^*)^d\hookrightarrow P$ such that 1) $\Phi\circ \theta$ has a positive expression in terms of natural coordinates on $(\mathbb{C}^*)^d$; 2) the composition $\theta_i\circ \theta_j^{-1}$ is positive for any $\theta_i, \theta_j\in \Theta$. Open embeddings $\theta_i$ are called positive charts on $(P, \Phi)$, see \cite[Section 2.1]{ABHL} for details. A positive structure can be recovered from the set of coordinate functions of the involved birational isomorphisms $\theta_i^{-1}:P\to (\mathbb{C}^*)^d$ (such as multi-corner minors $\{M_{i,j}\}$ before Theorem \ref{th:rhombus M2} and $\{M_{i,j,k}\}$ in Theorem \ref{intro:M_3properties}, see section \ref{subsec:potential on Bk} below).

We now introduce two equivalent positive charts on $(B_-,\Phi_{\rm BK})$. The first one comes from the {\em geometric Gelfand-Zeitlin pattern}. For each $b\in B_-$ and $(i,j)\in \delta_n:=\{(i,j)\mid 0\leqslant i\leqslant n-1,1\leqslant j\leqslant n, i+j\leqslant n\}$ define  
\begin{equation}\label{eq:geoGZpattern}
    \Delta_{i,j}:=\Delta_{[1,j]^{\op},[i+1,i+j]}=:\lambda_{1}^{(n-i)}\cdots \lambda_{j}^{(n-i)},
\end{equation}
which is equivalent to
\[
    \lambda_{j}^{(n-i)}=\frac{\Delta_{i,j}}{\Delta_{i,j-1}}, \text{~for~} j>1; \quad \lambda_{1}^{(n-i)}=\Delta_{i,1}.
\]
We have
\begin{lemma}
    The map $\Lambda\colon B_-\to \mathbb{C}^{\frac{1}{2}n(n+1)}$ given by
    \[b\mapsto \{\lambda_{n-i-j}^{(n-i)}(b)\mid (i,j)\in\delta_n\}
    \]
    is a birational isomorphism, whose inverse restricts to an open embedding $\theta_{\rm GZ}\colon (\mathbb{C}^*)^{\frac{1}{2}n(n+1)}\to B_-$.
\end{lemma}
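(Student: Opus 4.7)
The plan is to induct on $n$ and reconstruct $b\in B_-$ row by row from the coordinates $\{\lambda_j^{(k)}\}$. Let $V\subset B_-$ denote the Zariski-open locus on which every $\Delta_{i,j}$ with $(i,j)\in\delta_n$ is nonzero; on $V$ each $\lambda_j^{(n-i)}=\Delta_{i,j}/\Delta_{i,j-1}$ is regular and nowhere zero, so $\Lambda$ restricts to a morphism $V\to(\C^*)^{n(n+1)/2}$. Since $\dim V=\dim B_-=n(n+1)/2$, it suffices to produce a rational inverse.

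For the inductive step I decompose $b\in B_-$ as
\[
b=\begin{pmatrix}b_{1,1}&0\\ u & b''\end{pmatrix},
\]
where $b''\in B_-$ is the lower-right $(n-1)\times(n-1)$ block and $u=(b_{2,1},\ldots,b_{n,1})^t$. For $(i,j)\in\delta_n$ with $i\geqslant 1$ the minor $\Delta_{i,j}(b)=\Delta_{[1,j]^{\op},[i+1,i+j]}(b)$ uses only columns $\{i+1,\ldots,i+j\}\subset[2,n]$, so by direct inspection $\Delta_{i,j}(b)=\Delta_{i-1,j}(b'')$. Hence $\lambda_j^{(k)}(b)=\lambda_j^{(k)}(b'')$ for all $k<n$, and the induction hypothesis determines $b''$ birationally in $B_-$ of rank $n-1$ (the base case $n=1$ being tautological since $\lambda_1^{(1)}=b_{1,1}$).

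It remains to recover $u$ and $b_{1,1}$ from the top-row data $\Delta_{0,1}(b),\ldots,\Delta_{0,n}(b)$. Cofactor expansion of $\Delta_{0,j}(b)$ along the first column of $b$ yields
\[
\Delta_{0,j}(b)=\Delta_{1,j-1}(b'')\cdot b_{n-j+1,1}+R_j,
\]
where $R_j$ is a polynomial in the entries of $b''$ and in $b_{r,1}$ with $r>n-j+1$. Read in the order $j=1,2,\ldots,n$ this is a triangular system for $b_{n,1},b_{n-1,1},\ldots,b_{1,1}$ whose diagonal coefficients $\Delta_{1,j-1}(b'')=\prod_{k=1}^{j-1}\lambda_k^{(n-1)}$ are nonzero on $V$; back-substitution defines the rational inverse $\theta_{\rm GZ}$ and one checks that its image is exactly $V$. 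The main obstacle is the bookkeeping needed to verify the triangular shape of this last cofactor expansion and to identify its leading coefficient with the minor $\Delta_{1,j-1}(b'')$ already recovered inductively; both facts follow from a careful Laplace expansion but are the only non-mechanical point in the argument.
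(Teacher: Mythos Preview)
Your argument is essentially correct, with one small index slip: the leading cofactor in your expansion of $\Delta_{0,j}(b)$ along the first column is $\Delta_{0,j-1}(b'')$, not $\Delta_{1,j-1}(b'')$ (in the $(n-1)$-context, $\Delta_{0,j-1}(b'')=\Delta_{[1,j-1]^{\op_{n-1}},[1,j-1]}(b'')$, which indeed equals $\prod_{k=1}^{j-1}\lambda_k^{(n-1)}$ as you wrote). This does not affect the logic: the diagonal coefficients are still nonzero monomials in the $\lambda$'s, so back-substitution works. That $\theta_{\rm GZ}$ lands in $B_-$ follows since by construction $\det(b)=\Delta_{0,n}(b)=\prod_k\lambda_k^{(n)}\neq 0$.

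Your approach and the paper's share the same inductive skeleton (pass to the lower-right $(n-1)\times(n-1)$ block), but the executions differ. The paper writes down an explicit closed formula for the inverse,
\[
\theta_{\rm GZ}(\Lambda)=\diag(\lambda_n^{(n)},\ldots,\lambda_1^{(n)})\cdot \vec\prod_{i<j} x_{i-j}\bigl(\lambda_{n+1-j}^{(n+i-j)}/\lambda_{n+1-j}^{(n)}\bigr),
\]
and verifies it by citing identities from \cite{bz2}; this factorization is then reused to compute $\Phi_{\rm BK}$ in Theorem~\ref{Thm:poten}. Your back-substitution is more self-contained and avoids external input, but it does not directly produce the product decomposition that the paper exploits later. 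Either route establishes the lemma; the paper's version is chosen because the explicit formula is needed downstream.
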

\begin{proof}
    For $1\leqslant i\leqslant n-1$, denote by $\varphi_i\colon \SL_2\to \GL_n$ the natural embedding and 
     \[
     x_{-i}(t)=\varphi_i\left(
     \begin{bmatrix} 
     t^{-1} & 0\\
     1& t
     \end{bmatrix}
     \right).
     \]
     We show the lemma by claiming that the inverse of $\Lambda$ is given by
     \begin{equation}\label{eq:invofthetaGZ}
         \theta_{\rm GZ}(\Lambda)=\diag(\lambda_n^{(n)},\ldots,\lambda_1^{(n)})\cdot  \vec \prod x_{i-j}(t_{ij}),
     \end{equation}
     where the product is in the lexicographic order (that is $(1,2),(1,3), \ldots, (2,3) ,\ldots$) on pairs $(i,j)$ for $1\leqslant i<j\leqslant n$ and $t_{ij}=\lambda_{n+1-j}^{(n+i-j)}/\lambda_{n+1-j}^{(n)}$. The proof of the claim follows from \cite[Eq (7.1),(8.1) and (8.3)]{bz2} and by induction on $n$, where we embed $\GL_{n-1}$ to the right-bottom of $\GL_n$. The detailed proof  is purely computational, and it is omitted here.
\end{proof}

The second positive chart of $(B_-,\Phi_{\rm BK})$ comes from the standard planar  network $\Pi_{\rm{st}}$ with the weighting $w_{\rm{st}}\colon \edge(\Pi_{\rm{st}})\to \mathbb{C}$ as in Fig.\ref{fig:stplanar}.
Recall that for each planar network $\Pi$ and an $R$-weighting $w\colon \edge(\Pi)\to \mathbb{C}$, one can construct a matrix $M_{\Pi}(w)$ via

\begin{definition}\label{definition: correspondence map}
    The correspondence map is \[M_\Pi:R^\Pi\rightarrow \Mat_{n\times n}(R),\qquad w\mapsto \big[M_\Pi(w)_i^j\big]_{i,j=1\ldots n}=\Big[\sum_{p\colon i\rightarrow j}w(p)\Big]_{i,j=1,\ldots, n}.\]
\end{definition} 

The correspondence map has the following properties:
\begin{itemize}
    \item[(1)] $M_{\Pi_1\circ \Pi_2}(w_1\circ w_2) = M_{\Pi_1}(w_1)\cdot M_{\Pi_2}(w_2)$,

    \item[(2)] (Lindstr{\"o}m's Lemma) The minors of $M_\Pi(w)$ are positive rational functions of the weights. They can be expressed as
\[\Delta_{I,J}(M_\Pi(w))=\sum_{p\colon I\to J}w(p).\]
\end{itemize}

The variables $a_{i,j}$'s in the Fig.\ref{fig:stplanar} are nothing but the well-known factorization parameter for $B_-$. Thus we have a map
\[
    \theta_{\mathrm{st}}\colon (\mathbb{C}^*)^{\frac{1}{2}n(n+1)}\to B_-\ : \ \{a_{ij}\}\mapsto M_{\Pi_{\rm st}}(w_{\rm st}).
\]
\begin{lemma}\label{lemma: GZ and standard network positive structures are equivalent}
    The open embedding $\theta_{\rm GZ}$ and $\theta_{\rm{st}}$ are {\em positive equivalent}, {\em i.e.},  $\theta_{\rm GZ}\circ \theta_{\rm st}^{-1}$ and $\theta_{\rm st}\circ \theta_{\rm GZ}^{-1}$ are positive. Thus $\theta_{\rm GZ}$ and $\theta_{\rm{st}}$ define a positive structure for $(B_-,\Phi_{\rm BK})$.
\end{lemma}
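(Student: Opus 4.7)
The plan is to establish positivity of both transition maps and to confirm that the Berenstein-Kazhdan potential has a subtraction-free expression in one (and hence both) charts.

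For the map $\theta_{\rm GZ}^{-1}\circ \theta_{\rm st}$, the argument is immediate from Lindstr\"om's lemma. Setting $b=\theta_{\rm st}(\{a_{k,l}\})=M_{\Pi_{\rm st}}(w_{\rm st})$, property (2) of the correspondence map gives
\[
\Delta_{i,j}(b)=\Delta_{[1,j]^{\rm op},[i+1,i+j]}(b)=\sum_{p\,:\,[1,j]^{\rm op}\to [i+1,i+j]} w_{\rm st}(p),
\]
which is a nonzero polynomial in the $a_{k,l}$'s with nonnegative integer coefficients (nonzero because at least one such multipath exists in $\Pi_{\rm st}$). Hence each GZ coordinate $\lambda_j^{(n-i)}=\Delta_{i,j}/\Delta_{i,j-1}$ is a subtraction-free rational function of $\{a_{k,l}\}$, as desired.

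For the reverse direction $\theta_{\rm st}^{-1}\circ \theta_{\rm GZ}$, the plan is to use the explicit factorization of Eq~\eqref{eq:invofthetaGZ} together with the observation that $\Pi_{\rm st}$ itself realizes precisely the same Jacobi-type factorization of $B_-$ at the level of weighted planar networks. Concretely, each elementary factor $x_{i-j}(t_{ij})$ on the right-hand side of \eqref{eq:invofthetaGZ} corresponds (up to the diagonal rescaling $\diag(\lambda_n^{(n)},\ldots,\lambda_1^{(n)})$) to one slanted edge of $\Pi_{\rm st}$, and the diagonal prefactor absorbs the weights of the rightmost horizontal edges. Reading this matching off edge-by-edge yields, for every $(k,l)$, a Laurent-monomial formula $a_{k,l}=c_{k,l}(\{\lambda_j^{(n-i)}\})$, which is manifestly subtraction-free. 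Equivalently, one can directly invert $\theta_{\rm st}$ using Berenstein--Fomin--Zelevinsky type formulas that recover each edge weight $a_{k,l}$ as a ratio of two minors of $b$, then substitute the monomial expressions from \eqref{eq:geoGZpattern}.

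Finally, positivity of the potential in the standard-network chart is automatic: every minor $\Delta_{I,J}(b)$ appearing in $\Phi_{\rm BK}$ is a positive polynomial in $\{a_{k,l}\}$ by Lindstr\"om's lemma, and the denominators $\Delta_{[1,i]^{\rm op},[1,i]}$ are nonzero there. Combined with the transition maps above being positive, this yields positivity of $\Phi_{\rm BK}$ in the GZ chart as well, completing the verification that $\theta_{\rm GZ}$ and $\theta_{\rm st}$ define a positive structure on $(B_-,\Phi_{\rm BK})$. The main obstacle I foresee is the bookkeeping in the reverse direction: pinning down exactly which Jacobi factor $x_{i-j}(t_{ij})$ corresponds to which slanted edge of $\Pi_{\rm st}$, and correctly absorbing the diagonal rescaling into the rightmost horizontal weights. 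This is routine but requires care to avoid mis-indexing in the inductive embedding $\GL_{n-1}\hookrightarrow \GL_n$ used to derive \eqref{eq:invofthetaGZ}.
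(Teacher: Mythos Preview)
Your proposal is correct and follows essentially the same approach as the paper: Lindstr\"om's lemma for $\theta_{\rm GZ}^{-1}\circ\theta_{\rm st}$, and Fomin--Zelevinsky type inversion formulas expressing the factorization parameters $a_{k,l}$ as subtraction-free ratios of the minors $\Delta_{[1,j]^{\rm op},[i+1,i+j]}$ for the reverse direction. The paper's proof is in fact just a two-line sketch citing \cite{FZ} for the latter, so your version is more detailed; your alternative route via directly matching the factors in \eqref{eq:invofthetaGZ} to edges of $\Pi_{\rm st}$ is also workable, though as you correctly flag, the diagonal entries $t^{-1},t$ in $x_{-i}(t)$ make the edge-by-edge matching slightly more intricate than a one-to-one correspondence with slanted edges.
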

\begin{proof}
    By Lindstr{\"o}m's Lemma, it is clear that $\Delta_{[1,j]^\op,[i+1,i+j]}$'s are positive polynomial of $a_{ij}$'s. By \cite{FZ}, one can write down the parameter $a_{ij}$'s using $\Delta_{[1,j]^\op,[i+1,i+j]}$'s in positive expressions. 
\end{proof}

Now let us write down the matrix $b$ and the BK potential $\Phi_{\rm BK}(b)$ using the positive chart $\theta_{\rm GZ}$. Here are two examples: 
\begin{example}
    For $n=2$, we have
    \[
    b=\begin{bmatrix}[1.7]
        \frac{\lambda_1^{(2)}\lambda_2^{(2)}}{\lambda_1^{(1)}} & 0\\
        \lambda_1^{(2)} & \lambda_1^{(1)}
    \end{bmatrix} \text{~with~} \Phi_{\rm BK}=\frac{\lambda_2^{(2)}}{\lambda_1^{(1)}}+\frac{\lambda_1^{(1)}}{\lambda_1^{(2)}}.
    \]
    For $n=3$, we have
    \[
    b=\begin{bmatrix}[1.6]
    \frac{\lambda_1^{(3)}\lambda_2^{(3)}\lambda_3^{(3)}}{\lambda_1^{(2)}\lambda_2^{(2)}}&0&0\\
    \frac{\lambda_1^{(3)}\lambda_2^{(2)}}{\lambda_1^{(1)}}+\frac{\lambda_1^{(3)}\lambda_2^{(3)}}{\lambda_1^{(2)}}    &\frac{\lambda_1^{(2)}\lambda_2^{(2)}}{\lambda_1^{(1)}} & 0\\
    \lambda_1^{(3)}    &\lambda_1^{(2)} & \lambda_1^{(1)}
    \end{bmatrix}\text{~with~} \Phi_{\rm BK}= \frac{\lambda_2^{(2)}}{\lambda_1^{(1)}}+\frac{\lambda_3^{(3)}}{\lambda_2^{(2)}}+\frac{\lambda_2^{(3)}}{\lambda_1^{(2)}}+\frac{\lambda_1^{(2)}}{\lambda_1^{(3)}}+\frac{\lambda_1^{(1)}}{\lambda_1^{(2)}}+\frac{\lambda_2^{(2)}}{\lambda_2^{(3)}}.
    \]
\end{example}
\begin{theorem}\label{Thm:poten}
    In terms of the positive chart $\theta_{GZ}$ on $(B_-,\Phi_{\rm BK})$, we have
    \[
    \Phi_{\rm BK}=\sum_{(i,j)\in \delta_n'} \frac{\lambda_j^{(n-i-1)}}{\lambda_j^{(n-i)}}+\sum_{(i,j)\in \delta_n''}\frac{\lambda_{j+1}^{(n-i+1)}}{\lambda_j^{(n-i)}}, 
    \]
    where $\delta_n'=\{(i,j)\mid 0\leqslant i\leqslant n-1,1\leqslant j\leqslant n, i+j< n\}$ and  $\delta_n''=\{(i,j)\mid 1\leqslant i\leqslant n-1,1\leqslant j\leqslant n, i+j\leqslant n\}$. 
\end{theorem}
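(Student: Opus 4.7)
The plan is to verify the claimed identity of rational functions by direct expansion in the Gelfand-Zeitlin chart $\theta_{\rm GZ}$. By \eqref{eq:geoGZpattern}, the denominators appearing in $\Phi_{\rm BK}$ are immediately monomials in the GZ coordinates:
\[
\Delta_{[1,i]^{\op},[1,i]}(b) = \Delta_{0,i}(b) = \lambda_1^{(n)}\cdots\lambda_i^{(n)}.
\]
The heart of the computation is to expand the numerator minors
\[
A_i := \Delta_{([1,i+1]\setminus\{i\})^{\op},[1,i]}(b), \qquad B_i := \Delta_{[1,i]^{\op},[1,i+1]\setminus\{i\}}(b),
\]
in terms of the $\lambda$'s. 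I would do this by switching to the equivalent parametrization $\theta_{\rm st}$ of Lemma \ref{lemma: GZ and standard network positive structures are equivalent}: by Lindstr\"om's lemma applied to the standard planar network $\Pi_{\rm st}$, each $A_i$ and $B_i$ becomes a positive sum of weights of non-crossing multi-paths with the prescribed sources and sinks, and each monomial in the $a$-variables translates (via the transition rules implicit in the proof of Lemma \ref{lemma: GZ and standard network positive structures are equivalent}) into a monomial in the $\lambda$-variables.

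The argument is most naturally organized as an induction on $n$. The base case $n=2$ is the explicit calculation recorded in the example preceding the theorem. For the inductive step, embed $\GL_{n-1}\hookrightarrow \GL_n$ as the bottom-right $(n-1)\times(n-1)$ block; the restriction of $b\in B_-$ to this block is an element $b'\in B_-^{n-1}$, whose GZ pattern is obtained from that of $b$ by removing the top row $(\lambda_1^{(n)},\ldots,\lambda_n^{(n)})$. Under this restriction, the inductive hypothesis identifies $\Phi_{\rm BK}^{(n-1)}(b')$ with the sums in the statement of the theorem restricted to indices $(i,j)$ not involving $\lambda_k^{(n)}$, and the remaining computation is to verify that the difference $\Phi_{\rm BK}^{(n)}(b)-\Phi_{\rm BK}^{(n-1)}(b')$ equals exactly the collection of terms involving $\lambda_k^{(n)}$, namely those with $(i,j)=(0,j)\in\delta_n'$ for $1\leqslant j\leqslant n-1$ together with those $(i,j)\in\delta_n''$ for which $i+j=n$.

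The main obstacle is the combinatorial enumeration of non-crossing multi-paths contributing to $A_i$ and $B_i$. Based on the case $n=3$ worked out in the excerpt, one expects $A_i$ to produce $n-i$ terms (all of the form $\lambda_{j+1}^{(n-i+1)}/\lambda_j^{(n-i)}$ after division by $D_i$) and $B_i$ to produce $i$ terms (all of the form $\lambda_j^{(n-i-1)}/\lambda_j^{(n-i)}$). Summed over $i$ this yields $2\binom{n}{2}$ monomials, which matches $|\delta_n'|+|\delta_n''|$. Once one establishes the claimed bijection between contributing multi-paths and the indexed monomials in $\delta_n'\cup\delta_n''$, the theorem follows by collecting the contributions; the bookkeeping of this bijection is the only nontrivial part.
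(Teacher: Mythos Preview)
Your proposal contains a genuine gap at the step where you claim ``each monomial in the $a$-variables translates \ldots\ into a monomial in the $\lambda$-variables.'' This is false already for $n=3$: from the displayed example one has $b_{21}=a_{21}a_{11}$ on the network side, while in the GZ chart $b_{21}=\frac{\lambda_1^{(3)}\lambda_2^{(2)}}{\lambda_1^{(1)}}+\frac{\lambda_1^{(3)}\lambda_2^{(3)}}{\lambda_1^{(2)}}$. Since $a_{11}=b_{11}$ is a $\lambda$-monomial, $a_{21}$ itself is a genuine binomial in the $\lambda$'s. The positive equivalence in Lemma~\ref{lemma: GZ and standard network positive structures are equivalent} only guarantees that the transition maps are subtraction-free rational, not monomial. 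So Lindstr\"om in the $a$-chart gives you sums of $a$-monomials, but converting these to $\lambda$-coordinates will in general produce sums of $\lambda$-monomials with nontrivial cancellation and regrouping---you cannot simply read off the claimed bijection.

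The paper's proof avoids this by working directly in the factorization \eqref{eq:invofthetaGZ}, $b=\diag(\lambda_n^{(n)},\ldots,\lambda_1^{(n)})\cdot\vec{\prod}x_{i-j}(t_{ij})$ with $t_{ij}=\lambda_{n+1-j}^{(n+i-j)}/\lambda_{n+1-j}^{(n)}$. The point is that this product \emph{is} a planar-network presentation of $b$ whose edge weights are already Laurent monomials in the $\lambda$'s. The paper then records a closed formula for ${}_i\Phi(b)$ and $\Phi_i(b)$ in the parameters $h_i,t_{ij}$ (via a lemma whose proof is left to the reader---it is essentially a Lindstr\"om enumeration in \emph{this} network) and substitutes. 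Your induction on $n$ via the bottom-right block is also not as clean as you suggest: the denominators $\Delta_{0,i}(b)$ in $\Phi_{\rm BK}^{(n)}$ and $\Delta_{1,i}(b)$ in $\Phi_{\rm BK}^{(n-1)}(b')$ differ, so there is no term-by-term match between the two potentials at the level of the defining minor ratios. If you want to salvage a combinatorial argument, switch from $\theta_{\rm st}$ to the network underlying \eqref{eq:invofthetaGZ}.
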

\begin{proof}
     Recall that we have Eq \eqref{eq:invofthetaGZ}. Furthermore, abbreviate ${}_i\Phi(g):=\frac{\Delta_{([1,i+1]\setminus \{i\})^{\op},[1,i]}(g)}{\Delta_{[1,i]^{\op},[1,i]}(g)}$, $\Phi_i(g):=\frac{\Delta_{[1,i]^{\op},[1,i+1]\setminus \{i\}}(g)}{\Delta_{[1,i]^{\op},[1,i]}(g)}$ for $g\in \GL_n$. 
 Clearly, $\Phi_{\rm BK}(g)=\sum\limits_{i=1}^{n-1}{}_i\Phi(g)+\Phi_i(g)$. We need

 \begin{lemma} In the notation of \eqref{eq:invofthetaGZ}, let $b:=\diag(h_n,\ldots,h_1)\cdot \vec \prod x_{i-j}(t_{ij})$. Then
 $${}_i\Phi(b)=\frac{h_{i+1}}{h_i}\sum_{k=1}^{n-i} \frac{t_{k,n-i}}{t_{k,n+1-i}},\quad \Phi_i(b)=\sum_{k=n-i}^n \frac{t_{n-i,k}}{t_{n+1-i,k}}.$$ 
  with the convention $t_{j,j}=1$.
 \end{lemma}
The proof of  this lemma is left to the readers. Finally, taking $h_i:=\lambda_i^{(n)}$, $t_{ij}=\frac{\lambda_{n+1-j}^{(n+i-j)}}{\lambda_{n+1-j}^{(n)}}$ as in \eqref{eq:invofthetaGZ}, we obtain $b=\theta_{\rm GZ}(\Lambda)$ and:
\begin{align*}
    {}_i\Phi(b)&=\frac{h_{i+1}}{h_i}\sum_{k=1}^{n-i} \frac{t_{k,n-i}}{t_{k,n+1-i}}=\frac{\lambda_{i+1}^{(n)}}{\lambda_i^{(n)}}\sum_{k=1}^{n-i} \frac{\frac{\lambda_{i+1}^{(i+k)}}{\lambda_{i+1}^{(n)}}}{\frac{\lambda_i^{(i+k-1)}}{\lambda_i^{(n)}}}=\sum_{k=1}^{n-i} \frac{\lambda_{i+1}^{(i+k)}}{\lambda_i^{(i+k-1)}};\\
    \Phi_i(b)&=\sum_{k=n-i}^n \frac{t_{n-i,k}}{t_{n+1-i,k}}=\sum_{k=n-i}^n \frac{\frac{\lambda_{n+1-k}^{(2n-i-k)}}{\lambda_{n+1-k}^{(n)}}}{\frac{\lambda_{n+1-k}^{(2n+1-i-k)}}{\lambda_{n+1-k}^{(n)}}}=\sum_{\ell=1}^i \frac{\lambda_{\ell}^{(n+\ell-i-1)}}{\lambda_{\ell}^{(n+\ell-i)}}.\tag*{\qedhere}
\end{align*}    
\end{proof}
For a positive variety with potential $(P,\Phi)$, one can define {\em tropicalization} of a positive function on $P$.  The following brief description will be sufficient for the purposes of this paper (see \cite[Section 2.2]{ABHL} for more details). Consider a positive chart $\theta\colon (\mathbb{C}^*)^d\hookrightarrow P$. For a function $f$ on $P$, suppose that $f\circ \theta \in \mathbb{C}[x_1^\pm,\ldots, x_d^\pm]$ is a Laurent polynomial, where $x_i$'s are natural coordinates of $(\mathbb{C}^*)^d$. Furthermore, suppose that 
\[
f\circ \theta=\sum c_{i_1,\ldots,i_d} x_1^{a_{i_1}}\cdots x_d^{a_{i_d}}, \text{~for~} a_i\in \mathbb{Z},
\]
with positive coefficients $c_{i_1,\ldots,i_k}>0$. Then,
we define
\[
f^t:=\max_{(i_1,\ldots,i_k)}\{a_{i_1}x^t_1+\cdots +a_{i_d}x^t_d\},
\]
where $x_i^t$'s are variables which take values in $\T$, and we are using ordinary multiplication between $a$ and $x_k^t$ in the expression $a x_k^t$ for $a\in \mathbb{Z}$ rather than the product in $\mathbb{T}$. Theorem \ref{Thm:poten} admits the following tropicalization:

\begin{corollary}
    For the positive variety with potential $(B_-,\Phi_{\rm BK})$ equipped with the positive chart $\theta_{\rm GZ}$, the inequality $\Phi_{\rm BK}^t\leqslant 0$ is equivalent to the interlacing inequalities on functions $(\lambda^{(i)}_j)^t$, {\em i.e.},
    \[
    \Phi_{\rm BK}^t\leqslant 0\ \Leftrightarrow\ \left(\lambda^{(i)}_j\right)^t\geqslant \left(\lambda_{j}^{(i-1)}\right)^t\geqslant \left(\lambda_{j+1}^{(i)}\right)^t.
    \]
\end{corollary}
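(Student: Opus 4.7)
The plan is to apply tropicalization directly to the explicit formula for $\Phi_{\rm BK}$ obtained in Theorem \ref{Thm:poten}. Since that formula expresses $\Phi_{\rm BK}\circ\theta_{\rm GZ}$ as a sum of Laurent monomials in the coordinates $\lambda_j^{(i)}$ with coefficient $1$, its tropicalization is the maximum of the tropicalizations of the individual monomials:
\[
\Phi_{\rm BK}^t = \max_{(i,j)\in\delta_n'}\bigl\{ (\lambda_j^{(n-i-1)})^t - (\lambda_j^{(n-i)})^t\bigr\}\ \vee\ \max_{(i,j)\in\delta_n''}\bigl\{ (\lambda_{j+1}^{(n-i+1)})^t - (\lambda_j^{(n-i)})^t\bigr\}.
\]

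Next, because $\Phi_{\rm BK}^t$ is a maximum over finitely many tropical terms, the inequality $\Phi_{\rm BK}^t\leqslant 0$ is equivalent to each term being $\leqslant 0$. So I would set $k=n-i$ in both sums and rewrite the resulting pointwise conditions as
\[
(\lambda_j^{(k)})^t \geqslant (\lambda_j^{(k-1)})^t\quad\text{and}\quad (\lambda_j^{(k)})^t \geqslant (\lambda_{j+1}^{(k+1)})^t,
\]
which are precisely the two families of interlacing inequalities $(\lambda_j^{(i)})^t\geqslant (\lambda_j^{(i-1)})^t\geqslant (\lambda_{j+1}^{(i)})^t$ asserted in the statement.

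The only non-automatic step is bookkeeping: I need to check that the index sets $\delta_n'$ and $\delta_n''$ produce exactly the ranges of $(k,j)$ for which each interlacing inequality is meaningful (the boundary cases where $\lambda_j^{(k-1)}$ or $\lambda_{j+1}^{(k+1)}$ ceases to exist should correspond precisely to the pairs excluded from $\delta_n'$ and $\delta_n''$). This is a routine translation of the defining inequalities of $\delta_n',\delta_n''$ once $k=n-i$ is substituted. Since no cancellations or subtractions of monomials occur in $\Phi_{\rm BK}$ as written in Theorem \ref{Thm:poten}, there is no subtlety coming from the ``positive'' hypothesis in the tropicalization procedure; the equivalence follows immediately.
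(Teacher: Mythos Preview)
Your proposal is correct and matches the paper's approach exactly: the corollary is stated in the paper as the direct tropicalization of the formula in Theorem~\ref{Thm:poten}, with no further argument given, and your plan carries out precisely that step together with the routine index check.
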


\subsection{Multi-corner minors}

In this section, we introduce the notion of {\em multi-corner minors} and explore their basic properties.

Let $(g_1,\ldots,g_k)\in\GL_n^k:=\underbrace{\GL_n\times \cdots \times \GL_n\ }_{k}$, and consider  the following $n\times ((k+1)n)$ matrix
\begin{equation}\label{eq:g}
    {\bm g}={\bm g}(g_1,\ldots,g_k):=\begin{bmatrix}
       \id_n& g_1 & g_1g_2 &\cdots & g_1g_2\cdots g_k
    \end{bmatrix}.
\end{equation}

\begin{definition}\label{def:Malpha}
    For a $k$-tuple of integers $\underline{\alpha}$ satisfying $0\leqslant \alpha_i\leqslant n$ and $0\leqslant \sum\alpha_i\leqslant n$, the $\underline{\alpha}$-th multi-corner minor is defined to be
    \[
    M_{\underline{\alpha}}(g_1,\cdots,g_k)=\Delta_{[1,n], J(\underline{\alpha})}({\bm g}),
    \]
    where $J(\underline{\alpha})=([1,n]\setminus [1,\sum \alpha_i]^\op)\cup ([1,\alpha_1]+n)\cup ([1,\alpha_2]+2n)\cup \cdots \cup ([1,\alpha_k]+kn)$.
\end{definition}

\begin{example}\label{eq:k=1andhw}
   For $k=1$,  $M_i(g)$ is the $i\times i$ corner minor of $g$ for $i\in [1,n]$. This is actually where the name ``multi-corner minor'' comes from. Consider $g\in \GL_n$ such that $M_i(g)\neq 0$, and recall the {\em highest weight} map 
    \[
        \hw: g\mapsto \diag\left( \frac{M_n(g)}{M_{n-1}(g)}, \ldots, \frac{M_2(g)}{M_1(g)},M_1(g)\right)
    \]
    introduced in \cite[Eq (2.5)]{bk2}. In more detail, let $w_0$ be the longest element in the Weyl group of $\GL_n$ and denote by $\overline{w_0}$ the lift of $w_0$ given by
    \[
    \overline{w_0}=
    \begin{bmatrix}
        &&&(-1)^{n-1}\\
        &&(-1)^{n-2}&\\
        &\iddots&&\\
        (-1)^0&&&&
    \end{bmatrix}.
    \]
    Take an element $g$ in the Bruhat cell $Bw_0B$. Note that $g\in Bw_0B$ if and only if $M_i(g)\neq 0$ for all $i$. It is not hard to see that for $g\in Bw_0B$ there is a unique decomposition  $g=uh\overline{w_0}v$ with $u,v\in U$ and $h\in H$. Then, we observe that $\hw(g)=\hw(uh\overline{w_0}v)=h$. 
\end{example}

One can write $M_{\underline{\alpha}}$ using minors of matrices $g_i$: 

\begin{proposition}\label{pro:expofM}
    We have
    \begin{equation}\label{eq:expansion}
        M_{\underline{\alpha}}({\bm g})=\sum_{L_1, \ldots,L_{k-1}}\prod_{i=1}^k\Delta_{L_{i-1}, J_i\sqcup L_i}(g_i),
    \end{equation}  
    where $J_i=[1,\alpha_i]$ for $i\in [1,k]$, $L_0=[1,\sum \alpha_i]^\op, L_k=\emptyset$, and the summation is over $L_1,\ldots,L_{k-1}$ such that  $L_i\cap J_i=\emptyset$ and $|L_i|+|J_i|=|L_{i-1}|$.
\end{proposition}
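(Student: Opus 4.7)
The plan is to derive the formula by iterated application of the Cauchy--Binet formula, peeling off one matrix $g_i$ at a time.

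First, I would exploit the identity columns hidden in $\bm g$. Setting $\alpha := \sum_{i=1}^k \alpha_i$, the column set $J(\underline{\alpha})$ contains precisely the columns $[1, n-\alpha]$ of the $I_n$ block, while the remaining $\alpha$ columns come from the blocks $g_1, g_1 g_2, \ldots, g_1 \cdots g_k$. A Laplace expansion along the identity columns collapses the $n \times n$ determinant to an $\alpha \times \alpha$ minor
$$
M_{\underline{\alpha}}({\bm g}) = \Delta_{[1,\alpha]^{\rm op},\, J'}(\bar{\bm g}), \qquad \bar{\bm g} = [g_1 \mid g_1 g_2 \mid \cdots \mid g_1 \cdots g_k],
$$
where $J'$ selects the first $\alpha_i$ columns from the $i$-th block of $\bar{\bm g}$. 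The row set $L_0 := [1,\alpha]^{\rm op}$ matches the one in the statement, and the placement of the identity columns at positions $[1,n-\alpha]$ with complementary rows $[1,n-\alpha]$ makes the Laplace sign equal to $+1$.

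Next, I would factor $g_1$ out of each block of $\bar{\bm g}$: the submatrix selected by $J'$ equals $g_1$ times the corresponding submatrix of $[\,I_n \mid g_2 \mid g_2 g_3 \mid \cdots \mid g_2 \cdots g_k\,]$. Cauchy--Binet then expresses the minor as a sum over $\alpha$-subsets $K \subset [1,n]$ of $\Delta_{L_0, K}(g_1)$ times a residual minor. The residual minor vanishes unless $[1,\alpha_1] \subset K$, because of the block $I_n|_{[1,\alpha_1]}$. Writing $L_1 := K \setminus [1,\alpha_1]$, one has $K = J_1 \sqcup L_1$ with $J_1 \cap L_1 = \emptyset$, producing the factor $\Delta_{L_0, J_1 \sqcup L_1}(g_1)$, and a further Laplace expansion along the identity columns reduces the residual minor to the same shape as the original problem but with $k-1$ matrices $g_2, \ldots, g_k$ and initial row set $L_1$.

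By induction on $k$, iterating this step extracts the product $\prod_{i=1}^k \Delta_{L_{i-1},\, J_i \sqcup L_i}(g_i)$ and produces the nested summation over $L_1, \ldots, L_{k-1}$ with the prescribed cardinality constraint $|L_{i-1}| = |J_i| + |L_i|$ (arising from Cauchy--Binet) and disjointness $L_i \cap J_i = \emptyset$ (arising from the ``$[1,\alpha_i] \subset K$'' condition); the terminal step, with only $g_k$ left, contributes $\Delta_{L_{k-1}, [1,\alpha_k]}(g_k)$, so $L_k = \emptyset$ automatically. The only delicate point I anticipate is bookkeeping of signs across iterations: the ordering conventions of the columns of $\bm g$ (identity block first, then $g_1, g_1 g_2, \ldots$ in increasing order) should make every Laplace expansion contribute $+1$, but this is worth verifying carefully at the base step before propagating via induction.
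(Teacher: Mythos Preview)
Your proposal is correct and follows essentially the same approach as the paper: first collapse the identity columns to pass from $\Delta_{[1,n],J(\underline{\alpha})}(\bm g)$ to $\Delta_{L_0,J'}(\bar{\bm g})$, then apply Cauchy--Binet to $\bar{\bm g}=g_1[\,I_n\mid g_2\mid\cdots\mid g_2\cdots g_k\,]$, observe that the residual minor forces $J_1\subset K$ so that $K=J_1\sqcup L_1$, and iterate by induction on $k$. The paper's proof is slightly terser about the sign issue but otherwise identical in structure.
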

\begin{proof}
     Let $\bar{\bm g}:=[g_1 ~ g_1g_2 ~\cdots ~g_1\cdots g_k]$, and use the shorthand notation $J$ for $J(\underline{\alpha})$. Thus,
    \[
    \Delta_{[1,n],J}({\bm g})=\Delta_{L_0,J'}(\bar{\bm g}),
    \]
    where $J'=(J-n)\cap [1,kn]$. Now by applying the Cauchy-Binet formula to 
    \[
    \bar{\bm g}=g_1\begin{bmatrix}
        I_n & g_2 &\cdots & g_2\cdots g_k
    \end{bmatrix}, 
    \]
    we obtain
    \begin{equation}\label{eq:exp1}
       \Delta_{[1,n],J}({\bm g})=\sum_K \Delta_{L_0,K}(g_1)\Delta_{K,J'}(\begin{bmatrix}
        I_n & g_2 &\cdots & g_2\cdots g_k
    \end{bmatrix}),
    \end{equation}
    where $K\subset [1,n]$ and $|K|=|L_0|$. Note that $\Delta_{K,J'}(\begin{bmatrix}
        I_n & g_2 &\cdots & g_2\cdots g_k
    \end{bmatrix})$ is zero if $J_1\not\subset K$. Thus $K$ has form $J_1\sqcup L_1$ such that $J_1\cap L_1=\emptyset$, and 
    \[
    \Delta_{[1,n],J}({\bm g})=\sum_{L_1} \Delta_{L_0,J_1\sqcup L_1}(g_1)\Delta_{L_1,(J-2n)\cap [1,(k-1)n]}(\begin{bmatrix}
        g_2 & g_2g_3&\cdots & g_2\cdots g_k
    \end{bmatrix}).
    \]
     By induction, we have
    \begin{align*}
        \Delta_{[1,n],J}({\bm g})&=\sum_{L_1, \ldots,L_{k-1}} \Delta_{L_0,J_1\sqcup L_1}(g_1)\Delta_{L_1, J_2\sqcup L_2}(g_2)\cdots \Delta_{L_{k-1}, J_k}(g_k) \\
        &=\sum_{L_1, \ldots,L_{k-1}}\prod_{i=1}^k\Delta_{L_{i-1}, J_i\sqcup L_i}(g_i),
    \end{align*}
    where $L_i\cap J_i=\emptyset$ and $|L_i|+|J_i|=|L_{i-1}|$.
\end{proof}

Note that there is a $U^{k+1}$-action on $\GL_n^k$ defined as follows: for ${\bm u}=(u_0,\cdots,u_k)\in U^{k+1}$, 
\[
    (u_0,\ldots, u_k).(g_1,\ldots,g_k)=(u_0g_1u_1^{-1}, u_1g_2u_2^{-1},\ldots, u_{k-1}g_ku_k^{-1}).
\]
\begin{theorem}
    The function $M_{\underline{\alpha}}$ is $U^{k+1}$-invariant.
\end{theorem}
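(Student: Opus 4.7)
The plan is to deduce $U^{k+1}$-invariance directly from the definition $M_{\underline{\alpha}}(g_1,\ldots,g_k)=\Delta_{[1,n],J(\underline{\alpha})}(\bm{g})$ via a single application of the Cauchy--Binet formula. Set $\bm{g}':=\bm{g}(u_0g_1u_1^{-1},u_1g_2u_2^{-1},\ldots,u_{k-1}g_ku_k^{-1})$. A block-by-block verification, using the telescoping $(u_0g_1u_1^{-1})(u_1g_2u_2^{-1})\cdots(u_{i-1}g_iu_i^{-1})=u_0(g_1\cdots g_i)u_i^{-1}$, will show that
\[
\bm{g}'=u_0\cdot\bm{g}\cdot D,\qquad D:=\mathrm{diag}(u_0^{-1},u_1^{-1},\ldots,u_k^{-1}),
\]
where $D$ is block-diagonal of size $(k+1)n\times(k+1)n$ with $n\times n$ diagonal blocks.

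Because $u_0\in U$ is unipotent, $\det u_0=1$, and so
\[
\Delta_{[1,n],J(\underline{\alpha})}(\bm{g}')=\Delta_{[1,n],J(\underline{\alpha})}(\bm{g}D).
\]
Applying Cauchy--Binet to the right-hand side yields
\[
\Delta_{[1,n],J(\underline{\alpha})}(\bm{g}D)=\sum_{J'}\Delta_{[1,n],J'}(\bm{g})\,\Delta_{J',J(\underline{\alpha})}(D),
\]
with the sum ranging over $n$-subsets $J'\subset[1,(k+1)n]$. Since $D$ is block diagonal, this minor factors as $\Delta_{J',J(\underline{\alpha})}(D)=\prod_{i=0}^{k}\Delta_{J'_i,J_i}(u_i^{-1})$, where $J_i$ and $J'_i$ denote the restrictions of $J(\underline{\alpha})$ and $J'$ to the $i$-th size-$n$ block (shifted back to $[1,n]$).

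The decisive observation is that every component $J_i$ is an initial segment $[1,\alpha_i]$, with the convention $\alpha_0:=n-\sum_{i\geqslant 1}\alpha_i$ (indeed $J_0=[1,n]\setminus[1,\sum\alpha_i]^{\mathrm{op}}=[1,\alpha_0]$, while $J_i=[1,\alpha_i]$ for $i\geqslant 1$ by definition of $J(\underline{\alpha})$). For any unipotent upper triangular matrix $u$, a short cofactor-expansion argument exploiting the vanishing of subdiagonal entries shows that $\Delta_{J',[1,r]}(u)=\delta_{J',[1,r]}$: only $J'=[1,r]$ is compatible with all entries $u_{j,c}$ for $j>c$ being zero, and in that case the only surviving permutation is the identity, which picks up the diagonal product $1$. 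Consequently only the term $J'=J(\underline{\alpha})$ survives in the Cauchy--Binet sum, and it contributes exactly $\Delta_{[1,n],J(\underline{\alpha})}(\bm{g})=M_{\underline{\alpha}}(g_1,\ldots,g_k)$.

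I do not expect a serious obstacle. The factorization $\bm{g}'=u_0\bm{g}D$ is a routine check, and the only point requiring care is the vanishing $\Delta_{J',[1,r]}(u)=0$ whenever $J'\neq[1,r]$ for $u$ upper triangular. Once this lemma is recorded, the whole argument works uniformly across all indices $i\in\{0,1,\ldots,k\}$, and no case analysis or induction on $k$ is needed.
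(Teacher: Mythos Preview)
Your proof is correct and follows essentially the same route as the paper: both factor $\bm{g}'=u_0\bm{g}\,\diag(u_0^{-1},\ldots,u_k^{-1})$, apply Cauchy--Binet once, and observe that only the term $J'=J(\underline{\alpha})$ survives with value $1$. The paper simply asserts that $\Delta_{L,J(\underline{\alpha})}(\diag(u_0^{-1},\ldots,u_k^{-1}))\neq 0$ iff $L=J(\underline{\alpha})$, whereas you spell out the block factorization and the initial-segment lemma $\Delta_{J',[1,r]}(u)=\delta_{J',[1,r]}$; this extra detail is harmless and arguably clarifying.
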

\begin{proof}
    In terms of ${\bm g}$, the action can be rewritten as
\begin{equation}\label{eq:u-action}
    {\bm u}.{\bm g}=u_0\begin{bmatrix}
        \id_n & g_1 & g_1g_2 &\cdots & g_1g_2\cdots g_k
    \end{bmatrix}\diag(u_0^{-1}, u_1^{-1},\ldots, u_k^{-1}),
\end{equation}
where $\diag(u_0^{-1}, u_1^{-1},\ldots, u_k^{-1})$ is the block diagonal matrix, whose $j$'s block is $u_{j-1}^{-1}$. Thus, by applying the Cauchy-Binet formula to the r.h.s. of \eqref{eq:u-action} , we have
\[
   \Delta_{[1,n],J(\underline{\alpha})}({\bm u}.{\bm g})=\sum_{L} \det(u_0)\Delta_{[1,n],L}({\bm g})\Delta_{L, J(\underline{\alpha})}(\diag(u_0^{-1}, u_1^{-1},\ldots, u_k^{-1})).
\]
Note that $\Delta_{L, J(\underline{\alpha})}(\diag(u_0^{-1}, u_1^{-1},\ldots, u_k^{-1}))\neq 0$ if and only if $L=J(\underline{\alpha})$. Thus by the fact $\det(u_0)=1$ and $\Delta_{J(\underline{\alpha}), J(\underline{\alpha})}(\diag(u_0^{-1}, u_1^{-1},\ldots, u_k^{-1}))=1$, we get $\Delta_{[1,n],J(\underline{\alpha})}({\bm u}.{\bm g})=\Delta_{[1,n],J(\underline{\alpha})}({\bm g})$.
\end{proof}

Denote by $\mathcal{M}_k:=\GL_n^k/\!\!/U^{k+1}$ the GIT quotient of $\GL_n$ by $U^{k+1}$, which is one of the main objects studied in \cite{bl}.  
Recall that the $U^2$-action on $\GL_n$ by left and right multiplication restricts to a free action on the Bruhat cell $Bw_0B$.

\begin{proposition}\label{Pro:anykcomplete}
    For any $k>0$, the  the cardinality of the set
    \begin{equation}\label{eq:invingen}
        {\bm M}_k:=\{M_{\underline{\alpha}}\mid \alpha=(0^l,i,j,0^{k-l-2})\text{~for some~} 0\leqslant l\leqslant k-2, 0\leqslant i,j\leqslant n, 0< i+j\leqslant n\},
    \end{equation}
     is equal to $\dim \mathcal{M}_k$. Furthermore, the functions $M_{\underline{\alpha}}$ in this set descend to algebraically independent functions ${\overline M}_{\underline{\alpha}}$ on $\mathcal{M}_k$.
\end{proposition}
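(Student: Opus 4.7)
\emph{Plan of the proof.} The argument proceeds in three stages: counting $|{\bm M}_k|$, computing $\dim\mathcal{M}_k$, and then proving algebraic independence by induction on $k$ via the projection that merges the first two factors.

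\emph{Stages 1--2 (cardinality and dimension).} I would decompose ${\bm M}_k = \bigcup_{l=0}^{k-2} S_l$, where $S_l$ collects the multi-indices supported in positions $\{l+1, l+2\}$; each $|S_l| = n(n+3)/2$, only consecutive pairs $S_l \cap S_{l+1}$ overlap (in the $n$ multi-indices with a single non-zero entry at position $l+2$), and triple intersections vanish. Inclusion--exclusion yields $|{\bm M}_k| = (k-1)\cdot n(n+3)/2 - (k-2)\cdot n = n[(k-1)n + (k+1)]/2$. For the dimension, the stabilizer equations $u_i = g_i^{-1} u_{i-1} g_i$ together with $u_i \in U$ force $u_0 = 1$ for generic $g_1$ (since conjugation by a generic element moves a non-identity unipotent upper-triangular matrix out of $U$), so the $U^{k+1}$-action is generically free and $\dim\mathcal{M}_k = kn^2 - (k+1)\binom{n}{2} = n[(k-1)n + (k+1)]/2 = |{\bm M}_k|$.

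\emph{Stage 3 (algebraic independence).} I would induct on $k \geqslant 2$, using the projection
\[
    \pi\colon \mathcal{M}_k \to \mathcal{M}_{k-1}, \qquad [(g_1, \ldots, g_k)] \mapsto [(g_1 g_2, g_3, \ldots, g_k)],
\]
which is well-defined because $u_1$ cancels in the product $g_1 g_2$. Using the Cauchy--Binet expansion of Proposition \ref{pro:expofM}, a short check gives $\pi^* \overline M_{\underline{\alpha}'} = \overline M_{(0, \underline{\alpha}')}$, so $\pi^*$ identifies ${\bm M}_{k-1}$ with the subset $\{\overline M_{\underline{\alpha}} \in {\bm M}_k : \alpha_1 = 0\}$. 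The complementary $n(n+1)/2$ functions $\overline M_{(i,j,0,\ldots,0)}$ with $i \geqslant 1$ and $i+j \leqslant n$ match the dimension of a generic fiber of $\pi$, which is isomorphic to $\GL_n/U$ parameterized by $g_1$ (with $g_2 = h_1 g_1^{-1}$ and $g_j = h_{j-1}$ fixed for $j \geqslant 3$). On this fiber, the new functions reduce to the $k=2$ multi-corner minors $M_{i,j}(g_1, h_1 g_1^{-1})$ restricted to the level set $\{g_1 g_2 = h_1\}$ in $\mathcal{M}_2$; by the base case $k=2$ they form local coordinates there, which together with the inductive hypothesis on $\mathcal{M}_{k-1}$ completes the proof.

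\emph{Main obstacle.} The nontrivial input is the base case $k=2$: one must show that the $M_{i,j}$ with $0 \leqslant i, j$ and $0 < i+j \leqslant n$ form local coordinates on $\mathcal{M}_2$. The cleanest route invokes the Berenstein--Kazhdan positive structure on $\mathcal{M}_2$ (see \cite{bk2, bl}); alternatively, one can verify algebraic independence by a direct Jacobian computation at a point parameterized by the standard planar network of Definition \ref{definition: standard network}.
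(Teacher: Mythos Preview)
Your argument is correct and runs parallel to the paper's, but the two inductions go in opposite directions and rest on different bookkeeping.

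The paper does not compute $|{\bm M}_k|$ and $\dim\mathcal{M}_k$ separately. Instead it writes down an explicit open embedding $\delta\colon H\times(U\times H)^{k-1}\hookrightarrow\mathcal{M}_k$ coming from the factorizations $g_1\cdots g_l=u_lh_l\overline{w_0}$, and then computes directly that
\[
\overline{M}_{0^{l-1},i,j,0^{k-l-1}}=\overline{M}_{0^{l-1},i,0^{k-l}}\cdot\Delta_{[i+1,i+j],[1,j]}(g_{l+1})
\]
in these coordinates. The induction then peels off the \emph{last} factor $g_k$: the ratios $\overline{M}_{0^{k-2},i,j}/\overline{M}_{0^{k-2},i,0}$ depend only on $g_k$ and are visibly independent there, while the remaining functions depend only on $g_1,\ldots,g_{k-1}$. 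Crucially, this explicit formula also \emph{proves} the base case $k=2$ in the same stroke, since the minors $\Delta_{[i+1,i+j],[1,j]}$ are standard coordinates on $B_-w_0$.

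Your route---counting by inclusion--exclusion, establishing generic freeness of the $U^{k+1}$-action, and inducting via the merging map $\pi\colon\mathcal{M}_k\to\mathcal{M}_{k-1}$---is a legitimate alternative. It is more modular and avoids the $\overline{w_0}$-parametrization, at the cost of outsourcing the base case $k=2$ to \cite{bk2,bl} or a Jacobian check. One small imprecision: the phrase ``the level set $\{g_1g_2=h_1\}$ in $\mathcal{M}_2$'' is not literally meaningful since $g_1g_2$ is not $U^3$-invariant; what you actually use is that the fiber of $\pi$ maps isomorphically (generically) onto the level set $\{\overline{M}_{0,j}=\mathrm{const}\}\subset\mathcal{M}_2$, on which the remaining $\overline{M}_{i,j}$ with $i\geqslant 1$ are coordinates by the base case. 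With that adjustment the argument is complete.
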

\begin{proof}
    Firstly, consider the birational isomorphism
    \[
        \psi\colon \GL_n^k\to \GL_n^k\ :\ (g_1,\ldots,g_k)\mapsto (g_1,g_1g_2,\ldots,g_1\cdots g_k)
    \]
   and the map
    \begin{align*}
        \eta\colon H\times (U \times H)^{k-1} &\to \GL_n^k\\
        (h_1,u_2,h_2,u_3,h_3,\ldots,u_{k},h_{k})&\mapsto (h_1\overline{w_0}, u_2h_2\overline{w_0},\ldots,u_{k}h_{k}\overline{w_0}).
    \end{align*}
    Denote by $\pr\colon \GL_n^k\to \mathcal{M}_k$ the natural projection. Thus it is not hard to see that
    \[
    \delta:=\pr\circ \psi^{-1} \circ \eta\colon  H\times (U \times H)^{k-1}\to \mathcal{M}_k
    \]
    is an open embedding.  Put $g_1\cdots g_i:=u_ih_i\overline{w_0}$, where $u_1=\id_n$. Thus $g_1\in Hw_0$ and $g_i\in B_-w_0$ for $i>1$. By definition,
   \begin{align*}
        \overline{M}_{0^{l-1},i,j,0^{k-l-1}}(g_1,...,g_{k})&=\overline{M}_{0^{l-1},i,j}(g_1,...,g_{l+1})=\overline{M}_{i,j}(g_1\cdots g_l, g_{l+1})\\
        &=\Delta_{[1,i+j]^\op,[1,i+j]^\op}(h_l)\Delta_{[i+1,i+j],[1,j]}(g_{l+1})\\
         &=\overline{M}_{0^{l-1},i,0^{k-l}}(g_1,\ldots ,g_l)\Delta_{[i+1,i+j],[1,j]}(g_{l+1}).
   \end{align*}
    
    Note that $\psi^{-1}\circ \eta$ is a birational isomorphism onto its image. 
    
    We next prove the proposition by induction on $k$. For $k=2$, the above calculation shows
    \[
    \overline{M}_{i,j}(g_1,g_2)=\overline{M}_i(g_1)\Delta_{[i+1,i+j],[1,j]}(g_2).
    \]
    Note that $\{\Delta_{[i+1,i+j],[1,j]}\mid 0\leqslant i \leqslant n, 1\leqslant j \leqslant n, 1\leqslant i+j \leqslant n\}$ is a set of independent functions on $B_-w_0$. Recall that $\{\overline{M}_i\mid 1\leqslant i \leqslant n\}$ is a set of  independent functions on $Hw_0$. Besides,  $\{\overline{M}_{i,0}\mid 1\leqslant i \leqslant n\}$ only depends on  $g_1$ and $\{\overline{M}_{i,j}/\overline{M}_{i,0}\mid j\neq 0\}$ only depends on $g_2$. Thus $\overline{\bm M}_2$ is a set of  independent functions on $\mathcal{M}_2$.
    
    Suppose the proposition is true for $k=p-1$. For $k=p$, we see that $\{\overline{M}_{0^{p-2},i,j}/ \overline{M}_{0^{p-2},i,0}\mid j\neq 0\}$ is independent and only depends on $g_p$. By induction ${\bm M}_p\setminus \{\overline{M}_{0^{p-2},i,j}\mid j\neq 0\}$ is independent and only depends on $g_1,\ldots,g_{p-1}$. Note that $\overline{M}_{0^{p-2},i,0}\in {\bm M}_p\setminus \{\overline{M}_{0^{p-2},i,j}\mid j\neq 0\}$. Thus ${\bm M}_p$ is a set of independent functions on $\mathcal{M}_{p}$.
\end{proof}

\begin{example}\label{ex:k=3}
    For $k=3$, consider
   \begin{align*}
    {\bm M}_2'&:=\{M_{i,0,j}\mid 0\leqslant i,j\leqslant n, 0<i+j\leqslant n\}\cup \{M_{i,j,k}\mid 0\leqslant i,j,k\leqslant n,i+j+k=n\}.
   \end{align*}
   Similar to Proposition \ref{Pro:anykcomplete} show that ${\bm M}_2'$ descend to an algebraically independent set of functions on $\mathcal{M}_3$ as well (see further interesting properties in Example \ref{ex:pluasmutation}). For $k\geqslant 3$, functions $M_{\underline{\alpha}}$'s are not algebraically independent, in general (see Section \ref{sec:geooct} for more details).
\end{example}

\subsection{Potential on \texorpdfstring{$B_-^k$}{Mk} and rhombus inequalities}
\label{subsec:potential on Bk}

In this section, we introduce a $U^{k+1}$-invariant function $\Phi_k$ on $\GL_n^k$ (note that the function $\Phi_k$ is called the central charge in \cite{bk2, bl}). 

Let $\theta$ be a positive chart on $B_-$, and extend it to $B_-^k$
\begin{equation}\label{eq:posforB}
    \theta\times \cdots \times \theta\colon (\mathbb{C}^*)^{\frac{1}{2}n(n+1)}\times\cdots \times (\mathbb{C}^*)^{\frac{1}{2}n(n+1)}\to B_- \times \cdots \times B_-
\end{equation}
in the natural way. Thus $B_-^k$ becomes a positive variety. We will show that $\Phi_k$ restricts to a positive function on $B_-^k$, and that the condition $\Phi_k^t\leqslant 0$ is equivalent to rhombus inequalities. Furthermore, we will show that functions  $\overline{M}_{\underline{\alpha}}$ define a positive structure for $(\mathcal{M}_k, \overline{\Phi_k})$. 

\begin{definition}
    For $k\geqslant 2$, let $\Phi_k$ be the following rational function on $\GL_n^k$ 
    \[
        \Phi_k(g_1,\ldots,g_k):=\Phi_{\rm BK}(g_1)+\cdots+\Phi_{\rm BK}(g_k)-\Phi_{\rm BK}(g_1\cdots g_k).
    \]
\end{definition}

By Eq \eqref{eq:propofBK}, $\Phi_k$ is $U^{k+1}$-invariant and  descends to a function $\overline{\Phi_k}$ on $\mathcal{M}_k$. 

First, we focus on the case of $k=2$. 
\begin{proposition}\label{prop:ceninM}
    In terms of $M_{i,j}$, the potential $\Phi_2$ on $\GL_n^2$ has form
    \[
        \sum _{i,j} \frac{M_{i,j-1}}{M_{i,j}}\cdot \frac{M_{i+1,j}}{M_{i+1,j-1}}+ \frac{M_{i,j-1}}{M_{i,j}}\cdot \frac{M_{i-1,j+1}}{M_{i-1,j}}+\frac{M_{i-1,j+1}}{M_{i,j+1}}\frac{M_{i+1,j}}{M_{i,j}},
    \]
    where in the summation, we discard the Laurent monomial terms that contain  $M_{i,j}$ for $(i,j)\notin \{(i,j)\mid 0\leqslant i,j\leqslant n, 0\leqslant i+j\leqslant n\}$.
\end{proposition}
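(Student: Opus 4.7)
The plan is to verify the identity by direct computation, using $U^3$-invariance to reduce to a convenient slice in $\GL_n^2$ and matching both sides via Cauchy--Binet. The function $\Phi_2$ and each $M_{i,j}$ are $U^3$-invariant (for $\Phi_2$ this uses that the character terms in the transformation law \eqref{eq:propofBK} for $\Phi_{\rm BK}$ cancel), so both sides descend to rational functions on $\mathcal{M}_2$, and it suffices to verify the equality on the open image of the section $(h_1, u, h_2) \mapsto (h_1 \overline{w_0}, u h_2 \overline{w_0})$ appearing in the proof of Proposition \ref{Pro:anykcomplete}. On this slice, the Cauchy--Binet sum for $M_{i,j}$ in Proposition \ref{pro:expofM} collapses, since $h_1 \overline{w_0}$ is supported on the antidiagonal: only the choice $L = [i+1, i+j]$ survives, giving
\[
M_{i,j} \,=\, D_{i+j}(h_1) \cdot N_{i,j}(g_2),
\]
with $D_k(h_1) := \Delta_{[1, k]^\op, [1, k]}(h_1 \overline{w_0})$ depending only on $k$ and $h_1$, and $N_{i,j}(g_2) := \Delta_{[i+1, i+j], [1, j]}(g_2)$ a shifted corner minor of $g_2$.

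Next I would expand the left-hand side on the same slice. Since $\Phi_{\rm BK}$ vanishes on matrices supported on the antidiagonal (each of its numerator minors vanishes), one has $\Phi_{\rm BK}(h_1 \overline{w_0}) = \Phi_{\rm BK}(h_2 \overline{w_0}) = 0$; combined with \eqref{eq:propofBK} this yields $\Phi_{\rm BK}(g_1) = 0$ and $\Phi_{\rm BK}(g_2) = \chi(u)$. For $\Phi_{\rm BK}(g_1 g_2)$, apply Cauchy--Binet to each minor through the factorization $g_1 g_2 = h_1 \overline{w_0} \cdot u h_2 \overline{w_0}$: the antidiagonal support of $h_1 \overline{w_0}$ again forces exactly one surviving term in each expansion, producing ratios of the shape $(D'_k / D_k)(X_k / Z_k)$ and $Y_k / Z_k$, where $X_k, Y_k, Z_k$ are specific minors of $u h_2 \overline{w_0}$ and $D'_k$ is the signed monomial in the entries of $h_1$ coming from the noncontiguous row set $\{n-k\} \cup [n-k+2, n]$.

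Now I would substitute $M_{i,j} = D_{i+j} N_{i,j}$ into the right-hand side. A short direct check shows that the $D$-factors cancel exactly in the Type~2 and Type~3 ratios (their numerator and denominator multi-indices have pairwise matching sums $i+j$), while in the Type~1 ratio they combine into the vertical ratio $D_{i+j-1} D_{i+j+1} / D_{i+j}^2$, which coincides with $D'_{i+j} / D_{i+j}$ after computing both sides as products of consecutive $h_1$-entries. Thus the Type~1 family matches the $(D'_k / D_k)(X_k / Z_k)$ part of $-\Phi_{\rm BK}(g_1 g_2)$ once $X_k$ is expanded as the appropriate product of $N$-minors, while the Types~2 and~3 families must account for $\chi(u) - \sum_k Y_k / Z_k$ purely in terms of the $N_{i,j}$'s.

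The main obstacle is this final combinatorial matching: a Pl\"ucker/Laplace expansion is needed to identify the non-contiguous minors $X_k, Y_k$ as the correct positive combinations of products of shifted corner minors $N_{i,j}$ that reorganize into the two rhombus orientations not already captured by Type~1. A useful sanity check is that, upon tropicalization, the resulting formula forces $\Phi_2^t \leqslant 0$ to recover exactly the three families of rhombus inequalities from Theorem \ref{intro:KKT}, consistent with Theorem \ref{th:rhombus M2}; debugging the combinatorics in the small cases $n=2, 3$ should make the general pattern transparent.
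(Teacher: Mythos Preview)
Your setup coincides with the paper's: reduce by $U^3$-invariance to the slice $(h_1,u,h_2)\mapsto(h_1\overline{w_0},uh_2\overline{w_0})$, factor $M_{i,j}=D_{i+j}(h_1)\,N_{i,j}(g_2)$, and observe $\Phi_2=\chi(u)-\Phi_{\rm BK}(g_1g_2)$ on the slice. Your check that the $D$-factors cancel in the second and third families and combine to $D_{k-1}D_{k+1}/D_k^2$ in the first is also correct.

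The gap is exactly the one you flag, and the paper closes it not by a Pl\"ucker expansion but by a single identity you are missing: $\Phi_{\rm BK}(\overline{w_0}g^T\overline{w_0})=-\Phi_{\rm BK}(g)$. Applying this to $g_1g_2=h_1\overline{w_0}uh_2\overline{w_0}$ gives
\[
-\Phi_{\rm BK}(g_1g_2)=\Phi_{\rm BK}(b),\qquad b:=h_2\,u^T\,\overline{w_0}^{-1}h_1\overline{w_0}\in B_-.
\]
Since $b$ is lower triangular, Theorem~\ref{Thm:poten} applies verbatim and expresses $\Phi_{\rm BK}(b)$ as a sum of Laurent monomials in the Gelfand--Zeitlin minors $\Delta_{i,j}(b)=\Delta_{[1,j]^{\op},[i+1,i+j]}(b)$. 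A one-line Cauchy--Binet check shows $\Delta_{i,j}(b)=\overline{M}_{i,j}/\overline{M}_{i,0}$, and substituting this into Theorem~\ref{Thm:poten} produces precisely the first two families in the statement. The third family comes from $\chi(u)$ alone: the paper writes the factorization parameters $a_j^{(n-i)}$ of $u^T$ as ratios of the same $\Delta_{i,j}(b)$ (see \eqref{eq:facpara}), which translates directly into the third Laurent monomial.

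Two consequences for your plan. First, your direct route would have to recover, for each $k$, an expansion of the single ratio $X_k/Z_k$ as a \emph{sum} of roughly $k$ Laurent monomials in the $N_{i,j}$; this expansion is not a Pl\"ucker relation but precisely the content of Theorem~\ref{Thm:poten} in disguise, so you would be reproving that theorem. Second, your proposed split (Type~1 from one half of $\Phi_{\rm BK}(g_1g_2)$, Types~2 and~3 from $\chi(u)$ minus the other half) does not match the actual bookkeeping: $\Phi_{\rm BK}(b)$ accounts for the first \emph{two} families, and $\chi(u)$ by itself gives the third (count terms: $\chi(u)$ has $\binom{n}{2}$ summands, one per rhombus of a single orientation).
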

\begin{proof}
      Recall that we view $H\times U\times H$ as an open subset of $\mathcal{M}_2$ via $(h_1,u,h_2)\mapsto [h_1\overline{w_0},uh_2\overline{w_0}]$. In terms of elements $(h_1,u,h_2)\in H\times U\times H$, we have
    \[
        \Phi(h_1,u, h_2)=\Phi_{\rm BK}(h_1\overline{w_0})+\Phi_{\rm BK}(uh_2\overline{w_0})-\Phi_{\rm BK}(h_1\overline{w_0}uh_2\overline{w_0})=\chi(u)-\Phi_{\rm BK}(h_1\overline{w_0}uh_2\overline{w_0}).
    \]
    Since $\Phi_{\rm BK}(\overline{w_0}g^T\overline{w_0})=-\Phi_{\rm BK}(g)$, we get
    \[
        \Phi(h_1,u, h_2)=\chi(u)+\Phi_{\rm BK}(h_2u^T\overline{w_0}^{-1}h_1\overline{w_0}).
    \]
    Note that by \eqref{eq:expansion}, we have
    \[
    \overline{M}_{i,j}(h_1\overline{w_0},uh_2\overline{w_0} )= \Delta_{[1,i+j]^\op,[1,i+j]^\op}(h_1)\Delta_{[i+1,i+j],[1,j]^\op}(uh_2).
    \]
   Denote by $b=h_2u^T\overline{w_0}^{-1}h_1\overline{w_0}$. Then
   \begin{equation}\label{eq:deltatoM}
       \Delta_{i,j}(b):=\Delta_{[1,j]^\op,[i+1,i+j]}(b)=\overline{M}_{i,j}/ \overline{M}_{i,0}.
   \end{equation}
   Recall that in terms of $\Delta_{i,j}$ as in Eq \eqref{eq:geoGZpattern}, we have the expression of BK potential as
   \[
    \sum_{(i,j)\in \delta_n'} \frac{\Delta_{i,j-1}}{\Delta_{i,j}}\cdot \frac{\Delta_{i+1,j}}{\Delta_{i+1,j-1}}+ \sum_{(i,j)\in \delta_n''} \frac{\Delta_{i,j-1}}{\Delta_{i,j}}\cdot \frac{\Delta_{i-1,j+1}}{\Delta_{i-1,j}}.
   \]
   where $\delta_n'$ and $\delta_n''$ are as in Theorem \ref{Thm:poten}. Thus we get
    \[
        \Phi_{\rm BK}(b)=\sum _{i,j} \frac{\overline{M}_{i,j-1}}{\overline{M}_{i,j}}\cdot \frac{\overline{M}_{i+1,j}}{\overline{M}_{i+1,j-1}}+ \frac{\overline{M}_{i,j-1}}{\overline{M}_{i,j}}\cdot \frac{\overline{M}_{i-1,j+1}}{\overline{M}_{i-1,j}}.
    \]

 Next, we would like to give a formula of $\chi(u)=\sum u_{i,i+1}$ for $u\in U$ in terms of minors. We first write down factorization parameters for $u^T$ as in the following graph. Then it is clear that $\chi(u)=\sum a_{j}^{(n-i)}$. Thus we need to write down $a_j^{(n-i)}$ using minors $\Delta_{i,j}$. Note that
\[
    a_j^{(n)} a_j^{(n-1)}\cdots a_j^{(n-i)}= \frac{\Delta_{[1,i+1]^\op,[1,i+1]^\op-j+1}}{\Delta_{[1,i+1]^\op,[1,i+1]^\op-j+2}}=\frac{\Delta_{n-i-j,i+1}}{\Delta_{n-i-j+1,i+1}},
\]
 which gives 
\begin{equation}\label{eq:facpara}
    a_j^{(n-i)}=\frac{\Delta_{[1,i+1]^\op,[1,i+1]^\op-j+1}}{\Delta_{[1,i+1]^\op,[1,i+1]^\op-j+2}}\cdot \frac{\Delta_{[1,i]^\op,[1,i]^\op-j+2}}{\Delta_{[1,i]^\op,[1,i]^\op-j+1}}=\frac{\Delta_{n-i-j,i+1}}{\Delta_{n-i-j+1,i+1}}\cdot \frac{\Delta_{n-i-j+2,i}}{\Delta_{n-i-j+1,i}}.
\end{equation}

\begin{figure}[H]
    \centering
\begin{tikzpicture}[scale=0.4]
    \foreach \n in {0,...,5}{
    \draw (-1,2*\n)--(23,2*\n);
    }
    \foreach \n in {0,1,3,4}{
    \draw (3+4*\n,2)--(5+4*\n,0);
    }
    \foreach \n in {0,...,2}{
    \draw (3+4*\n,6)--(5+4*\n,4);
    }
    \foreach \n in {0,...,1}{
    \draw (3+4*\n,8)--(5+4*\n,6);
    }
    \draw (3,10)--(5,8);
    \node at (12,1){$\cdots$};
    \foreach \n in {0,...,3}{
    \node at (4+4*\n,3) {$\cdots$};
    }
    \node at (5.5,1) {$a^{(2)}_2$};
    \node at (9.5,1) {$a^{(3)}_3$};
    \node at (17.8,1) {$a^{(n-1)}_{n-1}$};
    \node at (21.5,1) {$a^{(n)}_n$};

    \node at (5.8,5) {$a^{(n-2)}_2$};
    \node at (9.8,5) {$a^{(n-1)}_3$};
    \node at (13.8,5) {$a^{(n)}_{4}$};

    \node at (5.8,7) {$a^{(n-1)}_2$};
    \node at (9.5,7) {$a^{(n)}_3$};
    
    \node at (5.5,9) {$a^{(n)}_{2}$};

    \node at (-1,0)[left] {$1$};
    \node at (-1,2)[left] {$2$};
    \node at (-1,4)[left] {$n-3$};
    \node at (-1,6)[left] {$n-2$};
    \node at (-1,8)[left] {$n-1$};
    \node at (-1,10)[left] {$n$};
\end{tikzpicture}
\end{figure}
   
   By Eq \eqref{eq:deltatoM} and \eqref{eq:facpara}, 
    \[
        \chi(u)=\sum \frac{\overline{M}_{i-1,j+1}}{\overline{M}_{i,j+1}}\frac{\overline{M}_{i+1,j}}{\overline{M}_{i,j}}.
    \]
    Thus the statement follows since  $M_{i,j}$'s and $\Phi_2$ are all $U^{k+1}$-invariant.
\end{proof}

\begin{corollary}\label{cor:deltak}
    For $k\geqslant 2$ and $0\leqslant l\leqslant k-2$, denote by $M^{(l)}_{i,j}=M_{0^l,i,j,0^{k-l-2}}$. Then on $\GL_n^k$
    \[
        \Phi_k=\sum_{l=0}^{n-2}\sum _{i,j} \frac{M^{(l)}_{i,j-1}}{M^{(l)}_{i,j}}\cdot \frac{M^{(l)}_{i+1,j}}{M^{(l)}_{i+1,j-1}}+ \frac{M^{(l)}_{i,j-1}}{M^{(l)}_{i,j}}\cdot \frac{M^{(l)}_{i-1,j+1}}{M^{(l)}_{i-1,j}}+\frac{M^{(l)}_{i-1,j+1}}{M^{(l)}_{i,j+1}}\frac{M^{(l)}_{i+1,j}}{M^{(l)}_{i,j}}.
    \]
\end{corollary}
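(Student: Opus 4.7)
The plan is to reduce the statement to the already-established $k=2$ case (Proposition \ref{prop:ceninM}) via two ingredients: a telescoping decomposition of $\Phi_k$, and an identification of the multi-corner minors $M^{(l)}_{i,j}$ with ordinary two-factor multi-corner minors evaluated on a product matrix.

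First I would establish the telescoping identity
\begin{equation*}
\Phi_k(g_1,\ldots,g_k) \;=\; \sum_{l=0}^{k-2} \Phi_2\bigl(g_1\cdots g_{l+1},\,g_{l+2}\bigr).
\end{equation*}
Setting $h_m:=g_1\cdots g_m$, the right-hand side equals
$\sum_{l=0}^{k-2}\bigl(\Phi_{\rm BK}(h_{l+1})+\Phi_{\rm BK}(g_{l+2})-\Phi_{\rm BK}(h_{l+2})\bigr)$;
the sums of $\Phi_{\rm BK}(h_i)$ and $-\Phi_{\rm BK}(h_i)$ cancel except at the endpoints, leaving $\Phi_{\rm BK}(g_1)-\Phi_{\rm BK}(g_1\cdots g_k)+\sum_{i=2}^k\Phi_{\rm BK}(g_i)$, which is exactly $\Phi_k$.

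Next I would prove the key identification
\begin{equation*}
M^{(l)}_{i,j}(g_1,\ldots,g_k) \;=\; M_{i,j}\bigl(g_1\cdots g_{l+1},\,g_{l+2}\bigr),
\end{equation*}
using the expansion formula from Proposition \ref{pro:expofM}. For $\underline{\alpha}=(0^l,i,j,0^{k-l-2})$ the constraints $|L_{i'-1}|=|J_{i'}|+|L_{i'}|$ force $|L_0|=\cdots=|L_l|=i+j$, $|L_{l+1}|=j$ and $L_{l+2}=\cdots=L_{k-1}=\emptyset$, so the last $k-l-2$ factors are trivial. Iterated Cauchy--Binet applied to the first $l+1$ factors gives
\begin{equation*}
\sum_{L_1,\ldots,L_l}\prod_{i'=1}^{l}\Delta_{L_{i'-1},L_{i'}}(g_{i'})\,\Delta_{L_l,[1,i]\sqcup L_{l+1}}(g_{l+1}) \;=\; \Delta_{[1,i+j]^{\op},[1,i]\sqcup L_{l+1}}(g_1\cdots g_{l+1}),
\end{equation*}
and summing over $L_{l+1}$ produces exactly the Proposition \ref{pro:expofM} expansion of $M_{i,j}(g_1\cdots g_{l+1},g_{l+2})$.

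Combining the two ingredients, each summand $\Phi_2(g_1\cdots g_{l+1},g_{l+2})$ is, by Proposition \ref{prop:ceninM}, expressible as the rhombus-type sum in the variables $M_{i,j}(g_1\cdots g_{l+1},g_{l+2})=M^{(l)}_{i,j}$, and summing over $l$ from $0$ to $k-2$ yields the formula claimed in the corollary. I do not anticipate a serious obstacle: the telescoping is immediate once written down, and the identification of $M^{(l)}_{i,j}$ with $M_{i,j}$ on the product is a direct application of Cauchy--Binet. The only bookkeeping care is that the convention ``discard Laurent monomials involving $M_{i,j}$ with indices outside the admissible triangle'' must be applied consistently at each level $l$, which matches the analogous convention already in force for Proposition \ref{prop:ceninM}.
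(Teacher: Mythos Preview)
Your proposal is correct and follows essentially the same approach as the paper: the paper sketches the $k=3$ case by writing $\Phi_3(g_1,g_2,g_3)=\Phi_2(g_1,g_2)+\Phi_2(g_1g_2,g_3)$ and observing that $M_{i,j}(g_1g_2,g_3)=M_{0,i,j}(g_1,g_2,g_3)$, then says ``a similar argument applies for any $k\geqslant 3$''. You have simply written out the general telescoping and the general identification $M^{(l)}_{i,j}=M_{i,j}(g_1\cdots g_{l+1},g_{l+2})$ explicitly, supplying via Proposition~\ref{pro:expofM} and Cauchy--Binet the justification that the paper leaves to the reader.
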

\begin{proof}
    For $k=2$, the statement is given by Proposition \ref{prop:ceninM}. For $k=3$, note that
    \begin{align*}
        \Phi_3(g_1,g_2,g_3)&=\Phi_{\rm BK}(g_1)+\Phi_{\rm BK}(g_2)-\Phi_{\rm BK}(g_1g_2)+\Phi_{\rm BK}(g_1g_2)+\Phi_{\rm BK}(g_3)-\Phi_{\rm BK}(g_1g_2g_3)\\
        &=\Phi_2(g_1,g_2)+\Phi_2(g_1g_2,g_3).
    \end{align*}
       Observe  that
        \[
            M_{i,j}(g_1g_2,g_3)=M_{0,i,j}(g_1,g_2,g_3).
        \]
        Hence, one can represent $\Phi_2(g_1g_2,g_3)$ in terms of $M_{0,i,j}$. A similar argument applies for any $k\geqslant 3$. 
\end{proof}

Now we first restrict the potential $\Phi_k$ to $B_-^k$ and consider the positive variety $(B_-^k,\Phi_k)$. What follows is an analogs of Theorem \ref{theorem PN rhombi inequalities}. For $0\leqslant l \leqslant k-2$, denote by
\[
    \mathcal{F}_l:=\{\underline{\alpha}\mid \underline{\alpha}=(0^l, i, j, 0^{k-2-l}), 0\leqslant i,j\leqslant n, 0\leqslant i+j\leqslant n \}
\]
a $2$-face of simplex $\Delta^k(n)$. We label a small rhombus on $\mathcal{F}_l$ as follows:
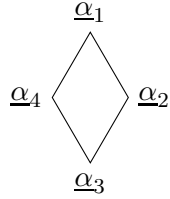
\begin{figure}[H]
    \centering
    \begin{tikzpicture}
        \draw (0,0) -- (0.5,0.866) --(1,0) -- (0.5, -0.866) -- cycle;
        \node[anchor=east] at (0,0) {$\underline{\alpha}_{4}$};
        \node[anchor=west] at (1,0) {$\underline{\alpha}_{2}$};
        \node[anchor=south] at (.5,.866) {$\underline{\alpha}_{1}$};
        \node[anchor=north] at (.5,-.866) {$\underline{\alpha}_{3}$};
    \end{tikzpicture}\caption{Order of vertices of a rhombus.}\label{fig:orderofrhom}
\end{figure}

\begin{proposition}\label{pro:tropcen=rhom}
    Restrict $\Phi_k$ to $B_-^k$ and consider the positive structure on $B_-^k$ defined by Eq \eqref{eq:posforB}. Then, the functions $M_{\underline{\alpha}}$'s are positive and
    $$
    \Phi_k^t\leqslant 0 \Leftrightarrow
    M^t_{\underline{\alpha}_1}+M^t_{\underline{\alpha}_3}\leqslant M^t_{\underline{\alpha}_2}+M^t_{\underline{\alpha}_4}
    $$ 
    for all small rhombi  on faces $\mathcal{F}_l$ with vertices $\underline{\alpha}_{1},\ldots,\underline{\alpha}_{4}$.  
\end{proposition}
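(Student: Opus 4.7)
The plan is to reduce both assertions of the proposition directly to the explicit formula given in Corollary~\ref{cor:deltak}.

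I would first establish positivity of the $M_{\underline{\alpha}}$'s. By Proposition~\ref{pro:expofM}, each multi-corner minor expands as a subtraction-free sum
$$M_{\underline{\alpha}}=\sum_{L_1,\ldots,L_{k-1}}\prod_{i=1}^k \Delta_{L_{i-1},J_i\sqcup L_i}(g_i)$$
of products of matrix minors of the factors $g_i\in B_-$. By Lindstr\"om's Lemma applied through the standard planar network chart $\theta_{\rm st}$ (see Definition~\ref{definition: correspondence map} and Lemma~\ref{lemma: GZ and standard network positive structures are equivalent}), every minor $\Delta_{I,J}(g_i)$ is a subtraction-free polynomial in the factorization parameters, so the $M_{\underline{\alpha}}$'s are positive functions on $B_-^k$ in the positive structure \eqref{eq:posforB}.

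For the equivalence of $\Phi_k^t\leqslant 0$ with the rhombus inequalities, I would invoke Corollary~\ref{cor:deltak}, which writes
$$\Phi_k=\sum_{l=0}^{k-2}\sum_{i,j}\left(\frac{M^{(l)}_{i,j-1}M^{(l)}_{i+1,j}}{M^{(l)}_{i,j}M^{(l)}_{i+1,j-1}}+\frac{M^{(l)}_{i,j-1}M^{(l)}_{i-1,j+1}}{M^{(l)}_{i,j}M^{(l)}_{i-1,j}}+\frac{M^{(l)}_{i-1,j+1}M^{(l)}_{i+1,j}}{M^{(l)}_{i,j+1}M^{(l)}_{i,j}}\right)$$
as a sum of Laurent monomials with coefficient $+1$ in the positive quantities $M^{(l)}_{i,j}$. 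Since tropicalization converts such a positive Laurent polynomial into the maximum of the tropicalizations of its individual monomials (each of which is just an integer linear combination of the $(M^{(l)}_{i,j})^t$), we get
$$\Phi_k^t=\max_{l,\,i,\,j,\,\sigma\in\{1,2,3\}} T^{(l)}_{\sigma,i,j},$$
where $T^{(l)}_{1,i,j}=(M^{(l)}_{i,j-1})^t+(M^{(l)}_{i+1,j})^t-(M^{(l)}_{i,j})^t-(M^{(l)}_{i+1,j-1})^t$ and $T^{(l)}_{2,i,j}$, $T^{(l)}_{3,i,j}$ are the analogous expressions read off from the second and third summands. Hence $\Phi_k^t\leqslant 0$ if and only if every $T^{(l)}_{\sigma,i,j}\leqslant 0$.

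It remains to identify each inequality $T^{(l)}_{\sigma,i,j}\leqslant 0$ with the rhombus inequality on the face $\mathcal{F}_l$ in the sense of Fig.~\ref{fig:orderofrhom}. The three index shapes $\sigma=1,2,3$ correspond to the three possible orientations of a small rhombus inside the triangular face $\mathcal{F}_l$; a direct inspection shows that in each type the denominators sit on the short diagonal ($\underline{\alpha}_2,\underline{\alpha}_4$) while the numerators sit on the long diagonal ($\underline{\alpha}_1,\underline{\alpha}_3$), so $T^{(l)}_{\sigma,i,j}\leqslant 0$ is exactly $M^t_{\underline{\alpha}_1}+M^t_{\underline{\alpha}_3}\leqslant M^t_{\underline{\alpha}_2}+M^t_{\underline{\alpha}_4}$. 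As $l,i,j,\sigma$ range, one obtains precisely the complete list of small rhombi on all faces $\mathcal{F}_l$ (boundary rhombi being automatically eliminated by the discard convention of Proposition~\ref{prop:ceninM}). The main obstacle is this last combinatorial bookkeeping, which is routine but requires care to match orientations and to check that no rhombus is missed or duplicated; everything else follows formally from Corollary~\ref{cor:deltak} and the definition of tropicalization.
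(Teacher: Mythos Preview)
Your proposal is correct and follows exactly the paper's approach: positivity via Proposition~\ref{pro:expofM} together with Lindstr\"om's Lemma, and the equivalence read off directly from the explicit formula in Corollary~\ref{cor:deltak}. The paper's own proof is a two-sentence pointer to these ingredients; you have simply unpacked the second step (tropicalize the sum of Laurent monomials to a maximum, then match each monomial to a rhombus), which is precisely what the paper means by ``follows directly from the expression in Corollary~\ref{cor:deltak}.''
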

\begin{proof}
    The Lindstr{\"o}m's Lemma and Proposition \ref{pro:expofM} imply that $M_{\underline{\alpha}}$'s are positive with respect to $\theta_{\mathrm{st}}\times \cdots \times \theta_{\mathrm{st}}$ on $B_-^k$. What remains in the proposition follows directly from the expression in Corollary \ref{cor:deltak}.
\end{proof}

The results stated above descend to the positive variety $(\mathcal{M}_k, \overline{\Phi_k})$:

\begin{proposition}\label{Pro:l=2complete}
    The set of functions ${\bm M}$ defined by Eq \eqref{eq:invingen} defines a positive chart on $(\mathcal{M}_k,\overline{\Phi_k})$, and we have 
     $$
     \overline{\Phi_k}^t\leqslant 0 \Leftrightarrow
     \overline{M}^t_{\underline{\alpha}_1}+\overline{M}^t_{\underline{\alpha}_3}\leqslant \overline{M}^t_{\underline{\alpha}_2}+\overline{M}^t_{\underline{\alpha}_4}
     $$ for all small rhombi  on faces $\mathcal{F}_l$ for $0\leqslant l\leqslant k-2$ with vertices $\underline{\alpha}_{1},\ldots,\underline{\alpha}_{4}$.  
\end{proposition}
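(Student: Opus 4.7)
The plan is to reduce Proposition~\ref{Pro:l=2complete} to three ingredients already established in this section: the algebraic independence and correct cardinality of ${\bm M}$ on $\mathcal{M}_k$ (Proposition~\ref{Pro:anykcomplete}), the positive Laurent expansion of $\Phi_k$ in the $M^{(l)}_{i,j}$'s (Corollary~\ref{cor:deltak}), and the rhombus-inequality characterization on $B_-^k$ (Proposition~\ref{pro:tropcen=rhom}). The only genuinely new step is upgrading algebraic independence to birationality; once ${\bm M}$ defines an open embedding $(\mathbb{C}^*)^{\dim\mathcal{M}_k}\hookrightarrow \mathcal{M}_k$, both the positivity of $\overline{\Phi_k}$ and the rhombus comparison follow from Corollary~\ref{cor:deltak} by $U^{k+1}$-invariance.

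For the birationality, I would compose with the open embedding $\delta\colon H\times (U\times H)^{k-1}\hookrightarrow \mathcal{M}_k$ constructed in the proof of Proposition~\ref{Pro:anykcomplete} and invert the map face-by-face using the triangular identity
$$\overline{M}_{0^{l-1},i,j,0^{k-l-1}} \;=\; \overline{M}_{0^{l-1},i,0,0^{k-l}}\cdot \Delta_{[i+1,i+j],[1,j]}(g_{l+1})$$
derived there. The subset $\{\overline{M}_{i,0,\ldots,0}\}$ consists of corner minors of $g_1=h_1\overline{w_0}$ and determines $h_1$ via the highest-weight formula of Example~\ref{eq:k=1andhw}; on the next face $\mathcal{F}_1$ the ratios $\overline{M}_{0,i,j,0,\ldots,0}/\overline{M}_{0,i,0,\ldots,0}$ equal $\Delta_{[i+1,i+j],[1,j]}(g_2)$ and determine $g_2=u_2h_2\overline{w_0}$ in its open Bruhat cell through factorization parameters as in Lemma~\ref{lemma: GZ and standard network positive structures are equivalent}; the same recipe then recovers $g_3,\ldots,g_k$ inductively. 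All inverse formulas are positive Laurent expressions, so the resulting chart is positive equivalent to $\delta$.

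Granted this, Corollary~\ref{cor:deltak} yields the rest immediately. Since $\Phi_k$ and the $M_{\underline{\alpha}}$'s are $U^{k+1}$-invariant, the identity of that corollary descends verbatim to $\mathcal{M}_k$, displaying $\overline{\Phi_k}$ as a positive Laurent polynomial in ${\bm M}$ (completing the positive-chart claim) and giving
$$\overline{\Phi_k}^t \;=\; \max_{l,i,j}\Bigl\{\bigl(\overline{M}^{(l),t}_{i,j-1}+\overline{M}^{(l),t}_{i+1,j}\bigr)-\bigl(\overline{M}^{(l),t}_{i,j}+\overline{M}^{(l),t}_{i+1,j-1}\bigr),\ \ldots\Bigr\},$$
where the omitted arguments correspond to the other two rhombus shapes appearing in Corollary~\ref{cor:deltak}. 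Each argument of this maximum is precisely the sum of two $\overline{M}^{(l),t}$-values on one diagonal of a small rhombus in $\mathcal{F}_l$ minus the sum on the other diagonal as in Fig.~\ref{fig:orderofrhom}. Hence $\overline{\Phi_k}^t\leqslant 0$ iff every such difference is non-positive, which is exactly the full collection of rhombus inequalities over all faces $\mathcal{F}_l$.

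The main obstacle is the birationality step. Algebraic independence from Proposition~\ref{Pro:anykcomplete} only guarantees a dominant rational map to a torus of the correct dimension; excluding higher degree requires an explicit rational inverse, which is what the triangular recursion above supplies. A less direct alternative would be to pull back through the quotient map $\GL_n^k\to\mathcal{M}_k$ and apply Proposition~\ref{pro:tropcen=rhom} on $B_-^k$ together with $U^{k+1}$-invariance, but this still requires verifying that the positive equivalence descends cleanly to the quotient, which amounts to essentially the same computation.
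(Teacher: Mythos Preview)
Your proposal is correct and follows essentially the same approach as the paper: invoke Proposition~\ref{Pro:anykcomplete} for the cardinality/independence of ${\bm M}_k$ and Corollary~\ref{cor:deltak} for the positive Laurent expression of $\overline{\Phi_k}$, whence the rhombus equivalence drops out. The paper's own proof is a two-line summary of precisely these ingredients; you are in fact more careful than the paper in explicitly upgrading algebraic independence to birationality via the triangular recursion (the paper leaves this implicit in the formulas appearing in the proof of Proposition~\ref{Pro:anykcomplete}).
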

\begin{proof}
    The statements follow from the fact that $\card({\bm M}_k)=\dim \mathcal{M}_k$ and $\overline{\Phi_k}$ has a positive expression in terms of functions in ${\bm M}_k$ by Corollary \ref{cor:deltak}.
\end{proof}

\subsection{Geometric octahedron recurrence}\label{sec:geooct}
In this section, we focus on the case of $k=3$. In that case, functions $M_{i,j,k}$ satisfy the following recurrence property:
\begin{theorem}\label{Thm:octrec}
    The functions $M_{i,j,k}$'s satisfy the geometric octahedron recurrence:
    \begin{equation}\label{eq:geooct}
        M_{i+1,j,k+1}M_{i,j+1,k}=M_{i,j+1,k+1}M_{i+1,j,k}+M_{i,j,k+1}M_{i+1,j+1,k},
    \end{equation}
    where $M_{i,j,k}=0$ if $(i,j,k)\notin \Delta^3(n)$. Moreover, each $M_{i,j,k}$ is expressible both as a rational function of  $M_{i,j,k}$'s  corresponding to the faces $\mathcal{F}_0$ and $\mathcal{F}_1$ and a rational function of  $M_{i,j,k}$'s  corresponding to the faces of $\Delta^3(n)$ other than $\mathcal{F}_0$ and $\mathcal{F}_1$.
\end{theorem}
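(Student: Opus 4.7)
The plan is to realize each multi-corner minor as a maximal minor of the concatenated $n\times 4n$ matrix $\bm g$ defined in \eqref{eq:g}, and then deduce the octahedron recurrence \eqref{eq:geooct} as a single instance of the classical three-term Plücker relation. Recall from Definition \ref{def:Malpha} that $M_{i,j,k}(g_1,g_2,g_3) = \Delta_{[1,n],J(i,j,k)}(\bm g)$, where the column set takes one contiguous block from each of the four blocks of $\bm g$:
$$J(i,j,k)=\{1,\ldots,n-s\}\cup\{n+1,\ldots,n+i\}\cup\{2n+1,\ldots,2n+j\}\cup\{3n+1,\ldots,3n+k\},\quad s=i+j+k.$$

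For $(i,j,k)$ with $s+2\leqslant n$, I would introduce four distinguished columns, one in each block,
$$a=n-s-1,\quad b=n+i+1,\quad c=2n+j+1,\quad d=3n+k+1,$$
so $a<b<c<d$, and let $S:=J(i+1,j,k+1)\setminus\{b,d\}$. A direct check of the six column sets of the recurrence gives
$$J(i+1,j,k+1)=S\cup\{b,d\},\ \ J(i,j+1,k)=S\cup\{a,c\},\ \ J(i+1,j,k)=S\cup\{a,b\},$$
$$J(i,j+1,k+1)=S\cup\{c,d\},\ \ J(i,j,k+1)=S\cup\{a,d\},\ \ J(i+1,j+1,k)=S\cup\{b,c\}.$$
Applying the classical three-term Plücker relation
$$\Delta_{S\cup\{a,c\}}\Delta_{S\cup\{b,d\}}=\Delta_{S\cup\{a,b\}}\Delta_{S\cup\{c,d\}}+\Delta_{S\cup\{a,d\}}\Delta_{S\cup\{b,c\}}$$
to $\bm g$ then produces \eqref{eq:geooct} verbatim. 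In the boundary regime $s+2>n$, at least one of the six indices lies outside $\Delta^3(n)$ (equivalently, the column $a$ falls outside $[1,4n]$), and the stated convention $M_{i,j,k}=0$ reduces the recurrence to the trivial identity $0=0$.

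For the expressibility claim, solving \eqref{eq:geooct} for the vertex $(i,j,k)$ with $i,k\geqslant 1$ yields
$$M_{i,j,k}=\frac{M_{i-1,j+1,k}\,M_{i,j,k-1}+M_{i-1,j,k}\,M_{i,j+1,k-1}}{M_{i-1,j+1,k-1}}.$$
All five multi-indices on the right have strictly smaller value of the product $ik$. Induction on $ik$, whose base case $ik=0$ is precisely $\mathcal{F}_0\cup\mathcal{F}_1$, shows that every $M_{i,j,k}$ is a rational function of the $M$'s on $\mathcal{F}_0\cup\mathcal{F}_1$. The dual statement is obtained by solving the same recurrence for a vertex lying strictly in the interior of the other pair of faces and carrying out induction on the product $j(n-i-j-k)$, whose vanishing locus is exactly $\{j=0\}\cup\{i+j+k=n\}$.

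The main obstacle is really just the combinatorial bookkeeping of matching the six column sets $J(\cdot,\cdot,\cdot)$ against the pivot pattern $S\cup\{\cdot,\cdot\}$; once that is in hand, the recurrence is a single Plücker identity, and expressibility reduces to a standard two-variable induction in which each step strictly decreases the chosen distance-to-boundary quantity.
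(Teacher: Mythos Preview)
Your proof is correct and follows essentially the same route as the paper: realize each $M_{i,j,k}$ as a maximal minor of the $n\times 4n$ matrix~$\bm g$, identify the four pivot columns $a<b<c<d$ so that the six column sets in the recurrence are $S\cup\{\cdot,\cdot\}$, and then invoke a single three-term Pl\"ucker relation. Your choice of induction variable is arguably cleaner than the paper's partial order: using $ik$ (resp.\ $j(n-i-j-k)$) makes the base case coincide exactly with $\mathcal{F}_0\cup\mathcal{F}_1$ (resp.\ the complementary pair of faces) and the strict decrease on all five right-hand indices is immediate, whereas the paper's partial order requires auxiliary clauses singling out the boundary faces; both arguments work.
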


\begin{proof}
    Recall that $M_{i,j,k}(g_1,g_2,g_3)=\Delta_{[1,n], J_{i,j,k}}(\bm g)$, where
    \[
    J_{i,j,k}=([1,n]\setminus [1,i+j+k]^\op)\cup ([1,i]+n)\cup ([1,j]+2n)\cup ([1,k]+3n).
     \]
    Denote
    \[
        i_1=n-(i+j+k), i_2=i+1+n, i_3=j+1+2n, i_4=k+1+3n.
    \]
    A direct calculation gives
    \begin{align*}
        J_{i+1,j,k+1}&=J'\cup \{i_2,i_4\},& &&  J_{i,j+1,k}&=J'\cup \{i_1,i_3\}, &&&
        J_{i,j+1,k+1}&=J'\cup \{i_3,i_4\}; \\ J_{i+1,j,k}&= J'\cup \{i_1,i_2\}, &&& 
        J_{i,j,k+1}&= J'\cup \{i_1,i_4\}, &&&  J_{i+1,j+1,k}&= J'\cup \{i_2,i_3\},
    \end{align*}
    where
    \[
        J'=[1,n-(i+j+k)-1]\cup ([1,i]+n)\cup ([1,j]+2n)\cup ([1,k]+3n).
    \]
    Finally, since $i_1<i_2<i_3<i_4$, the Pl\"ucker relation of ${\bm g}$ gives
    \[\Delta_{[1,n], J'\cup\{i_2,i_4\}}\Delta_{[1,n], J'\cup\{i_1,i_3\}}=\Delta_{[1,n], J'\cup\{i_3,i_4\}}\Delta_{[1,n], J'\cup\{i_1,i_2\}}+\Delta_{[1,n], J'\cup\{i_1,i_4\}}\Delta_{[1,n], J'\cup\{i_2,i_3\}}.\]
    The recurrence \eqref{eq:geooct} now follows directly.

    We now introduce a partial order on the set of triples $(i,j,k)$: we have $(i,j,k) < (i',j',k')$ if $i+k < i'+k'$, or (1) if $i=0$ or $k=0$ and $i'\neq 0, k'\neq 0$, (2) if $j'=0$ or $i'+k'=n$ and $j\neq 0$ and $i+k\neq n$. Thus Eq \eqref{eq:geooct} can be rewritten as
    \[
        M_{i+1,j,k+1}=M_{i,j+1,k}^{-1}\left(M_{i,j+1,k+1}M_{i+1,j,k}+M_{i,j,k+1}M_{i+1,j+1,k}\right).
    \]
    Each indices appeared on the r.h.s of the equation has a smaller partial order than $(i+1,j,k+1)$. Thus by  recurrence, one can express $M_{i+1,j,k+1}$ using $M_{i,j,k}$'s corresponding to the pair the faces $\mathcal{F}_0$ and $\mathcal{F}_1$. Similarly, we have
    \[
    M_{i,j+1,k}=M_{i+1,j,k+1}^{-1}\left(M_{i,j+1,k+1}M_{i+1,j,k}+M_{i,j,k+1}M_{i+1,j+1,k}\right).
    \]
    Each indices appeared on the r.h.s of the equation has a bigger partial order than $(i,j+1,k)$. Thus by  recurrence, one can express $M_{i,j+1,k}$ using $M_{i,j,k}$'s corresponding to the pair of the faces of $\Delta^3(n)$ other than $\mathcal{F}_0$ and $\mathcal{F}_1$.
\end{proof}
\begin{remark}
    Theorem \ref{Thm:octrec} descends to the property of $\mathcal{M}_3$ as stated in the first part of Theorem \ref{intro:M_3properties}. 
\end{remark}

\begin{example}\label{ex:pluasmutation}
    Retain the notation in Example \ref{ex:k=3}. For $n=2$, we have
    \begin{align*}
        {\bm M}_2=\{M_{1,0,0},M_{0,1,0}, M_{0,0,1},M_{1,1,0},M_{0,1,1},M_{2,0,0},M_{0,2,0},M_{0,0,2}\};\\
        {\bm M}_2'=\{M_{1,0,0},M_{1,0,1}, M_{0,0,1},M_{1,1,0},M_{0,1,1},M_{2,0,0},M_{0,2,0},M_{0,0,2}\}.
    \end{align*}
    The sets ${\bm M}_2$ and ${\bm M}_2'$ are related to each other by the recurrence:
    \[
    M_{1,0,1}M_{0,1,0}=M_{0,1,1}M_{1,0,0}+M_{0,0,1}M_{1,1,0},
    \]
    which can be thought of as a cluster mutation. For $n=3$, the set ${\bm M}_3$ can be transformed into ${\bm M}_3'$ by $4$ subsequent moves: Denote  ${\bm F}={\bm M}_3\cap {\bm M}_3'$, and consider
    \begin{align*}
        {\bm M}_3&= {\bm F} \cup \{ a=M_{0,1,0}, b=M_{0,2,0}, c=M_{1,1,0}, d=M_{0,1,1} \};\\
        {\bm N}_1&= {\bm F} \cup \{ a'=M_{1,0,1}, b, c, d \};\\
        {\bm N}_2&= {\bm F} \cup \{ a', b'=M_{1,1,1}, c, d \};\\
        {\bm N}_3&= {\bm F} \cup \{ a', b', c'=M_{2,0,1}, d \};\\
        {\bm M}_3'&= {\bm F} \cup \{ a', b', c', d'=M_{1,0,2} \}.
    \end{align*}
    The cluster nature of the variety $\mathcal{M}_k$ will be studied elsewhere.
\end{example}

We can now prove the remaining rhombus inequalities for functions $M^t_{i,j,k}$:

\begin{theorem}\label{thm:allrhombi}
    Consider the positive variety with potential $(B_-^3,\Phi_3)$ and restrict functions $M_{\underline{\alpha}}$ to $B_-^3$. If $\Phi_3^t\leqslant 0$, then for all small plane rhombi  in $\Delta^3(n)$ with vertices $\underline{\alpha}_{1},\ldots,\underline{\alpha}_{4}$ ordered as in Fig.\ref{fig:orderofrhom},  we have $M^t_{\underline{\alpha}_1}+M^t_{\underline{\alpha}_3}\leqslant M^t_{\underline{\alpha}_2}+M^t_{\underline{\alpha}_4}$.
\end{theorem}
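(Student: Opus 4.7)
The plan is to augment Proposition \ref{pro:tropcen=rhom} by two further ingredients: a second decomposition of $\Phi_3$ covering the remaining two faces of $\Delta^3(n)$, and the tropical octahedron recurrence covering the interior plane rhombi.

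First, I would exploit the second telescoping decomposition
$$\Phi_3(g_1, g_2, g_3) = \Phi_2(g_1, g_2 g_3) + \Phi_2(g_2, g_3),$$
obtained by cancelling $\Phi_{\rm BK}(g_2 g_3)$ between the two summands. Both summands are positive rational functions in the chart on $B_-^3$, so $\Phi_3^t \leqslant 0$ forces $\Phi_2(g_1, g_2 g_3)^t \leqslant 0$ and $\Phi_2(g_2, g_3)^t \leqslant 0$. Coupling this with the identifications
$$M_{i, 0, j}(g_1, g_2, g_3) = M_{i, j}(g_1, g_2 g_3), \qquad M_{n - i - j,\, i,\, j}(g_1, g_2, g_3) = \det(g_1) \cdot M_{i, j}(g_2, g_3),$$
both of which follow directly from Proposition \ref{pro:expofM} (the second using that the constraint $L_0 = [1, n]$ forces $L_1 = [1, i+j]^{\rm op}$ uniquely, factoring out $\det(g_1)$), and applying Proposition \ref{pro:tropcen=rhom} to each $\Phi_2$ term, we obtain the rhombus inequalities on the remaining two triangular faces $\{j = 0\}$ and $\{i + j + k = n\}$ of $\Delta^3(n)$; in the latter case, the additive constant $(\det g_1)^t$ cancels on both sides of each rhombus inequality.

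Second, by the definition of small rhombi from Section \ref{sec:planar} (the $S_4$-orbit of the unit square with vertices $(0,0,0), (1,0,0), (1,1,0), (0,1,0)$, together with translations), every small plane rhombus in $\Delta^3(n)$ lies in a $2$-dimensional slice parallel to one of the four triangular faces. To handle the rhombi lying strictly in the interior, I would propagate the face inequalities inward via the tropical octahedron recurrence
$$M^t_{i+1, j, k+1} + M^t_{i, j+1, k} = \max\bigl\{M^t_{i, j+1, k+1} + M^t_{i+1, j, k},\ M^t_{i, j, k+1} + M^t_{i+1, j+1, k}\bigr\},$$
which is the tropicalization of Theorem \ref{Thm:octrec}. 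The argument is by induction on the distance from the nearest face: for each rhombus in an interior slice, the tropical octahedron relations at octahedra containing its four vertices allow one to rewrite each of the four $M^t$-values in terms of $M^t$-values on adjacent slices and on nearby faces; the desired rhombus inequality then becomes a tropical-linear consequence of the induction hypothesis together with the previously established face rhombus inequalities.

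The main obstacle is the combinatorial case analysis in the propagation step: each of the four face orientations of a rhombus interacts asymmetrically with the fixed orientation of the octahedron recurrence, so one must carefully track a finite but non-trivial list of local configurations and verify that each reduces to a known rhombus inequality. This preservation of rhombus inequalities by the octahedron recurrence is the tropical manifestation of the classical ``hive-preservation'' phenomenon familiar from the work of Speyer, Knutson--Tao--Woodward, and Henriques--Kamnitzer.
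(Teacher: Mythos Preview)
Your step 1 is correct: the second telescoping $\Phi_3 = \Phi_2(g_1, g_2g_3) + \Phi_2(g_2, g_3)$ together with the identifications $M_{i,0,j}=M_{i,j}(g_1,g_2g_3)$ and $M_{n-i-j,i,j}=\det(g_1)\,M_{i,j}(g_2,g_3)$ does yield the rhombus inequalities on the two remaining faces. But this turns out to be unnecessary: the paper's argument uses only the two faces $\mathcal{F}_0,\mathcal{F}_1$ already supplied by Corollary~\ref{cor:deltak} as the base of an induction that reaches every plane rhombus, including those on the other two faces.

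The genuine gap is in your step 2. The mechanism you describe --- ``rewrite each of the four $M^t$-values in terms of $M^t$-values on adjacent slices'' --- does not work: the tropical recurrence $a+d=\max\{b+c,e+f\}$ does not let you isolate a single $M^t$ and substitute cleanly, and the two linear inequalities it does give directly ($a+d\geqslant b+c$ and $a+d\geqslant e+f$) are \emph{not} small-rhombus inequalities in the sense of the theorem (the four vertices involved do not form an $S_4$-translate of the basic unit rhombus). The paper's key step is different: it works with the \emph{geometric} octahedron recurrence \eqref{eq:geooct} and, multiplying by $M_{\underline{\alpha}}/(M_{\underline{\alpha}+v_1}M_{\underline{\alpha}+v_2}M_{\underline{\alpha}+v_3})$, obtains a three-term additive identity between rhombus monomials
\[
\Phi_{\lozenge_{12}}=\Phi_{\lozenge_{13}}+\Phi_{\lozenge_{23}},\qquad \Phi_\lozenge:=\frac{M_{\underline{\alpha}_1}M_{\underline{\alpha}_3}}{M_{\underline{\alpha}_2}M_{\underline{\alpha}_4}},
\]
for three plane rhombi sharing an acute vertex $\underline{\alpha}$. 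A partial order on rhombi (essentially by $i+k$) makes every plane rhombus not on $\mathcal{F}_0\cup\mathcal{F}_1$ appear on the larger side of such a relation, so chaining these additive identities yields $\Phi_\lozenge+P=L\,\Phi_3$ with $P$ positive and $L\in\mathbb{Z}_{>0}$, whence $\Phi_\lozenge^t\leqslant\Phi_3^t\leqslant 0$. Working with rational functions before tropicalizing keeps the relations additive and hence composable; your purely tropical route could in principle be made to work by tropicalizing this same three-term identity to $\Phi_{\lozenge_{12}}^t=\max\{\Phi_{\lozenge_{13}}^t,\Phi_{\lozenge_{23}}^t\}$, but that is precisely the computation you have not supplied, and the hive-preservation results you cite ultimately rest on this identity rather than bypass it.
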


\begin{proof}

Consider a plane rhombus $\lozenge$ with corners $\underline{\alpha}_1,\ldots,\underline{\alpha}_4$ as in Fig.\ref{fig:orderofrhom}, the long diagonal $\underline{\alpha}_1 - \underline{\alpha}_3$ and the short diagonal $\underline{\alpha}_2 - \underline{\alpha}_4$. Note that 
$\underline{\alpha}_1 + \underline{\alpha}_3=\underline{\alpha}_2 + \underline{\alpha}_4$. 

We introduce a partial order on the set of triples $(i,j,k)$: we have $(i,j,k) < (i',j',k')$ if $i+k < i'+k'$, or if $i=0$ or $k=0$ and $i'\neq 0, k'\neq 0$. This partial order induces a partial order on the set of plane rhombi: we say that $\lozenge \prec \lozenge'$ if $\underline{\alpha}_1 + \underline{\alpha}_3 < \underline{\alpha}_1' + \underline{\alpha}_3'$.
To a rhombus $\lozenge$ we associate the rational function
$$
\Phi_\lozenge = \frac{M_{\underline{\alpha}_1} M_{\underline{\alpha}_3}}{M_{\underline{\alpha}_2} M_{\underline{\alpha}_4}}. 
$$
The octahedron recurrence implies relations between functions $\Phi_\lozenge$ associated to different plane rhombi. In more detail, let 
$$
M_a M_d = M_b M_c + M_e M_f
$$
be one of the octahedron recurrence equations. Assuming that we are away from the walls of $\Delta^3(n)$, we can re-write this equation as
\begin{equation}     \label{eq:alpha_v}
M_{\underline{\alpha}+v_3} M_{\underline{\alpha}+v_1+v_2} = M_{\underline{\alpha}+v_1} M_{\underline{\alpha}+v_2+v_3} +
M_{\underline{\alpha}+v_2} M_{\underline{\alpha}+v_1+v_3}
\end{equation}
in 8 different ways corresponding to different choices of $\underline{\alpha}, v_1, v_2, v_3$.
These 8 ways correspond to different identifications in the two equations above. For instance, one can set
$$
a=\underline{\alpha}+v_3, d=\underline{\alpha}+v_1+v_2, b=\underline{\alpha}+v_1, c=\underline{\alpha}+v_2+v_3, 
e=\underline{\alpha}+v_2, f=\underline{\alpha}+v_1+v_3,
$$
which implies (note that $a+d=b+c=e+f$) :
\[
v_1=f-a;\quad v_2=c-a; \quad v_3=c-e;\quad \underline{\alpha}=e-c+a. 
\]
Permutation of elements in pairs $(a,d), (b,c), (e,f)$ leads to 
other values of $\alpha, v_1, v_2, v_3$.
Four possible choices of the vectors $v_1, v_2, v_3$ are given in the table below:
\begin{center}
\begin{tabular}{|c|c|c|c|c|}
$v_1$ & $(0,0,-1)$ & $(-1,0,0)$ & $(-1,1,0)$ & $(0,1,-1)$ \\
$v_2$ & $(-1,0,0)$ & $(-1,1,0)$ & $(0,1,-1)$ & $(0,0,-1)$ \\
$v_3$ & $(0,-1,0)$ & $(-1,0,1)$ & $(0,1,0)$ & $(1,0,-1)$ \\
\end{tabular}   
\end{center}
The other four choices are obtained by simultaneously reversing signs of the three vectors. The following figure illustrates the position of the vertex $\underline{\alpha}$ and of the vectors $v_1, v_2, v_3$ with respect to the vertices involved in the octahedron recurrence:
\begin{center}
    \begin{tikzpicture}[scale=.9]
        \tikzmath{ 
            \l1=0; \l2=0; 
            \r1=-4; \r2=-.5;
            \u1=-2; \u2=-2.5;
            \d1=-2.5; \d2=1;}

        \draw[-{Stealth}] (\u1,\u2) node[below]{$\underline{\alpha}$} -- (.5*\l1+.5*\u1, .5*\l2+.5*\u2) node[below right]{$v_1$} ;
        \draw[-{Stealth}] (\u1,\u2) -- (.5*\r1+.5*\u1, .5*\r2+.5*\u2) node[below left]{$v_2$} ;
        \draw[opacity=.5,-{Stealth},dashed] (\u1,\u2) -- (.5*\d1+.5*\u1, .5*\d2+.5*\u2) node[right,opacity=1]{$v_3$} ;

        \draw[opacity=.5,dashed]  (.5*\l1+.5*\d1, .5*\l2+.5*\d2) -- (.5*\d1+.5*\u1, .5*\d2+.5*\u2) -- (.5*\r1+.5*\d1, .5*\r2+.5*\d2); 
        
        \draw (.045*\r1+.5*\d1+.455*\u1,.045*\r2+ .5*\d2+.455*\u2) -- (.5*\r1+.5*\d1, .5*\r2+.5*\d2) -- (.5*\r1+.5*\u1, .5*\r2+.5*\u2) -- (.5*\l1+.5*\r1, .5*\l2+.5*\r2) -- (.5*\l1+.5*\u1, .5*\l2+.5*\u2) -- (.5*\l1+.5*\d1, .5*\l2+.5*\d2) -- (.17*\l1+.5*\d1+.33*\u1, .17*\l2+.5*\d2+.33*\u2);
\end{tikzpicture}
\end{center}

Equation \eqref{eq:alpha_v} implies
\begin{equation} \label{eq:3rhombi}
    \frac{M_{\underline{\alpha}} M_{\underline{\alpha} +v_1+v_2}}{M_{\underline{\alpha}+v_1}M_{\underline{\alpha}+v_2}} =
    \frac{M_{\underline{\alpha}} M_{\underline{\alpha} +v_1+v_3}}{M_{\underline{\alpha}+v_1}M_{\underline{\alpha}+v_3}}
    +\frac{M_{\underline{\alpha}} M_{\underline{\alpha} +v_2+v_3}}{M_{\underline{\alpha}+v_2}M_{\underline{\alpha}+v_3}}.
\end{equation}
Note that each fraction in this equation is a rational function corresponding to a plane rhombus. Hence, each octahedron recurrence equation gives rise to 8 linear relations between the functions $\Phi_\lozenge$. Denote the rhombi corresponding to the terms of the equation \eqref{eq:3rhombi} by $\lozenge_{12}, \lozenge_{13}$ and $\lozenge_{23}$. Note that for the vectors $v_{1,2,3}$ in the table above, we have $\lozenge_{12} \prec\lozenge_{13}, \lozenge_{23}$. And for the vectors with opposite signs, we have $\lozenge_{12} \succ \lozenge_{13}, \lozenge_{23}$.

For each plane rhombus, the rational function $\Phi_\lozenge$ enters in exactly two relations 
\eqref{eq:3rhombi} corresponding to the opposite signs of vectors $v_{1,2,3}$. Hence, in one of these equations the side where $\Phi_\lozenge$ appears is smaller in the partial order than the other side of the equation. In the second equation, it is larger in the partial order. We will be interested in the latter case. For a plane rhombus $\lozenge$, there are two possibilities: either we have
\begin{equation}     \label{eq:Phi=Phi+Phi}
\Phi_\lozenge = \Phi_{\lozenge'} + \Phi_{\lozenge''}
\end{equation}
with $\lozenge \succ \lozenge', \lozenge''$, or
\begin{equation}     \label{eq:Phi+Phi=Phi}
\Phi_\lozenge + \Phi_{\lozenge'} = \Phi_{\lozenge''}
\end{equation}
with $\lozenge, \lozenge' \succ \lozenge''$.

Recall that until now we were assuming that we were away from the walls of $\Delta^3(n)$. The case-by-case analysis shows that $\Phi_\lozenge$ satisfies one of the equations \eqref{eq:Phi=Phi+Phi} and \eqref{eq:Phi+Phi=Phi}, unless $\lozenge$ belongs to one of the faces $\mathcal{F}_0$ or $\mathcal{F}_1$. In the following, we show an example of this analysis.

Denote $\underline{\alpha}=(i,j,k)$ and choose $v_1=(1,0,0), v_2 =(1, -1, 0), v_3 =(1, 0, -1)$. The rhombus $r_{12}$ has vertices
$$
\underline{\alpha}=(i, j, k), \hskip 0.3cm
\underline{\alpha}+v_1=(i+1, j, k), \hskip 0.3cm
\underline{\alpha}+v_2=(i+1, j-1, k), \hskip 0.3cm
\underline{\alpha}+v_1+v_2=(i+2, j-1, k).
$$
By assumptions, these vertices belong to $\Delta^3(n)$ which implies that
$i \geqslant 0, j \geqslant 1, k \geqslant 0$ and $i+j+k+1 \leqslant n$. The remaining three vertices of the rhombi $\lozenge_{13}$ and $\lozenge_{23}$ are
$$
\underline{\alpha}+v_3=(i+1, j, k-1), \hskip 0.3cm
\underline{\alpha}+v_1+v_3=(i+2, j, k-1), \hskip 0.2cm
\underline{\alpha}+v_2+v_3=(i+2, j-1, k-1).
$$
There vertices belong to $\Delta^3(n)$ unless $k=0$, but in that case the rhombus $\lozenge_{12}$ belongs to the face $\mathcal{F}_0$, as required. Other cases can be treated in a similar way.

Using the procedure described above, we arrive at the following equation for each plane rhombus $\lozenge$:
$$
\Phi_\lozenge + \sum_{k=1}^K \Phi_{\lozenge_k} = \sum_{l=1}^L \Phi_{\lozenge_l},
$$
where the rhombi $\lozenge_l$ belong to the faces $\mathcal{F}_0$ and $\mathcal{F}_1$. By Corollary \ref{cor:deltak},  $\Phi_{\lozenge_l} + P_l = \Phi_3$, where $P_l$ is a positive function. Hence, $\Phi_\lozenge + P = L \Phi_3$,
where $P$ is a positive function. In turn, this yields $\Phi_\lozenge^t \leqslant \Phi_3^t$.
Then, $\Phi_3^t \leqslant 0$ implies 
$$
\Phi_\lozenge^t =M_{\underline{\alpha}_1}^t + M_{\underline{\alpha}_3}^t - M_{\underline{\alpha}_2}^t -M_{\underline{\alpha}_4}^t \leqslant 0,
$$ 
and this concludes the proof.
\end{proof}


\begin{remark}
    In the algorithm described in the proof of Theorem \ref{thm:allrhombi}, all rhombi $\lozenge$ (with the exception of the very first rhombus and rhombi $\lozenge_l$ on the faces $\mathcal{F}_0$ and $\mathcal{F}_1$) enter twice, first as the smaller (in partial order) rhombi in equations \eqref{eq:Phi=Phi+Phi} and \eqref{eq:Phi+Phi=Phi}, and then as the larger rhombi (again, in partial order). If the first occurrence of $\lozenge$ corresponds to the vertex with the acute angle $\alpha$ and the vectors $v_i$ and $v_j$ emanating from that vertex, its second occurrence will correspond to the vertex $\alpha + v_i +v_j$ and the vectors $(-v_i)$ and $(-v_j)$. In this way, in each step of the algorithm one always changes the vertex with the acute angle in the rhombus under consideration.
\end{remark}

\begin{example} 

    We illustrate the process described above for $n=3$. In each step, we mark the Laurent monomials of interest in red. Consider the rhombus $\lozenge_1$ with vertices $201, 012, 111, 102$ (the first two are acute angles). It is $\lozenge_{23}$ for $\alpha=(201)$ and $v_i$'s as in the 2nd column. The corresponding monomial is $\Phi_{\lozenge_1}=\frac{M_{201}M_{012}}{M_{111}M_{102}}$. 
    Therefore we have the relation $\frac{M_{201}M_{001}}{M_{101}M_{102}}+\textcolor{red}{\Phi_{\lozenge_1}}=\frac{M_{201}M_{011}}{M_{101}M_{111}}=:\Phi_{\lozenge_2}$, where $\lozenge_2$ is a rhombus with vertices $201, 011, 101,  111$.

   For the second step we consider $\lozenge_2$. It is $\lozenge_{12}$ for $\alpha=(011)$ and $v_i$'s negative of those in the 2nd column. (Here the choice of $\alpha$ is clear: it is an acute angle of $\lozenge_2$ which differs from the one used on the previous step).
   Therefore we can use the relation for the rhombi monomials: $\textcolor{red}{\Phi_{\lozenge_2}}=\frac{M_{011}M_{210}}{M_{111}M_{110}}+\frac{M_{011}M_{200}}{M_{101}M_{110}}=:\Phi_{\lozenge_3}+\Phi_{\lozenge_4}$, where $\lozenge_3$ has vertices  $011, 210, 111, 110$, and $\lozenge_4$ has vertices $011, 200, 101, 110$.

    For the third step we should consider two rhombi: $\lozenge_3$ and $\lozenge_4$.  
    
     $\lozenge_3$ is $\lozenge_{13}$ for $\alpha=(210)$ and $v_i$'s as in column 2.
     The relation for rhombi monomials is  $\frac{M_{2,1,0}M_{0,2,1}}{M_{1,1,1}M_{1,2,0}}+\textcolor{red}{\Phi_{\lozenge_3}}=\frac{M_{2,1,0}M_{0,2,0}}{M_{1,1,0}M_{1,2,0}}$.

     $\lozenge_4$ is $\lozenge_{23}$ for $\alpha=(200)$ and $v_i$'s as in column 2. The corresponding relation is
    is $\textcolor{red}{\Phi_{\lozenge_4}}+\frac{M_{2,0,0}M_{0,0,1}}{M_{1,0,1}M_{1,0,0}}=\frac{M_{2,0,0}M_{0,1,0}}{M_{1,0,0}M_{1,1,0}}$.

   After summing up the four equations described above, we have
    \[
    \frac{M_{012}M_{201}}{M_{102}M_{111}}+\cdots=\frac{M_{2,1,0}M_{0,2,0}}{M_{1,1,0}M_{1,2,0}}+\frac{M_{2,0,0}M_{0,1,0}}{M_{1,0,0}M_{1,1,0}},
    \]
    where $\cdots$ is a sum of some monomials (in fact, each corresponding to some rhombus). The monomials on the RHS of the above equality are summands of $\Phi_3$. Thus we have $\Phi_3-\Phi_{\lozenge}$ is positive. 
\end{example}

Summarizing the results described above, we consider the positive variety $(\mathcal{M}_3,\overline{\Phi}_3)$ as in the previous section, we get the second part of Theorem \ref{intro:M_3properties}:
\begin{theorem}\label{thm:sumofmt3}
    For the positive variety $(\mathcal{M}_3,\overline{\Phi}_3)$, we have
    \begin{equation}\label{eq:recforMbar}
        \overline{M}_{i+1,j,k+1}^t+\overline{M}_{i,j+1,k}^t=\max\left\{\overline{M}_{i,j+1,k+1}^t+\overline{M}_{i+1,j,k}^t,\overline{M}_{i,j,k+1}^t+\overline{M}_{i+1,j+1,k}^t\right\}.
    \end{equation}
    We have $\overline{\Phi_3}^t\leqslant 0$ if and only if for all small plan rhombi  in $\Delta^3(n)$ with vertices $\underline{\alpha}_{1},\ldots,\underline{\alpha}_{4}$ ordered as in Fig.\ref{fig:orderofrhom},  there is 
    \begin{equation}\label{eq:rhombiforMbar}
    \overline{M}^t_{\underline{\alpha}_1}+\overline{M}^t_{\underline{\alpha}_3}\leqslant \overline{M}^t_{\underline{\alpha}_2}+\overline{M}^t_{\underline{\alpha}_4}.
    \end{equation}
\end{theorem}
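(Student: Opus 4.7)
The plan is to assemble Theorem \ref{thm:sumofmt3} from three ingredients already established in the excerpt: the algebraic octahedron identity of Theorem \ref{Thm:octrec}, the expression for $\Phi_3$ as a positive sum of rhombus-shaped Laurent monomials given by Corollary \ref{cor:deltak}, and the implication from $\overline{\Phi_3}^t \leqslant 0$ to rhombus inequalities proved in Theorem \ref{thm:allrhombi}. Throughout, the positive structure on $\mathcal{M}_3$ is the one supplied by the coordinate functions $\overline{M}_{i,j,k}$ from Propositions \ref{Pro:anykcomplete} and \ref{Pro:l=2complete}, with respect to which all expressions we need are manifestly subtraction-free.

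First, I would deduce the octahedron recurrence \eqref{eq:recforMbar} by tropicalizing the Plücker identity
$$\overline{M}_{i+1,j,k+1}\,\overline{M}_{i,j+1,k} = \overline{M}_{i,j+1,k+1}\,\overline{M}_{i+1,j,k} + \overline{M}_{i,j,k+1}\,\overline{M}_{i+1,j+1,k}$$
of Theorem \ref{Thm:octrec}. Since this identity is subtraction-free, the general rule that tropicalization turns $\times$ into $+$ and $+$ into $\max$ gives \eqref{eq:recforMbar} verbatim. This step is routine.

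Second, I would prove the ``$\Leftarrow$'' direction of \eqref{eq:rhombiforMbar} directly from Corollary \ref{cor:deltak}. For $k=3$ that formula writes $\overline{\Phi_3}$ as a positive Laurent polynomial in the $\overline{M}^{(l)}_{i,j} = \overline{M}_{0^l,i,j,0^{1-l}}$ with $l\in\{0,1\}$, and each of its monomials has the form $\overline{M}_{\underline{\alpha}_1}\overline{M}_{\underline{\alpha}_3}/(\overline{M}_{\underline{\alpha}_2}\overline{M}_{\underline{\alpha}_4})$, where $\underline{\alpha}_1,\ldots,\underline{\alpha}_4$ are the corners of a small rhombus on the face $\mathcal{F}_l$, ordered as in Fig.\ref{fig:orderofrhom}. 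Tropicalizing the whole sum then yields
$$\overline{\Phi_3}^t = \max_{\lozenge\subset \mathcal{F}_0\cup\mathcal{F}_1}\left(\overline{M}^t_{\underline{\alpha}_1} + \overline{M}^t_{\underline{\alpha}_3} - \overline{M}^t_{\underline{\alpha}_2} - \overline{M}^t_{\underline{\alpha}_4}\right),$$
so if \eqref{eq:rhombiforMbar} holds for \emph{every} plane rhombus in $\Delta^3(n)$, in particular for all rhombi on $\mathcal{F}_0\cup\mathcal{F}_1$, then every summand is non-positive and $\overline{\Phi_3}^t \leqslant 0$.

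Third, the ``$\Rightarrow$'' direction is nothing but Theorem \ref{thm:allrhombi} read on $\mathcal{M}_3$. The only thing to verify is that the rhombus inequalities proved there (stated for the restriction to $B_-^3$ with the standard positive chart) transfer to $\mathcal{M}_3$ with the positive structure of ${\bm M}_3$; this is automatic since $\Phi_3$ and every $M_{\underline{\alpha}}$ descend from $B_-^3$ to $\mathcal{M}_3$ as positive Laurent expressions in these coordinates by Propositions \ref{Pro:anykcomplete}--\ref{Pro:l=2complete}, and tropicalization commutes with such a descent. The main conceptual difficulty --- relating $\Phi_3^t \leqslant 0$ to rhombus inequalities for rhombi \emph{not} lying on $\mathcal{F}_0\cup\mathcal{F}_1$ --- is already absorbed in Theorem \ref{thm:allrhombi} via the octahedron-driven reduction, so no further work is needed here.
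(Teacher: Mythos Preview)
Your proposal is correct and follows essentially the same approach as the paper's own proof, which is a two-sentence sketch invoking $U^4$-invariance to descend \eqref{eq:geooct} and then appealing to the proof of Theorem \ref{thm:allrhombi}. You have simply unpacked this sketch into its three constituent ingredients --- tropicalizing the Pl\"ucker identity for \eqref{eq:recforMbar}, reading off the ``$\Leftarrow$'' direction from the explicit positive expression of Corollary \ref{cor:deltak}, and quoting Theorem \ref{thm:allrhombi} for ``$\Rightarrow$'' --- which is exactly what the paper intends.
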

\begin{proof}
    The Eq \eqref{eq:recforMbar} follows from Eq \eqref{eq:geooct} by $U^4$-invariance of $M_{i,j,k}$. For each rhombus, $\Phi_{\lozenge}$ is $U^4$-invariant. Thus following the proof of Theorem \ref{thm:allrhombi}, $\overline{\Phi_3}^t\leqslant 0$ is equivalent to Eq \eqref{eq:rhombiforMbar}. 
\end{proof}

\subsection{From \texorpdfstring{$M_{\underline{\alpha}}$}{Ma} to \texorpdfstring{$m_{\underline{\alpha}}$}{ma} via GZ conditions}

Consider the restriction of $M_{\underline{\alpha}}$ to $B_-^k\subset \GL_n^k$ and extend the positive chart $\theta_{\rm st}$ on $B_-$ to $B_-^k$. The functions $M_{\underline{\alpha}}$'s are positive with respect to $\theta_{\rm st}\times \cdots \times \theta_{\rm st}$ by the Lindstr{\"o}m's Lemma. Our aim in this section is to show that $M_{\underline{\alpha}}^t=m_{\underline{\alpha}}$ under GZ conditions.

We first construct the ``detropicalization'' of $m_{\underline{\alpha}}$. 

\begin{definition}
    Take $(g_1,\ldots,g_k)\in\GL_n^k$ and consider the coefficients of
    \begin{equation}\label{eq:ascoeff}
        \det(x_0\id_n+ x_1g_1+x_2g_1g_2+\cdots+ x_kg_1\cdots g_k).
    \end{equation}
    For a $k$-tuple of integers $\underline{\alpha}$ satisfying $0\leqslant \alpha_i\leqslant n$ and $0\leqslant \sum\alpha_i\leqslant n$, define  $\widetilde{M}_{\underline{\alpha}}$ to be the coefficient of $x_0^{n-\sum \alpha_i} x_1^{\alpha_1}\cdots x_k^{\alpha_k}$.
\end{definition}

\begin{remark}
The determinant \eqref{eq:ascoeff} is similar to the one introduced by Spyer (see \cite[Lemma 4]{Speyer}) in his proof of Horn inequalities using Vinnikov curves.
\end{remark}

Similar to Proposition \ref{pro:expofM}, one can write down $\widetilde{M}_{\underline{\alpha}}$ in terms of minors of $g_i$'s. Let ${\bm g}$ be the $n\times ((k+1)n)$ matrix in Eq \eqref{eq:g}. We have
\begin{proposition}\label{pro:expofMtilta}
For a $k$-tuple of integers $\underline{\alpha}$ satisfying $0\leqslant \alpha_i\leqslant n$ and $0\leqslant \sum\alpha_i\leqslant n$, we have
\[
\widetilde{M}_{\underline{\alpha}}(g_1,\ldots,g_k)=\sum_{J}\Delta_{[1,n],J}({\bm g}),
\]
where the summation is over all possible $J\subset [1,(k+1)n]$ such that $|J|=n$ and 
\[
 |J\cap \left[in+1,(i+1)n\right]|=\alpha_i.
\]
Moreover, for such $J$, put $J_i=(J\cap \left[in+1,(i+1)n\right])-in$ and $L_0=[1,n]\setminus J_0$. We then have
\begin{align}
        \Delta_{[1,n],J}({\bm g})&=\sum_{L_1, \ldots,L_{k-1}} \Delta_{L_0,J_1\cup L_1}(g_1)\Delta_{L_1, J_2\cup L_2}(g_2)\cdots \Delta_{L_{k-1}, J_k}(g_k)\nonumber \\
        &=\sum_{L_1, \ldots,L_{k-1}}\prod_{i=1}^k\Delta_{L_{i-1}, J_i\cup L_i}(g_i),\label{eq:expansion2}
    \end{align}
where $L_i\cap J_i=\emptyset$ and $|L_i|+|J_i|=|L_{i-1}|$.
\end{proposition}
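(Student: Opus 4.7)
The plan is to reduce both assertions to iterated applications of the Cauchy-Binet formula combined with the sparsity of certain block matrices.

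For the first equality, the key observation is that the polynomial \eqref{eq:ascoeff} factors as a single matrix determinant:
$$x_0\id_n + x_1 g_1 + \cdots + x_k g_1\cdots g_k \;=\; {\bm g}\cdot D(x),$$
where $D(x)$ is the $(k+1)n \times n$ block-column matrix whose $\ell$-th block equals $x_\ell\id_n$. The Cauchy-Binet formula then gives
$$\det({\bm g}\cdot D(x)) \;=\; \sum_{J\subset[1,(k+1)n],\,|J|=n}\ \Delta_{[1,n],J}({\bm g})\cdot\Delta_{J,[1,n]}(D(x)).$$
The factor $\Delta_{J,[1,n]}(D(x))$ is computed directly: parametrizing $J=\{j_1<\cdots<j_n\}$ by $j_s=\ell_s n + i_s$ with $\ell_s\in[0,k]$ and $i_s\in[1,n]$, the $s$-th row of $D(x)|_{J,[1,n]}$ is $x_{\ell_s} e_{i_s}^{\top}$. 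Hence this factor vanishes unless $(i_1,\ldots,i_n)$ is a permutation of $[1,n]$, in which case it equals $\prod_s x_{\ell_s}$ up to a permutation sign. Collecting the coefficient of $x_0^{n-\sum\alpha_i}\prod_{i\geq 1}x_i^{\alpha_i}$ restricts the sum to those $J$'s with $|J\cap[\ell n+1,(\ell+1)n]|=\alpha_\ell$, which is exactly the indexing set asserted in the proposition.

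For the second equality we peel off the blocks of ${\bm g}$ one at a time, in the same spirit as the proof of Proposition~\ref{pro:expofM}. Because the first $n$ columns of ${\bm g}$ form the identity, Laplace expansion along the columns indexed by $J_0=J\cap[1,n]$ yields $\Delta_{[1,n],J}({\bm g})=\Delta_{L_0,J'}(\bar{\bm g})$ with $L_0=[1,n]\setminus J_0$, $J'=(J-n)\cap[1,kn]$, and $\bar{\bm g}=[g_1\mid g_1g_2\mid\cdots\mid g_1\cdots g_k]$. Factoring $\bar{\bm g}=g_1\cdot[\id_n\mid g_2\mid\cdots\mid g_2\cdots g_k]$ and applying Cauchy-Binet again, the second factor vanishes unless $J_1\subset K$, forcing $K=J_1\sqcup L_1$ with $L_1\cap J_1=\emptyset$. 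This reduces the problem to the analogous statement for $[\id_n\mid g_2\mid\cdots\mid g_2\cdots g_k]$, with $L_0$ replaced by $L_1$ and $J_1$ replaced by $J_2$. Iterating through $g_2,\ldots,g_{k-1}$ produces the chain expansion \eqref{eq:expansion2}. The calculation is in fact identical to that of Proposition~\ref{pro:expofM}, now carried out for a general $J$ with the specified block sizes rather than the particular subset $J(\underline{\alpha})$.

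The main technical subtlety lies in Step~1: one must verify that the permutation signs appearing in $\Delta_{J,[1,n]}(D(x))$ combine correctly with the signs implicit in $\Delta_{[1,n],J}({\bm g})$ (arising from the non-monotone interleaving of columns chosen from different blocks of ${\bm g}$) to yield the clean expression stated in the proposition. Step~2, by contrast, is essentially sign-free, since all contributions are extracted from identity columns and from ordinary matrix products, so no further sign bookkeeping is needed once the first step is in place.
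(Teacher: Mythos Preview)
Your overall plan matches the paper's for the second assertion---both defer to the iterated Cauchy--Binet argument of Proposition~\ref{pro:expofM}---and offers a mild variant for the first: the paper extracts the coefficient of the relevant monomial in~\eqref{eq:ascoeff} by repeated partial differentiation (equivalently, by multilinear column expansion of the determinant), whereas you observe that $x_0\id_n+\sum_\ell x_\ell\, g_1\cdots g_\ell = {\bm g}\cdot D(x)$ and apply Cauchy--Binet once. The two routes are equivalent, and yours has the advantage that the factor $\Delta_{J,[1,n]}(D(x))$ makes both the vanishing condition (the within-block residues of $J$ must be a permutation of $[1,n]$) and the accompanying permutation sign completely explicit, while the paper leaves both implicit.

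Your closing assessment understates the sign issue, however: Step~2 is \emph{not} sign-free for a general $J$. In Proposition~\ref{pro:expofM} each $J_i=[1,\alpha_i]$ is an initial interval, so in the decomposition $K=J_i\sqcup L_i$ the set $J_i$ always occupies the bottom of $K$, and the Laplace step along the identity columns indexed by $J_i$ carries sign $+1$. For an arbitrary $J_i$ this fails: the sign depends on the position of $J_i$ inside $K=J_i\sqcup L_i$ and hence varies with $L_i$. Concretely, take $n=3$, $k=2$, $J=\{1,5,9\}$, so $J_1=\{2\}$; then the Cauchy--Binet terms with $L_1=\{1\}$ and $L_1=\{3\}$ enter with opposite signs, so \eqref{eq:expansion2} as written cannot hold even up to a single global sign. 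The paper glosses over this (as it does the sign in the first display), so your caution is warranted---but the hazard lives in Step~2 as well as Step~1, and the argument of Proposition~\ref{pro:expofM} does not transport verbatim to general $J$.
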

\begin{proof}
    Take partial derivative of \eqref{eq:ascoeff} with respect to $x_0$ of order ${n-\sum \alpha_i}$, $ x_1$ of order ${\alpha_1}$, $\cdots$, and $x_k$ of order ${\alpha_k}$. Then set $x_i=0$ for $i\in [0,k]$. We see that $\widetilde{M}_{\underline{\alpha}}$ is just the sum of determinants of submatrices of ${\bm g}$ of the form given by choosing any ${n-\sum \alpha_i}$ columns of $\id_n$, any $\alpha_1$ columns of $g_1$,  any $\alpha_2$ columns of $g_1g_2$, $\ldots$, and any $\alpha_k$ columns of $g_1\cdots g_k$. The proof of Eq \eqref{eq:expansion2} is similar to the proof of Proposition \ref{pro:expofM}, which is omitted here.
\end{proof}

Together with Lindstr{\"o}m's Lemma, it is clear

\begin{proposition}\label{pro:detropofma}
    The functions$\widetilde{M}_{\underline{\alpha}}$ are positive with respect to the positive structure $\theta_{\mathrm{st}}\times \cdots \times \theta_{\mathrm{st}}$ on $B_-^k$ and $\widetilde{M}_{\underline{\alpha}}^t=m_{\underline{\alpha}}$.
\end{proposition}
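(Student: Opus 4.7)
The plan is to deduce both assertions directly from Proposition \ref{pro:expofMtilta} together with Lindström's Lemma, by interpreting each summand as the weight of an $\underline{\alpha}$-multipath in the concatenation $\Pi_{\rm st}^{\circ k}$.

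First I would handle positivity. Under $\theta_{\rm st}\times\cdots\times\theta_{\rm st}$, each $g_i$ is realized as $M_{\Pi_{\rm st}}(w_i)$, where the $w_i$ are the essential edge weights. By Lindström's Lemma, each minor $\Delta_{L_{i-1},\,J_i\cup L_i}(g_i)$ is a polynomial with nonnegative integer coefficients in the $w_i$'s, namely the sum of $w_i(p')$ over non-intersecting multipaths $p'\colon L_{i-1}\to J_i\cup L_i$ in $\Pi_{\rm st}$. Hence the expansion \eqref{eq:expansion2} together with Proposition \ref{pro:expofMtilta} exhibits $\widetilde{M}_{\underline{\alpha}}$ as a positive polynomial in the coordinates of the positive structure, establishing the first assertion.

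Next I would prove $\widetilde{M}_{\underline{\alpha}}^t=m_{\underline{\alpha}}$ by showing the stronger identity
\[
\widetilde{M}_{\underline{\alpha}}(g_1,\ldots,g_k)=\sum_{p\in P_{\underline{\alpha}}} w(p),
\]
from which tropicalization yields the desired equality since the tropicalization of a positive sum of monomials is the maximum. To establish this identity, I would combine Propositions \ref{pro:expofMtilta} and substitute Lindström into each factor of \eqref{eq:expansion2}, obtaining a sum indexed by tuples $(J_0,J_1,\ldots,J_k;L_1,\ldots,L_{k-1})$ satisfying the cardinality/disjointness constraints, together with a choice of non-intersecting multipath $p_i'\colon L_{i-1}\to J_i\cup L_i$ in each $\Pi_{\rm st}$. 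The key step is then to check that concatenating the $p_i'$'s across the $k$ networks yields a canonical bijection between these data and elements of $P_{\underline{\alpha}}(\Pi_{\rm st}^{\circ k})$: the union of all single paths ending at vertical $i$ forms the component $p_i\in P_{\alpha_i}(\Pi_{\rm st}^{\circ i})$, while the sets $J_i$ and $L_i$ simply record sinks at vertical $i$ and continuations to the next network.

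The only nontrivial verification in the bijection is that the resulting $(p_1,\ldots,p_k)$ is pairwise non-intersecting in $\Pi_{\rm st}^{\circ k}$, which I expect to be the main technical check. This follows because within each $\Pi_{\rm st}$ the paths are already non-intersecting by Lindström, and paths belonging to different $p_j$'s simply coincide with disjoint individual strands inside each $\Pi_{\rm st}$; no two distinct $p_j$'s share an edge or a vertex in any factor. Conversely, any $\underline{\alpha}$-multipath in the concatenation determines the intermediate crossing sets $L_i$ (the strands not yet terminated) and the sink sets $J_i$, together with its restriction to each $\Pi_{\rm st}$, recovering the indexing data. Since weights multiply under concatenation, $\prod_i w_i(p_i')=w(p)$, and the identity above follows. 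Tropicalizing term-by-term yields $\widetilde{M}_{\underline{\alpha}}^t=\max_{p\in P_{\underline{\alpha}}}w(p)^t=m_{\underline{\alpha}}$, as required by Definition \ref{definition: map m}.
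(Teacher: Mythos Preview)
Your proposal is correct and follows essentially the same approach as the paper, which merely states that the result is ``clear'' from Proposition~\ref{pro:expofMtilta} together with Lindstr{\"o}m's Lemma. You have supplied the details of the bijection between the index data $(J_0,\ldots,J_k;L_1,\ldots,L_{k-1};p_1',\ldots,p_k')$ and $P_{\underline{\alpha}}(\Pi_{\rm st}^{\circ k})$ that the paper leaves implicit.
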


Note that $M_{\underline{\alpha}}$ is a summand in $\widetilde{M}_{\underline{\alpha}}$, thus $\widetilde{M}_{\underline{\alpha}}-M_{\underline{\alpha}}$ is positive. In the case $n=1$, $M_{i}^t=\widetilde{M}_{i}^t$ under GZ condition since
\begin{lemma}\cite[Theorem 4.13, 4.14]{ABHL}\label{lem:bk=GZ}
   Consider the positive variety with potential $(B_-,\Phi_{\rm BK})$ and the positive chart $\theta_{\rm st}$. The condition $\Phi_{\rm BK}^t( w)\leqslant 0$ is equivalent to the GZ conditions on $(\Pi_{\rm st}, w)$, where $w$ is a $\T$-weighting of $\Pi_{\rm st}$.
\end{lemma}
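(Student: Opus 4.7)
The plan is to translate both conditions into tropical inequalities in the $a$-coordinates of $\theta_{\rm st}$ and show they define the same polyhedral cone in $\mathbb{T}^{n(n+1)/2}$. By the positive equivalence of $\theta_{\rm GZ}$ and $\theta_{\rm st}$ (Lemma \ref{lemma: GZ and standard network positive structures are equivalent}), the locus $\{\Phi_{\rm BK}^t \leqslant 0\}$ is chart-independent. The Corollary of Theorem \ref{Thm:poten} expresses it in the $\theta_{\rm GZ}$ chart as the interlacing $(\lambda_j^{(l)})^t \geqslant (\lambda_j^{(l-1)})^t \geqslant (\lambda_{j+1}^{(l)})^t$. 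Using $\lambda_j^{(l)} = \Delta_{n-l,j}/\Delta_{n-l,j-1}$ together with Lindstr\"om's Lemma, each $(\Delta_{n-l,j})^t$ becomes a max over weights of $j$-multipaths in $\Pi_{\rm st}$ with sources $[n-j+1, n]$ and sinks $[n-l+1, n-l+j]$; so $\Phi_{\rm BK}^t \leqslant 0$ becomes a system of tropical inequalities in the $a_{l,i}^t$.

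On the other hand, the GZ condition $\mathcal{A}(w) \in \Delta_{GZ}$ is by definition the interlacing on $m_i^{(l)} = w(\alpha_i^{(l)})$. Since $\mathcal{A}$ is a $\T$-linear upper-triangular isomorphism with $1$'s on the diagonal, the interlacing inequalities on the $m_i^{(l)}$ also form a system of tropical linear inequalities in the $a_{l,i}^t$, with the same total count as the inequalities on the $\Lambda$-side. A sanity check in the case $n=2$ reveals that although the individual tropical expressions $(\lambda_j^{(l)})^t$ (pulled back to $\theta_{\rm st}$) differ from the differences $m_j^{(l)} - m_{j-1}^{(l)}$, the pair of interlacing inequalities on either side reduces to the same inequalities on the $a_{l,i}^t$.

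The hard part is establishing this matching in general. I would prove it by using the reformulation of the GZ condition as non-negativity of the $r^\nearrow$ and non-positivity of the $r^\searrow$ functions on $\Pi_{\rm st}$ (as in \cite[Lemma 9]{APS-planar}, invoked in the proof of Lemma \ref{lemma A(w) in GZ implies multi-GZ condition on planar networks}): each such $r$-function is a signed sum of $a_{l,i}^t$'s along a boundary path of a specific rhombic region, and by direct computation one matches each of them with a tropical monomial appearing in $\Phi_{\rm BK}^t$ via the explicit formula of Theorem \ref{Thm:poten}. The combinatorial bookkeeping to set up this bijection between the summands of $\Phi_{\rm BK}$ in the $\theta_{\rm st}$ chart and the $r$-inequalities, with careful tracking of indices on the staircase structure of $\Pi_{\rm st}$, is precisely the content of \cite[Theorems 4.13 and 4.14]{ABHL}, to which we appeal.
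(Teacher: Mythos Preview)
Your route is correct in spirit but unnecessarily indirect, and you mischaracterise the content of the cited theorems from \cite{ABHL}. The paper's own proof is two sentences: Theorems~4.13 and~4.14 of \cite{ABHL} assert that, in the chart $\theta_{\rm st}$, the condition $\Phi_{\rm BK}^t\leqslant 0$ is \emph{equivalent} to the corner-minor dominance
\[
\Delta_{I,J}^t \leqslant \Delta_{[1,k]^{\rm op},[1,k]}^t \quad\text{for every }k\text{ and every }k\times k\text{ minor }\Delta_{I,J}.
\]
By Lindstr\"om's Lemma, $\Delta_{I,J}^t$ is the maximal weight of a $k$-multipath $I\to J$ in $(\Pi_{\rm st},w)$, so the displayed inequalities say precisely that for every $k$ the maximum over all $k$-multipaths is attained by one running from $[1,k]^{\rm op}$ to $[1,k]$; this is the GZ condition on $(\Pi_{\rm st},w)$.

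By contrast, you pass through the $\theta_{\rm GZ}$ chart, obtain interlacing on $(\lambda_j^{(l)})^t$, and then try to transport these back to $\theta_{\rm st}$ and match them term by term with the interlacing on $m_i^{(l)}=w(\alpha_i^{(l)})$. This detour is where the difficulty you describe arises: the change of chart $\theta_{\rm st}\circ\theta_{\rm GZ}^{-1}$ is positive but nontrivial, so each $(\lambda_j^{(l)})^t$ becomes a piecewise-linear function of the $a_{l,i}^t$, and there is no a priori reason the two systems of linear inequalities should coincide inequality-by-inequality (and indeed they need not). Your final paragraph then invokes \cite{ABHL} as if those theorems provided a bijection between summands of $\Phi_{\rm BK}$ and the $r^{\nearrow},r^{\searrow}$ inequalities; they do not. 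What they provide is the corner-minor dominance statement above, which bypasses the term-matching problem entirely and makes the equivalence with the GZ condition immediate via Lindstr\"om.
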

\begin{proof}
    Recall that Theorems 4.13 and 4.14 in \cite{ABHL} state that $\Delta^t\leqslant \Delta_{[1,k]^\op, [1,k]}^t$ for any minor $\Delta$ of size $k\times k$ if and only if $\Phi_{\rm BK}^t\leqslant 0$.  By the Lindstr{\"o}m's Lemma, these are exactly the GZ conditions for $(\Pi_{\rm st}, w)$.
\end{proof}

Similar to the statement that $M_{i}^t=\widetilde{M}_{i}^t$ under GZ condition, we have
\begin{theorem}\label{thm:bk=gz}
   Consider the positive chart $\theta_{\mathrm{st}}\times \cdots \times \theta_{\mathrm{st}}$ on $B_-^k$, we have
    \[
        M_{\underline{\alpha}}^t=\widetilde{M}_{\underline{\alpha}}^t(=m_{\underline{\alpha}}) \Leftrightarrow (\Phi_{\rm BK}\circ \pr_i)^t\leqslant 0 \text{~for all~} i\in [1,k],
    \]
    where $\pr_i\colon B_-^k\to B_-$ is the projection from $B_-^k$ to the $i$-th component of $B_-^k$.
\end{theorem}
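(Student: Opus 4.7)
The plan is to bracket $M_{\underline{\alpha}}^t$ between $m_{\underline{\alpha}}$ from below and $\widetilde{M}_{\underline{\alpha}}^t$ from above. The upper bound is automatic: every Laurent monomial appearing in the expansion \eqref{eq:expansion} of $M_{\underline{\alpha}}$ also appears in the expansion \eqref{eq:expansion2} of $\widetilde{M}_{\underline{\alpha}}$, and by Proposition~\ref{pro:detropofma} we have $\widetilde{M}_{\underline{\alpha}}^t = m_{\underline{\alpha}}$. Thus $M_{\underline{\alpha}}^t \leqslant m_{\underline{\alpha}}$ holds unconditionally, and the content of the theorem is the reverse comparison together with its converse reduction to the one-factor Berenstein-Kazhdan inequality.

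For the ``$\Leftarrow$'' direction I would argue as follows. By Lemma~\ref{lem:bk=GZ}, the assumption $\Phi_{\rm BK}^t(g_i) \leqslant 0$ is equivalent to the weighting $w_i$ of $\Pi_{\rm st}$ being Gelfand-Zeitlin, for each $i$. Lemma~\ref{lemma A(w) in GZ implies multi-GZ condition on planar networks} upgrades this to the multi-GZ condition on every consecutive concatenation, so there exists a maximal $\underline{\alpha}$-multipath $p=(p_1,\ldots,p_k)$ in $\Pi_{\rm st}^{\circ k}$ whose overall sources are $[1,\sum_l\alpha_l]^{\op}$ and whose sinks after $\Pi_l$ are $[1,\alpha_l]$. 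Setting $L_0=[1,\sum_l\alpha_l]^{\op}$, $J_l=[1,\alpha_l]$, and letting $L_l$ be the transversal crossings of $p$ at the vertical between $\Pi_l$ and $\Pi_{l+1}$, the non-intersection of $p$ forces $J_l\cap L_l=\emptyset$ and $|J_l|+|L_l|=|L_{l-1}|$, so $(L_1,\ldots,L_{k-1})$ is a valid summand index in \eqref{eq:expansion}. Denoting by $p^{(l)}$ the restriction of $p$ to $\Pi_l$, which is itself a non-intersecting multipath $L_{l-1}\to J_l\cup L_l$, the tropicalized Lindström Lemma gives $w_l(p^{(l)}) \leqslant \Delta_{L_{l-1}, J_l\cup L_l}(g_l)^t$, and summing over $l$ yields
\[
m_{\underline{\alpha}} = \sum_{l=1}^k w_l(p^{(l)}) \leqslant \sum_{l=1}^k \Delta_{L_{l-1}, J_l\cup L_l}(g_l)^t \leqslant M_{\underline{\alpha}}^t.
\]

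For the ``$\Rightarrow$'' direction I would probe the hypothesis with a specific family of test weights. Fix $i \in [1,k]$ and $j \in [1,n-1]$ and take $\underline{\alpha}_{i,j} = (j,0,\ldots,0)$ when $i=1$, and $\underline{\alpha}_{i,j} = (0,\ldots,0,n-j,j,0,\ldots,0)$ with $n-j$ in position $i-1$ when $i\geqslant 2$. Because $\alpha_l=0$ for $l<i-1$ and the total $\sum_l\alpha_l$ equals either $j$ or $n$, the recursion $|L_l|=|L_{l-1}|-|J_l|$ collapses \eqref{eq:expansion2}: one gets $L_l=[1,n]$ for $l<i-1$, the factor $\Delta_{[1,n],[1,n]}(g_{i-1})=\det(g_{i-1})$ at step $i-1$, the trivial factor $1$ for $l>i$, and $L_{i-1}$ ranging over all $j$-subsets of $[1,n]$. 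Hence
\[
\widetilde{M}_{\underline{\alpha}_{i,j}} = \Big(\prod_{l<i}\det(g_l)\Big)\sum_{|I|=|J|=j}\Delta_{I,J}(g_i),\qquad M_{\underline{\alpha}_{i,j}} = \Big(\prod_{l<i}\det(g_l)\Big) M_j(g_i).
\]
Cancelling the common single-monomial factor $\sum_{l<i}\det(g_l)^t$, the hypothesis $M_{\underline{\alpha}_{i,j}}^t = \widetilde{M}_{\underline{\alpha}_{i,j}}^t$ becomes $\Delta_{I,J}(g_i)^t \leqslant M_j(g_i)^t$ for every $j\times j$ minor. Letting $j$ vary in $[1,n-1]$ and invoking Lemma~\ref{lem:bk=GZ} delivers $\Phi_{\rm BK}^t(g_i)\leqslant 0$.

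The main obstacle will be the combinatorial bookkeeping in the sufficiency step, namely verifying that the index data $(L_0, J_1, L_1, \ldots, J_k)$ extracted from the canonical maximal multipath really satisfies the admissibility conditions for a summand of \eqref{eq:expansion}, and that applying tropical Lindström stage by stage is saturated on this particular multipath. Both reductions ultimately follow from the observation that each $p^{(l)}$ inherits maximality and non-intersection from the global multi-GZ multipath; the necessity step is by contrast an essentially mechanical unfolding.
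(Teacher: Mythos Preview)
Your proof is correct and follows the paper's route, which simply cites Lemmas~\ref{lemma A(w) in GZ implies multi-GZ condition on planar networks} and~\ref{lem:bk=GZ} together with Proposition~\ref{pro:detropofma} without further elaboration. You supply the details the paper leaves implicit: for ``$\Leftarrow$'' the verification that a multi-GZ maximal multipath yields an admissible summand index $(L_1,\ldots,L_{k-1})$ for~\eqref{eq:expansion}, and for ``$\Rightarrow$'' the explicit test indices $\underline{\alpha}_{i,j}$ that collapse $M^t_{\underline{\alpha}}=\widetilde{M}^t_{\underline{\alpha}}$ to the single-factor minor bounds $\Delta_{I,J}(g_i)^t\leqslant M_j(g_i)^t$ characterizing $\Phi_{\rm BK}^t\leqslant 0$.
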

\begin{proof}
    By Lemmas \ref{lemma A(w) in GZ implies multi-GZ condition on planar networks} and \ref{lem:bk=GZ}, we know
    \[
        M_{\underline{\alpha}}^t=\widetilde{M}_{\underline{\alpha}}^t \Leftrightarrow (\Phi_{\rm BK}\circ \pr_i)^t\leqslant 0 \text{~for all~} i\in [1,k].
    \]
    Thus we get the statement by Proposition \ref{pro:detropofma}.
\end{proof}

Finally, by restricting the statement of Theorem \ref{Thm:octrec} to $B_-^3$ and by applying tropicalization, we get an alternative proof of Theorem \ref{theorem PN octahedron recurrence}. Indeed, by restricting equation \eqref{eq:geooct} to $B_-^3$, we obtain
    \begin{equation}       \label{eq:M^t_recurrence}
        M_{i+1,j,k+1}^t+M_{i,j+1,k}^t=\max\{M_{i,j+1,k+1}^t+M_{i+1,j,k}^t,M_{i,j,k+1}^t+M_{i+1,j+1,k}^t\}.
    \end{equation}
Furthermore, for a triple of standard planar networks satisfying GZ conditions, we have $M^t_{i,j,k}=m_{i,j,k}$. Then Eq \eqref{eq:M^t_recurrence} implies
\begin{proposition}\label{pro:geoocttotropoct}
    For $(\Pi_{\rm st},w_i)$, $i=1,2,3$ satisfying GZ condition, we have
    \[
        m_{i+1,j,k+1}+m_{i,j+1,k}=\max\{m_{i,j+1,k+1}+m_{i+1,j,k},m_{i,j,k+1}+m_{i+1,j+1,k}\}.
    \]
\end{proposition}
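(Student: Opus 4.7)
The plan is to obtain Proposition \ref{pro:geoocttotropoct} as a direct tropicalization of the geometric octahedron recurrence (Theorem \ref{Thm:octrec}), combined with the identification of tropical multi-corner minors with maximal-weight multipaths under Gelfand–Zeitlin conditions (Theorem \ref{thm:bk=gz}).

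First I would restrict the identity of Theorem \ref{Thm:octrec},
\[
    M_{i+1,j,k+1}\,M_{i,j+1,k}=M_{i,j+1,k+1}\,M_{i+1,j,k}+M_{i,j,k+1}\,M_{i+1,j+1,k},
\]
from $\GL_n^3$ to the positive subvariety $B_-^3\subset\GL_n^3$ equipped with the positive chart $\theta_{\rm st}\times\theta_{\rm st}\times\theta_{\rm st}$. Since by Proposition \ref{pro:expofM} each $M_{i,j,k}$ is a sum of products of minors of the $g_i$'s, and minors of the matrix associated to $(\Pi_{\rm st},w)$ are positive Laurent polynomials of the weights by Lindström's Lemma, both sides are positive Laurent polynomials. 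The tropicalization rules (sum becomes $\max$, product becomes $+$) then yield
\[
    M^t_{i+1,j,k+1}+M^t_{i,j+1,k}=\max\bigl\{M^t_{i,j+1,k+1}+M^t_{i+1,j,k},\ M^t_{i,j,k+1}+M^t_{i+1,j+1,k}\bigr\},
\]
which is exactly equation \eqref{eq:M^t_recurrence}.

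Next I would invoke Theorem \ref{thm:bk=gz}: for weightings $w_1,w_2,w_3\in\mathbb{T}^{\Pi_{\rm st}}$ satisfying the Gelfand–Zeitlin condition, we have $(\Phi_{\rm BK}\circ\pr_i)^t\leqslant 0$ for each $i$, and therefore $M^t_{\underline{\alpha}}=\widetilde{M}^t_{\underline{\alpha}}=m_{\underline{\alpha}}$ for every admissible $\underline{\alpha}$. Substituting this identification into the displayed tropical recurrence above immediately produces
\[
    m_{i+1,j,k+1}+m_{i,j+1,k}=\max\bigl\{m_{i,j+1,k+1}+m_{i+1,j,k},\ m_{i,j,k+1}+m_{i+1,j+1,k}\bigr\},
\]
which is the desired octahedron recurrence for tropical multipath weights.

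The only subtle point, and the place where one must be careful, is making sure the hypothesis of Theorem \ref{thm:bk=gz} is satisfied for \emph{all} the multi-corner minors appearing in the recurrence, not merely for those indexed by the three networks individually. Here the Gelfand–Zeitlin condition on each $w_i$ is exactly what is needed, since it is precisely the pointwise condition $(\Phi_{\rm BK}\circ\pr_i)^t\leqslant 0$ on the three factors of $B_-^3$ that Theorem \ref{thm:bk=gz} requires; by Lemma \ref{lemma A(w) in GZ implies multi-GZ condition on planar networks} this also implies the multi-Gelfand–Zeitlin condition on all concatenations, so the formula $M^t_{\underline{\alpha}}=m_{\underline{\alpha}}$ is legitimately applicable to each term of the recurrence. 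With this checked, the proof is complete; no further combinatorial manipulation of multipaths is needed, and in particular one does not have to redo the canonical-path-decomposition analysis that underlies the direct proof of Theorem \ref{theorem PN octahedron recurrence}.
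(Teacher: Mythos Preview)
Your proposal is correct and follows essentially the same approach as the paper: restrict the geometric octahedron recurrence of Theorem~\ref{Thm:octrec} to $B_-^3$, tropicalize to obtain equation~\eqref{eq:M^t_recurrence}, and then use the identification $M_{\underline{\alpha}}^t=m_{\underline{\alpha}}$ under GZ conditions (Theorem~\ref{thm:bk=gz}) to conclude. Your write-up is somewhat more explicit about why the tropicalization step is valid (positivity via Lindstr\"om) and about why the GZ hypothesis suffices for all six terms, but the structure of the argument is the same.
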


\section{Multiplicative problem}\label{section: multiplicative problem}
\begin{definition}
    The \emph{singular values} of a matrix $A\in \GL_n$ are the positive square roots of the eigenvalues of $AA^*$. We denote them by $\sigma_1\geqslant\cdots\geqslant\sigma_n$ and set $\mathrm{sing}$ to be the map $A\mapsto (\sigma_1,\ldots,\sigma_n)$.
\end{definition}

\textbf{The multiplicative multiple Horn problem:} given $k$ matrices $A_i\in \GL_n$, $i\in 1\ldots k$, what are the relations between ${\rm sing}(A_i\cdots A_j)$, $1\leqslant i\leqslant j\leqslant k$?


This problem possesses a ${\rm U}(n)^{k+1}$-symmetry, where ${\rm U}(n)$ is a group of unitary matrices.
Namely, the action of $(u_0,\ldots,u_k)$ on $(A_1,\ldots,A_k)$ sends it to $(u_0 A_1 u_1^{-1}, u_1 A_2 u_2^{-1},\ldots, u_{n-1} A_n u_n^{-1})$ and this action preserves ${\rm sing}(A_i\cdots A_j)$. 
Recall that by the Iwasawa decomposition we can identify $\GL_n/{\rm U}(n)$ with the set of lower-triangular matrices with real positive values on the diagonal, which we denote by $\B(n)$. 
Therefore, we can replace each $A_i$ by its $\B(n)$-part and obtain an equivalent problem.
The residual action of ${\rm U}(n)$ is called {\em dressing action}. 
Explicitly, it is given by $u.(b_1,\ldots,b_k)=(u b_1 u_1^{-1}, u_1 b_1 u_2^{-1},\ldots\\ u_k b_k u_{k+1}^{-1})$,
where $u_{i+1}$ is uniquely determined by $u_i b_iu_{i+1}^{-1}\in\mathcal{B}(n)$.

\subsection{The correspondence map with a parameter}\label{subsection: correspondence with parameter}

To connect tropical and classical functions (such as singular values), we need the correspondence map with a parameter. Notice that for any $s\neq 0$ the map \[\mu_s: \T\times {\rm U}(1)\to\C, \quad (x,\phi)\mapsto e^{sx}\phi\] is an epimorphism of monoids $(\T,+)\times ({\rm U}(1),\cdot)$ and $(\C,\cdot)$. Given a planar network $\Pi$ the {\em angles} of $\Pi$ is the set $\Phi_\Pi$ of maps edges($\Pi$)$\to {\rm U}(1)$. Applying $\mu_s$ to each edge, we get a map $\mu_s:\T^\Pi\times\Phi_\Pi\to\C^\Pi$.

\begin{definition}
    The correspondence map with a parameter is
    \[M_s\colon \T^\Pi\times \Phi_\Pi\to \Mat_{n\times n}(\C), \qquad (w,\phi)\mapsto M(\mu_s(w,\phi)),\]
    where $M$ is a usual correspondence map (\ref{definition: correspondence map}).
\end{definition}

The tropical singular values of $w$ and the singular values of $M_s(w,\phi)$ are related as follows:

\textit{Claim.} Denote  $A_s=M_s(w,\phi)$. There exists a constant $C=C(\Pi)$ such that \[\left|\frac{1}{s}\log\sigma_i(A_s)-\lambda_i^\T(w)\right|<\frac{C}{s}.\]
The proof is given in Appendix \ref{appendix: proof of proposition gz_s M_s close to gz^T}.



Let $\Pi_{\rm st}$ be a standard planar network (Definition \ref{definition: standard network}).    Let $\Phi$ be the subset of $\Phi_{\Pi_{\rm st}}$ of angles which differ from 1 only on the slanted edges. This set is isomorphic to ${\rm U}(1)^\frac{n(n-1)}{2}$. It is clear that if $(w,\phi)\in W_\T(\Pi_{\rm st})\times\Phi$ then $M_s(w,\phi)\in\B$.

    \begin{figure}[H]
    \centering
\begin{tikzpicture}[xscale=1.6]
    \draw (0,0) node[left]{4} -- (6,0) node[above left]{\small$ e^{ta_{4,4}}$};
    \draw (0,-1) node[left]{3} -- (3.5,-1);
    \draw (3.5,-1) -- (6,-1) node[above left ]{\small$ e^{ta_{3,3}}$};
    \draw (0,-2) node[left]{2} -- (3,-2);
    \draw (3,-2) -- (4,-2);
    \draw (4,-2) -- (6,-2) node[above left]{\small$ e^{ta_{2,2}}$};
    \draw (0,-3) node[left]{1} -- (3.5,-3);
    \draw (3.5,-3) -- (6,-3) node[above left]{\small$ e^{ta_{1,1}}$};
    \draw (1,0) -- (1.5,-1) node [above, yshift=7,xshift=13]{\small$ e^{ta_{4,1}}\cdot\phi_{4,1}$};
    \draw (2.5,0) -- (3,-1) node [above, yshift=7,xshift=13]{\small$e^{ta_{4,2}}\cdot\phi_{4,2}$};
    \draw (4,0) -- (4.5,-1) node [above, yshift=7,xshift=13]{\small$ e^{ta_{4,3}}\cdot\phi_{4,3}$};
    \draw (2,-1) -- (2.5,-2) node [above, yshift=7,xshift=13]{\small$ e^{ta_{3,1}}\cdot\phi_{3,1}$};
    \draw (3.5,-1) -- (4,-2) node [above, yshift=7,xshift=13]{\small$ e^{ta_{3,2}}\cdot\phi_{3,2}$};
    \draw (3,-2) -- (3.5,-3) node [above, yshift=7,xshift=13]{\small$ e^{ta_{2,1}}\cdot\phi_{2,1}$};
\end{tikzpicture}
\end{figure}

\begin{lemma}\label{lemma: M_s is birational}
    For any $s\neq 0$ the map $M_s\colon W_\R(\Pi_{\rm st})\times\Phi\to\B$ is a birational isomorphism onto a subset given by  $\Delta_{[l-i+1,l],[1,i]}\neq 0$ for all $1\leqslant i\leqslant l\leqslant n$.
\end{lemma}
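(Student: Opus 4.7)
The plan is to factor $M_s$ through a triangular monomial transformation inherited from the combinatorics of $\Pi_{\rm st}$. First, I would rewrite the data $(w,\phi)\in W_\R(\Pi_{\rm st})\times \Phi$ in terms of the $\C^*$-valued edge weights $z_{l,i}:=\mu_s(w,\phi)(e)=e^{sa_{l,i}}\phi_{l,i}$ on the essential edges, with the convention $\phi_{l,l}=1$. This change of variables identifies $W_\R(\Pi_{\rm st})\times\Phi$ with $(\R_{>0})^n\times(\C^*)^{n(n-1)/2}$ via the explicit real-analytic diffeomorphism whose inverse is $a_{l,l}=s^{-1}\log z_{l,l}$, $a_{l,i}=s^{-1}\log|z_{l,i}|$ and $\phi_{l,i}=z_{l,i}/|z_{l,i}|$ for $i<l$. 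Thus it suffices to show that $z\mapsto M_{\Pi_{\rm st}}(z)$ is a birational isomorphism onto the open subset of $\B$ cut out by the non-vanishing of the listed corner minors.

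Next, I would apply Lindstr\"om's lemma to the distinguished minors. In $\Pi_{\rm st}$ there is a unique $i$-multipath $\alpha_i^{(l)}$ from sources $[l-i+1,l]$ to sinks $[1,i]$, hence
\[
\Delta_{[l-i+1,l],[1,i]}(M_{\Pi_{\rm st}}(z))=\prod_{e\in\alpha_i^{(l)}}z(e).
\]
By the observation after Definition \ref{definition: standard network}, this Laurent monomial contains $z_{l,i}$ with exponent one and any other factor $z_{l',i'}$ satisfies $l'<l$ and $i'<i$. Consequently, the map $z\mapsto(\Delta_{[l-i+1,l],[1,i]})_{1\leqslant i\leqslant l\leqslant n}$ is an upper-triangular monomial map with unit leading coefficients, and its inverse can be written down inductively: processing the pairs $(l,i)$ in any order compatible with the partial order, each $z_{l,i}$ is expressed as a Laurent monomial in the values of the minors, lying in $\C^*$ whenever all those minors are nonzero.

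Finally, I would match the image exactly to the claimed open subset. Since $\Pi_{\rm st}$ has no upward-oriented edges, $M_{\Pi_{\rm st}}(z)$ is automatically lower triangular, and the unique path from source $l$ to sink $l$ uses only non-essential horizontal edges (weight $1$) together with the essential edge adjacent to sink $l$, giving $b_{l,l}=z_{l,l}$. Hence the positive-diagonal condition for $b\in\B$ corresponds precisely to $z_{l,l}\in\R_{>0}$, which is the image of the constraint $a_{l,l}\in\R$. The one point that requires care is that when we reverse the procedure starting from an arbitrary $b\in\B$ with the prescribed minors nonzero, the values $z_{l,l}$ produced by the triangular inversion are indeed real positive; this follows from the fact that $\Delta_{[1,l],[1,l]}(b)=\prod_{k=1}^l b_{k,k}$ is real positive for $b\in\B$, and an induction over the triangular system. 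The main obstacle is keeping track of the subtle interplay between the real/complex structure of the source and target throughout the inversion, but all of the work has already been done in the tropical/real case in the proof of Lemma \ref{lemma A(w) in GZ implies multi-GZ condition on planar networks}.
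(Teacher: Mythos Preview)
Your approach is essentially the same as the paper's, which simply says ``The proof is similar to Lemma~\ref{lemma: GZ and standard network positive structures are equivalent}'' (i.e., Lindstr\"om plus the Fomin--Zelevinsky inversion of factorization parameters via minors); you have just unpacked those two ingredients explicitly via the triangular monomial map $\mathcal{A}$ from Definition~\ref{definition: standard network}.

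Two small points. First, your argument cleanly shows that the composite $z\mapsto M_{\Pi_{\rm st}}(z)\mapsto(\Delta_{[l-i+1,l],[1,i]})$ is an invertible monomial map, which gives injectivity of $M_s$, but for surjectivity onto the stated open set you still need that a matrix $b\in\B$ is determined by this family of minors; this is the content of the paper's [FZ] citation and can be checked by the same kind of triangular induction on the entries $b_{l,i}$ (your final sentence gestures at it but misattributes the work to Lemma~\ref{lemma A(w) in GZ implies multi-GZ condition on planar networks}, which is about the multi-GZ condition and not relevant here). Second, the claim that the remaining factors in $\Delta_{[l-i+1,l],[1,i]}$ have indices with $l'<l$ \emph{and} $i'<i$ is slightly off (e.g.\ $z_{2,2}$ appears in $\alpha_2^{(3)}$), but the paper's weaker statement that $\mathcal{A}$ is unit upper-triangular in a suitable total order is all you need, and your inversion argument goes through unchanged.
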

The proof is similar to Lemma \ref{lemma: GZ and standard network positive structures are equivalent}.



\subsection{Symplectic analysis}\label{subsection: symplectic analysis}

This section uses ideas of \cite{APS-symplectic}.

Let $\Pi$ be any planar network and let $\Pi^{(l)}$ denote a subnetwork of $\Pi$ where we delete all the edges containing sources or sinks with labels bigger than $l$:

\begin{figure}[H]
    \centering
\begin{tikzpicture}[scale=.8]
    \draw (1,0) -- (4,0);
    \draw (1.5,-1) -- (4.5,-1);
    \draw (0,-2) node[left]{2} -- (6,-2);
    \draw (0,-3) node[left]{1} -- (6,-3);
    \draw (3.5,-3) -- (6,-3);
    \draw (1,0) -- (1.5,-1);
    \draw (2.5,0) -- (3,-1);
    \draw (4,0) -- (4.5,-1);
    \draw (2,-1) -- (2.5,-2);
    \draw (3.5,-1) -- (4,-2);
    \draw (3,-2) -- (3.5,-3);
\end{tikzpicture}
    \caption{$\Pi_{\rm st}(4)^{(2)}$.}
\end{figure}

Notice that $M_{\Pi^{(l)}}(w)$ is a submatrix of $M_{\Pi}(w)$ concentrated in the first $l$ rows and columns. Notice that $M_{\Pi_{\rm st}(n)^{(l)}}(w)=M_{\Pi_{\rm st}(l)}(w)$, where $\Pi_{\rm st}(l)$ is viewed as a subnetwork of $\Pi_{\rm st}(n)$ in an obvious way.
\begin{definition}
    The {\em tropical Gelfand-Zeitlin map } $gz^\T\colon\T^\Pi\to\T^{\frac{n(n+1)}{2}}$ is a map with components $m_i(\Pi^{(l)})$, $0\leqslant i\leqslant l\leqslant n.$
\end{definition}
\begin{lemma}\cite[Theorem 2]{APS-planar}
    $gz^\T(w)\in\Delta_{GZ}$.
\end{lemma}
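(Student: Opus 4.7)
The plan is to show that the tropical Cauchy interlacing
\[
  \lambda_i^{(l+1)} \;\geqslant\; \lambda_i^{(l)} \;\geqslant\; \lambda_{i+1}^{(l+1)}, \qquad 1 \leqslant i \leqslant l < n,
\]
holds for $\lambda_i^{(l)} := m_i^{(l)} - m_{i-1}^{(l)}$ with $m_i^{(l)} = m_i(\Pi^{(l)})$ and $m_0^{(l)}:=0$. By the definition of $\Delta_{GZ}$ recalled in the excerpt, these interlacing inequalities are equivalent to the rhombus-type GZ inequalities \eqref{equation: GZ inequalities}, so the lemma will follow.

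Both halves are proved by a path-swap argument in the spirit of the proof of Theorem~\ref{theorem PN rhombi inequalities}, adapted to a pair of multipaths in a single planar network. For the left half, equivalent to $m_i^{(l+1)} + m_{i-1}^{(l)} \geqslant m_{i-1}^{(l+1)} + m_i^{(l)}$, I would pick maximal multipaths $q_1 \in P_{i-1}(\Pi^{(l+1)})$ and $q_2 \in P_i(\Pi^{(l)})$ realising $m_{i-1}^{(l+1)}$ and $m_i^{(l)}$, view them as a pair of multipaths in the ambient network $\Pi^{(l+1)}$, and form their canonical path decomposition $\Theta(q_1, q_2)$. A single-network version of the flow count of Lemma~\ref{lemma alpha-beta = Q-Q}, where the only verticals are the left and right boundaries of $\Pi^{(l+1)}$, shows that $\Theta(q_1, q_2)$ can be reassembled into a pair $(p_+, p_-)$ with $|p_+| = i$, $|p_-| = i-1$ and $w(p_+) + w(p_-) = w(q_1) + w(q_2)$.

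The main step is to arrange that $p_-$ lies in $\Pi^{(l)}$, i.e.\ does not touch the source or sink labelled $l+1$. Since $q_2$ is entirely contained in $\Pi^{(l)}$, the label $l+1$ can appear at most once as a source and at most once as a sink among the endpoints of $\Theta(q_1, q_2)$. Exactly as in the algorithm illustrated after the proof of Theorem~\ref{theorem PN rhombi inequalities}, the orientation freedom supplied by the flow count allows us to send any essential component touching the label $l+1$ into $p_+$ (which is permitted to use $[1,l+1]$) rather than into $p_-$. The resulting inequality $m_i^{(l+1)} + m_{i-1}^{(l)} \geqslant w(p_+) + w(p_-) = m_{i-1}^{(l+1)} + m_i^{(l)}$ is the first family in \eqref{equation: GZ inequalities}.

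The right half $\lambda_i^{(l)} \geqslant \lambda_{i+1}^{(l+1)}$ is handled by the symmetric argument, starting instead from maximal multipaths $q_1 \in P_{i+1}(\Pi^{(l+1)})$ and $q_2 \in P_{i-1}(\Pi^{(l)})$ and reassembling $\Theta(q_1, q_2)$ into a pair $(p_+, p_-) \in P_i(\Pi^{(l+1)}) \times P_i(\Pi^{(l)})$. Planarity of $\Pi$ forces the $i+1$ non-crossing paths of $q_1$ to be ordered, so that the label $l+1$ is carried by at most one of them as a source and at most one as a sink. Thus only a bounded, controlled number of $l+1$-endpoints need to be routed into $p_+$, and the flow/parity count once again guarantees that such an orientation is available. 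The main obstacle throughout is precisely this sub-network bookkeeping: one must verify, in each of the two cases, that the exceptional occurrences of the label $l+1$ can consistently be assigned to the component of the reassembly that is allowed to visit $[1,l+1]$. Once this is checked, both halves of the interlacing follow, and therefore so does the lemma.
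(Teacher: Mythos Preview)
The paper does not prove this lemma; it simply cites \cite[Theorem 2]{APS-planar}. Your approach---adapting the canonical path decomposition and reorientation technique from the proof of Theorem~\ref{theorem PN rhombi inequalities}---is the natural one, and for the first interlacing inequality it works essentially as you sketch. There the total number of odd-valency sources (and sinks) is $2i-1$, so the number $c$ of source--sink components of $\Theta(q_1,q_2)$ is odd; this parity gives enough freedom to route the at most one occurrence each of source $l+1$ and sink $l+1$ into $p_+$ while still achieving $|p_+|-|p_-|=1$.

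For the second inequality, however, your assertion that ``the flow/parity count once again guarantees that such an orientation is available'' has a genuine gap. Here the total is $2i$, so $c$ is even, and you need $|p_+|-|p_-|=0$. If source $l+1$ and sink $l+1$ are both used by $q_1$ and land on two \emph{distinct} source--sink components, orienting both into $p_+$ contributes $+2$; when $c=2$ there are no remaining source--sink components to absorb this, and parity alone does not exclude this configuration. What actually rules it out is a planarity argument: the two putative components $C_1$ (source $l{+}1$ to some sink $\leqslant l$) and $C_2$ (some source $\leqslant l$ to sink $l{+}1$) would have to cross in the strip, but at any shared (valency-4) vertex one of them uses the two incoming edges and the other the two outgoing edges, so they only touch without crossing---exactly the mechanism the paper uses in the proof of Theorem~\ref{theorem PN octahedron recurrence} to exclude case~(2). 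You correctly flag the sub-network bookkeeping as the crux, but the missing ingredient is this Jordan-curve/non-crossing step, not merely a parity check.
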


Let $\mathcal{H}$ denote the set of Hermitian $n\times n$ matrices. For $A\in \mathcal{H}$ let $A^{(l)}$ denote its submatrix concentrated in the first $l$ rows and columns. Let $\lambda_1^{(l)}\geqslant\cdots\geqslant\lambda_l^{(l)}$ be the eigenvalues of $A^{(l)}$.
\begin{definition}
    The \emph{Gelfand-Zeitlin map} $gz^\mathcal{H}\colon \mathcal{H}\to\R^{\frac{n(n+1)}{2}}$ is a map with components $A\mapsto \lambda_1^{(l)}+\cdots+\lambda_i^{(l)}$, $0\leqslant i\leqslant l\leqslant n.$
\end{definition}
\begin{lemma}(Cauchy’s Interlace Theorem) \quad 
    $gz^\mathcal{H}(A)\in\Delta_{GZ}$. 
\end{lemma}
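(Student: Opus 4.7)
The plan is to establish the statement by proving the Cauchy interlacing inequalities $\lambda_i^{(l+1)} \geqslant \lambda_i^{(l)} \geqslant \lambda_{i+1}^{(l+1)}$ for all admissible indices, and then translating them into the two families of Gelfand–Zeitlin inequalities via the identification $\lambda_i^{(l)} = m_i^{(l)} - m_{i-1}^{(l)}$. Since the paper already notes that the interlacing form of the GZ cone is equivalent to the inequalities $m_i^{(l+1)}+m_{i-1}^{(l)} \geqslant m_{i-1}^{(l+1)}+m_i^{(l)}$ and $m_i^{(l+1)}+m_i^{(l)} \geqslant m_{i+1}^{(l+1)}+m_{i-1}^{(l)}$, it suffices to verify the interlacing, which concerns only the transition from $A^{(l)}$ to $A^{(l+1)}$ and is essentially classical.

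The main technical tool I would use is the Courant–Fischer min–max characterization applied to the nested Hermitian matrices $A^{(l)}$ and $A^{(l+1)}$, identifying $\mathbb{C}^l$ with the coordinate subspace of $\mathbb{C}^{l+1}$ spanned by the first $l$ basis vectors. The key observation is that for any $v \in \mathbb{C}^l \subset \mathbb{C}^{l+1}$ one has $\langle A^{(l+1)} v, v\rangle = \langle A^{(l)} v, v\rangle$, because the last coordinate of $v$ vanishes, so truncation does not affect the quadratic form on vectors supported in the first $l$ coordinates.

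First I would prove $\lambda_i^{(l+1)} \geqslant \lambda_i^{(l)}$: pick an $i$-dimensional subspace $V \subset \mathbb{C}^l$ realizing the max–min for $A^{(l)}$, view it inside $\mathbb{C}^{l+1}$, and use it as a candidate in the max–min for $A^{(l+1)}$. For the inequality $\lambda_i^{(l)} \geqslant \lambda_{i+1}^{(l+1)}$, I would take any $(i+1)$-dimensional subspace $V \subset \mathbb{C}^{l+1}$; since $\mathbb{C}^l$ has codimension one, $V \cap \mathbb{C}^l$ has dimension at least $i$, so one can pick an $i$-dimensional subspace $W \subset V \cap \mathbb{C}^l$ on which the quadratic forms for $A^{(l+1)}$ and $A^{(l)}$ agree; the min of $\langle A^{(l+1)}\cdot,\cdot\rangle$ on unit vectors in $V$ is bounded above by the min on $W$, which is in turn bounded above by $\lambda_i^{(l)}$ by the max–min formula for $A^{(l)}$. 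Taking the maximum over $V$ yields the inequality.

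Finally I would express the partial sums as telescoping sums of eigenvalues, so that $m_i^{(l+1)}+m_{i-1}^{(l)} - m_{i-1}^{(l+1)} - m_i^{(l)} = \lambda_i^{(l+1)} - \lambda_i^{(l)}$ and $m_i^{(l+1)}+m_i^{(l)} - m_{i+1}^{(l+1)} - m_{i-1}^{(l)} = \lambda_i^{(l)} - \lambda_{i+1}^{(l+1)}$, so both GZ inequalities follow directly from the interlacing, and the image of $gz^{\mathcal{H}}$ indeed lies in $\Delta_{GZ}$. The proof is routine and contains no substantial obstacle; the only point of care is the bookkeeping of dimension counts when intersecting $V$ with $\mathbb{C}^l$, which only uses the codimension-one embedding.
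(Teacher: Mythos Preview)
Your argument is correct and is the standard Courant--Fischer proof of Cauchy interlacing; the paper itself gives no proof at all, simply citing the result by name as a classical fact. So you have supplied a complete proof where the paper merely invokes the theorem.
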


Let $\B$, as before, denote the set of lower-triangular $n\times n$ matrices with real positive values on the diagonal. For $A\in\B$ let $\sigma_1^{(l)}\geqslant\cdots\geqslant\sigma_l^{(l)}$ denote the singular values of $A^{(l)}$.
\begin{definition}
    The map $gz_s\colon \B\to \R^{\frac{n(n+1)}{2}}$ is a map with components $A\mapsto\frac{1}{s}\log\sigma_1^{(l)}+\cdots+\frac{1}{s}\log\sigma_i^{(l)}$, $0\leqslant i\leqslant l\leqslant n.$
\end{definition}

\begin{lemma}\cite{FR} 
    $gz_s(A)\in\Delta_{GZ}$. 
\end{lemma}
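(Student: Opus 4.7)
The plan is to reduce this lemma directly to the Cauchy interlace theorem that was invoked for $gz^{\mathcal H}$ in the preceding lemma. The key observation exploits the lower-triangular structure of $A\in\B$: the principal $l\times l$ submatrix $A^{(l)}$ satisfies
\[
A^{(l)}(A^{(l)})^{*} \;=\; (AA^{*})^{(l)}.
\]
Indeed, for $i\leqslant l$ the row $A_{i,\cdot}$ has its nonzero entries confined to columns $\leqslant i \leqslant l$, so truncating each of the first $l$ rows of $A$ to its first $l$ entries loses no information, and the Gram matrix of these truncated rows is precisely the upper-left $l\times l$ block of $AA^{*}$. In particular, $(\sigma_i^{(l)})^{2}$ is the $i$-th eigenvalue of the Hermitian matrix $(AA^{*})^{(l)}$.

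Given this identity, Cauchy's interlace theorem applied to $AA^{*}\in\mathcal H$ immediately yields
\[
(\sigma_i^{(l+1)})^{2} \;\geqslant\; (\sigma_i^{(l)})^{2} \;\geqslant\; (\sigma_{i+1}^{(l+1)})^{2}.
\]
Taking positive square roots preserves the inequalities, and then applying $\tfrac{1}{s}\log(\cdot)$ (for $s>0$) transforms them into
\[
\lambda_i^{(l+1)} \;\geqslant\; \lambda_i^{(l)} \;\geqslant\; \lambda_{i+1}^{(l+1)},
\]
where $\lambda_i^{(l)} := \tfrac{1}{s}\log\sigma_i^{(l)} = m_i^{(l)} - m_{i-1}^{(l)}$. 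By the equivalent characterization of $\Delta_{GZ}$ via interlacing on the $\lambda$'s stated in its definition, this is exactly $gz_s(A)\in\Delta_{GZ}$. The case $s<0$ is symmetric.

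There is no genuine obstacle here: the whole argument is a short reduction to the Hermitian Cauchy theorem made possible by the identity $A^{(l)}(A^{(l)})^{*} = (AA^{*})^{(l)}$. Without the lower-triangularity of $A$ this identity fails, and one would have to rely on the weaker Thompson singular-value interlacing (which loses an index each time a row or column is deleted). The restriction to $\B$ is precisely what makes the classical Hermitian Cauchy theorem directly applicable, and this is presumably the route used in the cited reference \cite{FR}.
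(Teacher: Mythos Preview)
Your argument is correct and self-contained. The identity $A^{(l)}(A^{(l)})^{*}=(AA^{*})^{(l)}$ for lower-triangular $A$ is exactly the right observation: it converts the problem about singular values of the truncations $A^{(l)}$ into a problem about eigenvalues of the principal submatrices of the single Hermitian matrix $AA^{*}$, and then Cauchy's interlace theorem applies verbatim.

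The paper does not supply its own proof; it simply attributes the statement to Flaschka--Ratiu \cite{FR}, whose convexity theorem for Poisson actions of compact Lie groups is a considerably heavier piece of machinery. Your route is therefore more elementary than what the citation would suggest, and isolates precisely the algebraic reason the interlacing holds (lower-triangularity of $A$), rather than deducing it from a general symplectic convexity principle.

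One small correction: the sentence ``The case $s<0$ is symmetric'' is not quite right. For $s<0$ the map $x\mapsto \tfrac{1}{s}\log x$ is order-reversing, so the Cauchy inequalities $(\sigma_i^{(l+1)})^{2}\geqslant(\sigma_i^{(l)})^{2}\geqslant(\sigma_{i+1}^{(l+1)})^{2}$ become $\lambda_i^{(l+1)}\leqslant\lambda_i^{(l)}\leqslant\lambda_{i+1}^{(l+1)}$, which is the wrong direction for $\Delta_{GZ}$ as defined. This is harmless here since the section works in the regime $s>0$ (indeed $s\to\infty$), but you should simply drop the remark rather than claim symmetry.
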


    Recall that $\mathcal{H}=\mathfrak{u}(n)^*$ and $\B={\rm U}(n)^*$ are Poisson manifolds. The Poisson structure on $\mathcal{H}$ is the \emph{Kirillov-Kostant-Souriau} structure, its symplectic leaves are  $\mathcal{H}_\lambda=\{A\in \mathcal{H}\mid {\rm eig}(A)=\lambda\}$. We denote its Poisson bivector by $\pi_{\mathcal{H}}$; the corresponding symplectic measure on $\mathcal{H}_\lambda$, by $\mu^\mathcal{H}_\lambda$. The Poisson structure on $\B$ is the \emph{standard structure}, its symplectic leaves are $\B_\sigma=\{A\in\B\mid {\rm sing}(A)=\sigma\}$. We denote its Poisson bivector by $\pi_{\B}$.

Let us collect known facts about these objects which we will need.

\begin{proposition}\label{proposition: symplectic measure and gz} \cite{GS83}
    The pushforward of $\mu^\mathcal{H}_\lambda$ under the map $gz^\mathcal{H}$ is a scalar multiple of the Lebesgue measure on $\Delta_{GZ}\cap\{l_i^{(n)}-l_{i-1}^{(n)}=\lambda_i\}$.
\end{proposition}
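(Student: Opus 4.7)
The plan is to realize $gz^\mathcal{H}$ as the moment map of a Hamiltonian torus action on an open dense subset of $\mathcal{H}_\lambda$, and then invoke the standard action-angle / Duistermaat-Heckman argument. The key input is the classical fact (due to Guillemin-Sternberg) that the components
\[
 l_i^{(l)} = \lambda_1^{(l)} + \cdots + \lambda_i^{(l)} = \operatorname{Tr}\bigl( (A^{(l)})^i\bigr)^\sim \quad (1\leqslant i\leqslant l\leqslant n)
\]
of $gz^\mathcal{H}$ Poisson-commute with respect to the KKS bracket $\pi_\mathcal{H}$. First I would verify that each $\operatorname{Tr}((A^{(l)})^k)$ generates a Hamiltonian flow on $\mathcal{H}$ which, when restricted to $\mathcal{H}_\lambda$, preserves each level set of all the other $l_i^{(l')}$'s. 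The cleanest way to see this is to observe that the functions $f(A^{(l)})$ for $f$ an $\mathrm{Ad}_{{\rm U}(l)}$-invariant function on $\mathcal{H}_l$ form a Poisson subalgebra (since the KKS bracket on $\mathcal{H}$ restricts nicely to block submatrices), and that invariant functions of $A^{(l)}$ and $A^{(l')}$ commute for $l\neq l'$ because one of them is already ${\rm U}(l)$-invariant (the Thimm trick). This gives $\frac{n(n+1)}{2}$ Poisson-commuting functions, of which $n$ ({\em i.e.}~the $l_i^{(n)}$) are Casimirs on $\mathcal{H}$; the remaining $\frac{n(n-1)}{2} = \tfrac{1}{2}\dim \mathcal{H}_\lambda$ (for generic $\lambda$) define an integrable system on $\mathcal{H}_\lambda$.

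Next I would argue that on the open dense subset $\mathcal{H}_\lambda^\circ \subset \mathcal{H}_\lambda$ where all the interlacing inequalities are strict, the Hamiltonian flows of the $l_i^{(l)}$ with $l<n$ are periodic with a common period (after an affine rescaling the period is $2\pi$), so together they integrate to a free Hamiltonian action of a torus $T = (S^1)^{n(n-1)/2}$ with moment map exactly $gz^\mathcal{H}|_{\mathcal{H}_\lambda^\circ}$. Periodicity follows from the fact that the flow of $l_i^{(l)}$ acts by conjugation by $\exp(t X_i^{(l)})$ for a suitable diagonalizable operator $X_i^{(l)}$ built from the spectral decomposition of $A^{(l)}$, whose orbits close up. The image of $gz^\mathcal{H}|_{\mathcal{H}_\lambda^\circ}$ is the interior of the Gelfand-Zeitlin polytope
\[
 P_\lambda := \Delta_{GZ} \cap \{\, l_i^{(n)} - l_{i-1}^{(n)} = \lambda_i \,\},
\]
by Cauchy interlacing together with an explicit construction of a preimage for each interior point.

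Finally, the Duistermaat-Heckman theorem (or, equivalently here, the construction of global action-angle coordinates on $\mathcal{H}_\lambda^\circ$ as $P_\lambda^\circ \times T$) gives that
\[
 (gz^\mathcal{H})_* \mu^\mathcal{H}_\lambda \bigr|_{P_\lambda^\circ} = c \cdot \mathrm{Leb}_{P_\lambda^\circ}
\]
for a positive constant $c$ depending only on the normalization of $\mu^\mathcal{H}_\lambda$ and the Haar measure on $T$. Since the complement $\mathcal{H}_\lambda \setminus \mathcal{H}_\lambda^\circ$ is of lower dimension and hence $\mu^\mathcal{H}_\lambda$-null, the pushforward extends by continuity to $c\cdot\mathrm{Leb}_{P_\lambda}$ on all of $P_\lambda$, which is the claim.

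The main obstacle is the singular locus where eigenvalues of some $A^{(l)}$ collide: there the functions $l_i^{(l)}$ are only continuous (not smooth), so the torus $T$ does not act globally and the action-angle picture breaks down. I would handle this by working entirely on $\mathcal{H}_\lambda^\circ$, noting that it is open, dense, and of full measure, so the identification of $(gz^\mathcal{H})_*\mu^\mathcal{H}_\lambda$ with Lebesgue measure on the interior of $P_\lambda$ extends uniquely to the boundary. This is precisely the strategy of \cite{GS83}, and the proof proposal here is merely to recall it with the minor change that we allow non-generic $\lambda$ (which only contracts $P_\lambda$ to a face of the generic polytope, without affecting the argument).
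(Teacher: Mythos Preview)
Your proposal is correct and is precisely the classical Guillemin--Sternberg argument being cited; the paper does not give its own proof of this proposition but simply invokes \cite{GS83}. Your outline of the Thimm-trick commutativity, the periodicity of the flows on the regular locus $\mathcal{H}_\lambda^\circ$, and the action-angle/Duistermaat--Heckman conclusion is exactly the content of that reference.
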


\begin{proposition}\label{proposition: existense of GW_s} \cite{AM}
    There exists a family of Poisson isomorphisms $GW_s\colon (\mathcal{H},s\pi_\mathcal{H})\to(\B,\pi_\B)$, intertwining the maps $gz^\mathcal{H}$ and $gz_s$.
\end{proposition}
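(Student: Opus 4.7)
The statement is a parameter-dependent version of the Ginzburg--Weinstein theorem, enhanced by the intertwining condition on integrable systems. I would prove it by induction along the Gelfand--Zeitlin chain of subalgebras $\mathfrak{u}(1)\subset\mathfrak{u}(2)\subset\cdots\subset\mathfrak{u}(n)$. The base case $n=1$ is immediate: both $\mathcal{H}$ and $\B$ are one--dimensional with trivial Poisson brackets, and the map $x\mapsto e^{sx}$ realises the intertwining $gz_s(e^{sx})=x=gz^{\mathcal{H}}(x)$.

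For the inductive step, I would exploit the fact that the ``truncation'' maps $\mathcal{H}(n)\to\mathcal{H}(n{-}1)$ and $\B(n)\to\B(n{-}1)$, sending a matrix to its upper--left $(n{-}1)\times(n{-}1)$ block, are both Poisson (the first with respect to the scaled bracket $s\pi_{\mathcal{H}}$). Assuming $GW_s^{(n-1)}$ has been built and intertwines the level--$(n{-}1)$ GZ maps, one needs to extend it along fibres. Over the open dense stratum where the level--$n$ eigenvalues (resp.\ singular values) are all distinct from the level--$(n{-}1)$ ones, Cauchy's interlacing theorem guarantees that the fibres are tori on which the Thimm torus $T^{n-1}$ acts freely, with moment maps given precisely by the new components of $gz^{\mathcal{H}}$ and $gz_s$. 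This supplies action--angle coordinates on both sides. Gluing $GW_s^{(n-1)}$ on the base with the fibrewise identification of action--angle coordinates defines $GW_s$ on the generic stratum. The resulting map is Poisson by construction, since in these coordinates both symplectic forms on leaves reduce to $s^{-1}\sum dI_i\wedge d\theta_i$ (the factor of $s$ is absorbed by the logarithmic rescaling that enters $gz_s$).

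The main obstacle is the extension of $GW_s$ across the non--generic locus, where some interlacing inequalities degenerate to equalities and the Thimm torus acquires stabilisers. Here I would invoke an equivariant symplectic slice theorem: near such a point the leaf is locally modelled by a symplectic slice times a symplectic orbit, and the same model governs both $\mathcal{H}_\lambda$ and $\B_\sigma$ with $\sigma=e^{s\lambda}$, so the fibrewise construction extends smoothly by functoriality. A complementary route, perhaps technically cleaner, is a Moser--type deformation argument: transport $\pi_{\B}$ back to $\mathcal{H}$ through a preliminary $U(n)$--equivariant diffeomorphism (for instance the polar exponential), and interpolate with $s\pi_{\mathcal{H}}$ through a family of Poisson structures sharing the same GZ integrable system; the Moser flow then produces $GW_s$ globally and at once, bypassing stratum--by--stratum patching. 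Smoothness of $GW_s$ in the parameter $s$ (so that one truly obtains a \emph{family}) follows from the smooth dependence of either construction on $s$, and it is in harmony with the tropical limit $s\to\infty$ used in Section \ref{subsection: correspondence with parameter}.
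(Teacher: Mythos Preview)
The paper does not prove this proposition; it is stated with a citation to \cite{AM} (Alekseev--Meinrenken, \emph{Ginzburg--Weinstein via Gelfand--Zeitlin}) and used as a black box in the symplectic analysis of Section~\ref{subsection: symplectic analysis}. So there is nothing in the paper to compare your proposal against.

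That said, your sketch is in the right spirit, and in fact your ``complementary route'' via a Moser-type deformation is essentially the strategy of the cited paper \cite{AM}: one pulls back the multiplicative Poisson structure to $\mathcal{H}$ by a preliminary diffeomorphism, connects it to $s\pi_{\mathcal{H}}$ by a one-parameter family of Poisson structures all admitting the same Gelfand--Zeitlin integrable system, and integrates a time-dependent vector field to obtain $GW_s$. Your first route (inductive action--angle matching on the generic stratum) is also reasonable heuristically, but the sentence ``the fibrewise construction extends smoothly by functoriality'' across the degenerate locus hides the genuine difficulty: the Thimm torus action is not smooth there, so a naive slice argument does not apply, and this is precisely why the Moser approach is preferred in \cite{AM}. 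If you were to turn this into an actual proof you should commit to the Moser route and make the interpolation and the integrability of the Moser vector field explicit; the inductive action--angle description is useful intuition but does not by itself yield a global smooth map.
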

In particular, $GW_s$ maps $\mathcal{H}_\la$ to $\B_{e^{s\la}}$.

Denote by $\zeta_s=(w_s,\phi_s)\colon\B\dashrightarrow W_\T(\Pi_{\rm st})\times\Phi$ the inverse of the birational isomorphism $M_s$ from Lemma \ref{lemma: M_s is birational}.
\begin{center}
\begin{tikzcd}
    \mathcal{H} \arrow[rrr, bend left=25, "\zeta_s"] \arrow[dr, swap, "gz^\mathcal{H}"] \arrow[rr, "GW_s"]  & & \B \arrow[dl, "gz_s"] \arrow[r, dashrightarrow, shift left, "\zeta_s"]  & \R^{\frac{n(n-1)}{2}}\times\Phi \arrow[d,"gz^\T"] \arrow[l, shift left, "M_s"] \\ 
    & \Delta_{GZ} & & \Delta_{GZ} 
\end{tikzcd}
\end{center}

For $\delta>0$ denote by $\Delta_{GZ}(\delta)$ a subset in $\Delta_{GZ}$ where all the inequalities (\ref{equation: GZ inequalities}) are $\delta$-strict, {\em i.e.},\[m_i^{(l+1)}+ m_{i-1}^{(l)}\geqslant m_{i-1}^{(l+1)}+ m_{i}^{(l)}+\delta, \qquad m_i^{(l+1)}+ m_{i}^{(l)}\geqslant m_{i+1}^{(l+1)}+ m_{i-1}^{(l)}+\delta.\]

Recall the map $\mathcal{A}$ from Definition \ref{definition: standard network}. Since its matrix is upper-triangular with $1$'s on the diagonal, we have $||\mathcal{A}||=||\mathcal{A}^{-1}||=1$.

\begin{proposition}\label{proposition: existense of zeta_infty} \cite[Proposition 5.1]{ALL}.
On a dense subset of $\mathcal{H}$, given by $gz^\mathcal{H}(A)\in\cup_{\delta>0}\Delta_{GZ}(\delta)$, there exists a limit $\lim_{s\to\infty}\zeta_s=:\zeta_\infty$ which intertwines $gz^\mathcal{H}$ and $\mathcal{A}$. More precisely, for any $\delta$ there exists $C$ and $s_0$ such that $gz^\mathcal{H}(A)\in \Delta_{GZ}(\delta)$ implies $|\mathcal{A}\circ w_s-gz^\mathcal{H}|(A)\leqslant Ce^{-s\delta/2}$.
\end{proposition}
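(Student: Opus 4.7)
The plan is to reduce the statement to a pointwise comparison between corner minors of $B:=GW_s(A)\in\B$ and products of top singular values of its principal blocks, and then to obtain the required exponential estimate by exploiting the lower-triangularity of $B$ together with the spectral-gap condition encoded by $\Delta_{GZ}(\delta)$.

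First, because $GW_s$ intertwines $gz^\mathcal{H}$ with $gz_s$ (Proposition~\ref{proposition: existense of GW_s}), the singular values of the corner block $B^{(l)}$ satisfy $\tfrac{1}{s}\log(\sigma^{(l)}_1\cdots\sigma^{(l)}_i)=gz^\mathcal{H}(A)_{(i,l)}$. On the planar-network side, since $\alpha^{(l)}_i$ is the \emph{unique} multipath from sources $[l-i+1,l]$ to sinks $[1,i]$ in $\Pi_{\rm st}$, Lindström's lemma applied to $B=M_s(w_s,\phi_s)$ yields $\Delta_{[l-i+1,l],[1,i]}(B)=\phi\cdot e^{s\,w_s(\alpha^{(l)}_i)}$ with $|\phi|=1$, so $(\mathcal{A}\circ w_s(A))_{(i,l)}=\tfrac{1}{s}\log|\Delta_{[l-i+1,l],[1,i]}(B^{(l)})|$. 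The claim thus reduces to proving
\[
\Bigl|\tfrac{1}{s}\log|\Delta_{[l-i+1,l],[1,i]}(B^{(l)})|-\tfrac{1}{s}\log(\sigma^{(l)}_1\cdots\sigma^{(l)}_i)\Bigr|\leqslant C\,e^{-s\delta/2}
\]
whenever $gz_s(B)\in\Delta_{GZ}(\delta)$, for every $1\leqslant i\leqslant l\leqslant n$.

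Next, I would use the SVD $B^{(l)}=U_l\Sigma_l V_l^*$ and Cauchy--Binet to expand
\[
\Delta_{[l-i+1,l],[1,i]}(B^{(l)})=\sum_{|I|=i}\Delta_{[l-i+1,l],I}(U_l)\,\overline{\Delta_{[1,i],I}(V_l)}\,\prod_{j\in I}\sigma^{(l)}_j.
\]
The $\delta$-strict GZ inequalities give $\lambda^{(l)}_i-\lambda^{(l)}_{i+1}\geqslant 2\delta$, hence $\sigma^{(l)}_j/\sigma^{(l)}_{j+1}\geqslant e^{2s\delta}$, so every term with $I\neq[1,i]$ is dominated by the $I=[1,i]$ term by a multiplicative factor of at least $e^{-2s\delta}$. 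Since all minors of unitary matrices are bounded in absolute value by $1$, this produces the upper bound $|\Delta_{[l-i+1,l],[1,i]}(B^{(l)})|\leqslant (1+O(e^{-2s\delta}))\,\sigma^{(l)}_1\cdots\sigma^{(l)}_i$ essentially for free.

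The hard part will be the matching \emph{lower} bound: one must show that the ``alignment minors'' $|\Delta_{[l-i+1,l],[1,i]}(U_l)|$ and $|\Delta_{[1,i],[1,i]}(V_l)|$ are each at least $1-O(e^{-s\delta})$, and this is precisely where the lower-triangularity of $B$ must be used nontrivially. I plan to argue by induction on $l$: writing $B^{(l)}=\bigl(\begin{smallmatrix}B^{(l-1)} & 0\\ r & B_{ll}\end{smallmatrix}\bigr)$, the second $\delta$-strict GZ inequality furnishes the cross-level separation $\lambda^{(l-1)}_i\geqslant\lambda^{(l)}_{i+1}+\delta$, which is exactly a spectral-gap hypothesis of the kind needed for standard $\sin\theta$ perturbation estimates. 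These will force the top-$i$ left and right singular subspaces of $B^{(l)}$ to be $O(e^{-s\delta})$ perturbations of the corresponding top subspaces of $B^{(l-1)}$ (when $i<l$) or of the whole space (when $i=l$), yielding the required control on the unitary minors. Combining the upper and lower bounds gives the announced exponential estimate, and since $\mathcal{A}$ is a fixed linear isomorphism the existence of $\zeta_\infty$ and the intertwining $\mathcal{A}\circ w_\infty=gz^\mathcal{H}$ follow immediately on the dense subset $\bigcup_{\delta>0}\Delta_{GZ}(\delta)$.
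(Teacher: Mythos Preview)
The paper does not prove this proposition; it is quoted from \cite[Proposition~5.1]{ALL} without argument, so there is no ``paper's proof'' to compare against. Your reduction in the first two paragraphs is correct and useful: using that $GW_s$ intertwines $gz^{\mathcal{H}}$ with $gz_s$, and that $\alpha_i^{(l)}$ is the unique multipath in $\Pi_{\rm st}$ from $[l-i+1,l]$ to $[1,i]$, the statement becomes the purely linear-algebraic estimate
\[
\Bigl|\tfrac{1}{s}\log\bigl|\Delta_{[l-i+1,l],[1,i]}(B^{(l)})\bigr|\;-\;\tfrac{1}{s}\log\bigl(\sigma_1^{(l)}\cdots\sigma_i^{(l)}\bigr)\Bigr|\leqslant Ce^{-s\delta/2}
\]
for $B\in\B$ with $gz_s(B)\in\Delta_{GZ}(\delta)$. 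The upper bound is in fact immediate (any $i\times i$ minor is bounded by the product of the $i$ largest singular values), so your Cauchy--Binet expansion is unnecessary there.

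The gap is in the lower bound. Your inductive sketch asserts that the top-$i$ singular subspaces of $B^{(l)}$ are $O(e^{-s\delta})$-perturbations of those of $B^{(l-1)}$, but this is neither what you need nor what Davis--Kahan gives. First, the target left subspace is ${\rm span}\{e_{l-i+1},\dots,e_l\}$, which \emph{contains} the new direction $e_l$ and is not the embedded copy of the top-$i$ left subspace of $B^{(l-1)}$ (by your own induction hypothesis the latter is close to ${\rm span}\{e_{l-i},\dots,e_{l-1}\}$). Second, passing from $B^{(l-1)}$ to $B^{(l)}$ adjoins a row whose norm must be \emph{large} (indeed $\|r_l\|\gtrsim e^{s\delta}\sigma_1^{(l-1)}$ is forced by $\lambda_1^{(l)}\geqslant\lambda_1^{(l-1)}+\delta$), so the ``perturbation'' is not small in operator norm and the standard $\sin\theta$ theorem does not apply. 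A workable route along your lines would be to first show that the row norms satisfy $\|r_j\|/\|r_{j-1}\|\geqslant(1-o(1))e^{s\delta}$ (this follows from $\|r_j\|\leqslant\sigma_1^{(j)}$ and $\|B^{(j)}\|_F^2-\sum_{k<j}\|r_k\|^2\geqslant(\sigma_1^{(j)})^2(1-O(e^{-2s\delta}))$), then analyse the rescaled Gram matrix $D^{-1}B^{(l)}(B^{(l)})^*D^{-1}$ with $D=\diag(\|r_1\|,\dots,\|r_l\|)$; but showing its off-diagonal entries are small still requires controlling the \emph{angles} between rows, not just their norms, and this is where the argument needs substantially more work than the sketch provides. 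The proof in \cite{ALL} proceeds differently, exploiting the explicit construction of $GW_s$ rather than such a posteriori estimates on $\B$.
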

\begin{remark}\label{remark: how close w and w_s zeta_infty^-1}
    Setting $A=\zeta_\infty^{-1}(w,\phi)$ in the inequality gives: $|\mathcal{A} w_s\zeta_\infty^{-1}(w,\phi)-\mathcal{A}(w)|\leqslant Ce^{-s\delta/2}$ whenever $\mathcal{A}(w)\in\Delta_{GZ}(\delta)$. Since $\mathcal{A}^{-1}$ is a linear isomorphism of norm $1$, $|w_s\zeta_\infty^{-1}(w,\phi)-w|\leqslant Ce^{-s\delta/2}$ whenever $\mathcal{A}(w)\in\Delta_{GZ}(\delta)$.
\end{remark}

Notice that by Lemma \ref{lemma A(w) in GZ implies multi-GZ condition on planar networks} and Proposition \ref{proposition: existense of zeta_infty}, $\mathcal{A}=gz^\T$ on the image of $\zeta_\infty$.

\vspace{5pt}

Let $W_{\Pi,\delta}\subset \R^\Pi$ be given by  (a) $gz^\T(w)\in\Delta_{GZ}(\delta)$, and (b) $|w(p)-w(p')|>\delta$ for any distinct $p, p'\in P_k(\Pi^{(l)})$.  

 \begin{proposition}\label{proposition: gz_s M_s close to gz^T} \cite[Proposition 2]{APS-symplectic}.
 For any $\delta$ there exists a constant $C$ and $s_0$ such that $(w,\phi)\in W_{\Pi,\delta}\times\Phi_\Pi$ and $s\geqslant s_0$ implies $|gz_s\circ M_s-gz^\T|(w,\phi)\leqslant Ce^{-s\delta}$.
 \end{proposition}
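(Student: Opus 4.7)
The plan is to sandwich each component of $gz_s\circ M_s$ between upper and lower bounds coming from a single dominant minor, and then to take logs. Fix a pair $(i,l)$ and set $A=M_s(w,\phi)$, $m=m_i(\Pi^{(l)})$. Since $\sigma_1(A^{(l)})\cdots\sigma_i(A^{(l)})$ equals the operator norm of $\Lambda^i(A^{(l)})$, whose entries in the standard basis of $\Lambda^i\C^l$ are the $i\times i$ minors $\Delta_{I,J}(A^{(l)})$, the $(i,l)$-component of $gz_s\circ M_s$ equals $\tfrac{1}{s}\log\|\Lambda^i(A^{(l)})\|_{\rm op}$, whereas the target $(i,l)$-component of $gz^\T(w)$ is $m$.

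By the Lindström--Gessel--Viennot lemma, whose sign-reversing involution on crossing path families is weight-preserving and so applies verbatim to complex weights, each minor expands as
\[
\Delta_{I,J}(A^{(l)}) = \sum_{p\in P_i(\Pi^{(l)}),\, s(p)=I,\, t(p)=J} \phi(p)\, e^{sw(p)}.
\]
Condition (b) in the definition of $W_{\Pi,\delta}$ ensures that there is a unique $p^*\in P_i(\Pi^{(l)})$ with $w(p^*)=m$ and that every other $p\in P_i(\Pi^{(l)})$ satisfies $w(p)\leq m-\delta$. Writing $(I^*,J^*)=(s(p^*),\,t(p^*))$ and setting $N=|P_i(\Pi^{(l)})|$, the triangle inequality yields
\[
|\Delta_{I^*,J^*}(A^{(l)})|\geq e^{sm}\bigl(1-Ne^{-s\delta}\bigr),\qquad |\Delta_{I,J}(A^{(l)})|\leq Ne^{s(m-\delta)}\ \text{for }(I,J)\neq(I^*,J^*).
\]

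Combining these with $|\Delta_{I^*,J^*}(A^{(l)})|\leq \|\Lambda^i(A^{(l)})\|_{\rm op}\leq \|\Lambda^i(A^{(l)})\|_{\rm HS}$ and summing squared minors gives
\[
e^{sm}\bigl(1-Ne^{-s\delta}\bigr) \leq \|\Lambda^i(A^{(l)})\|_{\rm op} \leq e^{sm}\sqrt{(1+Ne^{-s\delta})^2 + C_1 e^{-2s\delta}}
\]
for a combinatorial constant $C_1$ depending only on $\Pi$. Applying $\tfrac{1}{s}\log$ and the estimate $|\log(1+x)|\leq 2|x|$ valid for $|x|\leq 1/2$ produces $\bigl|\tfrac{1}{s}\log\|\Lambda^i(A^{(l)})\|_{\rm op}-m\bigr|\leq C_2\, e^{-s\delta}$ as soon as $Ne^{-s\delta}<1/2$. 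Taking the maximum of $C_2$ and the minimum acceptable threshold $s_0$ over the finitely many pairs $(i,l)$ produces the uniform constants of the proposition. The main technical point is that condition (b) is imposed on entire $i$-multipaths rather than on single paths in each subnetwork $\Pi^{(l)}$: this is exactly what supplies both the uniqueness of $p^*$ and the tropical gap $\delta$ that drives the exponential rate $e^{-s\delta}$, as opposed to the merely polynomial rate $1/s$ that one would get from path-level separation alone.
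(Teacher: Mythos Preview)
Your argument is correct. The key identity $\sigma_1\cdots\sigma_i=\|\Lambda^i A^{(l)}\|_{\rm op}$ lets you sandwich the product of singular values directly between a single dominant minor and the Hilbert--Schmidt norm of $\Lambda^i A^{(l)}$; condition~(b) then supplies the gap $\delta$ both from below (isolating $p^*$ inside $\Delta_{I^*,J^*}$) and from above (all remaining minors are $O(e^{s(m-\delta)})$). The log step and the uniformization over $(i,l)$ are straightforward.

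This is a genuinely different route from the paper's proof in Appendix~B. There the argument goes via the elementary symmetric polynomial $e_k$ of the eigenvalues of $A_sA_s^*$, equivalently $\|\Lambda^k A\|_{\rm HS}^2$, and proceeds in two stages: Step~1 obtains a crude $C/s$ bound without using~(b); Step~2 invokes condition~(a) (the strict Gelfand--Zeitlin inequalities) to separate the eigenvalues of $A_sA_s^*$, which is what allows the refinement $e_k\leqslant\lambda_1\cdots\lambda_k(1+Ce^{-s\delta})$, and then uses~(b) for the other side. Your approach bypasses the eigenvalue-separation step entirely by working with the operator norm rather than the trace of $\Lambda^i(A A^*)$: since $\|\Lambda^i A\|_{\rm op}$ is \emph{exactly} $\sigma_1\cdots\sigma_i$, no comparison among the $\binom{l}{i}$ singular values of $\Lambda^i A$ is needed, and condition~(a) becomes superfluous for this particular estimate. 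What the paper's route buys is the intermediate $C/s$ bound (stated separately as the Claim in \S\ref{subsection: correspondence with parameter}), which holds without condition~(b) and is used independently; your argument trades that byproduct for a cleaner one-shot proof of the proposition itself.
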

Since the definition of the set $W_{\Pi,\delta}$ does not coincide with one used in \cite{APS-symplectic}, we prove Proposition \ref{proposition: gz_s M_s close to gz^T} in Appendix \ref{appendix: proof of proposition gz_s M_s close to gz^T}.


Introduce two maps:
\[Horn_s\colon\B^3\to \R^{6n},\qquad (A,B,C)\mapsto \big(\frac{1}{s}\sum_{i=1}^k\log{\rm sing}_i(X)\big)_{ k\leqslant n, X\in\{A,B,C,AB,BC,ABC\}}.\] 
In other words, $Horn_s$ takes $A,B,C$ to the upper rows of $gz_s(X)$ for $X\in\{A,B,C,AB,BC,ABC\}$. 

Similarly, \[Horn^\T\colon\prod_{i=1}^3\T^{\Pi_i}\times\Phi_{\Pi_1\circ\Pi_2\circ\Pi_3}\to \T^{6n},\quad (w_1,w_2,w_3,\phi)\mapsto\big(\sum_{i=1}^k \la^\T_i(w)\big)_{k\leqslant n, w\in \{w_1\ldots w_1\circ w_2\circ w_3\}}.\] 
 In other words it sends $(w_1,w_2,w_3,\phi)$ to the upper rows of $gz^\T(w)$ for $w\in \{w_1,\ldots, w_1\circ w_2\circ w_3\}$. 

Let $\Delta_{octah}\subset\R^{\Delta^k(n)}$ denote the set of $\Delta^k(n)$-tuples of reals satisfying rhombus inequalities and the octahedron recurrence (\ref{theorem PN octahedron recurrence}). Let $\partial\Delta_{octah}$ be the image of $\Delta_{octah}$ under the map (\ref{equation partial on R^tetrahedron}). 

\begin{theorem}\label{theorem: symplectic convergence}
    For any $\epsilon_1$ and $\epsilon_2$ there exists a set $U\subset \mathcal{H}_{\la(1)}\times \mathcal{H}_{\la(2)}\times \mathcal{H}_{\la(3)}$ of normalized measure at least $1-\epsilon_1$ and $s_0$ such that $\forall s\geqslant s_0$:
    \[Horn_s(GW_s^{\times 3}(U))\subset \mathcal{U}_{\epsilon_2}(\partial\Delta_{octah}).\]
\end{theorem}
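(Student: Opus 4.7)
The plan is to compose four ingredients: the Ginzburg--Weinstein Poisson isomorphism $GW_s$ of Proposition \ref{proposition: existense of GW_s}, the parametrization $\zeta_s=M_s^{-1}$ of $\mathcal{B}$ by tropical weights and angles on the standard planar network, its limit $\zeta_\infty$ from Proposition \ref{proposition: existense of zeta_infty}, and the tropical octahedron recurrence of Theorem \ref{theorem PN octahedron recurrence}. For $A_i\in\mathcal{H}_{\la(i)}$ set $B_i:=GW_s(A_i)$, $(w_{s,i},\phi_{s,i}):=\zeta_s(B_i)$ and $w_i^\infty:=w_\infty(A_i)$. Three observations drive the argument: (i) by the note following Proposition \ref{proposition: existense of zeta_infty}, $\mathcal{A}(w_i^\infty)=gz^\mathcal{H}(A_i)$, so whenever $gz^\mathcal{H}(A_i)\in\Delta_{GZ}$ the triple $(w_1^\infty,w_2^\infty,w_3^\infty)$ and all its concatenations satisfy the multi-Gelfand--Zeitlin condition by Lemma \ref{lemma A(w) in GZ implies multi-GZ condition on planar networks}; (ii) by the Remark after Proposition \ref{proposition: existense of zeta_infty}, $|w_{s,i}-w_i^\infty|\leqslant C e^{-s\delta/2}$ when $gz^\mathcal{H}(A_i)\in\Delta_{GZ}(\delta)$; (iii) the correspondence map is multiplicative, so $B_i\cdots B_j=M_s(w_{s,i}\circ\cdots\circ w_{s,j},\phi_{s,i}\circ\cdots\circ\phi_{s,j})$.

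First I would fix $\delta>0$ and define $U_\delta\subset\mathcal{H}_{\la(1)}\times\mathcal{H}_{\la(2)}\times\mathcal{H}_{\la(3)}$ as the locus on which (a) $gz^\mathcal{H}(A_i)\in\Delta_{GZ}(\delta)$ for $i=1,2,3$, and (b) each of the six concatenations $w_i^\infty\circ\cdots\circ w_j^\infty$ lies in $W_{\Pi_{\rm st}^{\circ(j-i+1)},2\delta}$, that is, all multipath weights in each concatenated network are $2\delta$-separated and the GZ inequalities for $gz^\T$ of each concatenation are $2\delta$-strict. Proposition \ref{proposition: symplectic measure and gz} identifies the pushforward of the symplectic measure on $\mathcal{H}_{\la(i)}$ under $gz^\mathcal{H}$ with Lebesgue measure on the slice $\{m_k^{(n)}-m_{k-1}^{(n)}=\la_k\}\subset\Delta_{GZ}$, and the map $A_i\mapsto w_i^\infty$ factors through the linear isomorphism $\mathcal{A}^{-1}$. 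Hence condition (b) defines an open semi-algebraic subset whose complement is the $O(\delta)$-thickening of finitely many lower-dimensional real-algebraic loci. Choosing $\delta$ small enough makes $\mu(U_\delta)\geqslant 1-\epsilon_1$.

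Next, for $(A_1,A_2,A_3)\in U_\delta$ I would estimate $Horn_s(B_1,B_2,B_3)$. By (ii), $|w_{s,i}-w_i^\infty|\leqslant C_1 e^{-s\delta/2}$, so for $s\geqslant s_0$ large each concatenation $(w_{s,i}\circ\cdots\circ w_{s,j},\phi_{s,i}\circ\cdots\circ\phi_{s,j})$ lies in $W_{\Pi_{\rm st}^{\circ(j-i+1)},\delta}\times\Phi_{\Pi_{\rm st}^{\circ(j-i+1)}}$ by (b). Proposition \ref{proposition: gz_s M_s close to gz^T} then gives, for each of the six products,
$|gz_s(B_i\cdots B_j)-gz^\T(w_{s,i}\circ\cdots\circ w_{s,j})|\leqslant C_2 e^{-s\delta}$. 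Since $gz^\T$ is piecewise linear, hence Lipschitz, in the weights and since $w_{s,\bullet}\to w_\bullet^\infty$ exponentially, the aggregate error satisfies $|Horn_s(B_1,B_2,B_3)-Horn^\T(w_1^\infty,w_2^\infty,w_3^\infty,\phi)|\leqslant C_3 e^{-s\delta/2}$ for any choice of $\phi$, which is less than $\epsilon_2$ once $s_0$ is enlarged appropriately.

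Finally, observation (i) combined with Theorems \ref{theorem PN rhombi inequalities} and \ref{theorem PN octahedron recurrence} forces $m(w_1^\infty,w_2^\infty,w_3^\infty)\in\Delta_{octah}$, so its boundary $Horn^\T(w_1^\infty,w_2^\infty,w_3^\infty,\phi)$ lies in $\partial\Delta_{octah}$. Together with the preceding estimate this yields $Horn_s(GW_s^{\times 3}(U_\delta))\subset\mathcal{U}_{\epsilon_2}(\partial\Delta_{octah})$. The main obstacle I anticipate is making condition (b) quantitative: one needs to show that the locus where two distinct multipath weights in a concatenated standard network coincide is a finite union of proper real-algebraic subvarieties of $\Delta_{GZ}^3$, and to give a uniform volume bound on its $\delta$-thickening compatible with the choices of $\delta$ in Propositions \ref{proposition: existense of zeta_infty} and \ref{proposition: gz_s M_s close to gz^T}. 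This reduces to a piecewise-linear transversality statement for the family of equations $\max_{p_1} w(p_1)=\max_{p_2} w(p_2)$ expressed in the linear coordinates $\mathcal{A}^{-1}$, and is the only step of the argument that is not a direct chain of the cited estimates.
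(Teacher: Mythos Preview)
Your proposal is correct and follows essentially the same approach as the paper: define a large-measure set $U$ via $\delta$-strict Gelfand--Zeitlin and multipath-separation conditions (the paper does this on the weight side as $(\zeta_\infty^{-1})^{\times 3}(W^{(3)}_\delta\times\Phi^3)$ rather than directly on the Hermitian side, but this is equivalent since $\zeta_\infty$ intertwines $gz^\mathcal{H}$ and $\mathcal{A}$), then chain Remark \ref{remark: how close w and w_s zeta_infty^-1} and Proposition \ref{proposition: gz_s M_s close to gz^T} applied to each concatenated network, and conclude with Theorem \ref{theorem PN octahedron recurrence}. The ``main obstacle'' you flag is exactly what the paper dispatches in one line: the complement of $U$ is the $\delta$-neighborhood of a finite union of affine hyperplanes (from condition (c)) and cones (from the piecewise-linear $gz^\T$ equalities), whose normalized volume is $O(\delta)$.
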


\begin{proof}
    Define $W^{(3)}_\delta\subset \R^{3\cdot\frac{n(n-1)}{2}}=W(\Pi_{\rm st})^3$ as the set of those weightings $w=(w_1,w_2,w_3)\in W(\Pi_{\rm st})^3$ for which:

    (a) for $i=1,2,3$: \quad $\mathcal{A}(w_i)\in\Delta_{GZ}(\delta)$,

    (b) $gz^\T(w_1\circ w_2),\, gz^\T(w_2\circ w_3), \, gz^\T(w_1\circ w_2\circ w_3) \in\Delta_{GZ}(\delta)$,

    (c) for any distinct subsets $\alpha,\beta\subset essedges(\Pi_{\rm st})^3$: \quad $|w(\alpha)-w(\beta)|>\delta$.
    
    This set is the complement of $\delta$-neighbourhood of the union of hyperplanes given by $w(\alpha)=w(\beta)$, hyperplanes given by GZ equalities for $\mathcal{A}(w_i)$, and subsets given by GZ equalities for $gz^\T(w_1\circ w_2)$ etc., each being an intersection of a hyperplane with a cone. Denote this union by $X$.

    Let $U=(\zeta_\infty^{-1})^{\times 3} (W^{(3)}_\delta\times\Phi^3)$.

\textit{Claim 1.
    $\mu^\mathcal{H}(U)>1-\delta \cdot {\rm Vol}(X)$, where $\mu^\mathcal{H}$ and ${\rm Vol}$ are normalized so that $\mu^\mathcal{H}(\mathcal{H}_\lambda)={\rm Vol}(\Delta_{GZ,\lambda})=1$.
}

    Indeed, by Proposition \ref{proposition: symplectic measure and gz}: $\mu^\mathcal{H}(U)={\rm Vol}\big((gz^\mathcal{H})^{\times 3}(U)\big)$. Since $\zeta_\infty$ intertwines the maps $gz^\mathcal{H}$ and $gz^\T$, $(gz^\mathcal{H})^{\times 3}(U)=(gz^\T)^{\times 3}(W^{(3)}_\delta)$, so $\mu^\mathcal{H}(U) = {\rm Vol}\big((gz^\T)^{\times 3}(W^{(3)}_\delta)\big)\geqslant 1-\delta\cdot {\rm Vol}(X)$.

\textit{Claim 2. 
There exists $s_0$ such that $\forall s\geqslant s_0$:    $Horn_s((GW_s^{\times 3}(U))\subset \mathcal{U}_{C^{-s\cdot 2\delta}}(\partial\Delta_{octah})$.
}

Lemma \ref{lemma A(w) in GZ implies multi-GZ condition on planar networks} implies that the weightings  $(w_1,w_2,w_3)\in W_\delta^{(3)}$ satisfy the conditions of Theorem \ref{theorem PN octahedron recurrence}, hence $Horn^\T(W^{(3)}_\delta)\subset \partial\Delta_{octah}$.
\begin{align*}
Horn_s(GW_s^{\times 3}(U))
 &= Horn_s( (GW_s\zeta_\infty^{-1})^{\times 3}(W^{(3)}_\delta\times\Phi^3))     
& &\text{by definition of $U$}\\
 &= Horn_s( (GW_s\zeta_s^{-1}\zeta_s\zeta_\infty^{-1})^{\times 3}(W^{(3)}_\delta\times\Phi^3)) 
& &\text{trivially}\\
 &= Horn_s( (M_s\zeta_s\zeta_\infty^{-1})^{\times 3}(W^{(3)}_\delta\times\Phi^3)) 
& &\text{since }\zeta_s^{-1}=GW_s^{-1}M_s\\
 &\subset Horn_s( M_s^{\times 3}(\mathcal{U}_{Ce^{-s\delta}}(W^{(3)}_\delta)\times\Phi^3)) 
& &\text{by Remark \ref{remark: how close w and w_s zeta_infty^-1} }\\
 &\subset Horn_s( M_s^{\times 3}(W^{(3)}_{2\delta})\times\Phi^3) 
& &\text{for }s\geqslant s_1:=-\frac{2}{\delta}\log\frac{\delta}{C}\\
 &\subset \mathcal{U}_{C^{-s\cdot 2\delta}}(Horn^\T (W^{(3)}_{2\delta}\times\Phi^3)) 
& &\text{by Proposition \ref{proposition: gz_s M_s close to gz^T} }\\
 &\subset\mathcal{U}_{C^{-s\cdot 2\delta}}(\partial\Delta_{octah}). 
& &\text{by Lemma \ref{lemma A(w) in GZ implies multi-GZ condition on planar networks} and Theorem \ref{theorem PN octahedron recurrence}.}\\
\end{align*}
 Now the proposition follows by choosing $s_0\geqslant s_1$ such that $C^{-s_0\cdot 2\delta}<\epsilon_2$.
\end{proof}

\begin{appendices}

\section{The multiplicative problem for \texorpdfstring{$n=2$}{n=2}}

In this Appendix, we consider the multiplicative multiple Horn problem for $n=2$. First, recall that it is sufficient to consider lower-triangular matrices with positive reals on the diagonal:
$$
g = 
\begin{bmatrix}
u & 0 \\
v & w
\end{bmatrix},
$$
where $u, w \in \mathbb{R}_{>0}, v \in \mathbb{C}$. Furthermore, one can multiply each of the matrices $A, B$ and $C$ by a positive real number to make their determinants equal to $1$ (this corresponds to $w=u^{-1}$ in the formula above), and then the same will apply to determinants of $AB, BC$ and $ABC$. In this setup, all trace equalities are automatically satisfied. Furthermore, the ordered arrays of reals in the formulation of the multiple Horn problem are of the form $(\lambda, -\lambda)$ with $\lambda \geqslant 0$. Hence, we can think of 6-tuples $(\lambda, \mu, \nu, \rho, \sigma, \tau)$ taking values in $\mathbb{R}^6$, where we keep only the non-negative number in each array.

Next, we observe that for $n=2$ all plane rhombi lie in the faces of the tetrahedron. Since each face corresponds to an (ordinary) multiplicative  Horn problem, the rhombus inequalities are automatically satisfied for every solution of the multiplicative multiple Horn problem.

In order to deal with the tetrahedron equalities, we define a function
$f: \SU(2)^* \to \mathbb{R}_{>0}$ given by formula
$$
f(g) = {\rm Tr}(gg^*) = u^2 + u^{-2}+|v|^2.
$$
Note that $f(g)=f(g^{-1})$, and that
$$
f(g) = \Lambda^2 + \Lambda^{-2},
$$
where $\Lambda $ and $\Lambda^{-1}$ are the singular values of $g$ (that is, the eigenvalues of the positive definite Hermitian matrix $(gg^*)^{1/2}$).

\begin{proposition} \label{appendix:main_inequality}
    For all $A, B, C \in \SU(2)^*$, we have
    $$
2(f(AB)f(BC)+f(A)f(C)) \geqslant f(B)f(ABC).
    $$
\end{proposition}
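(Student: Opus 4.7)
The plan is to exploit the residual $\mathrm{U}(2)^4$-symmetry (which preserves all singular-value data, and hence all six values of $f$) to reduce to a convenient normal form, and then to verify the inequality by direct expansion together with two AM-GM estimates. First, I would observe that for every triple of matrices $A, B, C \in \SU(2)^*$ this symmetry can be used to bring $B$ into diagonal form $B = \diag(b, b^{-1})$ while keeping $A$ and $C$ in $\SU(2)^*$: apply an SVD to $B$ to fix the two middle unitaries, then choose the outer two unitaries, uniquely determined by the Iwasawa decomposition, so as to restore $A$ and $C$ to lower-triangular form with positive real diagonal.

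After this reduction, write $A = \bigl(\begin{smallmatrix} a & 0 \\ \alpha & a^{-1}\end{smallmatrix}\bigr)$ and $C = \bigl(\begin{smallmatrix} c & 0 \\ \gamma & c^{-1}\end{smallmatrix}\bigr)$ with $a,b,c>0$ and $\alpha,\gamma \in \mathbb{C}$. A direct matrix computation yields explicit Laurent-polynomial formulas for the six values of $f$; the only interaction between $\alpha$ and $\gamma$ appears in
$$f(ABC) = (abc)^2 + (abc)^{-2} + |\alpha|^2 b^2 c^2 + |\gamma|^2 a^{-2} b^{-2} + 2(c/a)\,\mathrm{Re}(\alpha\bar\gamma).$$
Since $f(ABC)$ depends linearly on $\mathrm{Re}(\alpha\bar\gamma)$ with positive coefficient $2c/a$, while the other five $f$-values are independent of $\mathrm{Re}(\alpha\bar\gamma)$, the inequality is tightest exactly when $\mathrm{Re}(\alpha\bar\gamma) = |\alpha||\gamma|$, and I would assume this equality henceforth.

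The final step is to expand the difference $D := 2(f(AB)f(BC) + f(A)f(C)) - f(B)f(ABC)$ as a Laurent polynomial in $a,b,c,|\alpha|,|\gamma|$. After routine cancellation of common monomials, $D$ becomes a sum of manifestly non-negative terms together with
$$|\alpha|^2 b^4 c^2 + a^{-2}|\gamma|^2 + a^{-2}b^{-4}|\gamma|^2 + |\alpha|^2 c^2 - 2|\alpha||\gamma|(c/a)(b^2 + b^{-2}),$$
which is non-negative in view of the two AM-GM estimates $|\alpha|^2 b^4 c^2 + a^{-2}|\gamma|^2 \geqslant 2|\alpha||\gamma| b^2(c/a)$ and $|\alpha|^2 c^2 + a^{-2}b^{-4}|\gamma|^2 \geqslant 2|\alpha||\gamma| b^{-2}(c/a)$. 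The main obstacle is purely clerical: one must track roughly a dozen Laurent monomials and verify that, after cancellation, precisely the right four terms remain to feed these AM-GM estimates and exactly neutralize the negative cross term $-2|\alpha||\gamma|(c/a)(b^2+b^{-2})$; no further ingenuity beyond this particular pairing is required.
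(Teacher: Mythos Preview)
Your proof is correct and follows essentially the same strategy as the paper: diagonalize $B$ via the dressing action, compute all six $f$-values explicitly, and compare Laurent monomials. The only difference is a minor bookkeeping choice for the cross term in $f(ABC)$: the paper applies $|a+b|^2 \leqslant 2(|a|^2+|b|^2)$ to $|vwx + u^{-1}w^{-1}y|^2$ up front and then compares coefficients, whereas you first maximize $\mathrm{Re}(\alpha\bar\gamma)$ and then use two AM--GM pairings at the end; the two estimates are equivalent and the remaining verification is identical.
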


\begin{proof}
    All functions in the inequality are invariant under the dressing action (defined in \ref{section: multiplicative problem}) of ${\rm U}(2)$ on triples $(A,B,C)$. Hence, without loss of generality we can choose one of these matrices to be diagonal. We use the following notation
$$
A=
\begin{bmatrix}
u & 0 \\
v & u^{-1}
\end{bmatrix},
\hskip 0.3cm
B=
\begin{bmatrix}
w & 0\\
0 & w^{-1}
\end{bmatrix},
\hskip 0.3cm
C=
\begin{bmatrix}
x & 0 \\
y & x^{-1}
\end{bmatrix}.
$$
We have,
$$
\begin{array}{lll}
f(A) & = & u^2+u^{-2}+|v|^2, \\
f(B) & = & w^2 + w^{-2}, \\
f(C) & =& x^2+x^{-2} +|y|^2 \\
f(AB) & = & u^2w^2+u^{-2}w^{-2}+w^{2}|v|^2, \\
f(BC) & = & w^2x^2+w^{-2}x^{-2}+w^{-2}|y|^2, \\
f(ABC) & = & u^2w^2x^2 +u^{-2}w^{-2}x^{-2} +|vwx+u^{-1}w^{-1}y|^2.
\end{array}
$$
First, we give an estimate from above for the right hand side of the inequality:
$$
\begin{array}{lll}
f(B)f(ABC) & = & (w^2 + w^{-2})(u^2w^2x^2 +u^{-2}w^{-2}x^{-2} +|vwx+u^{-1}w^{-1}y|^2) \\
& \leqslant & u^2w^4x^2 +u^2x^2 + u^{-2}x^{-2} +u^{-2}w^{-4}x^{-2}  \\
& + &  2|v|^2(1+w^{4})x^2 + 2u^{-2}(1+w^{-4})|y|^2,
\end{array}
$$
where we have used that  $|a+b|^2 \leqslant 2(|a|^2 + |b|^2)$. Next, we re-write the left hand side as follows:
$$
\begin{array}{lll}
f(AB)f(BC)+f(A)f(C) & = & (u^2w^2+u^{-2}w^{-2}+w^{2}|v|^2)(w^2x^2+w^{-2}x^{-2}+w^{-2}|y|^2) \\
& + & (u^2+u^{-2}+|v|^2)(x^2+x^{-2} +|y|^2) \\
& = & u^2w^4x^2 +u^2x^2 + u^{-2}x^{-2} +u^{-2}w^{-4}x^{-2} + 2(u^2x^{-2} + u^{-2}x^2) \\
& + & |v|^2(x^{2}+w^{4}x^{2}+2x^{-2}) + (u^{-2}w^{-4}+u^{-2}+2u^{2})|y|^2\\
& + & 2|v|^2|y|^2.
\end{array}
$$
We conclude the proof by a direct comparison of coefficients in front of monomials.
\end{proof}

\begin{proposition}       \label{appendix:2}
   For all $A, B, C \in \SU(2)^*$, we have
$$
\begin{array}{lll}
2(f(A)f(C) + f(B)f(ABC)) &\geqslant & f(AB)f(BC), \\
2(f(B)f(ABC)+f(AB)f(AB)) & \geqslant & f(A)f(C).
\end{array}
$$
\end{proposition}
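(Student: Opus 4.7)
The plan is to derive both inequalities from Proposition \ref{appendix:main_inequality} by substituting cleverly chosen triples. Two elementary observations make this possible. First, $\SU(2)^* = \mathcal{B}(2) = AU_-$ is a group (a semidirect product of the positive diagonals with the lower unipotents), hence closed under products and inverses, so that expressions like $A^{-1}$, $AB$, and $(ABC)^{-1}$ again belong to $\SU(2)^*$. Second, $f(g^{-1}) = f(g)$, since inverting $g$ merely interchanges its singular values $\Lambda$ and $\Lambda^{-1}$, leaving $f(g) = \Lambda^2 + \Lambda^{-2}$ unchanged.

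For the first inequality $2(f(A)f(C) + f(B)f(ABC)) \geqslant f(AB)f(BC)$, I would apply Proposition \ref{appendix:main_inequality} to the triple $(X,Y,Z) = (A^{-1}, AB, C) \in (\SU(2)^*)^3$. A direct calculation gives $f(X) = f(A)$, $f(Y) = f(AB)$, $f(Z) = f(C)$, together with $XY = A^{-1}(AB) = B$, $YZ = (AB)C = ABC$, and $XYZ = A^{-1}(AB)C = BC$. Substituting these values into the conclusion $2(f(XY)f(YZ) + f(X)f(Z)) \geqslant f(Y)f(XYZ)$ of Proposition \ref{appendix:main_inequality} yields exactly the claimed inequality.

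For the second inequality, which presumably should read $2(f(B)f(ABC) + f(AB)f(BC)) \geqslant f(A)f(C)$ (with the $f(AB)f(AB)$ in the statement appearing to be a typo for $f(AB)f(BC)$), I would apply Proposition \ref{appendix:main_inequality} to the triple $(X,Y,Z) = (B, C, (ABC)^{-1})$. Here $f(X) = f(B)$, $f(Y) = f(C)$, $f(Z) = f((ABC)^{-1}) = f(ABC)$, while $XY = BC$, $YZ = C(ABC)^{-1} = (AB)^{-1}$ (so $f(YZ) = f(AB)$), and $XYZ = BC(ABC)^{-1} = A^{-1}$ (so $f(XYZ) = f(A)$). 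Substituting again into Proposition \ref{appendix:main_inequality} gives the desired inequality.

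The only point requiring care is the verification that each substituted triple actually lies in $(\SU(2)^*)^3$, which is precisely what the group-theoretic observation above ensures; once this is in hand, both inequalities follow without further computation. For this reason I do not anticipate a serious obstacle: the entire analytic content, which was the delicate estimate $|a+b|^2 \leqslant 2(|a|^2 + |b|^2)$ combined with direct expansion, has already been absorbed in the proof of Proposition \ref{appendix:main_inequality}, and what remains is a purely algebraic manipulation exploiting the symmetry of $f$ under inversion and the group structure of $\SU(2)^*$.
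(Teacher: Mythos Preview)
Your proposal is correct and follows essentially the same approach as the paper: both inequalities are obtained from Proposition~\ref{appendix:main_inequality} by a change of variables together with $f(g^{-1})=f(g)$. Your first substitution $(A^{-1},AB,C)$ is exactly the paper's; for the second the paper uses $(\,(AB)^{-1},A,BC\,)$ instead of your $(B,C,(ABC)^{-1})$, but both lead to the same inequality, and you correctly identify the typo $f(AB)f(AB)$ for $f(AB)f(BC)$.
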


\begin{proof}
Both inequalities follow from the inequality of Proposition \ref{appendix:main_inequality} by a change of variables. For the first, we map $A \mapsto A^{-1}, B\mapsto AB, C \mapsto C$. For the second, we map $A \mapsto (AB)^{-1}, B \mapsto A, C\mapsto BC$. In both cases, we use the fact that $f(g^{-1})=f(g)$.
\end{proof}

Let $(\lambda, \mu, \nu, \rho, \sigma, \tau) \in \mathbb{R}_{\geqslant 0}^6$ be a 6-tuple of nonnegative real numbers solving the multiplicative multiple Horn problem for the parameter $s$. We denote
$$
\alpha = \lambda + \nu, \hskip 0.3cm
\beta = \mu + \tau, \hskip 0.3cm
\gamma = \rho + \sigma.
$$

\begin{proposition}
    We have,
    $$
\alpha < {\rm max}\{\beta, \gamma\} + s^{-1}\log(16), \hskip 0.3cm
\beta < {\rm max}\{\alpha, \gamma\} + s^{-1}\log(16), \hskip 0.3cm
\gamma < {\rm max}\{\alpha, \beta\} + s^{-1}\log(16).
    $$
\end{proposition}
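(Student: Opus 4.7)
The plan is to translate the operator-theoretic inequalities of Propositions \ref{appendix:main_inequality} and \ref{appendix:2} into inequalities on exponents via a sandwich estimate on $f$. Given $g \in \SU(2)^*$ with singular values $e^{s\eta}, e^{-s\eta}$ and $\eta \geqslant 0$, the identity $f(g) = e^{2s\eta} + e^{-2s\eta}$ yields the elementary two-sided bound
\[
e^{2s\eta} \;\leqslant\; f(g) \;\leqslant\; 2\,e^{2s\eta}.
\]
Applying this to the six matrices $A, B, C, AB, BC, ABC$ whose ``dominant'' exponents are $\lambda, \mu, \nu, \rho, \sigma, \tau$ respectively, I would record
\[
e^{2s\alpha} \leqslant f(A)f(C) \leqslant 4\,e^{2s\alpha}, \quad
e^{2s\beta} \leqslant f(B)f(ABC) \leqslant 4\,e^{2s\beta}, \quad
e^{2s\gamma} \leqslant f(AB)f(BC) \leqslant 4\,e^{2s\gamma}.
\]

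With these estimates in hand, Proposition \ref{appendix:main_inequality} gives
\[
e^{2s\beta} \;\leqslant\; f(B)f(ABC) \;\leqslant\; 2\bigl(f(AB)f(BC)+f(A)f(C)\bigr) \;\leqslant\; 8\bigl(e^{2s\gamma}+e^{2s\alpha}\bigr) \;\leqslant\; 16\,e^{2s\max\{\alpha,\gamma\}}.
\]
Taking logarithms and dividing by $2s$ yields $\beta \leqslant \max\{\alpha,\gamma\} + s^{-1}\log 4$, which is strictly smaller than $\max\{\alpha,\gamma\} + s^{-1}\log 16$ and establishes the middle inequality of the proposition. The two inequalities of Proposition \ref{appendix:2} are then treated in exactly the same way: the first produces $e^{2s\gamma} \leqslant 16\,e^{2s\max\{\alpha,\beta\}}$, yielding $\gamma < \max\{\alpha,\beta\} + s^{-1}\log 16$, and the second produces $e^{2s\alpha} \leqslant 16\,e^{2s\max\{\beta,\gamma\}}$, yielding $\alpha < \max\{\beta,\gamma\} + s^{-1}\log 16$.

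There is no real obstacle here; the proof is essentially a mechanical consequence of Propositions \ref{appendix:main_inequality} and \ref{appendix:2}, where all the analytic content lives. The only point worth emphasizing is that the constant $16$ appearing in the statement is not sharp: the argument above actually gives $s^{-1}\log 4$, and the weaker bound $s^{-1}\log 16$ is stated presumably because it arises naturally from distributing the factor $2$ in the inequalities of the propositions over both summands and both factors of $f$.
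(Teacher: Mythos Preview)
Your proof is correct and follows essentially the same approach as the paper: sandwich $f(g)$ between exponentials, feed the inequalities of Propositions~\ref{appendix:main_inequality} and~\ref{appendix:2} through these bounds, and take logarithms. Your bound $s^{-1}\log 4$ is in fact sharper than the paper's $s^{-1}\log 16$ because you correctly track the factor of $2$ in the exponent of $f(g)=e^{2s\eta}+e^{-2s\eta}$; the paper's own proof writes $f(B)=(e^{s\mu}+e^{-s\mu})$, apparently dropping that $2$, which accounts for the discrepancy.
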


\begin{proof}
    We consider the inequality of Proposition \ref{appendix:main_inequality}. The right hand side can be estimated from below:
$$
f(B)f(ABC)=(e^{s\mu}+e^{-s\mu})(e^{s\tau}+e^{-s\tau}) > e^{s(\mu+\tau)}
= e^{s\beta},
$$
and the left hand side can be estimated from above:
\begin{align*}
    2(f(AB)f(AC)+f(A)f(C))&=2((e^{s\rho}+e^{-s\rho})(e^{s\sigma}+e^{-s\sigma})
+(e^{s\lambda}+e^{-s\lambda})(e^{s\nu}+e^{-s\nu}))\\
&\leqslant 16 \, e^{s\, {\rm max}\{\alpha, \gamma\}} \, .
\end{align*}
We conclude that
$$
16 \, e^{s\, {\rm max}\{\alpha, \gamma\}} > e^{s\beta},
$$
and 
$$
\beta < {\rm max}\{\alpha, \gamma\} + s^{-1} \log(16),
$$
as required. The other two inequalities are proven in the same way using the inequalities from Proposition \ref{appendix:2}.
\end{proof}

We conclude that for $n=2$ solutions of the multiplicative multiple Horn problem satisfy the trace equalities and the rhombus inequalities, and they may violate the tetrahedron equalities by $\varepsilon=s^{-1} \log(16)$. Clearly, for $s$ large enough $\varepsilon$ is arbitrarily small which proves the $n=2$ case of Conjecture \ref{conjB} stated in Section \ref{sec:background}.

\begin{proposition}
    The image of the map $m \colon \T^{(\Pi_{\rm st}(2))}\to\T^{\Delta^3(2)}$ coincides with the cone defined by rhombus inequalities and tetrahedron equality.
\end{proposition}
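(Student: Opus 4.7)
The plan is to prove both inclusions. The inclusion of the image of $m$ in the cone follows immediately from Theorems \ref{theorem PN rhombi inequalities} and \ref{theorem PN tetrahedra equalities}. For the reverse inclusion, I observe that when $n=2$ the simplex $\Delta^3(2)$ is itself the unique small tetrahedron, so there is a single tetrahedron equality, and all small plane rhombi lie on the four triangular faces. The tetrahedron equality forces the maximum of $\{\alpha,\beta,\gamma\}$ to be attained at least twice, so the cone decomposes as
\[
C_\gamma \cup C_\alpha \cup C_\beta, \qquad C_\gamma = \{\gamma = \max\{\alpha,\beta\}\},\quad C_\alpha = \{\alpha = \max\{\beta,\gamma\}\},\quad C_\beta = \{\beta = \max\{\gamma,\alpha\}\},
\]
each intersected with the rhombus-inequality region on the four faces.

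The sub-cone $C_\gamma$ is exactly the locus of the octahedron recurrence in the standard form of Theorem \ref{theorem PN octahedron recurrence}, so by Theorem \ref{theorem: m is onto the octahedron recurrence cone} every point of $C_\gamma$ is realized by a multi-Gelfand-Zeitlin weighting of $\Pi_{\rm st}(2)^3$. For the remaining sub-cones $C_\alpha$ and $C_\beta$ the multi-Gelfand-Zeitlin assumption is too restrictive; instead I would modify the construction of Theorem \ref{theorem: m is onto the octahedron recurrence cone} using a different pair of ``seed'' faces of the tetrahedron together with the corresponding alternative branch of the octahedron recurrence. Concretely, in the original proof the linear isomorphism $\beta\colon W\to\T^{n(n+2)}$ is built from maximal multipaths $\beta_{i,j,k}$ anchored on the faces $(\lambda,\mu,\rho)$ and $(\rho,\nu,\tau)$; to realize $C_\alpha$ one would replace these anchor multipaths with analogues anchored on the pair of faces sharing the $\lambda$-edge, namely $(\lambda,\mu,\rho)$ and $(\lambda,\sigma,\tau)$, and similarly for $C_\beta$ using the $\mu$-edge as pivot.

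The hard part will be verifying that each modified construction still produces a linear isomorphism from $W_\T(\Pi_{\rm st}(2))^3$ onto the corresponding sub-cone, and that the weightings it produces actually satisfy the appropriate analogue of the multi-Gelfand-Zeitlin condition needed to compute $m$ correctly. For $n=2$ this reduces to a finite combinatorial check: the standard network has only three essential edges per copy (so nine tropical parameters in total), and for each $m_{\underline{\alpha}}$ there are only a small number of candidate maximal multipaths. Using the same rearrangement arguments as in the proofs of Lemma \ref{lemma A(w) in GZ implies multi-GZ condition on planar networks} and of Theorem \ref{theorem: m is onto the octahedron recurrence cone}, one checks that under the rhombus inequalities associated to the new seed pair the designated multipaths are indeed maximal and that the analogue of the map $\beta$ is a bijection, so $C_\alpha$ and $C_\beta$ lie in the image of $m$. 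Finally, consistency on the overlap locus $\alpha=\beta=\gamma$, where all three constructions apply, is automatic by continuity of $m$.
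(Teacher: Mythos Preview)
Your forward inclusion and the use of Theorem~\ref{theorem: m is onto the octahedron recurrence cone} for $C_\gamma$ are correct, but the remaining part has a genuine gap.

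You propose to rerun the construction of Theorem~\ref{theorem: m is onto the octahedron recurrence cone} with seed faces sharing the $\lambda$- or $\mu$-edge. This is not a minor variant: that construction works because the seed faces share an edge carrying data of an \emph{intermediate} concatenation ($\lambda(12)$ or $\lambda(23)$), and both the linear isomorphism $\beta$ and the maximality arguments exploit exactly this. The edges $\lambda(1),\lambda(2)$ carry data of a single factor, and no symmetry of $\Pi_1\circ\Pi_2\circ\Pi_3$ exchanges their role with that of $\lambda(12),\lambda(23)$. Moreover, by Theorem~\ref{theorem PN octahedron recurrence} every multi-GZ weighting already lands in $C_\gamma$; you correctly note that this forces non-GZ weightings, but you give no replacement condition that would make a $\beta$-style argument run. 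Saying ``this reduces to a finite combinatorial check'' is accurate but \emph{is} the missing content.

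The paper's approach is direct and shorter. First it normalizes: adding constants $x_i$ to the sink-adjacent edges of each $\Pi_i$ shifts every $m_{\underline\alpha}$ by a linear function of $(x_1,x_2,x_3)$ that preserves the rhombus inequalities and the tetrahedron equality, so one may take $m_{200}=m_{020}=m_{002}=0$; the face rhombus inequalities then become ordinary triangle inequalities among the $\lambda$'s. After this normalization, $C_\gamma$ already covers two of the three legs of the tropical line, and only the single leg
\[
\lambda(123)+\lambda(2)=\lambda(1)+\lambda(3),\qquad \lambda(12)+\lambda(23)\leqslant\lambda(1)+\lambda(3)
\]
remains (so one extra case, not two). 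For this leg the paper writes down an explicit weighting of $\Pi_{\rm st}(2)^{\,3}$ with the slanted edge of the \emph{middle} network set to $-\infty$ --- forcing every path through $\Pi_2$ to be horizontal, hence $\lambda(123)=\lambda(1)-\lambda(2)+\lambda(3)$ --- and the remaining essential weights chosen as simple linear expressions in $\lambda(1),\lambda(2),\lambda(3),\lambda(12),\lambda(23)$. A short direct verification of the six edge values $m_{100},\ldots,m_{001}$ then finishes the proof, with no $\beta$-isomorphism or GZ machinery invoked.
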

\begin{proof}
    Notice that adding $x_i$ to the weights of the edges of the $i$-th network which are adjacent to sinks affects the values $m_{ijk}$ as follows:
    \begin{center}
        \begin{tikzpicture}
            \tikzmath{ 
            \l1=0; \l2=0; 
            \r1=4; \r2=.5;
            \u1=2; \u2=2.5;
            \d1=2.5; \d2=-1;}
        \draw (\u1,\u2) node[font=\small, above] {$ 0$} -- (\l1,\l2) node[font=\small, left] {$ +2x_1$} -- (\d1,\d2) node[font=\small, below] {$+2x_1+2x_2$} -- (\r1,\r2) node[font=\small, right] {$+2x_1+2x_2+2x_3$}  -- cycle;
        \draw[dashed] (\l1,\l2) -- (\r1,\r2);
        \draw (\u1,\u2) -- (\d1,\d2);
        \fill(.5*\d1+.5*\u1, .5*\d2+.5*\u2) circle (2pt) node[font=\small, right,yshift=-3] {$+x_1+x_2$};
        \fill(.5*\l1+.5*\r1, .5*\l2+.5*\r2) circle (2pt);
        \draw[->__] (.6*\l1+.4*\u1,.2*\l2+.8*\u2) node[font=\small, above,xshift=-15] {$+2x_1+x_2+x_3$} -- (.5*\l1+.5*\r1, .5*\l2+.5*\r2);
        \fill(.5*\l1+.5*\u1, .5*\l2+.5*\u2) circle (2pt) node[font=\small, left] {$+x_1$};
        \fill(.5*\r1+.5*\d1, .5*\r2+.5*\d2) circle (2pt) node[font=\small, right] {$+2x_1+2x_2+x_3$};
        \fill(.5*\r1+.5*\u1, .5*\r2+.5*\u2) circle (2pt) node[font=\small, right] {$+x_1+x_2+x_3$};
        \fill(.5*\l1+.5*\d1, .5*\l2+.5*\d2) circle (2pt) node[font=\small, below] {$+2x_1+x_2$};
    \end{tikzpicture}
    \end{center}
and it is easy to see that such an operation does not affect rhombus inequalities or tetrahedron equality. Therefore we can assume that $m_{200}=m_{020}=m_{002}=0$.
Notice that in this case the rhombus inequaltites are equivalent to triangle inequalities for triples of numbers  standing on a face (e.g. $|\la(1)-\la(2)|\leqslant\la(12)\leqslant\la(1)+\la(2)$). 

\begin{center}
    \begin{tikzpicture}[scale=0.7]
        \draw[thick,->] (0,0)--(5,0) node[right] {$\la(123)+\la(2)$};
        \draw[thick,->] (0,0)--(0,5) node[above] {$\la(12)+\la(23)$};
        \draw[thick,blue] (0,2) node[left,black] {$\la(1)+\la(3)$} --(2,2);
        \draw[thick,red] (2,0) node[below,black] {$\la(1)+\la(3)$} --(2,2);
        \draw[thick,blue] (2,2)--(4,4) node[above,font=\small]{octahedron equality};
    \end{tikzpicture}
\end{center}

In view of Theorem \ref{theorem: m is onto the octahedron recurrence cone}, it suffices to prove that the map $m$ covers the ``$\la(123)+\la(2)=\la(1)+\la(3)$-part'' of the tetrahedron equality (red). That is,  for any 5-tuple of non-negative numbers $\la(1), \la(2), \la(3),\\ \la(12), \la(23)$ which together with $\la(123):=\la(1)-\la(2)+\la(3)$ satisfy triangle inequalities and
    $\la(12)+\la(23)\leqslant \la(1)+\la(3)$,
there exists a triple of weightings with prescribed tropical singular values.

Set $x=\la(12)-\la(2)$, $z=\la(23)-\la(2)$, and consider the following weighting:

\begin{center}
    \begin{tikzpicture}[scale=1.3]
    \draw (0,0) -- (6,0);
    \draw (0,1) -- (6,1);
    \draw (0,-.1) -- (0,1.1);
    \draw (2,-.1) -- (2,1.1);
    \draw (4,-.1) -- (4,1.1);
    \draw (6,-.1) -- (6,1.1);
    \draw (.5,1) -- (1.5,0);
    \node[above] at (1.2,.4){$\la(1)$};
    \node[above left] at (2,1){$x$};
    \node[above right] at (0,0){$-x$};
    \draw[gray](2.5,1) -- (3.5,0);
    \node[above] at (3.2,.4){$-\infty$};
    \node[above left] at (4,1){$\la(2)$};
    \node[above right] at (2,0){$-\la(2)$};
    \draw (4.5,1) -- (5.5,0);
    \node[above] at (5.2,.4){$z$};
    \node[above left] at (6,1){$-\la(3)$};
    \node[above right] at (4,0){$\la(3)$};
    \end{tikzpicture}
\end{center}    

From triangle inequalities it follows that $|x|\leqslant\la(1), |z|\leqslant\la(3)$, therefore $m_{100}=\la(1),m_{110}=\la(2),m_{011}=\la(3)$.
Also $x+\la(2)=\la(12)\geqslant\la(1)-\la(2)$, therefore $m_{010}=\la(12)$; $z+\la(2)=\la(23)\geqslant\la(3)-\la(2)$, therefore $m_{101}=\la(23)$. Finally, $m_{001}=\max\{x+\la(2)+z,\la(1)-\la(2)+\la(3)\}$, and the last condition implies that $x+\la(2)+z=\la(12)+\la(23)-\la(2)\leqslant\la(1)-\la(2)+\la(3)$, hence $m_{001}=\la(1)-\la(2)+\la(3)$.
\end{proof}


\section{Proof of Proposition \ref{proposition: gz_s M_s close to gz^T}.}\label{appendix: proof of proposition gz_s M_s close to gz^T}

The proof is essentially as in \cite{APS-symplectic}, Proposition 2.

\textit{Step 1. Denote  $A_s=M_s(w,\phi)$. There exists a constant $C=C(\Pi)$ such that \[\left|\frac{1}{s}\log\sigma_i(A_s)-\lambda_i^\T(w)\right|<\frac{C}{s}.\]}

Note that $\frac{1}{s}\log\sigma_i(A_s)=\frac{1}{2s}\log\lambda_i(A_sA_s^*)$. Write a characteristic polynomial of $A_sA_s^*$: \[\chi_{A_sA_s^*}(x)=\prod_1^n(x-\lambda_i(A_sA_s^*)),\]
so the coefficient of $x^{n-k}$ is equal to $e_k$, the $k$-th elementary symmetric polynomial in $\lambda_i$'s. On the other hand, this coefficient equals
\[ \sum_{|K|=k} \Delta_{K,K}(A_sA_s^*)=\sum_{p_1,p_2:K\to K'}e^{s(w(p_1)+w(p_2))}e^{i(\phi(p_1)-\phi(p_2)}.\]

We can estimate both expressions:
\[\lambda_1\cdots\lambda_k(A_sA^*_s)\leqslant e_k\leqslant \lambda_1\cdots\lambda_k(A_sA_s^*)(1+C), \]
and 
\[e^{2s m_k(w)}\leqslant\sum_{p_1,p_2:K\to K'}e^{s (w(p_1)+w(p_2))}e^{i(\phi(p_1)-\phi(p_2)}\leqslant e^{2s m_k(w)}(1+C),\]
where  $C>0$ is some constant (say $C=\max(2^n, |P_k|^2)$). Therefore
\[\lambda_1\cdots\lambda_k(A_sA^*_s)\leqslant
e^{2sm_k(w)}(1+C), \qquad 
e^{2sm_k(w)}\leqslant \lambda_1\cdots\lambda_k(A_sA_s^*)(1+C).\]

Taking $\frac{1}{2s}\log$ in both inequalities and using $\log(1+C)<C$, we get
\[\frac{1}{2s}\log\lambda_1\cdots\lambda_k(A_sA^*_s)-\frac{1}{2s}C< m_k(w)< \frac{1}{2s}\log\lambda_1\cdots\lambda_k(A_sA_s^*)+\frac{1}{2s}C,\]
thus $\left|\frac{1}{2s}\log\lambda_i(A_sA_s^*)-\lambda^\T_i(w)\right|<\frac{C}{s}$.

\vspace{5pt}

\textit{Step 2. Suppose $w\in W_\delta$. Then there exists a constant $C$ and $s_0$ such that $|\frac{1}{s}\log\sigma_i(A_s)-\lambda_i^\T(w)|<Ce^{-s\delta}$ for any $s\geqslant s_0$.}

Since $gz^\T(w)\in\Delta_{GZ}(\delta)$, Step 1 applied to $\Pi^{(l)}$'s implies that $gz_s\circ M_s\in \Delta(\frac{\delta}{2})$ for any $s\geqslant\max_l \frac{4C(\Pi^{(l)})}{\delta}$. This means in particular that $\left|\frac{1}{2s}\log\frac{\lambda_i}{\lambda_j}\right|>\frac{\delta}{2}$, {\em i.e.}, $\frac{\lambda_j}{\lambda_i}<e^{-s\delta}$ for $j>i$.
Therefore we can refine inequalities from Step 1 as follows:
\[\lambda_1\cdots\lambda_k(A_sA^*_s)\leqslant e_k\leqslant \lambda_1\cdots\lambda_k(A_sA_s^*)(1+Ce^{-s\delta}).\]

Condition (b) in the definition of $W_\delta$ gives: 
\[e^{2sm_k(w)}\leqslant\sum_{p_1,p_2:K\to K'}e^{s(w(p_1)+w(p_2))}e^{i(\phi(p_1)-\phi(p_2)}\leqslant e^{2sm_k(w)}(1+Ce^{-s\delta}).\]

Similarly to the proof of Step 1 we get $\left|\frac{1}{2s}\log\lambda_i(A_sA_s^*)-\lambda_i^\T(w)\right|<\frac{Ce^{-s\delta}}{s}<Ce^{-s\delta}$. 

\end{appendices}

\Addresses
\end{document}